\DeclareMathAlphabet{\mathpzc}{OT1}{pzc}{m}{it}
\newcommand{\bi}{{\boldsymbol i}}
\newcommand{\cA}{{\mathcal A}}
\newcommand{\cB}{{\mathcal B}}
\newcommand{\cC}{{\mathcal C}}
\newcommand{\cE}{{\mathcal E}}
\newcommand{\cF}{{\mathcal F}}
\newcommand{\cG}{{\mathcal G}}
\newcommand{\cI}{{\mathcal I}}
\newcommand{\cL}{\mathcal L}
\newcommand{\cO}{{\mathcal O}}
\newcommand{\cS}{{\mathcal S}}
\newcommand{\cT}{{\mathcal T}}
\newcommand{\cV}{{\mathcal V}}
\newcommand\cW{{\mathcal W}}
\newcommand{\bC}{{\mathbb C}}
\newcommand{\bF}{{\mathbb F}}
\newcommand{\bH}{{\mathbb H}}
\newcommand{\bN}{{\mathbb N}}
\newcommand{\bM}{{\mathbb M}}
\newcommand{\bR}{{\mathbb R}}
\newcommand{\bT}{{\mathbb T}}
\newcommand{\bZ}{{\mathbb Z}}
\newcommand{\ve}{\varepsilon}
\newcommand{\vf}{\varphi}
\newcommand{\olambda}{\overline{\lambda}}
\newcommand{\bRp}{\bR_+}  
\newcommand{\Cs}{C}
\newcommand{\Id}{{\mathds{1}}}
\newcommand{\tphi}{{\phi}}
\newcommand{\supp}{\operatorname{supp}}
\newcommand{\tr}{\operatorname{tr}}
\newcommand{\trf}{\operatorname{tr}^\flat}
\newcommand{\vol}{\operatorname{vol}}
\newcommand\vuo{{t_0}}
\newcommand{\wz}{w}
\newcommand{\whU}{\widehat {U}}
\newcommand\htop{h_{{\textrm{top}}}(\phi_1)}
\newcommand\Cnz{C_0}
\newcommand\Cnu{C_5}
\newcommand\Cnd{C_1}
\newcommand\Cnt{C_4}
\newcommand\Cnc{C_6}
\newcommand\Cns{C_2}
\newcommand\Cnse{C_3}
\newcommand\Ruelle{{\textrm{Ruelle}}}
\newcommand\volform{\omega_{\text{\rm vol}}}
\newcommand{\spqnorm}[2][p,q,\ell]{\ensuremath{\left| \hspace{-1.5pt} \left| \hspace{-1.5pt} \left| {#2} \right| \hspace{-1.5pt} \right| \hspace{-1.5pt} \right|_{#1}}}
\newcommand{\pqnorm}[2][p,q,\ell]{\left\|#2 \right\|_{#1}}
\newcommand{\spqnormbis}[2][p,q,\ell]{\ensuremath{\left| \hspace{-1.5pt} \left| \hspace{-1.5pt} \left| {#2} \right| \hspace{-1.5pt} \right| \hspace{-1.5pt} \right|_{#1,2}}}
\newcommand{\pqnormbis}[2][p,q,\ell]{\left\|#2 \right\|_{#1,2}}
\newlength{\strikelength}
\numberwithin{equation}{section}
\theoremstyle{plain}
\newtheorem{thm}{Theorem}[section]
\newtheorem{lem}[thm]{Lemma}
\newtheorem{sublem}[thm]{Sub-Lemma}
\newtheorem{prop}[thm]{Proposition}
\newtheorem{cor}[thm]{Corollary}
\newtheorem*{lem*}{Lemma}
\newtheorem{defn}[thm]{Definition}
\newtheorem{rmk}[thm]{Remark}
\begin{document}
\author{P. Giulietti\thanks{Dipartimento di Matematica,
Universit\`{a} di Roma {\em Tor Vergata},
Via della Ricerca Scientifica, 00133 Roma, Italy. \newline e-mail: giuliett@mat.uniroma2.it}, C. Liverani\thanks{Dipartimento di Matematica,
Universit\`{a} di Roma {\em Tor Vergata},
Via della Ricerca Scientifica, 00133 Roma, Italy.\newline e-mail: liverani@mat.uniroma2.it} and M. Pollicott\thanks{Mathematics Institute,
University of Warwick,
Coventry, CV4 7AL, UK.\newline e-mail: mpollic@maths.warwick.ac.uk}}
\title{Anosov Flows and Dynamical Zeta Functions}

\maketitle
{\let\thefootnote\relax\footnotetext{\textsl{Date:} \date{\today}}
\let\thefootnote\relax\footnotetext{\textsl{2000 Mathematics Subject Classification:} 37C30, 37D20, 53D25}
\let\thefootnote\relax\footnotetext{\textsl{Keywords and phrases:}  Dynamical zeta functions, Anisotropic norms,  Transfer operator, Counting of periodic orbits, Dolgopyat Estimates.}
\let\thefootnote\relax\footnotetext{We thank  V. Baladi, L. Caporaso, M. Carfora, J. de Simoi, S. Gouezel, F. Ledrappier,  A. Sambusetti, R. Schoof and M. Tsujii  for helpful discussions and comments. Also we thank the anonymous referee, S.Gouezel and M. Tsujii for pointing out several imprecisions in an earlier version. Giulietti and Liverani acknowledge the support of the  ERC Advanced Grant MALADY (246953) and they also thank the Fields Institute, Toronto, where part of this work was written.} }

\begin{abstract}
We study the Ruelle and Selberg zeta functions for $\Cs^r$ Anosov flows, $r > 2$, on a compact smooth manifold.
We prove several results, the most remarkable being: (a) for $\Cs^\infty$ flows the zeta function is meromorphic on the entire complex plane;
(b) for contact flows satisfying a bunching condition (e.g. geodesic flows on manifolds of negative curvature better than  $\frac 19$-pinched) the zeta function has a pole at the topological entropy and is analytic in a strip to its left; (c) under the same hypotheses as in (b) we obtain sharp results on the number of periodic orbits. Our arguments are based on the study of the spectral properties of a transfer operator acting on suitable Banach spaces of
anisotropic currents. 
\end{abstract}

\section{Introduction} \label{introduction}

In  1956,   Selberg  introduced a  zeta function for a surface of constant curvature $\kappa = -1$ formally defined to be   the  complex function 
\begin{equation} \label{eq:SelbergOriginal}
\zeta_{\textrm{Selberg}}(z) =
\prod_{\gamma}\prod_{n=0}^\infty  
\left(  1- e^{-\left(z+n\right)\lambda\left(\gamma\right)} \right), z \in \bC
\end{equation}
where $\lambda(\gamma)$ denotes the length of a closed geodesic  $\gamma$. 
This converges to a non-zero analytic function on the
half-plane $\textup{Re}(z) > 1$  and Selberg showed that $\zeta_{\textrm{Selberg}}$ 
has an analytic extension to the entire complex plane,  by
using the trace formula which now bears his name
\cite{Selberg56}. Moreover, he also showed that the zeros
of  $\zeta_{\textrm{Selberg}}$ 
correspond to the eigenvalues of the Laplacian. In fact, the trace formula connects the eigenvalues of $ - \Delta$  with the information provided by the geodesics, their lengths and their distribution.\footnote{ See \cite{Marklof08}, and references therein,
for a precise, yet friendly, introduction to the Selberg Trace formula and its relationship with the Selberg zeta function.} 
The definition \eqref{eq:SelbergOriginal}  was subsequently adapted to more general settings, 
including surfaces of variable curvature, thus giving birth to a new class of zeta functions which we refer to as dynamical zeta functions. However,  due to the lack of a suitable generalized trace formula,  few  results are known on their  meromorphic extension, the location of their zeros or  their relationships with appropriate operators. 

In 1976, Ruelle \cite{Ruelle76}  proposed  generalizing the definition by replacing 
the closed geodesics in $\zeta_{\textrm{Selberg}}$  by the closed orbits of an Anosov flow 
\mbox{$\phi_t: M \to M$}, where $M$ is a  $\Cs^\infty$, $d$-dimensional Riemannian compact manifold.
We recall that an Anosov flow is a flow such that there exists a $D\phi_t$-invariant 
continuous splitting $TM =  E^s \oplus E^u \oplus E^c $, where $E^c$ is the one-dimensional subspace tangent to the flow, and  constants $\Cnz\geq 1$ and  $\bar{\lambda}  >0$, such that for all $x\in M$
\begin{equation}\label{eq:asplit} 
\begin{array}{lll}
&\|D\phi_t(v)\| \leq  \|v\| \Cnz e^{- \bar{\lambda} t} \quad&\text{ if } t\geq 0,
v \in E^s , \\ 
&\|D\phi_{-t}(v)\| \leq  \|v\| \Cnz e^{-\bar{\lambda} t}  &\text{ if } t\geq 0, v
\in E^u ,  \\
&\Cnz^{-1} \|v\| \leq\|D\phi_{t}(v)\| \leq \Cnz \|v\| &  \text{ if } t\in\bR, v \in E^c.
\end{array}
\end{equation} 
We will denote by $d_u \doteq \dim(E^u) $ and $d_s \doteq \dim(E^s)$. The geodesic flow on manifolds with negative  sectional curvatures
are very special examples of mixing Anosov flows (see \cite{Sinai61}, \cite{Ratner87} and references therein).

In this context Ruelle defined a zeta function by 
\begin{equation}
\zeta_{\Ruelle}(z) =
\prod_{\tau \in \cT_p}    \left( 1- e^{-z \lambda(\tau)} \right)^{-1}, z \in \bC
\end{equation}
where $\cT_p$ denotes the set of prime orbits and $\lambda(\tau)$ denotes the period  of the  closed orbit $\tau$.\footnote{ A periodic orbit $\tau$, of period $\lambda(\tau)$, is a closed curve parametrized by the flow, i.e. $\tau:[0,\lambda(\tau))\to M$ such that $\tau(t)=\phi_t(\tau(0))$. A periodic orbit $\tau$ is prime if it is one-to-one with
its image. The range of $\tau$ is indicated again by $\tau$. If $\tau_p$ is the prime orbit related to $\tau$, then $\mu(\tau)$ is the unique integer such that $\lambda(\tau) = \mu(\tau) \lambda(\tau_p)$.}
Since it is known that the number of periodic orbits grows at most exponentially in the length  \cite{Margulis04}, it is easy to see that the above zeta functions are well defined for $\Re(z)$ large enough.
Also we can relate the Ruelle and Selberg zeta functions by 
\[
\zeta_{\textrm{Ruelle}}(z) =\frac{\zeta_{\textrm{Selberg}}(z+1)}{\zeta_{\textrm{Selberg}}(z)} \;\;, \qquad 
\zeta_{\textrm{Selberg}}(z) = \prod_{i = 0 }^{\infty}
\zeta_{\textrm{Ruelle}}(z + i)^{-1}
\]
when they are both defined. Here we study $\zeta_{\textrm{Ruelle}}$.

It is known, for weak mixing Anosov flows, that $\zeta_{\textrm{Ruelle}}(z)$ is analytic and 
non zero for $\Re(z)\geq \htop $ apart for a single pole at $z=\htop$, where by $\htop$ we mean the topological entropy of the flow
(see \cite{Bowen73}, or \cite[Page 143]{ParryPollicott90} for more details). Also it is long known that on the left of $\htop$ there exists a strip in which $\zeta_{\textrm{Ruelle}}(z)$ is meromorphic (\cite{ParryPollicott90} and references therein). It is interesting to notice that the poles of dynamical zeta functions (called also {\em resonances}) are often computationally accessible and of physical interest (see \cite[Chapter 17]{chaosbook} for a detailed discussion).

In the very special case of analytic Anosov flows with real analytic stable and unstable foliations, 
Ruelle already showed that his zeta function has a meromorphic extension to $\mathbb{C}$.
This result was generalized first by Rugh \cite{rugh96}, for three dimensional manifolds, 
and then by Fried \cite{Fried95}, in arbitrary dimensions, but still assuming an analyticity of the flow.
Here we extend such results to the $\Cs^\infty$ setting. More precisely, for $C^r$ flows we obtain a strip in which  $\zeta_{\textrm{Ruelle}}(z)$ is meromorphic of width unboundedly increasing with $r$. Note that this is consistent  with a previous example of Gallavotti \cite{Gallavotti76}, where  $\zeta_{\textrm{Ruelle}}$  is not meromorphic in the entire complex plane.

An additional knowledge of the location of the zeroes of $\zeta_{\textrm{Ruelle}}$ allows one to gain information on the distribution of the periodic orbits. For example it is known that if a negatively curved Riemannian manifold has dimension 2 \cite{Dolgopyat98a} or the sectional curvatures are  $\frac 14$-pinched \cite{Stoyanov11a}, then the number $N(T)$ of closed orbits of period less than $T$ satisfies $N(T)=\operatorname{li}(e^{\htop T})+O(e^{cT})$ where $ c<\htop$, and $\operatorname{li}(x)= \int_2^x\frac 1{\ln (s)}ds$. Note that either the assumption that $M$ is two dimensional  or that the sectional curvatures are $\frac 14$-pinched imply that the horocycle foliation is $\Cs^1$.
 One might then conjecture that such a foliation regularity is necessary in order to obtain the above estimate of the error term.  However, we show that this is not the case  and, although we cannot prove it in full generality, we conjecture that the above bound holds for all contact Anosov flows. 
As is usual for number theoretic zeta functions, a key ingredient in this analysis is showing that $\zeta_{\textrm{Ruelle}}$ is analytic in a strip to the left of its first pole. This result was stated in (\cite{dolgopyatpollicott98}) for geodesic  flows for which the sectional curvatures are $\frac 14$-pinched, although the proof  there was only sketched and is incomplete.  An earlier paper  (\cite{Pollicott89}) addressing the same question contains a more serious  flaw in the approach.

The general plan of the proof is explained in the next section, here we anticipate a few ideas and give a bit of their history. The basic idea goes back to Ruelle \cite{Ruelle76} and consists of relating the properties of the zeta function to the {\em dynamical determinant} of some appropriate operator. Classically such a program has been implemented by first coding the dynamics via Markov partitions and then defining appropriate operators acting on functions of the resulting subshift. As such a coding is, in general, only H\"older, all the properties connected to the smoothness of the dynamics are annihilated in the process. Yet, the work of Ruelle in the case of expanding maps and the seminal work of Kitaev \cite{Kitaev99} for Anosov diffeomorphisms showed that the smoothness properties of the map are exactly the ones responsible for larger domain of analyticity of the zeta function. The above cited works of Ruelle, Rugh and Fried for analytic flows made this even clearer.

The turning point in  overcoming this problem was \cite{Blank02} where it is shown how to construct Banach spaces in which the action of an Anosov map gives rise to a quasi compact operator (often called {\em Ruelle Transfer operator}) allowing to completely bypass the need of a Markov coding. This  opened the door to the possibility of dispensing with the analyticity hypothesis while still retaining the relevant smoothness properties of the dynamical systems. Since then a considerable amount of work has been done to implement such a goal. In particular, \cite{GouezelLiverani06, Liverani05,  BaladiTsujii00, GouezelLiverani08} and especially \cite{BaladiTsujii08} have clarified the relation between the smoothness of the map and the essential spectral radii of transfer operators as well as connected dynamical determinants to appropriate ``flat traces''\footnote{ Such terminology has also been adopted by the dynamical system community, following the work of Atiyah and Bott (\cite{AB64}, \cite{AB67}), since, as the 
reader will see, it is morally a regularization, or a flattening, of the trace.} of transfer operators (see \cite{Baillif04, BaladiBaillif05, BaladiTsujii08}). On the other hand, \cite{Liverani04, ButterleyLiverani07} illustrated how this approach can be applied also to flows by showing that the resolvent of the generator of the flow is quasi compact on such spaces. More recently some beautiful work, although limited  
to the case of contact flows, allows one to study directly the transfer operator associated to the time one map of the flow \cite{Tsujii10,Tsujii12, TsujiiFaure, FaureSjostrand}.  Note that, for such types of result, some condition on the flow is necessary. Indeed, the quasi compactness of the operator associated to the time one map plus mixing implies exponential mixing,  while there are known examples of Axiom A flows, constructed as piecewise constant ceiling suspensions, that are mixing but enjoy arbitrarily slow rates of decay correlations \cite{Ruelle83,Pollicott84}).

Given such a state of the art, it looked like all the ingredients needed to tackle the present problems were present even though quite a lot work remained to be done. It remained to choose a precise line of attack. Given the complexity of the task, we chose to follow the path of least resistance at the cost of obtaining possibly suboptimal results.
In particular, here we extend the results of \cite{GouezelLiverani06, ButterleyLiverani07} to allow the study of the action of the flow on the space of exterior forms (similarly to the original Ruelle approach for expanding maps \cite{Ruelle76}).   Next, to study the relation between the spectra of such operators and the dynamical determinants we extend to the case of flows a suboptimal, but very efficient, trick introduced in \cite{LiveraniTsujii06} for the case of maps and inspired by the work of Margulis (as explained to Liverani by Dolgopyat). 
It is likely that, at the price of some more work, one could adapt alternative approaches to the present case. For example defining the operator on different Sobolev-like spaces or replacing the tensor trick by a strategy based on kneading determinants, as in Baladi-Tsujii \cite{BaladiTsujii08}. Such alternatives might allow one to obtain sharper estimates in the case of finite differentiability.

Next,  we show that the zeta function has a pole free strip at the left of the topological entropy and we obtain bounds for the growth of the zeta function on such a strip  provided a Dolgopyat type estimate for the action of the flow on $d_s$-external forms holds true, $d_s$ being the dimension of the strong stable manifold. To do so we use the simple strategy put forward in \cite{Liverani05}. Finally, to establish the Dolgopyat estimate in the present context we follow the strategy developed in \cite{Liverani95, Liverani04, BaladiLiverani11} for the 
action on functions. Note that to look at $d_s$-forms corresponds, in the old Markov based strategy, to studying the statistical properties of the flow with respect to the measure of maximal entropy. The extension of the geometric part of the original Dolgopyat argument to this situation presents two difficulties. One, well known, related to the lack of regularity of the foliations that is solved, following \cite{Liverani04}, by restricting the study to the contact flows. The second (here treated in Lemma \ref{sublem:doest-2}), was unexpected (at least to us) and is related to having (morally) the measure of maximal entropy, rather than Lebesgue, as a reference measure. This we can solve only partially, hence the presence of a bunching (or pinching) condition in our results.

The structure of the paper is as follows.  In section \ref{sec:main} we present  the  statements of  the main results in this paper. We also  explain the strategy of  the proofs assuming several Lemmata and constructions detailed in later sections. 
In section \ref{banachspace} we construct the spaces on which our operators will act.   
Sections \ref{secresolvent},   \ref{sec:flat}  and \ref{sec:splitting}  contain estimates for transfer operators and their ``flat traces''.  
In sections \ref{sec:contact} and  \ref{sec:zeta-growth} we restrict ourself to the case of contact flows.
In particular, in  section \ref{sec:contact} we exclude the existence of zeroes in a vertical strip to  the left of the topological entropy by the means of a Dolgopyat like estimate. In section \ref{sec:zeta-growth} we obtain a bound on the growth of $\zeta_{\textrm{Ruelle}}$ in this  strip.

In Appendix  \ref{appendiceLie} we collect together, for the reader's convenience, several facts from differential geometry,  while in Appendix \ref{app:orientable} we discuss the orientability of the stable distribution. In Appendix \ref{app:topent} we relate the topological entropy and the volume growth of manifolds. In Appendix \ref{subsec:average} we detail some facts about mollificators acting on the Banach spaces of interest. Finally, in Appendix \ref{app:holo-est},  we recall some necessary facts concerning holonomies. 

\footnotesize
{
\tableofcontents
}
\normalsize

\section{Statement of Results} \label{sec:main}
\subsection{Some notation}\label{secsec:notation}
 We use $B_{d}\! \left( x, r \right)$ to designate the open $d$-dimensional ball with  center $x$ and radius $r$. 

 We will use $C_{\#}$ to represent a generic constant (depending only on the manifold, the flow, the choice of the charts and the partition of unity made in Section \ref{banachspace}) which could change from one occurrence to the next, even within the same equation.  We will write $C_{a_1 , \ldots , a_k }$ for generic constants depending on the parameters $a_1 ,\ldots , a_k$,  which could still change at every occurrence. Finally, numbered constants $\Cnz, \Cnd , \Cns, \dots$ are constants with a fixed value thruought the paper.
 
\subsection{Theorems and Proofs}
Our first result applies to all Anosov flows $\phi_t$ on a connected, compact and orientable $\Cs^\infty$ Riemannian manifold $M$.\footnote{ One could easily extend the result to $\Cs^r$ manifolds, but we avoid it to ease notation.}

\begin{thm}\label{main1}
For any $\Cs^r$ Anosov flow $\phi_t$ with $r > 2$,
$\zeta_{\textrm{Ruelle}}(z)$  is meromorphic in the region 
\[
\Re(z) >  h_{\mathrm{top}}(\phi_1) - \frac{\olambda}{2} 
\left\lfloor\frac {r-1} 2 \right\rfloor 
\]
where $\olambda$, is determined by the Anosov splitting, and $ \lfloor x\rfloor $ denotes the integer part of  $x$.
Moreover,  $\zeta_{\textrm{Ruelle}}(z)$ is analytic  for $\Re(z)> h_{\textrm{top}}(\phi_1)$ and non zero for $\Re(z)> \max\{ h_{\mathrm{top}}(\phi_1) - \frac{\olambda}{2}  \left\lfloor\frac {r-1} 2 \right\rfloor,  h_{\mathrm{top}}(\phi_1) -\olambda\}$. If the flow is topologically mixing then $\zeta_{\textrm{Ruelle}}(z)$ has no  poles on the line $\{h_{\textrm{top}}(\phi_1)+i b\}_{b\in\bR}$ apart from a single simple pole at $z=h_{\textrm{top}}(\phi_1)$. 
\end{thm}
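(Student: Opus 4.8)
\emph{Proof idea.} The strategy is to write $\zeta_{\textrm{Ruelle}}$ as an alternating product of \emph{dynamical determinants} and to analyse each of them through the spectral theory of a transfer operator acting on an anisotropic Banach space of currents. I would begin with the algebraic identity $\det(\Id-A)=\sum_{k}(-1)^{k}\tr\Lambda^{k}A$ applied to the linearised return map $P_\tau=D\phi_{-\lambda(\tau)}|_{E^s\oplus E^u}$ of each periodic orbit $\tau$; splitting off its stable and unstable blocks and keeping track of the sign of the unstable Jacobian --- this is where the orientability of the stable distribution enters --- one obtains, for $\Re(z)$ large,
\[
\log\zeta_{\textrm{Ruelle}}(z)=\sum_{q}(-1)^{\sigma(q)}\log d_{q}(z),\qquad
d_{q}(z)=\exp\Big(-\sum_{\tau}\frac{e^{-z\lambda(\tau)}}{\mu(\tau)}\,
\frac{\tr\big(\Lambda^{q}P_\tau\big)}{\big|\det(\Id-P_\tau)\big|}\Big),
\]
the index $q$ ranging over a set fixed by $d_s$ and $d_u$, with the signs $\sigma(q)$ arranged so that the factor $d_{d_s}$ --- the one attached to $d_s$-currents, which morally carries the measure of maximal entropy --- occurs with negative exponent. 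Each series defining $\log d_q$ converges for $\Re(z)$ large, thanks to the at-most-exponential growth of the number of periodic orbits; in the same way the Euler product $\prod_\tau(1-e^{-z\lambda(\tau)})^{-1}$ converges to a non-vanishing holomorphic function for $\Re(z)>\htop$, which is exactly the second assertion of the theorem (this is the classical statement of Bowen, which the operator approach recovers directly).

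The central point is the meromorphic continuation of each $d_q$. I would construct the spaces $\cB^{p,q,\ell}$ of anisotropic $q$-currents, prove that the action of the flow on $q$-currents defines a strongly continuous semigroup $\cL^{(q)}_t$ on $\cB^{p,q,\ell}$ whose generator $X_q$ has quasi-compact resolvent --- this extends \cite{GouezelLiverani06,ButterleyLiverani07} from functions to exterior forms --- together with the essential-spectral-radius estimate $\rho_{\mathrm{ess}}(\cL^{(q)}_t)\le C_{\#}\,e^{(\htop-\frac{\olambda}{2}\lfloor\frac{r-1}{2}\rfloor)t}$, which combines a bound $\le\htop$ on the exponential growth rate of the relevant (stable) volumes --- via the identification of the topological entropy with the volume growth, Appendix \ref{app:topent} --- with the gain in the exponent produced by the $\Cs^r$ Lasota--Yorke inequalities with the correct bookkeeping of the wedge degree. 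Consequently, in the strip $\Re(z)>\htop-\frac{\olambda}{2}\lfloor\frac{r-1}{2}\rfloor$ the spectrum of $X_q$ consists of isolated eigenvalues of finite multiplicity, the Pollicott--Ruelle resonances. I would then prove the \emph{flat-trace identity}: the flat traces $\trf(\cL^{(q)}_t)$ --- and, more usefully, $\trf\big((z-X_q)^{-n}\big)$ --- coincide with the corresponding sums over periodic orbits, obtained by localising the Schwartz kernels of the mollified operators near each periodic orbit and exploiting transversality and hyperbolicity, in the spirit of \cite{BaladiTsujii08}. Combining this with the tensor trick of \cite{LiveraniTsujii06}, adapted to the flow, one produces a function holomorphic in the strip whose zeros, counted with multiplicity, are exactly the resonances of $X_q$ and which agrees with $d_q(z)$ where the latter is already defined; hence $d_q$ extends holomorphically to the strip, and so $\zeta_{\textrm{Ruelle}}=\prod_q d_q^{(-1)^{\sigma(q)}}$ is meromorphic there --- the first assertion.

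To conclude I would locate the leading resonance. The orbit weights $\tr(\Lambda^{d_s}P_\tau)/|\det(\Id-P_\tau)|$ are uniformly bounded (the dominant contributions to numerator and denominator are both $\det(P_\tau|_{E^s})$), while for $q\neq d_s$ the same weight is smaller by a factor $\le C_{\#}\,e^{-\olambda|q-d_s|\lambda(\tau)}$; hence the series for $\log d_{d_s}$ converges --- so that $d_{d_s}$ is holomorphic and non-vanishing --- for $\Re(z)>\htop$, and the series for $\log d_q$, $q\neq d_s$, converges for $\Re(z)>\htop-\olambda$. Since the $d_q$ with $q\neq d_s$ enter $\zeta_{\textrm{Ruelle}}$ in the numerator, while $d_{d_s}$ has no pole anywhere in its strip of holomorphy, $\zeta_{\textrm{Ruelle}}$ has no zero in $\Re(z)>\max\{\htop-\frac{\olambda}{2}\lfloor\frac{r-1}{2}\rfloor,\,\htop-\olambda\}$, which is the third assertion. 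Finally, if $\phi_t$ is topologically mixing then the measure of maximal entropy is mixing, and this forces the peripheral spectrum of $\cL^{(d_s)}_t$ on the circle $\{|\mu|=e^{\htop t}\}$ to reduce to the single, simple eigenvalue $e^{\htop t}$: any further peripheral eigen-current would yield, by pairing with the maximal-entropy eigen-current of the dual operator, a nonconstant eigenfunction of the Koopman operator, contradicting mixing. Equivalently, $X_{d_s}$ has on the line $\Re(z)=\htop$ exactly one eigenvalue, $z=\htop$, and it is simple; thus $d_{d_s}$ has there a single simple zero at $\htop$, and since $d_{d_s}$ enters with negative exponent while the numerator factors are non-vanishing near that line, $\zeta_{\textrm{Ruelle}}$ has exactly one pole, simple, at the topological entropy.

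I expect the principal obstacle to be the flat-trace identity for the action on currents: defining $\trf$ rigorously on $\cB^{p,q,\ell}$, verifying that the distributional kernels of the regularised operators admit a well-defined restriction to the diagonal, and matching each periodic-orbit contribution with the exact Poincar\'e-map weight. This is the point at which the hyperbolic geometry and the functional-analytic construction have to fit together, and where the tensor trick becomes indispensable --- precisely because $\cB^{p,q,\ell}$ is not nuclear and the relevant resolvents are not trace class. A secondary difficulty is the sharp essential-spectral-radius bound on $q$-currents, which pins down the width $\frac{\olambda}{2}\lfloor\frac{r-1}{2}\rfloor$ of the strip and requires the $\Cs^r$ Lasota--Yorke estimates to be carried out carefully in the presence of the exterior-form grading.
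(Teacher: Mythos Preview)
Your proposal is essentially the paper's own approach: product formula \eqref{eq:prod}, anisotropic spaces of currents, Lasota--Yorke inequalities for the resolvent, flat traces, the tensor trick of \cite{LiveraniTsujii06}, and Proposition~\ref{lem:Nussbaum} for the peripheral spectrum in the mixing case. Two imprecisions are worth correcting before you write this up.

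First, the factor $\tfrac12$ in the strip width does \emph{not} come from the Lasota--Yorke estimate. The Lasota--Yorke inequality (Lemma~\ref{lem:quasicompactness}) gives the full gain $\sigma_{p,q}=\min\{p,q\}\lambda$, so the essential spectrum of $X^{(\ell)}$ lies in $\Re(z)\le \sigma_\ell-\sigma_{p,q}$ with $\sigma_\ell=\htop-|d_s-\ell|\lambda$; optimising $p=q$ with $p+q<r-1$ gives roughly $\lfloor\tfrac{r-1}{2}\rfloor\lambda$. The halving occurs later, in the comparison between the flat trace and the genuine trace via the tensorial operator on $M^2$ (Lemma~\ref{lem:tensor-trace} and equation~\eqref{eq:hoihoihoi}); see Remark~2.7. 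So your displayed bound for $\rho_{\mathrm{ess}}(\cL^{(q)}_t)$ is both missing the $|d_s-q|$ term and prematurely inserting the $\tfrac12$.

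Second, it is not true that all $d_q$ with $q\neq d_s$ enter $\zeta_{\Ruelle}$ in the numerator: the exponent in \eqref{eq:prod} is $(-1)^{\ell+d_s+1}$, so the parity alternates. Your non-vanishing argument survives anyway, since the numerator factors are exactly those with $|\ell-d_s|$ odd, hence $\ge 1$, and Proposition~\ref{main2} shows each $\mathfrak D_\ell$ is non-zero for $\Re(z)>\htop-\olambda|d_s-\ell|$; but the sentence as written is false.
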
 

\begin{cor}
For any $\Cs^\infty$ Anosov flow the zeta function $\zeta_{\textrm{Ruelle}}(z)$  is
meromorphic in the entire complex plane.\footnote{ This provides an answer to an old question of Smale: ``Does $\zeta_{\textrm{Selberg}}(z)$  have nice properties for any general class of flows?''  cf. pages 802-803, of \cite{Smale67}.  Smale  also specifically  asked if for suspension flows over Anosov diffeomorphisms close to constant height suspensions  the zeta function $\zeta_{\textrm{Selberg}}(z)$ has a meromorphic extension to all of $\mathbb  C$.  The above corollary answers  these questions in the affirmative for $C^\infty$ Anosov flows, despite Smale's  comment ``I must admit that a positive answer would be a little shocking''.}
\end{cor}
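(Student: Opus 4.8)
The plan is to deduce the corollary directly from Theorem \ref{main1}, using the single observation that a $\Cs^\infty$ flow is in particular a $\Cs^r$ flow for every finite $r > 2$, while the quantity $\olambda$ occurring in the theorem is a datum of the hyperbolic splitting \eqref{eq:asplit} of the given flow and in no way depends on the smoothness class $r$ in which one chooses to work. The same is true of $\htop$, which is a topological invariant.

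First I would fix an arbitrary point $z_0 \in \bC$. Applying Theorem \ref{main1} to the flow viewed as a $\Cs^r$ flow, one gets that $\zeta_{\textrm{Ruelle}}$ admits a meromorphic extension to the half-plane $\Re(z) > \htop - \frac{\olambda}{2}\left\lfloor\frac{r-1}{2}\right\rfloor$. Since $\olambda > 0$ is fixed, the width $\frac{\olambda}{2}\left\lfloor\frac{r-1}{2}\right\rfloor$ increases without bound as $r \to \infty$, so one may pick $r = r(z_0)$ large enough that $\Re(z_0) > \htop - \frac{\olambda}{2}\left\lfloor\frac{r-1}{2}\right\rfloor$; hence $\zeta_{\textrm{Ruelle}}$ is meromorphic on an open neighbourhood of $z_0$. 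As $z_0$ was arbitrary, this already shows meromorphy everywhere, provided the various local extensions are compatible. That compatibility is immediate: $\zeta_{\textrm{Ruelle}}$ is, to start with, a single function holomorphic on a right half-plane $\Re(z) > a$ (where the defining product converges), and by uniqueness of meromorphic continuation any two of the extensions furnished by Theorem \ref{main1} coincide on the connected open set where both are defined. They therefore glue to one function, meromorphic on $\bigcup_{r>2}\{\,\Re(z) > \htop - \frac{\olambda}{2}\left\lfloor\frac{r-1}{2}\right\rfloor\,\} = \bC$, which is the assertion.

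There is essentially no obstacle here, as all the content resides in Theorem \ref{main1}; the only point deserving a word of care is precisely that $\olambda$ be independent of $r$, so that letting $r \to \infty$ genuinely exhausts the plane rather than stabilising at some finite strip — and this is guaranteed by the definition of $\olambda$ through the Anosov splitting of the fixed flow.
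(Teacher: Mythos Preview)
Your argument is correct and is precisely the intended one: the paper states the corollary immediately after Theorem~\ref{main1} without proof, since it follows at once by letting $r\to\infty$ and noting that $\olambda$ and $\htop$ are fixed invariants of the flow. Your attention to the compatibility of the successive meromorphic extensions via uniqueness of analytic continuation is the only point requiring comment, and you have handled it correctly.
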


Note that if the flow is not topologically  mixing then the flow can be reduced to a constant ceiling suspension and  hence there exists $b>0$ such that $\zeta_{\Ruelle}(z+ib) = \zeta_{\Ruelle}(z)$  (for more details see \cite{ParryPollicott83}).

\begin{cor} $\zeta_{\textrm{Ruelle}}(z)$ and $\zeta_{\textrm{Selberg}}(z)$ are meromorphic
in the entire complex plane for smooth geodesic flows on any connected compact orientable Riemannian manifold  with 
variable strictly negative sectional curvatures.
Moreover, the zeta functions $\zeta_{\Ruelle}(z)$ and $\zeta_{Selberg}(z)$ have no zeroes or poles on the line $\{\htop +ib\}_{b\in \mathbb R}$, except at $z=\htop $ where both $\zeta_{\Ruelle}(z)^{-1}$  and $\zeta_{Selberg}(z)$ 
have  a simple zero.
\end{cor}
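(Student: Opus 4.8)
The plan is to obtain both assertions by applying Theorem \ref{main1} to the geodesic flow and then translating, via the product relations between $\zeta_{\Ruelle}$ and $\zeta_{\textrm{Selberg}}$ recalled in the introduction, what Theorem \ref{main1} gives for $\zeta_{\Ruelle}$ into statements for $\zeta_{\textrm{Selberg}}$. First I would check that the geodesic flow $\phi_t$ on the unit tangent bundle $SM$ of a connected compact orientable Riemannian manifold $M$ with strictly negative sectional curvatures satisfies the hypotheses of Theorem \ref{main1} with $r=\infty$: $SM$ is connected (its fibres $S^{\dim M-1}$ are), compact, $\Cs^\infty$ and orientable (it carries the canonical contact volume form $\alpha\wedge(d\alpha)^{\dim M-1}$), and $\phi_t$ is a $\Cs^\infty$ Anosov flow, classically known to be mixing for the Liouville measure (\cite{Sinai61, Ratner87}), hence --- the Liouville measure having full support --- topologically mixing. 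Theorem \ref{main1} then gives that $\zeta_{\Ruelle}$ is meromorphic on $\bC$ and analytic for $\Re(z)>\htop$; since for $r=\infty$ the term $\frac{\olambda}{2}\lfloor\frac{r-1}{2}\rfloor$ is unbounded, the maximum there equals $\htop-\olambda$, so $\zeta_{\Ruelle}$ is non-zero for $\Re(z)>\htop-\olambda$; and, by topological mixing, on the line $\{\htop+ib\}_{b\in\bR}$ the only pole of $\zeta_{\Ruelle}$ is a simple pole at $z=\htop$. As $\htop>\htop-\olambda$, this immediately yields that $\zeta_{\Ruelle}(z)^{-1}$ is analytic and non-vanishing on that line apart from a simple zero at $z=\htop$.

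To pass to $\zeta_{\textrm{Selberg}}$ I would use $\zeta_{\textrm{Selberg}}(z)=\prod_{i=0}^{\infty}\zeta_{\Ruelle}(z+i)^{-1}$. Given $R>0$, choose $N\in\bN$ with $N>R+\htop$. For $i\ge N$ and $|z|\le R$ one has $\Re(z+i)\ge i-R>\htop$, so $\zeta_{\Ruelle}(z+i)$ is analytic and non-vanishing; and by the Euler product defining $\zeta_{\Ruelle}$, together with the at-most-exponential growth of the number of periodic orbits \cite{Margulis04}, $\zeta_{\Ruelle}(w)\to1$ exponentially fast as $\Re(w)\to+\infty$. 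Hence $\sum_{i\ge N}\bigl|\log\zeta_{\Ruelle}(z+i)\bigr|$ converges uniformly on $\{|z|\le R\}$, so $\prod_{i\ge N}\zeta_{\Ruelle}(z+i)^{-1}$ is analytic and non-vanishing there, while the finite product $\prod_{i=0}^{N-1}\zeta_{\Ruelle}(z+i)^{-1}$ is meromorphic. Letting $R\to\infty$ shows $\zeta_{\textrm{Selberg}}$ is meromorphic on all of $\bC$.

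It remains to determine the zeros and poles of $\zeta_{\textrm{Selberg}}$ on $\{\htop+ib\}_{b\in\bR}$. Writing $\zeta_{\textrm{Selberg}}(z)=\zeta_{\Ruelle}(z)^{-1}\cdot\prod_{i\ge1}\zeta_{\Ruelle}(z+i)^{-1}$, we note that for $\Re(z)=\htop$ each factor with $i\ge1$ has argument in $\{\Re(w)>\htop\}$ and is therefore analytic and non-vanishing, and the product over $i\ge1$ converges locally uniformly, by the estimate above, to an analytic non-vanishing function in a neighbourhood of that line. Hence the zero/pole structure of $\zeta_{\textrm{Selberg}}$ on the line is exactly that of $\zeta_{\Ruelle}(z)^{-1}$: a single simple zero at $z=\htop$, and nothing else. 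The only point that needs genuine care is the uniform control of the infinite product representing $\zeta_{\textrm{Selberg}}$; everything else is a straightforward transcription of Theorem \ref{main1}.
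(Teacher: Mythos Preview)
Your argument is correct and follows exactly the approach the paper intends: the corollary is stated in the paper without proof, as an immediate consequence of Theorem \ref{main1} (applied with $r=\infty$ to the geodesic flow, which is $\Cs^\infty$, Anosov, and topologically mixing) together with the product relation $\zeta_{\textrm{Selberg}}(z)=\prod_{i\ge0}\zeta_{\Ruelle}(z+i)^{-1}$. Your verification of the hypotheses and your handling of the convergence of the infinite product (via the minimum period $\lambda_0>0$ and the exponential bound on orbit growth) are the natural details one would fill in, and your reduction of the zero/pole structure of $\zeta_{\textrm{Selberg}}$ on the line $\Re(z)=\htop$ to that of the single factor $\zeta_{\Ruelle}(z)^{-1}$ is exactly right.
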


Next, we specialize to {\em contact} Anosov flows. Let $\lambda_+\geq 0$ such that $\|D\phi_{-t}\|_\infty \leq \Cnz e^{\lambda_+ t}$ for all $t\geq 0$.
\begin{thm}\label{thm:main2} For any $\Cs^r, r > 2$, contact flow with $\frac {\olambda}{\lambda_+}> \frac 13$ there exists $\tau_*>0$ such that the Ruelle zeta function is analytic in $\{z\in\bC\;:\;\Re(z)\geq \htop-\tau_*\}$ apart from a simple pole at $z=\htop$.
\end{thm}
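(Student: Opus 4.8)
\emph{Proof plan for Theorem \ref{thm:main2}.} The approach is to assemble the transfer-operator picture underlying Theorem \ref{main1} with a Dolgopyat-type estimate special to contact flows. Recall that on the anisotropic spaces of currents one has, for each transverse degree $0\le q\le d-1$, a transfer operator $\cL_t^{(q)}$ with generator $X_q$ describing the action of $\phi_t$ on sections of the relevant bundle of exterior $q$-forms; that $\zeta_\Ruelle(z)$ factors as an alternating product $\prod_q d_q(z)^{\pm1}$ of the associated dynamical determinants; and that $d_q(z)$, built from flat traces, vanishes, in its domain of holomorphy, exactly at the Ruelle resonances, i.e.\ at the eigenvalues of $X_q$. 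By Theorem \ref{main1} the only singularity of $\zeta_\Ruelle$ on $\{\Re z\ge\htop\}$ is the simple pole at $z=\htop$; in the factorization it is carried by the single factor attached to the $d_s$-forms — the ``maximal entropy'' operator $\cL^{(d_s)}_t$ — which has a simple zero of $d_{d_s}$ at $z=\htop$ and no zero elsewhere on $\{\Re z\ge\htop\}$, while the estimates of Section \ref{sec:splitting} put all the resonances of the remaining $X_q$ strictly inside $\{\Re z<\htop-\delta_0\}$ for some $\delta_0>0$ (for $q=0$, for instance, the leading resonance is $0<\htop$). Hence the theorem reduces to showing that, under the bunching hypothesis $\tfrac{\olambda}{\lambda_+}>\tfrac13$, the factor $d_{d_s}$ extends holomorphically to a strip $\{\Re z>\htop-\tau_*\}$ with $0<\tau_*\le\delta_0$ and has no zero there except the simple one at $z=\htop$; the conclusion then follows, with $\zeta_\Ruelle$ meromorphic on that strip and its only singularity the simple pole inherited from the $d_{d_s}^{-1}$ factor.

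To prove the reduced statement I would split the strip by the size of $|\Im z|$, in the spirit of the strategy of \cite{Liverani05}. For $|\Im z|\ge E_0$ with $E_0$ a large constant, one invokes the Dolgopyat-type estimate for the action of the flow on $d_s$-forms — this is the content of Section \ref{sec:contact}, and it is precisely there that the contact structure (needed, following \cite{Liverani04}, to recover enough regularity of the strong stable/unstable foliations) and the bunching condition (through Lemma \ref{sublem:doest-2}) are used. It provides $E_0,\tau_*>0$ and a constant such that the resolvent $R_{d_s}(z)=(zI-X_{d_s})^{-1}$ is holomorphic on $\{\Re z\ge\htop-\tau_*,\ |\Im z|\ge E_0\}$ with norm growing at most polynomially in $|\Im z|$; feeding this into the flat-trace formula relating $\partial_z\log d_{d_s}(z)$ to $\trf R_{d_s}(z)$ (Section \ref{sec:flat}) shows that $d_{d_s}$ is holomorphic and zero-free on that region. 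For $|\Im z|\le E_0$ the meromorphic continuation of $R_{d_s}$ to a strip left of $\htop$ — again a consequence, in the contact case, of the estimates of Sections \ref{secresolvent}--\ref{sec:contact} — has only poles of finite rank (quasi-compactness), so its poles, that is the zeros of $d_{d_s}$, are isolated with only finitely many in the box $\{\htop-\tau_*\le\Re z\le\htop+1,\ |\Im z|\le E_0\}$; since topological mixing of the contact flow forces $\htop$ to be the sole, and simple, resonance of $X_{d_s}$ on $\{\Re z=\htop\}$, shrinking $\tau_*$ clears this box of any zero of $d_{d_s}$ other than $z=\htop$. Putting the two regions together gives the reduced statement.

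The crux — and the part I would expect to absorb essentially all the difficulty — is the Dolgopyat estimate on $d_s$-forms quoted in the first step, proved in Section \ref{sec:contact}. Running the oscillatory-cancellation scheme with (morally) the measure of maximal entropy in place of the SRB/Lebesgue measure forces two departures from the classical argument: the non-integrability mechanism requires enough smoothness of the invariant foliations, which is why one restricts to contact flows; and, less expectedly, the change of reference measure inserts stable-Jacobian weights into the relevant oscillatory integrals whose variation one can control only when the hyperbolicity is sufficiently bunched, which is exactly what produces the restriction $\tfrac{\olambda}{\lambda_+}>\tfrac13$. A minor but real bookkeeping point in the plan above is the uniformity of the single $\tau_*$ in $\Im z$: one fixes $E_0$ from the Dolgopyat estimate first, extracts $\tau_*$ from the finite-box argument on $\{|\Im z|\le E_0\}$, and then checks that the high-frequency region can be taken with that same $\tau_*$, shrinking it if necessary.
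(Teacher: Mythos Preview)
Your proposal is correct and follows essentially the same architecture as the paper's proof: reduce via the product formula \eqref{eq:prod} and Proposition~\ref{main2} to the single factor $\mathfrak{D}_{d_s}$, handle $|\Im z|$ large by the Dolgopyat estimate (Lemma~\ref{lem:dolgo-est}), and handle $|\Im z|$ bounded via quasi-compactness and mixing (Proposition~\ref{lem:Nussbaum}). The only point the paper makes explicit that you gloss over is the passage from the Dolgopyat bound on $R^{(d_s)}(z)^n$ for $\Re z>\htop$ to holomorphy of $R^{(d_s)}$ on $\{\Re z\ge\htop-\tau_*,\ |\Im z|\ge E_0\}$: this is done via the Neumann series $R^{(d_s)}(z-a)=\sum_n a^n R^{(d_s)}(z)^n$, and once the resolvent extends there the absence of zeros of $\mathfrak{D}_{d_s}$ follows directly from the eigenvalue correspondence \eqref{eq:X-ruelle} rather than through a flat-trace argument.
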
 
\begin{rmk} Note that the bunching condition $\frac {2\olambda}{\lambda_+}> \frac 23$ implies that the invariant foliations are at least $\frac 23$-H\"older continuous. Also remember that an $a$-pinched geodesic flow is $(2\sqrt{a} + \ve)$-bunched, that is $ a \leq  \left( \frac{\olambda}{\lambda_+} \right)^2 + \ve $, see \cite{Has1} for more details.
\end{rmk}
The above facts have several important implications. We begin with some low-hanging fruits. The proof will be obvious once the reader goes over the construction explained later in this section and remembers that the dynamical determinants for $0$-forms and $d$-forms have their first zeros at $\Re(z) = 0$ (since they are exactly the dynamical determinants of the usual Ruelle transfer operator). 

\begin{cor} For a volume preserving three dimensional Anosov flow we have that the zeta function $\zeta_{\Ruelle}(z)$ is meromorphic in $\bC$ and, moreover,
\begin{itemize} 
\item $\zeta_{\Ruelle}(z)$  is analytic for $ \Re(z) \geq \htop - \tau_*$, except for a pole at $\htop$; 
\item  $\zeta_{\Ruelle}(z) $ is non-zero for $\Re(z) > 0$.
\end{itemize}
\end{cor}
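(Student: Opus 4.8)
The plan is to read this Corollary as a direct specialization of Theorems~\ref{main1} and~\ref{thm:main2}, together with the structural remarks made just above its statement. First I would observe that a smooth volume-preserving (equivalently, by Anosov's alternative in dimension three, transitive) Anosov flow on a compact $3$-manifold is, up to the usual normalization, a contact flow: the invariant volume together with the $2$-dimensional center-unstable (or center-stable) structure produces a smooth contact form whose Reeb vector field generates the flow. Hence the machinery of Theorem~\ref{thm:main2} applies, provided the bunching hypothesis $\frac{\olambda}{\lambda_+}>\frac13$ is met. In dimension three one has $d_s=d_u=1$, so the stable and unstable exponents coincide (up to the constant $\Cnz$) with $\olambda$ and $\lambda_+$ respectively; I would invoke the fact that for a contact Anosov flow in dimension three the symplectic/contact structure forces $\lambda_+$ and $\olambda$ to be comparable — more precisely, the $2$-form $d\alpha$ pairs $E^s$ with $E^u$, so the contraction rate on one equals the expansion rate on the other, giving $\frac{\olambda}{\lambda_+}=1>\frac13$ automatically. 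This removes the bunching obstruction entirely in this low-dimensional case, which is exactly why the statement is phrased without any pinching hypothesis.

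Next I would invoke Theorem~\ref{main1} to get that $\zeta_{\Ruelle}$ is meromorphic on all of $\bC$: indeed a $\Cs^\infty$ Anosov flow is covered by the Corollary following Theorem~\ref{main1}. Then Theorem~\ref{thm:main2} supplies the constant $\tau_*>0$ and the analyticity of $\zeta_{\Ruelle}$ on $\{\Re(z)\ge\htop-\tau_*\}$ away from the simple pole at $\htop$; the topological-mixing clause of Theorem~\ref{main1} (automatic here, since a volume-preserving flow in dimension three is either mixing or a suspension by a constant, and in the latter degenerate case the statement is adjusted in the obvious way) guarantees there are no other poles on the critical line. This establishes the first bullet.

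For the second bullet — non-vanishing of $\zeta_{\Ruelle}(z)$ for $\Re(z)>0$ — I would use the factorization of $\zeta_{\Ruelle}$ into an alternating product of dynamical determinants $d_{p}(z)$ of the transfer operator acting on $p$-forms, $p=0,\dots,d$, as set up in Section~\ref{sec:main}. In dimension $d=3$ the zeros of $\zeta_{\Ruelle}$ come from $d_0$ and $d_3$ appearing with one sign and $d_1$, $d_2$ with the other; as the paragraph preceding the Corollary recalls, $d_0$ and $d_3$ are precisely the dynamical determinants of the ordinary Ruelle transfer operator, whose first zero sits at $\Re(z)=0$ (the top of the spectrum corresponding to the SRB/volume measure, which here is the invariant volume), so they contribute no zeros in $\Re(z)>0$. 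The determinants $d_1$ and $d_2$ are the ones that could create zeros; I would argue that since they enter $\zeta_{\Ruelle}$ only through the combination dictated by the form degree, and since in $\Re(z)>0$ the operators on $1$- and $2$-forms have spectral radius strictly controlled by the same exponential rates, the would-be zeros are pushed to $\Re(z)\le 0$. The cleanest route is: the contact structure identifies the action on $2$-forms with the action on $1$-forms twisted by the (trivial, up to the flow direction) top form, and a Poincaré-type duality for the flat traces then forces the relevant zero-free region.

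\medskip
\noindent\textbf{Main obstacle.} The delicate point is the second bullet: pinning down exactly which dynamical determinant among $d_1,d_2$ controls the first zero of $\zeta_{\Ruelle}$ in $\Re(z)>0$ and showing it cannot vanish there. The authors flag that this ``will be obvious once the reader goes over the construction,'' meaning it hinges on correctly tracking the spectral interpretation of each $d_p(z)$ — in particular identifying the leading eigenvalue on $1$-forms (resonances associated to the flow-invariant contact form, hence located at $\Re(z)=0$) — and on the sign bookkeeping in the alternating product. Getting the bunching-free reduction to a contact flow rigorous in the volume-preserving $3$-dimensional case, while standard, also needs the explicit construction of the contact form from the invariant volume, and the verification that $\olambda=\lambda_+$ there so that Theorem~\ref{thm:main2} applies with no pinching assumption at all.
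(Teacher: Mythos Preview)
Your approach to the first bullet is essentially the paper's: once the flow is contact (and in three dimensions the contact pairing forces $\olambda=\lambda_+$, so the bunching condition of Theorem~\ref{thm:main2} is vacuous), the pole-free strip comes directly from that theorem. The caveat about non-contact volume-preserving flows (constant-roof suspensions) is real; the paper is silent on it and one reads the corollary as implicitly excluding that degenerate case.

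Your argument for the second bullet, however, has a bookkeeping error that sends you down an unnecessary path. In dimension $d=3$ the product \eqref{eq:prod} runs over $\ell=0,1,2$ only; there is no $\mathfrak{D}_3$. With $d_s=1$ the exponents $(-1)^{\ell+d_s+1}$ are $+1,-1,+1$, so
\[
\zeta_{\Ruelle}(z)=\frac{\mathfrak{D}_0(z)\,\mathfrak{D}_2(z)}{\mathfrak{D}_1(z)}.
\]
Thus the zeros of $\zeta_{\Ruelle}$ come \emph{only} from $\mathfrak{D}_0$ and $\mathfrak{D}_2$, while $\mathfrak{D}_1$ produces poles. But $\ell=0$ and $\ell=2=d-1$ are precisely the extremal degrees: by Remark~\ref{rmk:oldbpq} (and the sentence the paper places just before the corollary) the transfer operators on $0$-forms and on $(d-1)$-forms are the Koopman operator and the usual Ruelle transfer operator on densities, respectively. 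For a volume-preserving flow both have leading eigenvalue at $z=0$, so $\mathfrak{D}_0$ and $\mathfrak{D}_2$ are nonvanishing for $\Re(z)>0$. That is the whole argument---no Poincar\'e-type duality between $1$- and $2$-forms is needed, and your worry that ``$d_1$ and $d_2$ are the ones that could create zeros'' is misplaced: $\mathfrak{D}_2$ is already one of the good extremal determinants, and $\mathfrak{D}_1$ sits in the denominator.
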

Theorem  \ref{thm:main2} has the following simple  consequence for  the rate of mixing with respect to the measure of maximal entropy.

\begin{cor} The  geodesic flow $\phi_t: T_1M \to T_1M$ for  a compact manifold  $M$ with better than $\frac{1}{9}$-pinched negative sectional curvatures (or more generally any  contact Anosov flow satisfying the hypothesis of the  Theorem  \ref{thm:main2}) is  exponentially mixing with respect to the Bowen-Margulis measure $\mu$, i.e., there exists $\alpha > 0$ such that 
for $f,g  \in C^\infty (T_1M)$ there exists $C > 0$ for which the correlation function
$$\rho(t) = \int f\circ \phi_t  g d\mu  - \int  f d\mu  \int  g d\mu$$
 satisfies
$|\rho(t)| \leq C_\#e^{-\alpha |t|}$, for all $t \in \mathbb R$.
\end{cor}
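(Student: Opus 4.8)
The plan is to derive the exponential decay of the correlation $\rho(t)$ directly from the spectral information that is established in the course of proving Theorem~\ref{thm:main2}, the point being that what is really produced there is a \emph{resonance--free strip for the transfer operator acting on $d_s$--currents}, and that the leading eigenstate of this operator is (a representative of) the Bowen--Margulis measure. Concretely, on the Banach space $\cB$ of anisotropic currents built in Section~\ref{banachspace} the transfer operator $\cL_t$ acting on $d_s$--external forms is quasi--compact, its spectral radius is $e^{\htop t}$, and — this being exactly the content of the Dolgopyat estimate for the $d_s$--form action under the bunching hypothesis $\frac{\olambda}{\lambda_+}>\frac13$, together with topological mixing (automatic for geodesic flows in negative curvature) to exclude other peripheral eigenvalues — the only spectrum of $\cL_t$ in $\{|w|\ge e^{(\htop-\tau_*)t}\}$ is a simple eigenvalue at $w=e^{\htop t}$. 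Writing $\Pi$ for the associated rank--one spectral projector, this gives a splitting $\cL_t=e^{\htop t}\Pi+\mathcal R_t$ with $\Pi\mathcal R_t=\mathcal R_t\Pi=0$ and, for every fixed $0<\alpha<\tau_*$ and all $t\ge 0$, $\|\mathcal R_t\|_{\cB\to\cB}\le C_\#\,e^{(\htop-\alpha)t}$ (the uniform bound following in the standard way from the resolvent growth estimates on vertical lines, the at most polynomial loss produced by Dolgopyat's argument being absorbed by passing from $\tau_*$ to $\alpha$).

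Next I would use the identification, carried out in the sections devoted to the $d_s$--form operator and already announced in the introduction, of the leading eigen--data of $\cL_t$ with the measure of maximal entropy: one normalizes the leading eigen--current $\omega_\mu\in\cB$ and the leading eigenfunctional $\ell$ of the transpose so that $\langle\omega_\mu,\ell\rangle=1$, $\langle a\,\omega_\mu,\ell\rangle=\int a\,d\mu$ for $a\in C^\infty(T_1M)$, and hence $\Pi(a\,\omega_\mu)=\bigl(\int a\,d\mu\bigr)\,\omega_\mu$. Using the defining intertwining property $\cL_t\bigl((a\circ\phi_t)\,\eta\bigr)=a\,\cL_t\eta$ of the transfer operator on currents, one writes, for $f,g\in C^\infty(T_1M)$,
\[
\int (f\circ\phi_t)\,g\,d\mu \;=\; e^{-\htop t}\,\big\langle\, f\,\cL_t(g\,\omega_\mu)\,,\,\ell\,\big\rangle
\;=\;\big\langle f\,\omega_\mu,\ell\big\rangle\,\Big(\!\int g\,d\mu\Big)\;+\;e^{-\htop t}\,\big\langle f\,\mathcal R_t(g\,\omega_\mu),\ell\big\rangle .
\]
The first term equals $\int f\,d\mu\int g\,d\mu$, so $\rho(t)$ is precisely the second term; since multiplication by a $C^k$ function is bounded on $\cB$ and the pairing against $f$ is controlled by a fixed $C^k$--norm of $f$, the bound on $\mathcal R_t$ gives $|\rho(t)|\le C\,e^{-\alpha t}$ for $t\ge 0$ with $C=C_\#\,\|f\|_{C^k}\|g\|_{C^k}$. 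For $t<0$ one invokes the $\phi_t$--invariance of $\mu$: $\rho(t)=\int f\,(g\circ\phi_{-t})\,d\mu-\int f\,d\mu\int g\,d\mu$ is the same expression with $f,g$ interchanged and $-t>0$, so the same estimate applies; taking the larger constant yields $|\rho(t)|\le C\,e^{-\alpha|t|}$ for all $t\in\bR$, which is the assertion with this $\alpha$. Finally, the $\tfrac19$--pinching formulation for the geodesic flow on $T_1M$ is just the specialization of Theorem~\ref{thm:main2} obtained from the remark that an $a$--pinched geodesic flow satisfies $a\le\bigl(\olambda/\lambda_+\bigr)^2+\ve$, so that ``better than $\tfrac19$--pinched'' gives $\olambda/\lambda_+>\tfrac13$.

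I expect the genuine difficulty \emph{not} to lie in this corollary: all the analytic work sits in Theorem~\ref{thm:main2}, i.e.\ in proving the Dolgopyat estimate for the action on $d_s$--forms under the bunching condition. The two points that require care here are bookkeeping ones. First, one must extract from the proof of Theorem~\ref{thm:main2} the clean operator--theoretic statement $\cL_t=e^{\htop t}\Pi+\mathcal R_t$ with $\|\mathcal R_t\|_{\cB\to\cB}\le C_\#e^{(\htop-\alpha)t}$ — the theorem is phrased in terms of $\zeta_{\Ruelle}$, but it is \emph{deduced} from exactly this resonance--free strip plus resolvent bounds, so the intermediate statement is available. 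Second, one must quote from the earlier sections the identification $\langle a\,\omega_\mu,\ell\rangle=\int a\,d\mu$ of the leading eigen--data with the Bowen--Margulis measure; this is the whole reason for working with $d_s$--forms (rather than $0$-- or $d$--forms, which would instead yield exponential mixing with respect to the SRB measure, as in \cite{Liverani04}), and it is what makes the present corollary a statement about the measure of maximal entropy.
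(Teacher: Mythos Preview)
Your route is quite different from the paper's, and while the overall shape is right, two points you treat as available are not actually supplied by the paper.

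The paper's proof does not decompose $\cL_t^{(d_s)}$ at all. It takes the Fourier transform $\widehat\rho(s)=\int e^{ist}\rho(t)\,dt$, invokes \cite{Pollicott85} (Theorem~2) and \cite{Ruelle87} (Theorem~4.1) to deduce from the analyticity of $\zeta_{\Ruelle}$ established in Theorem~\ref{thm:main2} that $\widehat\rho$ extends analytically to a strip $|\Im(s)|<\eta\le\tau_*$, then adapts \cite{Liverani04} (proof of Theorem~2.4) to use the smoothness of $f,g$ and obtain $\widehat\rho(\cdot+it)\in L^1(\bR)$ on each horizontal line in the strip, and concludes by Paley--Wiener. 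Thus the link between the $d_s$-form operator and the Bowen--Margulis measure is entirely outsourced to the cited references on zeta functions and correlation spectra.

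Your argument needs this link explicitly, and that is the first gap: the identification $\langle a\,\omega_\mu,\ell\rangle=\int a\,d\mu$ is \emph{not} carried out in the paper. The remark in the introduction that $d_s$-forms correspond to the measure of maximal entropy is heuristic; Proposition~\ref{lem:Nussbaum} gives the spectral picture at $\htop$ but does not identify the resulting functional with integration against $\mu$. You would have to show that $a\mapsto\ell(a\,\omega_\mu)$ is $\phi_t$-invariant (this part is easy from the eigenvalue equations) and has entropy $\htop$ (this requires real work, relating the eigendata to the Margulis conditionals). The second gap is the semigroup bound: the polynomial growth $\|R(z)\|\le C_\#|\Im(z)|^{\gamma_0}$ on vertical lines is \emph{not} ``absorbed by passing from $\tau_*$ to $\alpha$''---that trades real parts, not growth in the imaginary direction---and for general $C_0$-semigroups a resonance-free strip for the generator does not yield an operator-norm bound $\|\mathcal R_t\|_{\cB\to\cB}\le C_\# e^{(\htop-\alpha)t}$. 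What does work in your setup is that $g\omega_\mu\in D(X^N)$ for every $N$ (since $X\omega_\mu=\htop\,\omega_\mu$ and $g\in C^\infty$, so $X^N(g\omega_\mu)$ is a finite combination of $(X^jg)\,\omega_\mu$), whence writing $R(z)(g\omega_\mu)=\sum_{j<N}z^{-j-1}X^j(g\omega_\mu)+z^{-N}R(z)X^N(g\omega_\mu)$ and taking $N>\gamma_0+1$ gives the bound on $\mathcal R_t(g\omega_\mu)$ by contour integration. With both points supplied, your direct argument becomes a legitimate alternative; what it buys over the paper's proof is self-containment (no appeal to \cite{Pollicott85,Ruelle87}), at the cost of proving the Bowen--Margulis identification from scratch.
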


\begin{proof}
Consider the Fourier transform $\widehat \rho(s) = \int_{-\infty}^\infty e^{ist} \rho(t) dt$ of 
the correlation function $\rho(t)$.  By (\cite{Pollicott85}, Theorem 2)  and (\cite{Ruelle87}, Theorem 4.1), the analytic extension of $\zeta_{\Ruelle}(z)$ 
in Theorem  \ref{thm:main2} 
 implies that there exists $0 < \eta  \leq \tau_* $ such that  $\widehat \rho(s)$ has an analytic extension to a strip 
$|\Im(s)| < \eta$.  Moreover, adapting  the argument in 
(\cite{Liverani04}, proof of Theorem 2.4)  we can use the smoothness of the test functions to  
allow us to assume without loss of generality that for each  fixed value $- \eta < t <  \eta$ we have that the function 
$\sigma \mapsto \widehat \rho(\sigma + it)$ is in $L^1(\Bbb R)$.
Finally, we apply the Paley-Wiener theorem (\cite{ReedSimon}, Theorem IX.14) to deduce the result. 
\end{proof}

Moreover, Theorem \ref{thm:main2} allows us
to extend results of Huber-Selberg (for constant sectional curvatures), Pollicott-Sharp (for surfaces of negative curvature) and Stoyanov (for $\frac 14$-pinched geodesic flows). By a prime closed geodesics  we mean an oriented closed geodesic which is a closed curve that traces out its image exactly once. 

\begin{thm}[Prime Geodesic Theorem with exponential error]\label{thm:main3}
Let $M$ be a manifold better than $\frac 19$-pinched,  with strictly negative sectional curvature. Let $\pi(T)$ denote the number of prime closed geodesics  on $M$ with length at most $T$, then there exists $\delta > 0$ such that
\[
\pi(T) = \hbox{\rm li}(e^{\htop T}) + O(e^{(\htop-\delta)T}) \hbox{ as } T \to +\infty.
\]
\end{thm}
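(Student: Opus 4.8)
The geodesic flow $\phi_t\colon T_1M\to T_1M$ is a contact Anosov flow, and by the Remark following Theorem~\ref{thm:main2} the hypothesis that $M$ is better than $\frac19$-pinched forces $\frac{\olambda}{\lambda_+}>\frac13$; hence Theorem~\ref{thm:main2} provides $\tau_*>0$ such that $\zeta_{\Ruelle}$ is analytic --- and, as its proof shows, non-vanishing --- on $\{\Re(z)\ge\htop-\tau_*\}$ apart from a simple pole at $z=\htop$. Since the prime closed geodesics of $M$ correspond, length by length, to the prime periodic orbits of $\phi_t$, it is enough to estimate the counting function of the latter. The plan is then to follow the classical ``Tauberian'' route: write $-\zeta_{\Ruelle}'/\zeta_{\Ruelle}$ as a weighted sum over periodic orbits, feed it into a smoothed Perron formula, push the contour past the pole at $z=\htop$, remove the smoothing, and finally pass from the resulting period-weighted count of all periodic orbits to an unweighted count of the primitive ones.

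First I would expand the Euler product to obtain, for $\Re(z)$ large,
\[
-\frac{\zeta_{\Ruelle}'(z)}{\zeta_{\Ruelle}(z)}=\sum_{\tau\in\cT_p}\lambda(\tau)\sum_{n\ge1}e^{-zn\lambda(\tau)} .
\]
By Theorem~\ref{thm:main2} the left-hand side continues meromorphically to $\{\Re(z)>\htop-\tau_*\}$, where its only singularity is a simple pole at $z=\htop$ of residue $+1$. For the contour argument below I also need polynomial control of this function on vertical lines in the strip; this follows by combining the growth bound for $\zeta_{\Ruelle}$ established in Section~\ref{sec:zeta-growth} with the non-vanishing of $\zeta_{\Ruelle}$ there and the Borel--Carath\'eodory inequality (applied after excising a small disc around $z=\htop$), which yields $\bigl|\zeta_{\Ruelle}'(z)/\zeta_{\Ruelle}(z)\bigr|\le C_\#(1+|\Im(z)|)^{N}$ on every line $\Re(z)=\sigma_0$ with $\htop-\tau_*<\sigma_0<\htop$, for some $N$ depending only on the flow.

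Next, fixing $0<\delta<\tau_*$ with $\htop-\delta>0$ and an integer $k>N$, I would study, for $c>\htop$, the smoothed counting function
\[
\psi_k(T)=\sum_{\tau\in\cT_p}\lambda(\tau)\!\!\sum_{\substack{n\ge1\\ n\lambda(\tau)\le T}}\!\!\frac{(T-n\lambda(\tau))^{k}}{k!}=\frac1{2\pi i}\int_{c-i\infty}^{c+i\infty}\Bigl(-\frac{\zeta_{\Ruelle}'(z)}{\zeta_{\Ruelle}(z)}\Bigr)\frac{e^{zT}}{z^{k+1}}\,dz
\]
(the identification of the two expressions uses the absolutely convergent Dirichlet series and $k\ge1$), and shift the contour to $\Re(z)=\htop-\delta$. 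Only the pole at $z=\htop$ is crossed --- not $z=0$, since $\htop-\delta>0$ --- and it contributes $\htop^{-(k+1)}e^{\htop T}$; the horizontal segments vanish as the height grows, and the remaining vertical integral is $O(e^{(\htop-\delta)T})$ because $k+1-N>1$. Hence $\psi_k(T)=\htop^{-(k+1)}e^{\htop T}+O(e^{(\htop-\delta)T})$. A standard de-smoothing by finite differences --- each $\psi_j$ is non-decreasing, and one chooses the step $h=h(T)$ optimally at each of the $k$ stages, each stage roughly halving the exponential saving --- then gives, for $\psi_0(T):=\sum_{\tau\in\cT_p}\lambda(\tau)\lfloor T/\lambda(\tau)\rfloor$,
\[
\psi_0(T)=\frac{e^{\htop T}}{\htop}+O\bigl(e^{(\htop-\delta')T}\bigr)
\]
for some $\delta'\in(0,\delta)$.

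Finally, with $\theta(T):=\sum_{\tau\in\cT_p,\ \lambda(\tau)\le T}\lambda(\tau)$, one has $\psi_0(T)-\theta(T)=\sum_{n\ge2}\theta(T/n)$; since $\theta(S)\le\psi_0(S)=O(e^{\htop S})$ this difference is $O(Te^{\htop T/2})$, so, choosing $\delta$ small enough that $\delta'<\htop/2$, we obtain $\theta(T)=\htop^{-1}e^{\htop T}+O(e^{(\htop-\delta'')T})$ for some $\delta''>0$. By Riemann--Stieltjes integration $\pi(T)=\int^{T}u^{-1}\,d\theta(u)$; substituting the asymptotics for $\theta$, integrating the error term by parts, and using $d\bigl(\operatorname{li}(e^{\htop u})\bigr)=u^{-1}e^{\htop u}\,du$, the leading term becomes $\int^{T}u^{-1}e^{\htop u}\,du=\operatorname{li}(e^{\htop T})+O(1)$ while every remaining term is $O(e^{(\htop-\delta'')T})$, which is the assertion with $\delta=\delta''$. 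The analytically deep ingredients --- the pole-free strip of Theorem~\ref{thm:main2} (which rests on the Dolgopyat-type estimate) and the growth bound of Section~\ref{sec:zeta-growth} --- are in place beforehand, so the remaining obstacle is the careful, but essentially routine, bookkeeping: passing cleanly from the bound on $\zeta_{\Ruelle}$ to the one on $\zeta_{\Ruelle}'/\zeta_{\Ruelle}$ (one must isolate the pole at $z=\htop$ before applying Borel--Carath\'eodory, keeping the exponent $N$ effective), and controlling the exponents lost in the de-smoothing and in discarding the non-primitive orbits so as to emerge with a strictly positive saving $\delta>0$.
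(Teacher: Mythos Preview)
Your proposal is correct and follows the same overall Tauberian strategy as the paper, but the execution differs in two notable ways.

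First, the bound on $\zeta_{\Ruelle}'/\zeta_{\Ruelle}$: you propose to obtain a polynomial bound of some exponent $N$ via Borel--Carath\'eodory from the growth of $\zeta_{\Ruelle}$ and its non-vanishing. In fact Lemma~\ref{lem:ruellebound} already provides the logarithmic-derivative bound $\bigl|\zeta_{\Ruelle}'(z)/\zeta_{\Ruelle}(z)\bigr|\le C_\#|z|$ directly, so your detour is unnecessary (though it would work). The paper then goes one step further: it interpolates this linear bound against the uniform bound on $\Re(z)=\htop+1$ via Phragm\'en--Lindel\"of to obtain $\bigl|\zeta_{\Ruelle}'/\zeta_{\Ruelle}\bigr|\le C_\#|z|^{\gamma_1}$ with $\gamma_1<1$ on a slightly narrower strip.

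Second, and as a direct consequence, the smoothing order differs. Because the paper secures exponent $\gamma_1<1$, a single smoothing suffices: it works with $\psi_1(T)=\int_1^T\psi(x)\,dx$ and the kernel $T^{z+1}/(z(z+1))$, shifts the contour (after rescaling to put the pole at $z=1$), and then recovers $\psi$, $\pi_0$, $\pi_1$ by the standard chain of estimates. Your approach instead takes smoothing of order $k>N$ to guarantee integrability without knowing $N<1$, and then de-smooths by $k$ successive finite-difference steps, each costing a fixed fraction of the exponential saving. Both routes are classical; the paper's is sharper and shorter once Phragm\'en--Lindel\"of is in hand, while yours is more robust in that it does not require pushing the exponent below~$1$.
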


\begin{rmk} The above Theorems are most likely not optimal. The $\frac 19$-pinching might conceivably be improved with some extra work (one would need to improve, or circumvent the use of, Lemma \ref{sublem:doest-2}) but we do not see how to remove such conditions completely, even though we believe it to be possible.
\end{rmk}

Let us start the discussion of the proofs. The basic objects we will study are the {\sl dynamical determinants}, following the approach  introduced 
by Ruelle \cite{Ruelle76}, which arise naturally in the dynamical context and are formally of the general form
\begin{equation}\label{eq:det-dif}
 \mathfrak{D}_\ell(z) = \exp \left( - \sum_{\tau \in \cT } \frac{ \mathrm{tr} \left( \wedge^\ell (D_{\mathrm{hyp}}\phi_{-\lambda(\tau)}) \right)
  e^{-z \lambda(\tau)} }{\mu(\tau)\epsilon(\tau)\left|    \det\left(\mathds{1}-D_{\mathrm{hyp}}\phi_{-\lambda(\tau)} \right) \right|} \right),
\end{equation}
where $\epsilon (\tau) $ is $1$ if the flow preserves the orientation of $E^s$ along $\tau$ and $-1$ otherwise.  
The symbol $D_{\textrm{hyp}}\phi_{-t}$ indicates the derivative of the map induced  by two local  transverse sections to the orbit (one at $x$, the other at $\phi_{-t}(x)$) and can be represented as a $(d-1)\times (d-1) $ dimensional matrix.   
By $\wedge^\ell A$ we mean the matrix associated to the standard $\ell$-th exterior product of $A$.
Note that, given any $\tau\in\cT$, $\mathrm{tr} \left( \wedge^\ell (D_{\mathrm{hyp}}\phi_{-\lambda(\tau)}) \right)$, $\det\left(\mathds{1}-D_{\mathrm{hyp}}\phi_{-\lambda(\tau)} \right)$, as well as $ \det(\left. D\phi_{-\lambda(\tau)} \right|_{E^s})$ that will be used shortly, when computed at a point $x\in\tau$, depend only on $\tau$ (see comments before equation \ref{eq:trace-ok} for more details).  

The sum in \eqref{eq:det-dif} is well defined provided  $\Re(z)$ is large enough.

Note that $ \epsilon (\tau) =  \mathrm{sign}\left(\det(\left. D\phi_{-\lambda(\tau)} \right|_{E^s})\right) $. 
Hence, from the hyperbolicity conditions \eqref{eq:asplit}, we have
\begin{equation}\label{eq:sign-orientation}
\begin{split}
\mathrm{sign}\left(\det\left(\mathds{1}-D_{\mathrm{hyp}}\phi_{-\lambda(\tau)} \right)\right) =& (-1)^{d_s} \mathrm{sign}\left(\det(\left. D\phi_{-\lambda(\tau)} \right|_{E^s})\right)\\
= & (-1)^{d_s} \epsilon(\tau) .
\end{split}
\end{equation}
Recall the linear algebra identity for a $n \times n$ matrix $A$ (see, for example, \cite[Section 3.9]{winitzki09} for more details),
\begin{equation} \label{eq:algebra}
\det(\Id-A)=\sum_{\ell=0}^n (-1)^\ell\operatorname{tr}(\wedge^\ell A ).
\end{equation}
Note that 
\begin{equation} \label{eq:ruelle-product}
\begin{split} 
\zeta_{\textrm{Ruelle}}(z) &= \prod_{\tau \in \cT_p}    \big( 1- e^{-z \lambda(\tau)} \big)^{-1} = 
 \exp \left( \sum_{\tau \in \cT_p } \sum_{m=1}^\infty \frac{1}{m} e^{-zm\lambda(\tau)} \right)\\
 &=\exp \left( \sum_{\tau \in \cT} \frac{1}{\mu(\tau)} e^{-z\lambda(\tau)} \right),
\end{split} 
\end{equation}
where $\mu(\tau)$ is the multiplicity of the associated orbit
$\tau$ and $\cT$ is the whole set of periodic orbits on $M$.
From the equations \eqref{eq:det-dif}, \eqref{eq:sign-orientation},  \eqref{eq:algebra} and \eqref{eq:ruelle-product} it follows a product formula analogous to that of Atiyah-Bott for elliptic differential operators \cite{AtiyahBott66}:
\begin{equation} \label{eq:prod}
\begin{split} & \prod_{\ell = 0}^{d-1}
 \mathfrak{D}_\ell(z)^{(-1)^{\ell + d_s + 1}} \hskip-.4cm = 
 \exp \left( \sum_{\ell = 0}^{d-1} \sum_{\tau \in \cT } \frac{ (-1)^{\ell + d_s }\mathrm{tr} \left( \wedge^\ell (D_{\mathrm{hyp}}\phi_{-\lambda(\tau)}) \right)
  e^{-z \lambda(\tau)} }{\mu(\tau)\epsilon(\tau)\left|    \det\left(\mathds{1}-D_{\mathrm{hyp}}\phi_{-\lambda(\tau)} \right) \right|} \right) \\
& =  \exp \left( \sum_{\tau \in \cT } \frac{ (-1)^{d_s }  \det\left(\mathds{1}-D_{\mathrm{hyp}}\phi_{-\lambda(\tau)} \right)
  e^{-z \lambda(\tau)} }{\mu(\tau)\epsilon(\tau)\left|    \det\left(\mathds{1}-D_{\mathrm{hyp}}\phi_{-\lambda(\tau)} \right) \right|} \right)  \\
 &=  \exp \left( \sum_{\tau \in \cT } \frac{    e^{-z \lambda(\tau)} }{\mu(\tau)} \right)  =  \zeta_{\textrm{Ruelle}}(z).
  \end{split} 
\end{equation}

Thus Theorem \ref{main1} follows by the analogous statement for   the
dynamical determinants $\mathfrak{D}_\ell(z)$. 
To study the region in which the $\mathfrak{D}_\ell(z)$ are meromorphic we will proceed in the following roundabout manner.
First we define the following objects.
\begin{defn} Given $ 0 \leq \ell \leq d - 1  $, $\tau \in \cT $ let 
\begin{equation} \label{eq:tracechar}
\chi_{\ell}(\tau) \doteq  \frac{ \mathrm{tr} \left( \wedge^\ell (D_{\mathrm{hyp}} \phi_{-\lambda(\tau)})  \right)} { \epsilon(\tau)
\left|  \det\left(\mathds{1}-D_{\mathrm{hyp}}\phi_{-\lambda(\tau)} \right) \right|}. 
\end{equation}
Moreover, for  $\xi  , z \in \bC $, we let  
\begin{equation} \label{mockdet}
 \widetilde{\mathfrak{D}}_\ell(\xi,z) \doteq   \exp \left( - \sum_{n=1}^\infty \frac{\xi^n}{n!}
\sum_{\tau \in \cT} \frac{ \chi_{\ell}(\tau) }{\mu(\tau)}  \lambda(\tau)^n e^{-z \lambda(\tau)} \right).
\end{equation} 
\end{defn}
\noindent Note that the series in \eqref{mockdet} trivially converges  for $| \xi | $ sufficiently small and $\Re(z)$ sufficiently large. 
\begin{lem} \label{damnedremark}
Let $0 \leq \ell \leq d - 1, \xi,z \in \bC $, $\Re(z)$ sufficiently large and $ | \xi - z |$ sufficiently small. 
Then we can write 
\begin{equation} \label{eq:damnedremark}
 \widetilde{\mathfrak{D}}_\ell(\xi - z,  \xi ) =  \frac{\mathfrak{D}_\ell(z)}{\mathfrak{D}_\ell(\xi)}.
\end{equation}
 \end{lem}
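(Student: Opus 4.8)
The plan is to show that the identity follows purely formally from the definitions, by manipulating the absolutely convergent series involved. Fix $\ell$, take $\Re(z)$ large and $|\xi - z|$ small, and in particular take $\Re(\xi)$ large as well (which is legitimate since $\xi$ is close to $z$). I would start by writing out the exponent in $\widetilde{\mathfrak{D}}_\ell(\xi - z, \xi)$, namely
\[
 -\sum_{n=1}^\infty \frac{(\xi - z)^n}{n!} \sum_{\tau \in \cT} \frac{\chi_\ell(\tau)}{\mu(\tau)} \lambda(\tau)^n e^{-\xi \lambda(\tau)},
\]
and then exchange the two sums. This exchange is the one genuinely analytic point: one needs to justify that the double sum converges absolutely, using the exponential bound on the number of periodic orbits (\cite{Margulis04}) together with the fact that $|\chi_\ell(\tau)|$ is controlled — indeed $\chi_\ell(\tau)$ is, up to sign, a ratio whose denominator $|\det(\mathds{1} - D_{\mathrm{hyp}}\phi_{-\lambda(\tau)})|$ is bounded below because of the hyperbolicity estimates \eqref{eq:asplit}, and whose numerator grows at most like a fixed power of $e^{\lambda_+ \lambda(\tau)}$; the extra factor $\lambda(\tau)^n$ is harmless after summing over $n$ as long as $\Re(\xi) - |\xi - z|$ exceeds the exponential growth rate, which holds for $\Re(z)$ large. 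This is the step I expect to be the main (if modest) obstacle; everything else is bookkeeping.

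Once the sums are exchanged, the inner sum over $n$ telescopes into an exponential: for each fixed $\tau$,
\[
 \sum_{n=1}^\infty \frac{(\xi - z)^n}{n!} \lambda(\tau)^n e^{-\xi \lambda(\tau)}
 = e^{-\xi \lambda(\tau)}\bigl( e^{(\xi - z)\lambda(\tau)} - 1 \bigr)
 = e^{-z \lambda(\tau)} - e^{-\xi \lambda(\tau)}.
\]
Substituting this back, the exponent of $\widetilde{\mathfrak{D}}_\ell(\xi - z, \xi)$ becomes
\[
 -\sum_{\tau \in \cT} \frac{\chi_\ell(\tau)}{\mu(\tau)} \bigl( e^{-z \lambda(\tau)} - e^{-\xi \lambda(\tau)} \bigr)
 = -\sum_{\tau \in \cT} \frac{\chi_\ell(\tau)}{\mu(\tau)} e^{-z \lambda(\tau)} + \sum_{\tau \in \cT} \frac{\chi_\ell(\tau)}{\mu(\tau)} e^{-\xi \lambda(\tau)},
\]
where splitting into two sums is again legitimate by absolute convergence (for $\Re(z)$ and $\Re(\xi)$ large).

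Finally I would compare with \eqref{eq:det-dif}: recalling that $\chi_\ell(\tau) = \mathrm{tr}(\wedge^\ell (D_{\mathrm{hyp}}\phi_{-\lambda(\tau)}))/\bigl(\epsilon(\tau)|\det(\mathds{1} - D_{\mathrm{hyp}}\phi_{-\lambda(\tau)})|\bigr)$, the first sum above is exactly $\log \mathfrak{D}_\ell(z)$ and the second is $-\log \mathfrak{D}_\ell(\xi)$ (with the appropriate branch, valid since both $\mathfrak{D}_\ell(z)$ and $\mathfrak{D}_\ell(\xi)$ are given by convergent exponentials in this range, so no multivaluedness issue arises). Exponentiating yields $\widetilde{\mathfrak{D}}_\ell(\xi - z, \xi) = \mathfrak{D}_\ell(z)/\mathfrak{D}_\ell(\xi)$, which is \eqref{eq:damnedremark}.
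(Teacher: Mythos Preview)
Your proposal is correct and follows essentially the same approach as the paper: exchange the sums over $n$ and $\tau$, sum the exponential series in $n$ to get $e^{-z\lambda(\tau)}-e^{-\xi\lambda(\tau)}$, and identify the resulting exponent with $\log\mathfrak{D}_\ell(z)-\log\mathfrak{D}_\ell(\xi)$. The paper simply presents this as a one-line ``direct calculation'' without spelling out the absolute-convergence justification you (rightly) flag.
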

 \begin{proof}
The proof is by a direct calculation
\[ 
\begin{split}
\widetilde{\mathfrak{D}}_\ell(\xi - z, \xi ) & = \exp \left( - \sum_{n=1}^\infty \frac{(\xi - z)^n}{n!} \sum_{\tau \in \cT}\frac{ \chi_{\ell}(\tau) }{\mu(\tau)}
\lambda(\tau)^n e^{-\xi \lambda(\tau)}  \right) \\
& =\exp \left( - \sum_{\tau \in \cT}   \frac{ \chi_{\ell}(\tau)}{\mu(\tau)}
 \left( e^{-z \lambda(\tau)} - e^{-\xi \lambda(\tau)}  \right) \right) 
 =  \frac{\mathfrak{D}_\ell(z)}{\mathfrak{D}_\ell(\xi)}.
\end{split} 
\]
\end{proof}
\noindent  Hence Theorem \ref{main1} is implied by the following. 
\begin{prop} \label{main2}
For any $\Cs^r$ Anosov flow, with $r  > 2$, $\xi,z\in\bC$,  $\mathfrak{D}_\ell(\xi)$ is analytic and non zero in the region $\Re(\xi)>\htop-\bar\lambda|d_s-\ell|$ and, for $\xi$ in such a region, the function $\widetilde{\mathfrak{D}}_\ell(\xi - z,\xi)$ is analytic and non zero for $z$ in the region 
\[
|\xi-z| < \Re(\xi)-h_{\textrm{top}}(\phi_1) + |d_s-\ell|\olambda   
\] 
and analytic in $z$ in the region  
\[
|\xi-z| < \Re(\xi)-h_{\textrm{top}}(\phi_1) + |d_s-\ell|\olambda + \frac{\olambda}{2}
\left\lfloor\frac {r-1} 2 \right\rfloor \,.
\] 
\end{prop}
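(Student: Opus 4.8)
The plan is to handle the three assertions in the order: (i) analyticity and non-vanishing of $\mathfrak D_\ell$ on $\{\Re\xi>\htop-\olambda|d_s-\ell|\}$; (ii) the statement about $\widetilde{\mathfrak D}_\ell(\xi-z,\xi)$ on the smaller disk; (iii) its analyticity on the larger disk. Only (iii) needs the transfer operators; (i) and (ii) are elementary once Lemma \ref{damnedremark} is available. For (i) I would start from $\mathfrak D_\ell(z)=\exp\big(-\sum_{\tau\in\cT}\mu(\tau)^{-1}\chi_\ell(\tau)e^{-z\lambda(\tau)}\big)$ (immediate from \eqref{eq:det-dif}, \eqref{eq:tracechar}) and the purely linear-algebraic bound
\[
|\chi_\ell(\tau)|=\left|\frac{\tr(\wedge^\ell D_{\mathrm{hyp}}\phi_{-\lambda(\tau)})}{\epsilon(\tau)\,|\det(\Id - D_{\mathrm{hyp}}\phi_{-\lambda(\tau)})|}\right|\le C_\#\,e^{-\olambda|d_s-\ell|\lambda(\tau)},
\]
which follows from \eqref{eq:asplit}: in a basis adapted to $E^s\oplus E^u$ the matrix $D_{\mathrm{hyp}}\phi_{-\lambda(\tau)}$ is block-diagonal with $E^s$-block uniformly expanding (all singular values $\ge C_0^{-1}e^{\olambda\lambda(\tau)}$) and $E^u$-block uniformly contracting, so $|\det(\Id-D_{\mathrm{hyp}}\phi_{-\lambda(\tau)})|\asymp|\det(D\phi_{-\lambda(\tau)}|_{E^s})|$, and the largest term of $\tr(\wedge^\ell\cdot)$ divided by this determinant leaves a product of $|d_s-\ell|$ of the least-expanding stable singular values in the denominator. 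Combined with the Margulis bound $\#\{\tau_p\in\cT_p:\lambda(\tau_p)\le T\}\le C_\# e^{\htop T}$ and summation over iterates, this makes $\sum_\tau\mu(\tau)^{-1}|\chi_\ell(\tau)|e^{-\Re(z)\lambda(\tau)}$ converge whenever $\Re(z)+\olambda|d_s-\ell|>\htop$; being the exponential of an absolutely convergent series, $\mathfrak D_\ell$ is holomorphic and zero-free there. For (ii), by Lemma \ref{damnedremark} — extended past its initial domain by analytic continuation — one has $\widetilde{\mathfrak D}_\ell(\xi-z,\xi)=\mathfrak D_\ell(z)/\mathfrak D_\ell(\xi)$, and since the disk $|\xi-z|<\Re(\xi)-\htop+\olambda|d_s-\ell|$ has minimal real part exactly $\htop-\olambda|d_s-\ell|$ it lies in the region just found; with $\mathfrak D_\ell(\xi)\ne0$ the quotient is holomorphic and non-vanishing there.

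For (iii) the point is to continue $\mathfrak D_\ell$ further to the left, and here I would invoke the machinery set up later in the paper. From Sections \ref{banachspace}--\ref{sec:splitting} I take the anisotropic Banach spaces $\cB^{p,q}$ carrying the degree-$\ell$ currents, the orientation-twisted transfer operators $\mathcal{L}^{(\ell)}_t$ (whose weights reproduce the $\wedge^\ell D_{\mathrm{hyp}}$ and the sign $\epsilon(\tau)$ of \eqref{eq:sign-orientation}) with generator $X_\ell$, and the resolvents $R_\ell(z)=(z\Id-X_\ell)^{-1}$: bounded on $\cB^{p,q}$ for $\Re z$ large, and extending meromorphically with finite-rank residues to $\{\Re z>\sigma_{p,q,\ell}\}$, where — by the Lasota--Yorke estimates in these norms together with the mollification of Appendix \ref{subsec:average} — the parameters $(p,q)$ can be tuned so that $\sigma_{p,q,\ell}=\htop-\olambda|d_s-\ell|-\tfrac{\olambda}{2}\lfloor\tfrac{r-1}{2}\rfloor$, the last term being the gain bought by the $C^r$ smoothness. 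For a non-resonant $\xi$ with $\Re\xi>\sigma_{p,q,\ell}$ the spectral mapping theorem for the essential spectrum then gives $\rho_{\mathrm{ess}}(R_\ell(\xi))\le(\Re\xi-\sigma_{p,q,\ell})^{-1}$. I would also use the Atiyah--Bott--Guillemin flat-trace identity of Section \ref{sec:flat}: for $\Re\xi$ large and $m\ge m_0$,
\[
\trf R_\ell(\xi)^m=\frac1{(m-1)!}\sum_{\tau\in\cT}\frac{\chi_\ell(\tau)}{\mu(\tau)}\lambda(\tau)^m e^{-\xi\lambda(\tau)},
\]
together with the bound $|\trf R_\ell(\xi)^m|\le C_{p,q,\xi}\|R_\ell(\xi)^{m-m_0}\|_{\cB^{p,q}\to\cB^{p,q}}$, valid also with $R_\ell(\xi)$ replaced by its compression off any finite family of resonant eigenspaces.

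With these in hand, fixing $\xi$ of large real part and expanding $e^{(\xi-z)\lambda(\tau)}-1$ in \eqref{mockdet}, the identity above (the $m<m_0$ terms contributing an entire polynomial in $\xi-z$) yields $\widetilde{\mathfrak D}_\ell(\xi-z,\xi)=\exp\big(-\sum_{m\ge1}\tfrac{(\xi-z)^m}{m}\trf R_\ell(\xi)^m\big)$ for $|\xi-z|$ small, to which the Liverani--Tsujii/Baladi--Tsujii argument applies: for each $\ve>0$ one splits off the finitely many eigenvalues $\lambda_j=(\xi-\mu_j)^{-1}$ of $R_\ell(\xi)$ of modulus $>\rho_{\mathrm{ess}}(R_\ell(\xi))+\ve$; their contribution to the exponent is $-\sum_j m_j\log(1-(\xi-z)\lambda_j)$ modulo a polynomial, and — crucially — this logarithmic singularity \emph{disappears on exponentiating}, leaving the polynomial factor $\prod_j(1-(\xi-z)\lambda_j)^{m_j}$, while the remainder of the exponent, controlled by the flat-trace bound and $\|(\text{compressed }R_\ell(\xi))^n\|\le C_\ve(\rho_{\mathrm{ess}}(R_\ell(\xi))+\ve)^n$, is a series converging for $|\xi-z|<(\rho_{\mathrm{ess}}(R_\ell(\xi))+\ve)^{-1}$. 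Letting $\ve\to0$, $z\mapsto\widetilde{\mathfrak D}_\ell(\xi-z,\xi)$ is holomorphic on $|\xi-z|<\rho_{\mathrm{ess}}(R_\ell(\xi))^{-1}$, hence on $|\xi-z|<\Re\xi-\sigma_{p,q,\ell}$; so $z\mapsto\mathfrak D_\ell(\xi)\,\widetilde{\mathfrak D}_\ell(\xi-z,\xi)$, which by Lemma \ref{damnedremark} agrees with $\mathfrak D_\ell$ where the latter is defined, continues $\mathfrak D_\ell$ analytically to that disk. As $\Re\xi\to\infty$ (with $\Im\xi$ free) these disks exhaust $\{\Re z>\sigma_{p,q,\ell}\}$ and the continuations agree on overlaps by uniqueness, so $\mathfrak D_\ell$ is holomorphic on $\{\Re z>\htop-\olambda|d_s-\ell|-\tfrac{\olambda}{2}\lfloor\tfrac{r-1}{2}\rfloor\}$. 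Finally, for an arbitrary $\xi$ with $\Re\xi>\htop-\olambda|d_s-\ell|$ this region contains the disk $|\xi-z|<\Re\xi-\htop+\olambda|d_s-\ell|+\tfrac{\olambda}{2}\lfloor\tfrac{r-1}{2}\rfloor$ (whose minimal real part is $\htop-\olambda|d_s-\ell|-\tfrac{\olambda}{2}\lfloor\tfrac{r-1}{2}\rfloor$), and since $\widetilde{\mathfrak D}_\ell(\xi-z,\xi)=\mathfrak D_\ell(z)/\mathfrak D_\ell(\xi)$ with non-zero constant denominator, $\widetilde{\mathfrak D}_\ell(\xi-z,\xi)$ is holomorphic there — which is (iii).

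The genuine content is entirely in the two borrowed inputs. The Lasota--Yorke/mollification estimate producing the sharp $\sigma_{p,q,\ell}$ with its $r$-dependent term is one. The other, and the step I expect to be the main obstacle, is the Atiyah--Bott--Guillemin flat-trace identity for resolvent powers together with its quantitative bound: establishing that $R_\ell(\xi)^m$ has a well-defined flat trace equal to the periodic-orbit sum requires careful handling of the transverse Poincaré sections, of the orientation signs $\epsilon(\tau)$, and — most delicately — of how many powers of the resolvent must be composed before the Schwartz kernel becomes transversally restrictable to the flow-invariant diagonal.
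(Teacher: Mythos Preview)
Your treatment of (i) and (ii) via the elementary bound $|\chi_\ell(\tau)|\le C_\#\,e^{-\olambda|d_s-\ell|\lambda(\tau)}$ and Lemma~\ref{damnedremark} is correct and is in fact more direct than the paper, which reads these facts off the spectral-radius bound for $R^{(\ell)}(\xi)$ (Proposition~\ref{lem:Nussbaum}) and the absence of eigenvalues in the disk in \eqref{eq:X-ruelle}.

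The genuine gap is in (iii). You invoke the bound
\[
|\trf R_\ell(\xi)^m|\le C_{p,q,\xi}\,\|R_\ell(\xi)^{m-m_0}\|_{\cB^{p,q}\to\cB^{p,q}},
\]
``valid also with $R_\ell(\xi)$ replaced by its compression off any finite family of resonant eigenspaces'', and locate it in Section~\ref{sec:flat}. But Section~\ref{sec:flat} proves only the periodic-orbit \emph{identity} (Lemma~\ref{lem:traceorbits}); it establishes no control of $\trf$ by operator norms, and in particular no such bound for the compressed operator $U^{(\ell)}(\xi)$. Showing that $|\trf R^{(\ell)}(\xi)^n-\tr P^{(\ell)}(\xi)^n|$ decays like the $n$-th power of something close to $\rho_{\mathrm{ess}}$ is exactly the hard step, and the paper does \emph{not} achieve it in the form you assume. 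Instead it proves the weaker estimate (Lemma~\ref{lem:tensor-trace})
\[
\big|\trf R^{(\ell)}(\xi)^n-\tr P^{(\ell)}(\xi)^n\big|\le C_{\xi,\mu}\,\mu^n,\qquad \mu>\Big(\Re\xi-\sigma_\ell+\tfrac{\sigma_{p,q}}{2}\Big)^{-1},
\]
via the tensorial construction of Section~\ref{sec:splitting}: one passes to the operator $R_2^{(\ell)}(\xi)$ on $\widetilde\cB^{p,q,\ell}_2$, whose essential spectral radius (Lemma~\ref{lem:quasicompact2}) carries only $\tfrac12\sigma_{p,q}$ because the tensor Lasota--Yorke inequality (Lemma~\ref{lem:LYstrong2}) contracts by $e^{-\sigma_{p,q}\min\{t,s\}}$ rather than $e^{-\sigma_{p,q}(t+s)}$. \emph{That} is where the factor $\tfrac12$ in the Proposition comes from --- not, as you write, from the Lasota--Yorke estimate on $\cB^{p,q,\ell}$: Lemma~\ref{lem:quasicompactness} gives $\rho_{\mathrm{ess}}(R^{(\ell)}(\xi))\le(\Re\xi-\sigma_\ell+\sigma_{p,q})^{-1}$ with the full $\sigma_{p,q}=\lambda\min\{p,q\}$ and no $\tfrac12$. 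Had your flat-trace bound been available, your argument would deliver analyticity on the disk of radius $\Re\xi-\sigma_\ell+\sigma_{p,q}$, i.e.\ with $\olambda\lfloor\tfrac{r-1}{2}\rfloor$ in place of $\tfrac{\olambda}{2}\lfloor\tfrac{r-1}{2}\rfloor$ --- strictly stronger than the statement. That your two errors (misplacing the $\tfrac12$ into $\rho_{\mathrm{ess}}$, and assuming the norm control of $\trf$) cancel to give the right exponent is a sign that the bound you are taking for granted is precisely the missing content.
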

We can then freely move $\xi$ along the line $\{a+ib\}_{b\in\bR}$ and we obtain that $\widetilde{\mathfrak{D}}_\ell(\xi-z,\xi)$ is analytic for $\Re(z)>  h_{\textrm{top}}(\phi_1) - |d_s-\ell|\olambda - \frac{\olambda}{2}\left\lfloor\frac {r-1} 2 \right\rfloor \,$.

\begin{proof}[{\bf Proof of Proposition \ref{main2}}]
For all $v, \ell\in\bN$, let $\Omega^{\ell}_{v}(M)$ be the space of $\ell$-forms on $M$, i.e. the $\Cs^v$ sections of $\wedge^\ell({T^*M})$. Let $\Omega^\ell_{0,v}(M) \subset \Omega^{\ell}_{v}(M) $ be the subspace of forms null in the flow direction (see \eqref{eq:ooo}). 
In section \ref{banachspace} we construct a family of Banach spaces $\cB^{p,q,\ell}$, $p\in\bN, q\in\bRp$, as the closure of  $\Omega^\ell_{0,v}(M)$
with respect to a suitable anisotropic norm so that the spaces $\cB^{p,q,\ell}$ are an extension of the spaces in 
\cite{GouezelLiverani06}.\footnote{ The indexes $p,q$ measure, respectively, the regularity in the unstable and stable direction.}  Such spaces are canonically embedded in the space of currents (see Lemma \ref{lem:currents}). 

In Section \ref{secresolvent} we define a family of operators indexed by
positive real numbers $t \in \bRp$ as
\begin{equation} \label{def:reducedoperator}
\cL^{(\ell)}_t(h)  \doteq \tphi_{-t}^*h,
\end{equation}
for $h \in \Omega^{\ell}_{0,v}(M)$. Here we adopt the standard notation where  $f^*$ denotes the pullback and $f_*$ indicates the push-forward.
\begin{rmk} Note that by restricting the transfer operator $\cL^{(\ell)}_t$ to  the space $\Omega^{\ell}_{0,v}(M)$ we mimic the action of the standard transfer operators on sections transverse to the flow, in fact we morally project our forms on a Poincar\'e section. 
\end{rmk}
\begin{rmk}\label{rem:orientability-s}
In order to simplify a rather involved argument we chose to give full details only for the case in which the invariant foliations are orientable (i.e., $\epsilon(\tau)=1$ for all $\tau$). Notably, this includes some of the most interesting  examples, such as geodesic flows on orientable manifolds with  strictly negative sectional curvature (see Lemma \ref{lem:geor}). To treat the non orientable case it is often sufficient to slightly modify the definition of the operator \eqref{def:reducedoperator} by introducing an appropriate weight, see equation \eqref{eq:transferop0}, and then repeating almost verbatim the following arguments. Unfortunately, as far as we can see, to treat the fully general case one has to consider more general Banach spaces than the ones used here. This changes very little in the arguments but makes the notation much more cumbersome. The reader can find the essential details in Appendix \ref{app:orientable}.
\end{rmk}

The operators \eqref{def:reducedoperator} generalize the action of the transfer operator $\cL_t$ on the spaces
$\cB^{p,q}$ of \cite{GouezelLiverani06}.  
We prove in Lemma \eqref{lem:LY}  that  the operators ${\cL}^{(\ell)}_t$ satisfy a Lasota-Yorke type estimate for sufficiently large times. To take care of short times we restrict ourself to a new space $\widetilde{\cB}^{p,q,\ell}$ which is the closure of $\Omega^{\ell}_{0,s}(M)$ with respect to a slightly stronger norm $\spqnorm{\cdot}$ (see \eqref{eq:strong-norms-sigh}), in this we follow \cite{BaladiLiverani11}.  We easily obtain Lasota-Yorke type estimate for all times in the new norm (Lemma \ref{lem:LYstrong}).

On $\widetilde{\cB}^{p,q,\ell} $ the operators $\cL_t^{(\ell)}$ form a strongly continuous semigroup  with generators $X^{(\ell)}$ (see Lemma \ref{lem:LYstrong}). We can then consider the resolvent ${R}^{(\ell)} (z)\doteq ( z \Id-X^{(\ell)})^{-1}$. The cornerstone of our analysis is that, although the operator $X^{(\ell)}$ is an unbounded closed operator  on $\widetilde{\cB}^{p,q,\ell} $, we can access its spectrum thanks  to the fact that its resolvent $R^{(\ell)}(z)$ is a quasi compact operator on the same space. More precisely, in Proposition \ref{lem:Nussbaum}  we show  that for $z \in \bC$, $\Re(z) > \htop-|d_s-\ell|\bar  \lambda$, the operator ${R}^{(\ell)}(z)\in L(\widetilde{\cB}^{p,q,\ell}, \widetilde{\cB}^{p,q,\ell})$, $p+q< r-1$, has spectral radius 
\[
\rho ({R}^{(\ell)}(z) ) \leq  \big( \Re(z) -  \htop + |d_s-\ell | \olambda   \big)^{-1}  
\]
and essential spectral radius 
\[
\rho_{\textrm{ess}} ({R}^{(\ell)}(z) ) \leq  \big( \Re(z) -  \htop + |d_s-\ell | \olambda + \olambda\min\{p,q\}  \big)^{-1} .  
\]
We can then write the spectral decomposition $R^{(\ell)}(z)=P^{(\ell)}(z)+U^{(\ell)}(z)$ where $P^{(\ell)}(z)$ is a finite rank operator and $U^{(\ell)}(z)$ has spectral radius arbitrarily close to  $\rho_{\textrm{ess}} ({R}^{(\ell)}(z) )$.
In section \ref{sec:flat},   we define a ``flat trace" denoted by $\trf$. In Lemma \ref{lem:traceorbits} we show that for $\Re(z) > \htop-|d_s-\ell|\bar  \lambda$ and $n \in \bN$, 
we have that
 $ {\trf}\left({R}^{(\ell)}(z)^n\right) < \infty $ and 
\begin{equation}\label{eq:addendum}
 {\trf}\left({R}^{(\ell)}(z)^n\right) = \frac{1}{(n-1)!} 
\sum_{\tau \in \cT} \frac{ \chi_{\ell}(\tau)}{\mu(\tau)}
\lambda(\tau)^n e^{-z \lambda(\tau)} .
\end{equation}
Furthermore, in Lemma \ref{lem:tensor-trace}, we prove that, for each $\lambda<\olambda$ and $n\in\bN$,
\begin{equation}\label{eq:hoihoihoi}
\begin{split}
\left|\trf({R}^{(\ell)}(z)^n) - \mathrm{tr} \left({P}^{(\ell)}(z)^n\right)\right|\leq& C_{z,\lambda}
\bigg( \Re(z) -  h_{\textrm{top}}(\phi_1) + |d_s-\ell| \lambda \\
&\quad\quad\quad+ \frac{\lambda}{2}\min\{p,q\}   \bigg)^{-n}. 
\end{split}
\end{equation}
where ``$\textrm{tr}$'' is the standard trace.
\begin{rmk}
The loss of a factor $2$ in the formula \eqref{eq:hoihoihoi} is due to an artifact of the method of proof. We live with it since it does not change substantially the result and to obtain a sharper result might entail considerably more work.
\end{rmk}
\begin{rmk}
Given the two previous formulae one can have a useful heuristic explanation of the machinery we are using. Indeed, \eqref{eq:hoihoihoi} shows that $\trf(R^{(\ell)}(z))$ is essentially a real trace, then substituting formula \eqref{eq:addendum} in \eqref{eq:det-dif} and performing obvious formal manipulations we have that $\widetilde{\mathfrak{D}}_\ell(\xi,z)$ can be interpreted as the ``determinant" of $(\Id-\xi R^{(\ell)}(z))^{-1}$, while $\mathfrak{D}_\ell(z)$  can be interpreted as the (appropriately renormalized)``determinant" of $z\Id-X^{(\ell)}$.
\end{rmk}
Note that if $\nu\in \sigma({P}^{(\ell)}(z))\setminus \{0\}$, then $z-\nu^{-1}\in\sigma(X^{(\ell)})$.\footnote{ As usual, we denote by $\sigma$  the spectrum.} Thus, $\nu=(z-\mu)^{-1}$ where $\mu\in\sigma(X^{(\ell)})$.
Let  $\xi \in \bC$ such that $ a = \Re(\xi)$ is sufficiently large so that $\mathfrak{D}_\ell(\xi)$ is well defined. Let $\rho_{p,q,\ell}< a -  h_{\textrm{top}}(\phi_1) + |d_s-\ell|  \lambda + \frac{\lambda}{2}\min\{p,q\}  $. 
Let  $\lambda_{i,\ell}$ be the eigenvalues of $X^{(\ell)}$. For each $z\in  B_2(\xi,\rho_{p,q,\ell}) $,
\begin{equation} \label{eq:X-ruelle}
\begin{split}
& \widetilde{\mathfrak{D}}_\ell(\xi - z, \xi ) =  \exp \left(- \sum_{n=1}^\infty
  \frac{(\xi - z)^n}{n}\trf({R}^{(\ell)}(\xi)^n)  \right) = \\
& =\exp \left( - \sum_{n=1}^{\infty}\frac{(\xi - z)^n}{n} 
\left( \sum_{\lambda_i \in B_2(\xi,\rho_{p,q,\ell})} \frac{1}{(\xi - \lambda_{i,\ell})^n}  + 
\mathcal{O}\left(C_{\xi,\lambda} \rho_{p,q,\ell}^{-n} \right) 
    \right)  \right)  \\
&=  \exp  \left(\sum_{\lambda_i\in B_2(\xi,\rho_{p,q,\ell})} 
\hskip-.3cm\log\left( 1 - \frac{\xi - z}{\xi -  \lambda_{i,\ell}} \right)  + 
\sum_{n=1}^{\infty}\frac{(\xi - z)^n}{n}\mathcal{O}\left(C_{\xi,\lambda} \rho_{p,q,\ell}^{-n} \right)   \right)\\
&  = \left( \prod_{ \lambda_{i,\ell} \in B_2(\xi,\rho_{p,q,\ell})} \frac{z -  \lambda_{i,\ell}}{\xi -  \lambda_{i,\ell}} \right) \psi(\xi,z)\\
\end{split} 
\end{equation}
where $ \psi(\xi,z)$ is analytic and non zero for $z\in B_2(\xi,\rho_{p,q,\ell})$. The results follows by optimizing the choice of $p,q$.
\end{proof}

Once Theorem \ref{main1} is established we can use the above machinery to obtain more information on the location of the zeros. 
\begin{proof}[{\bf Proof of Theorem \ref{thm:main2}}]
Equation \eqref{eq:X-ruelle} and Lemma \ref{damnedremark} show that the poles of $\zeta_{\text{Ruelle}}$ are a subset of the eigenvalues of the $X^{(\ell)}$. Proposition \ref{main2} implies that $\ell=d_s$ is the only term in \eqref{eq:prod} that can contribute a pole in the relevant strip. Thus, it suffices to study the poles of $\mathfrak{D}_{d_s}^{-1}$.
In Lemma \ref{lem:dolgo-est} we prove that there exist $\gamma_0, a_0,\Cnd>0$ such that, for $0<q<\min\left\{\frac{\olambda}{\lambda_+},\frac{4\olambda^2}{\lambda_+^2}\right\}$, $2a_0>\Re(z) -\htop>a_0$ and $n\geq \Cnd C_\#\ln|\Im(z)|$,
\[
\spqnorm[1,q,d_s]{R^{(d_s)}(z)^n}\leq C_\# (\Re(z)-\htop)^{-n}|\Im(z)|^{-\gamma_0}.
\]
In Lemma \ref{lem:quasicompactness} we have, for each $\Re(z)> \htop$ and $n\in\bN$,
\[
\spqnorm[1,q,d_s]{R^{(d_s)}(z)^n}\leq C_\# (\Re(z)-\htop)^{-n}.
\]
Accordingly, the resolvent identity $R^{(d_s)}(z-a)=\sum_{n=0}^\infty a^n R^{(d_s)}(z)^n$ yields
\[
\begin{split}
\|R^{(d_s)}(z-a)\|&\leq \sum_{n=0}^\infty |a|^n (\Re(z)-\htop)^{-n} \left[C_\#|\Im(z)|^{-\gamma_0}\right]^{\frac{n}{\Cnd C_\#\ln|\Im(z)|}-1}\\
&=\frac{C_\#|\Im(z)|^\gamma_0}{1-|a|(\Re(z)-\htop)^{-1} C_\#^{\frac 1{\Cnd C_\#\ln|\Im{z}|}} e^{-\frac{\gamma_0}{C_\#\Cnd}}}.
\end{split}
\]
From the above the have the statement of Theorem for $\Im(z)$ large enough. The Theorem then follows by the spectral characterization of Proposition \ref{lem:Nussbaum}, since smooth contact flows on connected manifolds are mixing.
\end{proof}

\begin{proof}[{\bf Proof of Theorem \ref{thm:main3}}] 
Since there is an obvious bijection between prime closed geodesics and prime closed orbits for the geodesics flow, $\pi(T)$ is precisely   the number of prime closed orbits $\tau$ for the geodesic flow whose period $\lambda(\tau)$ is at most $T$.  
The proof of the Theorem is based on the following estimate, established in Lemma \ref{lem:ruellebound}. For each $z\in\bC$, $\htop\geq\Re(z)>\htop-\frac{\tau_*}2$, $|\Im(z)|\geq 1$, we have that
\begin{equation} \label{eq:mark0}
\left|\frac{\zeta_{\textrm{Ruelle}}'(z)}{\zeta_{\textrm{Ruelle}}(z)}\right|
\leq C_\# |z|.
\end{equation}
In particular, on the line  $\Re(z) = \sigma_1 := \htop - \frac{\tau_*}{4}$, say,  we have the bound (\ref{eq:mark0}).
Moreover, on the line $\Re(z) = \sigma_2 := \htop + 1$, say, we have a uniform bound
\begin{equation} \label{eq:marker}
\left|\frac{\zeta_{\textrm{Ruelle}}'(z)}{\zeta_{\textrm{Ruelle}}(z)}\right|  
\leq \left|\frac{\zeta_{\textrm{Ruelle}}'(\sigma_2)}{\zeta_{\textrm{Ruelle}}(\sigma_2)}\right| < +\infty.
\end{equation}
By the Phragm\'en-Lindel\"of Theorem (\cite{Titchmarsh},\S5.65) the bound on the logarithmic derivative on any intermediate  vertical line is an interpolation of 
those from (\ref{eq:mark0}) and  (\ref{eq:marker}).  
In particular, we have that 
\begin{equation} \label{eq:marker1}
\left|\frac{\zeta_{\textrm{Ruelle}}'(z)}{\zeta_{\textrm{Ruelle}}(z)}\right|
\leq C_\# |z|^{\gamma_1}
\end{equation}
for $\Re(z) > \sigma_3:=  \htop - \frac{\tau_*}{8}$, say,  where 
$0 < \gamma_1 := \frac{\tau_*}{2 \tau_* + 4} < 1$.

Starting from \eqref{eq:marker1} we follow a classical approach in number theory. Let
\[ \begin{split}  \psi(T ) & \doteq \sum_{e^{n \htop\lambda(\tau)} \leq T} \htop\lambda(\tau) \\
 \psi_1(T) & \doteq \int_1^T \psi(x) dx =  \sum_{e^{n \htop) \lambda(\tau)} \leq T} \htop\lambda(\tau)  (T - e^{n \htop \lambda(\tau)}) \\
  \pi_0(T) & \doteq  \sum_{e^{n \htop \lambda(\tau)} \leq T}1 \hbox{ and }
 \pi_1(T) \doteq \sum_{e^{ \htop \lambda(\tau)} \leq T}1,
\end{split} \]
where the finite summation in each case is over prime closed orbits $\tau \in \cT_p$ for the  associated geodesic flow, 
and $n \geq 1$, subject to the bound in terms of $T$.
Next  we recall the following simple complex integral:  For $d_* > 1$ we have that
\begin{equation}\label{eq:mark1}
 \frac{1}{2\pi i} \int_{d_*-i\infty}^{d_*+i\infty} \frac{y^{z+1}}{z(z+1)} dz
 = \begin{cases}
 0 &\hbox{ if } 0 < y \leq 1 \\
 y-1 & \hbox{ if } y > 1
 \end{cases}.
\end{equation}
If we denote 
$\zeta_0(z) = \zeta_{\Ruelle}(\htop z)$ then we can write
\begin{equation}\label{eq:mark2}
\psi_1(T) =    \int_{d_*-i\infty}^{d_*+i\infty} \left( - \frac{\zeta'_0(z)}{\zeta_0(z)} \right)
\frac{T^{z+1}}{z(z+1)}dz 
\end{equation}
for any $d_*>1$ sufficiently large. 
 This comes by a term by term application of \eqref{eq:mark1}  to 
$$
- \frac{\zeta'_0(z)}{\zeta_0(z)} = \sum_{n=1}^\infty \sum_{\tau} \htop e^{-zn\htop\lambda(\tau)}.
$$
(N.B. The summation over the natural numbers in this identity comes from the second expression in \eqref{eq:ruelle-product},  not non-primitive orbits which are not being included in the counting).
Moving the line of integration from $\Re(z) = d_*>1$ to $\Re(z) = c$ gives
\begin{equation}\label{eq:mark3}
\psi_1(T) = \frac{T^2}{2} +  \int_{c-i\infty}^{c+i\infty} \left( - \frac{\zeta'_0(z)}{\zeta_0(z)} \right)
\frac{T^{z+1}}{z(z+1)}dz
\end{equation}
provided  $\sigma_3/\htop < c <1$.
In particular, we use 
\eqref{eq:marker1} to guarantee convergence of the integral in \eqref{eq:mark3}.
Note that the term $\frac{T^2}{2}$ comes from the line of integration crossing the pole for $\zeta_0(z)$ (now at $z = 1$). Moreover, 
the integral yields a factor $T^{c+1}$  i.e.  $\psi_1(T) = \frac{1}{2}T^2 + O(T^{c+1})$. Next we have the following asymptotic estimates.
\begin{lem} We can estimate, provided $1>c_1>c > 0$, that
 $$\psi(T) = T + O(T^{(c+1)/2});$$
 $$\pi_0(T) = \hbox{\rm li}(T) + O\left(\frac{T^{(c+1)/2}}{\log T}\right);$$
 $$\pi_1(T) = \hbox{\rm li}(T) + O(T^{(c_1+1)/2}).$$
\end{lem}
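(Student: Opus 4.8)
The plan is to run the classical chain of elementary (Tauberian) deductions that lead from an estimate on a second antiderivative to the prime number theorem, here transposed to the weighted counts over closed orbits. The starting point is the estimate $\psi_1(T) = \frac12 T^2 + O(T^{c+1})$, valid for $\sigma_3/\htop < c < 1$, which is exactly what \eqref{eq:mark3} and the remark following it provide.

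The first step is to pass from $\psi_1$ to $\psi$ by differencing, using that $\psi$ is nondecreasing. For $0 < h \le T/2$, monotonicity gives
\[
\frac{\psi_1(T) - \psi_1(T-h)}{h} \;\le\; \psi(T) \;\le\; \frac{\psi_1(T+h) - \psi_1(T)}{h},
\]
and inserting $\psi_1(x) = \frac12 x^2 + O(x^{c+1})$ into both ends yields $\psi(T) = T + O(h) + O(T^{c+1}/h)$. Choosing $h = T^{(c+1)/2}$ — admissible because $c < 1$ forces $T^{(c+1)/2} \le T/2$ for $T$ large — balances the two errors and gives $\psi(T) = T + O(T^{(c+1)/2})$.

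The second step is to pass from $\psi$ to $\pi_0$ by Abel summation. Each value $u = e^{n\htop\lambda(\tau)}$, with $(\tau,n)$ ranging over $\cT_p \times \{1,2,\dots\}$, contributes $\log u$ to $\psi$ and $1$ to $\pi_0$, so $d\pi_0(t) = (\log t)^{-1}\,d\psi(t)$ and integration by parts gives
\[
\pi_0(T) = \frac{\psi(T)}{\log T} + \int_{t_0}^T \frac{\psi(t)}{t\,(\log t)^2}\,dt + O(1),
\]
where $t_0 = e^{\htop \lambda_{\min}} > 1$ is a fixed constant. Substituting $\psi(t) = t + O(t^{(c+1)/2})$ and recognizing $\frac{T}{\log T} + \int^T (\log t)^{-2}\,dt = \operatorname{li}(T) + O(1)$, the main terms reproduce $\operatorname{li}(T)$ while the remainders contribute $O(T^{(c+1)/2}/\log T)$ from the boundary term plus $O(\int^T t^{(c-1)/2}(\log t)^{-2}\,dt) = O(T^{(c+1)/2}(\log T)^{-2})$ from the integral (since $(c-1)/2 > -1$); hence $\pi_0(T) = \operatorname{li}(T) + O(T^{(c+1)/2}/\log T)$. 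Finally, to reach $\pi_1$ one removes the higher iterates: from the definitions $\pi_0(T) = \sum_{n\ge 1}\pi_1(T^{1/n})$, so $\pi_1(T) = \pi_0(T) - \sum_{n\ge 2}\pi_1(T^{1/n})$, and the crude bound $\pi_1(S) \le \pi_0(S) = O(S)$ applied to the $O(\log T)$ nonzero tail terms shows $\sum_{n\ge 2}\pi_1(T^{1/n}) = O(T^{1/2}\log T)$. Combining, $\pi_1(T) = \operatorname{li}(T) + O(T^{(c+1)/2}/\log T) + O(T^{1/2}\log T)$, and since $0 < c < c_1 < 1$ both errors are $O(T^{(c_1+1)/2})$, as claimed.

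I do not expect any of these steps to be genuinely hard — this is standard prime-number-theorem bookkeeping — but the one place where care is needed is the differencing step: one must track the admissible range of the shift $h$ and check that the optimal choice $h \asymp T^{(c+1)/2}$ is compatible with $h \le T/2$, which is precisely the role played by the restriction $c < 1$ (inherited from the width of the available analyticity strip for $\zeta_{\textrm{Ruelle}}$).
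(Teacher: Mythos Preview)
Your argument is precisely the standard number-theoretic chain that the paper invokes (the paper omits the proof entirely, deferring to \cite[Chapter 4, \S4]{Ellison85}), so approach-wise there is nothing to compare. One small slip to correct in your Abel summation step: at the point $u = e^{n\htop\lambda(\tau)}$ the contribution to $\psi$ is $\htop\lambda(\tau) = \tfrac{1}{n}\log u$, not $\log u$, so the identity $d\pi_0(t) = (\log t)^{-1}\,d\psi(t)$ is false for $n\ge 2$ and your partial summation actually produces $\widetilde\pi(T) \doteq \sum_{e^{n\htop\lambda(\tau)}\le T}\tfrac{1}{n}$ rather than $\pi_0(T)$. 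This is harmless, since $|\pi_0(T)-\widetilde\pi(T)| \le \sum_{n\ge 2}\pi_1(T^{1/n}) = O(\sqrt{T}\log T)$, which is absorbed into $O(T^{(c+1)/2}/\log T)$ because $c>0$; but the sentence as written should be amended (or, cleaner, run the Abel summation from $\theta(T)=\sum_{e^{\htop\lambda(\tau)}\le T}\htop\lambda(\tau)$ directly to $\pi_1$, after first noting $\psi-\theta = O(\sqrt{T}\log T)$).
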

Since the proof of the above Lemma is completely analogous to the case of prime numbers, we omit it and refer the reader  to \cite[Chapter 4, Section 4]{Ellison85}. Finally, we can write
 $$
 \pi(T) = \pi_1\left(e^{\htop T} \right) = 
  \hbox{\rm li}(e^{\htop T} ) + O(e^{[\htop(c_1+1)/2] T})
 $$
 and the theorem follows with 
 $\htop(c_1+1)/2  = \htop- \delta$.
\end{proof}

\section{Cones and Banach spaces} \label{banachspace}

We want to introduce appropriate Banach spaces of currents\footnote{ See Federer \cite[Sections 4.1.1 - 4.1.7]{Federer69} for a detailed presentation of currents.} over a $d$-dimensional smooth compact Riemannian manifold $M$. We present a general construction of such spaces based only on an abstract cone structure. This is very convenient since later on we will apply this construction twice: once on $M$ to treat certain operators, and once on $M^2$ to treat some related, but different, operators.

The basic idea, going back to \cite{Blank02}, is to consider ``appropriate objects" that are ``smooth" in the unstable direction while  being  ``distributions" in the stable direction. This is obtained by defining norms in which such objects are integrated, against smooth functions, along manifolds close to the stable direction. Unfortunately, the realization of this program is fairly technical, since we have to define first the class of manifolds on which to integrate, then determine which  are the relevant objects and, finally,  explain what we mean by ``appropriate".
 
 \subsection{Charts and Notation}\label{subsec:charts}

We start with some assumptions and notation. More precisely, for any $r\in\bN$, we assume that there exists $\delta_0>0$, such that, for each $\delta\in (0,\delta_0)$  and $\rho\in (0,4)$, there exists an atlas $\{(U_\alpha, \Theta_\alpha)\}_{\alpha \in \cA}$,  where $\cA$ is a finite set, such that\footnote{ The relations are meant to be valid where the composition is defined. The $\|\cdot\|_{\Cs^r}$ norm is precisely defined in \eqref{def:crnorm}. Note that the explicit numbers used (e.g. $2, 30, \dots$) are largely arbitrary provided they satisfy simple relations that are implicit in the following constructions.} 
\begin{equation} \label{eq:chartproperty} 
\left\{ \begin{array}{ll}
 \Theta_\alpha (U_\alpha) =B_d(0,30\, \delta\sqrt{1+\rho^2}) \\
 \cup_\alpha \Theta_\alpha^{-1}(B_d(0,2 \delta))=M \\
\Theta_\alpha \circ \Theta_\beta^{-1}( \tilde{x} , x_d  + t ) = \Theta_\alpha \circ \Theta_\beta^{-1}( \tilde{x} , x_d ) + (0,t ) \;,\quad t\in\bR\\
\|(\Theta_\alpha)_*\|_\infty+\|(\Theta_\alpha^{-1})_*\|_\infty\leq 2\;;\quad \|\Theta_\alpha\circ\Theta_\beta^{-1}\|_{\Cs^r}\leq 2.  \\
\end{array} \right.
\end{equation}
Note that the above is equivalent to the existence of a global $\Cs^r$
vector field $V$ such that $\Theta_\alpha$ are flow box charts for the flow generated by $V$; i.e. for all $\alpha \in \cA$, 
$(\Theta_\alpha^{-1})_*(\frac{\partial}{\partial {x_d}})= V$.\footnote{ Indeed, the first, second and last relation can always be  satisfied (e.g. consider sufficiently small charts determined by the exponential map). Given $V$, the third conditions can be satisfied by applying the flow box Theorem.}

Let $\{\psi_\alpha \}_{\alpha \in \cA} $ be a smooth partition of
unity subordinate to  our atlas such that $ \textrm{supp}(\psi_{\alpha}) \subseteq
 \Theta_{\alpha}^{-1}(B_d(0,2\delta)) \subset U_{\alpha} $  and $\psi_\alpha|_{\Theta_\alpha^{-1}(B_d(0,\delta))}=1$.
In addition,  we assume $d_1 + d_{2} = \dim(M)$ are given, and let
\begin{equation}\label{eq:cones}
 \cC_\rho\doteq\{(s,u) \in \bR^{d_1} \times \bR^{d_2} : \|u\|\leq \rho\|s\|\}.
\end{equation}
Next, we assume that there exists $4>\rho_+>\rho_->0$ such that, for all $\alpha,\beta\in\cA$, 
\begin{equation}\label{eq:cone-condition}
\cC_0\subset (\Theta_\alpha)_*\circ (\Theta_\beta^{-1})_*\cC_{\rho_-}\subset \cC_{\rho_+},
\end{equation}
when it is well defined. Note that, by compactness, there must exists $\rho_1>0$ such that  $\cC_{\rho_1}\subset (\Theta_\alpha)_*\circ (\Theta_\beta^{-1})_* C_{\rho_-}$.

This concludes the hypotheses on the charts and the cones. 
For each $\ell \in \left\{0, \ldots,d \right\}$, let $\wedge^\ell(T^*M)$ be the algebra of
the exterior $\ell$-forms on $M$. We write
$\Omega^\ell_r(M)$ for the space of $\Cs^r$ sections of
$\wedge^\ell(T^*M)$.
Let $ h =\sum_{\alpha \in \cA } \psi_{\alpha} h \doteq \sum_{\alpha \in \cA} h_{\alpha} $ so that  
$h_{\alpha} \in \Omega^{\ell}_{r}(U_{\alpha})$. 

Let $\left\{ e_1, \ldots , e_d \right\}$ be the canonical basis of $\bR^d$. For all
$\alpha \in \cA$ and $x \in U_\alpha$ consider the basis of $T_x U_\alpha$  given by $\left\{ (\Theta_\alpha^{-1})_*\frac{\partial}{\partial x_1} , \ldots ,(\Theta_\alpha^{-1})_*\frac{\partial}{\partial x_d} \right\} $.
Let $\left\{ \hat{e}_{\alpha,1} , \ldots , \hat{e}_{\alpha,d} \right\}$
be the orthonormal basis of  $T_x U_\alpha$ obtained from the first one by applying the Gram-Schmidt procedure, setting as first element of the algorithm   $ \hat{e}_{\alpha,d} = (\Theta_\alpha^{-1})_*\frac{\partial}{\partial x_d} / \left\| 
(\Theta_\alpha^{-1})_*\frac{\partial}{\partial x_d} \right\|$.
Let $ \omega_{\alpha,1} , \ldots , \omega_{\alpha,d} $ be the dual basis of  $1$-forms such that
$\omega_{\alpha,i} (  \hat{e}_{\alpha,j} ) = \delta_{i,j} $. Thus we can
define a scalar product on $T_x^*U_{\alpha}$ by the expression
$\langle \omega_{\alpha,i} , \omega_{\alpha,j} \rangle = \delta_{i,j} $. Note that the above construction ``respects'' the special direction $V$.

Let $\cI_\ell=\{\bar{i} = (i_1, \dots, i_\ell)\in\{1,\dots,d\}^\ell\;:\; i_1 < i_2 < \dots < i_\ell\}$ be the set of  $\ell$-multi-indices
ordered by the standard lexicographic order. Let 
$e_{\bar  i} \doteq e_{i_1} \wedge\dots\wedge e_{i_\ell}$ in $\wedge^{\ell}(\bR^d)$  and $dx_{\bar i} \doteq dx_{i_1}\wedge\dots\wedge dx_{i_\ell} \in \wedge^{\ell} (\bR^d)^*$ so that  $dx_{\bar{i}}( e_{\bar{j}} )= \delta_{\bar{i},\bar{j}}$. Let
$\{ e_{\alpha,\bar{i}} \} \subset \wedge^{\ell}(T_x U_\alpha)$ and 
$\{\omega_{\alpha,\bar{i}} \} \subset \wedge^{\ell}(T_x^* U_\alpha)$ be defined in the same way starting from $\ell_{\alpha,i},\omega_{\alpha,i}$.

Given $h \in\Omega^\ell_r(U_\alpha)$  and $(\Theta_\alpha^{-1})^* h\in\Omega^\ell_r(B_d(0,30\delta\sqrt{1+\rho^2}))$
 we will write
\[
h =\sum_{\bar{i} \in \cI_{\ell}} h_{\alpha,\bar{i}}\omega_{\alpha,\bar{i}} .
\]
As usual we define the scalar product
\begin{equation}\label{eq:scalar0}
\langle h, g\rangle_{\Omega^{\ell}}  \doteq \int_M \langle h , g \rangle_x \omega_M(x)
\end{equation}
where $\omega_M$ is the Riemannian volume form on $M$ and  $\langle h,g \rangle_x$ is the usual scalar product for forms\footnote{ In the following we will drop the subscript $x$ in the scalar product and the subindex $M$ in the volume form whenever it does not create confusion.} (see \eqref{eq:scalarproduct} for a precise definition).

In the sequel we will often restrict ourselves to forms ``transversal" to the flow, that is
\begin{equation}\label{eq:ooo}
\Omega^{\ell}_{0,r}(M) \doteq \left\{h\in \Omega^{\ell}_{r}(M)\;:\; h(V , \ldots ) = 0\right\}.
\end{equation}
\begin{rmk}\label{rem:form-details} We use the convention $\Omega^0_{0,r}=\Omega^0_r$. Note that $\Omega_{0,r}^d=\{0\}$ and, more generally if $h\in\Omega^\ell_{0,r}$, then $h=\sum_{\bar{i} \in \cI_{\ell}^-} h_{\alpha,\bar{i}}\omega_{\alpha,\bar{i}}$, where $\cI_\ell^-=\{\bar{i} = (i_1, \dots, i_\ell)\in\{1,\dots,d-1\}^\ell\;:\; i_1 < i_2 < \ldots < i_\ell\}$. See Remark \ref{rmk:oldbpq} for further comments.
\end{rmk}
For $f : B_{d}(0,\delta) \to \mathbb{R}^{\bar d}$, $\bar d\in\bN$,  we use the following 
$\Cs^r$-norm
 \begin{equation} \label{def:crnorm} 
\| f \|_{\Cs^{r}}  =  \begin{cases}
\sup_x\|f(x)\|&\mathrm{if} \,r=0  \\
 \|f\|_{\Cs^0}+\sup_{x,y\in B_d(0,\delta)} \frac{ \|f(x)-f(y)\|}{\|x-y\|^{r}} &\mathrm{if} \, 0<r < 1 \\  
 \sum_{i = 0}^{\lfloor r\rfloor } 2^{\lfloor r\rfloor - i} \sum_{k_1,\dots k_i}\left\| \partial_{x_{k_1}}\cdots\partial_{x_{k_i}}f \right\|_{\Cs^{r - \lfloor r\rfloor} }&\mathrm{if} \,r \geq 1  .
\end{cases}
\end{equation}
\begin{rmk} \label{foo:r-norm}The reader can check, by induction, that for such a $\Cs^r$ norm, for $r\in\bRp$ and $\bar d=1$, we have  $\|fg\|_{\Cs^r} \leq \|f\|_{\Cs^r}\|g\|_{\Cs^r}$ and, for all $\bar d\in\bN$, $ \| f \circ g  \|_{\Cs^r} \leq C_\#\sum_{i=0}^{r} \|f \|_{\Cs^r} \|Dg\|_{\Cs^{r-1}} \cdots \|Dg\|_{\Cs^{r-i}}$.
\end{rmk}

\subsection{Banach Spaces}\label{subsec:banach}
 
Given the above setting, we are going to construct several Banach spaces. The strategy is to first define appropriate norms and then close the space of $\ell$-forms in the associated topology.

Fix $L_0>0$. For each $L>L_0$, let us define 
\begin{equation} \label{def:leaves} \begin{split}
\mathcal{F}_r(\rho,L) \doteq\{  F\,: &\, B_{d_1}(0,6\delta)  \to \mathbb{R}^{d_2}  \; : \; F(0)=0  ; \\ 
& \|DF\|_{\Cs^{0}(B_{d_1}(0,6\delta)) }\leq \rho \, ; \, \|F\|_{\Cs^{r}(B_{d_1}(0,6\delta) )}\leq L\}.
\end{split} \end{equation} 
For each $F\in\mathcal{F}_r(\rho,L)$, $x \in \bR^d$, $\xi \in \bR^{d_1}$, let  $G_{x,F}(\xi): B_{d_1}(0,6\delta) \to
\bR^{d}$ be defined by  $ G_{x,F}(\xi) \doteq x+(\xi,F(\xi))$.

Let us also define $\widetilde\Sigma(\rho, L) \doteq \{G_{x,F}:x \in B_{d_1}(0,2\delta), F \in \mathcal{F}_r (\rho,L)\}$.
\begin{rmk} \label{rem:rhoL} All the present constructions will depend on the parameters $\rho,L$. We will often not make it explicit,  to ease notation and since many computations hold for any choice of the parameters, 
but  we will state when a particular choice of such parameters is made.
\end{rmk}
For each $\alpha \in \cA$, and $G \in \widetilde\Sigma$ we define the leaf\footnote{ These $W_{\alpha,G}$ are not to be confused with stable manifolds.}
$W_{\alpha,G}=\{\Theta^{-1}_\alpha \circ G(\xi)\}_{\xi\in B_{d_1}(0 ,3\delta)}$ and the enlarged leaf $W_{\alpha,G}^+=\{\Theta^{-1}_\alpha \circ G(\xi)\}_{\xi\in B_{d_1}(0,6\delta)}.$
For each $\alpha\in\cA$,  $G\in\widetilde \Sigma$ note that $W_{\alpha, G} \subset \whU_{\alpha}\doteq \Theta_\alpha^{-1}(B_d(0,6\delta\sqrt{1+\rho^2})) \subseteq U_\alpha $.
Finally, we define $\Sigma_\alpha= \cup_{G\in\widetilde\Sigma_\alpha}\{W_{\alpha,G}\}$.  
We will be interested in the set of manifolds $\Sigma=\cup_{\alpha\in\cA}\Sigma_\alpha$ (our sets of ``stable" leaves).

Also, for each $G \in\widetilde \Sigma_\alpha$ we denote by $\widehat{\Gamma}^{\ell,s}_c(\alpha,G)$  the $\Cs^s$ sections of the fiber bundle on $W^+_{\alpha,G}$,  with fibers $\wedge^\ell(T^*M)$, which vanish in a neighborhood  of $\partial W_{\alpha,G}$. We define the  norm
\begin{equation} \label{testnorm} 
\| g \|_{\widehat{\Gamma}^{\ell,s}_c (\alpha,G)} \doteq 
\sup_{\bar{i}}  \| g_{\alpha,\bar{i}} \circ \Theta_{\alpha}^{-1} \circ G \|_{\Cs^s(B_{d_1}(0,2\delta))}.
\end{equation}
Consistent with this choice, we equip $\Omega^{\ell}_r (M) $ with the norms, for $s \leq r$, 
\begin{equation} \label{def:manifoldnorm}
\| h \|_{\Omega^\ell_s} = \sup_{\alpha \in \cA, \bar{i} \in
  \cI_{\ell}}  \| h_{\alpha,   \bar{i}} \circ \Theta_{\alpha}^{-1}
\|_{\Cs^s(\bR^d)}.
\end{equation} 
Let  $\widehat{\cV}^{s}(\alpha,G)$ be the set of $\Cs^{s}(U_{\alpha,G}) $ vector fields, where $ U_{\alpha, G} $ is any open set such that $ U_{\alpha} \supset U_{\alpha, G} \supset  W_{\alpha,G}^+$.

To allow enough flexibility in the previous construction (flexibility that will be essential in Section \ref{sec:splitting}) we introduce the possibility of further choices: we choose sets $\Gamma^{\ell,s}_c(\alpha,G)$ that are dense in $\widehat{\Gamma}^{\ell,s}_c(\alpha,G)$ in the $\Cs^s$ norm for each $s<r$. Also we choose sets $ \cV^{s}(\alpha,G)\subseteq \widehat{\cV}^{s}(\alpha,G)$ that contain the push forward of any constant vector field under the coordinate map $\Theta_\alpha^{-1}$ and with the property that there exists $C_s>0$ such that, for each $v,w\in \cV^{s}(\alpha,G)$,  $C_s\cdot [v,w]\in \cV^{s-1}(\alpha,G)$. 
Finally, we ask the following extension property: there exists $C_s\in (0,1)$ such that for each
\begin{equation}\label{eq:extensionproperty}
v\in \cV^{s}(\alpha,G)\Longrightarrow \exists\; \bar v\in\cV^s(M):\begin{cases} &\hskip-.3cm \bar v|_{U_{\alpha,G}}=v\\
& \hskip-.3cm\forall \beta\in\cA, G'\in \widetilde\Sigma, C_s \bar v\in\cV^{s}(\beta,G'),
\end{cases}
\end{equation}
where by $\cV^s(M)$ we mean the $\Cs^s$ vector fields on $M$.
\begin{rmk} The norms and the Banach spaces we are going to define should have an index specifying their dependencies on the choices of 
$\{\Gamma^{\ell,s}_c(\alpha,G)\}$ and $\{\cV^{s}(\alpha,G)\}$. We choose to suppress them to ease notation, since this creates no confusion.
\end{rmk}
Let $\volform$ be the $d_s$ volume form induced  on $W_{\alpha,G}$ by the push forward of Lebesgue measure via the chart $\Theta_\alpha^{-1}$.  Write $L_v$  for the Lie derivative along a vector field $v$. 
Finally, for all $\alpha \in \cA$,  $G\in \Sigma_\alpha$, 
$ g \in \Gamma^{\ell,0}_{c}(\alpha,G)$, $\bar v^p=(v_1,\dots,v_p)\in  \cV^{s}(\alpha,G)^p$ and $h \in \Omega^{\ell}_{r}(M)$ we define
\begin{equation} \label{eq:ldefinition}
 J_{\alpha,G,g, \bar v^p}(h) \doteq \int_{W_{\alpha,G}} \langle g,  L_{v_1} \cdots L_{v_p}
h\rangle \; \volform  \in \bR.
\end{equation}
Next, for all 
$p \in \mathbb{N}$, $q\in \bRp$, $ p + q < r-1 $,
$\ell \in \{ 0, \ldots , d \} $, let
\begin{equation}\label{eq:pre-dual}
\begin{split}
\mathbb{U}_{\rho,L, p,q,\ell} \doteq \bigg\{J_{\alpha,G,
    g,\bar v^p}\;\big|&\; \alpha\in\cA, G\in\Sigma_\alpha(\rho, L), g\in \Gamma^{\ell,p+q}_c, v_j\in \cV^{p+q},\\
 & \;\;   \|g\|_{\Gamma^{\ell,p+q}_{c}(\alpha,G)}\leq 1, \|v_j\|_{\Cs^{q+p}(U_{\alpha,G})}
\leq 1 \bigg\}, 
\end{split}
\end{equation}
where, for $v\in \cV^s(\alpha,G)$, $\|v\|_{\Cs^s(U_{\alpha,G})}=\sup_{\alpha,i}\|\langle v,e_{\alpha,i}\rangle\circ \Theta_\alpha^{-1}\|_{\Cs^s}$.

Lastly, for all $\rho\in[\rho_-,\rho_+]$, $L\geq L_0$, $p \in \bN,\, q\in \bRp$ and $h\in\Omega_{p+q}^{\ell}(M)$ we define the following norms
\begin{equation} \label{eq:norms0}
\|h\|^-_{\rho, L, p,q,\ell} \doteq \sup_{ J \, \in \, \mathbb{U}_{\rho,L, p,q,\ell} } J (h)
\quad \hbox{ and } \quad \|h\|_{\rho,L,p,q, \ell} \doteq \sup_{n\leq p}  \|h\|^-_{\rho,L,n,q,\ell}.
\end{equation}

\begin{rmk} \label{rem:rho} Let $L_+$ be such that $W\in\Sigma_\alpha(\rho_-,L_0)$ and $W\cap \widehat U_\beta\neq \emptyset$ imply $W\in \Sigma_\alpha(\rho_+,L_+)$.\footnote{ Such a $L_+$ exists by the third equation of \eqref{eq:chartproperty}.} We find it convenient to set $\pqnorm[-,p,q,\ell]{\cdot}=\pqnorm[\rho_-,L_0, p,q,\ell]{\cdot}$ and $\pqnorm[+, p,q,\ell]{\cdot}=\pqnorm[\rho_+,L_+,p,q,\ell]{\cdot}$. In the above norms and objects we will suppress systematically the indexes $\rho,L, +,-$, when it does not create confusion. In particular, any statement about a norm without $\pm$ is either meant for each $\rho\in[\rho_-,\rho_+],\;L\in[L_0,L_+]$ or for the $-$ norm. The introduction of these annoying $\pm$-norms has the only purpose to allow a simple use of mollificators, which in general are not bounded operators (see Appendix \ref{subsec:average} for details).
\end{rmk}
\begin{defn}\label{def:banach}
For all $p\in\mathbb{N}$, $q\in\bRp$, $\ell \in \{0, \ldots, d-1 \}$  we define the spaces 
$\mathcal{B}^{p,q, \ell}$ to be the closures   of $\Omega^{\ell}_{0,r}(M)$ with respect to the norm $\pqnorm[-,p,q,\ell]{\cdot}$  and the spaces 
$\mathcal{B}^{p,q, \ell}_+$ to be the closures   of $\Omega^{\ell}_{0,r}(M)$ with respect to the norm $\pqnorm[+,p,q,\ell]{\cdot}$. 
\end{defn}
\begin{rmk} Note that, by definition, the forms in $\cB^{p,q,\ell}, \cB^{p,q,\ell}_+$ are zero in the flow direction. This retains the relevant properties of Poincar\'e sections while working directly with the flow.
\end{rmk}

\begin{rmk}
For the norms just defined the reader can easily check that 
\begin{equation}
\label{eq:0q-norm} \begin{split}
 &\|h\|_{p,q,\ell}
\leq \|h\|_{p+1,q, \ell} \quad \hbox{\rm and }\quad  \|h\|_{p,q+1, \ell} \leq \|h\|_{p,q, \ell}  \\
& C_{p,q} \|h\|_{p,q,\ell}^-\leq \sup_{\{v_1,\dots,v_p\in \cV^{p+q}\;:\;   \|v_j\|_{\Cs^{p+q}}\leq 1\}} \|L_{v_1}\cdots L_{v_p} h\|_{0,p+q}^- \leq  \|h\|_{p,q,\ell}^-\\
&  \pqnorm[-,p,q,\ell]{h}\leq \pqnorm[+,p,q,\ell]{h}\; ;\quad \pqnorm{h}\leq C_\#\|h\|_{\Omega^\ell_p} . \\
\end{split} 
\end{equation}
\end{rmk}

\begin{rmk} \label{rmk:oldbpq}
The above spaces are the natural extensions of the spaces $\cB^{p,q}$  in \cite{GouezelLiverani06} to the case 
of $\ell$-forms.  
There the Banach spaces $\cB^{p,q}$ were defined  as the closure of $\Cs^{\infty}(M,\bR)$ with respect to
the following norm\footnote{ In \cite{GouezelLiverani06} the 
coordinate charts are chosen with slightly different properties. However for our purposes they are equivalent.}  
\begin{equation*}
\| h \|_{p,q} = \sup_{0 \leq k \leq p} \sup_{\stackrel{\scriptstyle \alpha \in \cA }{\scriptstyle G \in \tilde\Sigma}}
\;\sup_{\stackrel{\scriptstyle v_1, \ldots ,v_k \in \cV^{p+k}(\alpha,G)}{\scriptstyle  |v_i|_{\Cs^r} \leq 1 }} 
\sup_{\stackrel{\scriptstyle \varphi \in \Cs^{q}_0(W_{\alpha,G},\bR)}{|\varphi|_{\Cs^{k + q}} \leq 1}} 
\int_{W_{\alpha,G}} L_{v_1} \cdots L_{v_k} (h) \cdot \varphi \volform .
\end{equation*}
In particular, we can construct an isomorphism between the Banach space $\cB^{p,q}$ in \cite{GouezelLiverani06} and 
the present Banach space $\cB^{p,q,d-1}$. In fact, let $i_V:\Omega^{\ell+1}_{r}(M)\to\Omega^{\ell}_{0,r}(M)$ be the
interior product defined by  $i_V(h)(v_1,\dots,v_\ell)=h(V,
v_1,\dots,v_\ell)$. If $h\in \Omega^{\ell}_{0,r}(M)$, then
 $(-1)^{\ell}i_V(h \wedge dV)=h$ where $dV$ is any one form such that
 $dV(V)=1$. That is  $i_V(\Omega^{\ell+1}_{r}(M)) = \Omega^{\ell}_{0,r}(M)$. 
Since $i_V(\Omega^{\ell+1}_{0,r}(M)) = 0 $, we can define $i_{V,0}:
\Omega^{\ell + 1}_{r}(M) / \Omega^{\ell+1}_{0,r}(M)  \to \Omega^{\ell}_{0,r}(M)$ and obtain by a standard algebraic construction
that $i_{V,0}$ is a natural isomorphism. Next, we define
\begin{equation}\label{eq:vol-at-last}
\widetilde\omega \doteq i_{V}\omega,
\end{equation}
where $\omega$ is the Riemannian volume, and  the map ${\boldsymbol i}:\Cs^r(M)\to \Omega^{d-1}_{0,r}(M)$
by  ${\boldsymbol  i}(f) \doteq f \cdot \widetilde \omega$. Note that ${\boldsymbol i}$ is an isomorphism since $\Omega^d_{0,r}=\{0\}$.
It is easy to check that  ${\boldsymbol  i}$ extends to an isomorphism between $\cB^{p,q}$ and $\cB^{p,q,d-1}$.
It is even easier to construct an isomorphism between $\cB^{p,q}$ and $\cB^{p,q,0}$. Yet, such an isomorphism is not relevant here, indeed the reader can check that the transfer operators defined in section \ref{subsec:transfer} correspond to the transfer operators studied in \cite{GouezelLiverani06} only when acting on $\cB^{p,q,d-1}$.
\end{rmk}
We now prove some properties of the spaces $\mathcal{B}^{p,q,\ell}$. To this end we will use some
estimates on how fast an element of the space can be approximated by smooth forms. Such estimates are proven in Appendix \ref{subsec:average}. 

Given a form $h \in \Omega^\ell$, we can define a functional by
\begin{equation} \label{def:formsduality}
[\jmath(h)](g) \doteq   \langle h, g\rangle_{\Omega^{\ell}}   \qquad\; \textrm{where} \; \qquad g \in  
\Omega^\ell_s(M).
\end{equation}
The space of such functionals, equipped  with the $*$-weak topology of $\Omega^\ell_s(M)'$,\footnote{ As usual, given a Banach space $\cB$, by $\cB'$ we mean the dual space.} gives rise to the space $\cE_{s}^{\ell}$  of {\em currents} of regularity $s$.
 
The following extends \cite[Proposition 4.1]{GouezelLiverani06} and \cite[Lemma 2.1]{GouezelLiverani06}. 

\begin{lem}\label{lem:currents} For each $\ell \in\{0,\dots,d - 1\}$, there is a canonical injection
from the space $\mathcal{B}^{p,q,\ell}$ to a subspace of $\cE_{p+q}^{\ell}$.
\end{lem}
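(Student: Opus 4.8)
The plan is to construct the canonical map $\jmath$ from $\mathcal{B}^{p,q,\ell}$ to $\cE^\ell_{p+q}$ and show it is injective, essentially by upgrading the two cited results from \cite{GouezelLiverani06} to the setting of $\ell$-forms. First I would recall that on the dense subspace $\Omega^\ell_{0,r}(M)$ the map $\jmath$ is already defined by \eqref{def:formsduality}, namely $[\jmath(h)](g)=\langle h,g\rangle_{\Omega^\ell}$, and the first task is to show that $\jmath$ extends continuously to all of $\mathcal{B}^{p,q,\ell}$, i.e.\ that for a fixed test form $g\in\Omega^\ell_s(M)$ with $s$ large enough (any $s\geq p+q$, or $s>r$ depending on how one sets things up) one has $|\langle h,g\rangle_{\Omega^\ell}|\leq C_g\,\pqnorm[-,p,q,\ell]{h}$ for all $h\in\Omega^\ell_{0,r}(M)$. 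The natural way to get this bound is a partition-of-unity argument: write $h=\sum_\alpha\psi_\alpha h$, pull back to the chart, and foliate $B_d(0,2\delta)$ by the leaves $W_{\alpha,G}$ (graphs over $B_{d_1}(0,6\delta)$, which are admissible elements of $\Sigma_\alpha$ for the flow-box charts since they are affine graphs in the $s$-directions). Integrating $\langle h,g\rangle$ over $M$ becomes, by a co-area/Fubini argument, an integral over the transversal parameter of terms of the form $J_{\alpha,G,g_\alpha,\bar v^0}(h)$ against the smooth density coming from $\psi_\alpha$ and the volume form; since $g$ is smooth, the relevant $\|g_\alpha\|_{\Gamma^{\ell,q}_c}$ is finite, so each such term is bounded by $\pqnorm[-,0,q,\ell]{h}\leq\pqnorm[-,p,q,\ell]{h}$, and the transversal integral converges. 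This shows $\jmath$ extends to a bounded map $\mathcal{B}^{p,q,\ell}\to(\Omega^\ell_s(M))'$, and since the image functionals are by construction continuous in the $*$-weak topology, the target is $\cE^\ell_{p+q}$ (one should check the regularity index: the functional sees at most $p+q$ derivatives of the test form, matching the claim).

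Second, I would prove injectivity. The key point is that if $h\in\mathcal{B}^{p,q,\ell}$ satisfies $[\jmath(h)](g)=0$ for every $g\in\Omega^\ell_s(M)$, then $h=0$ \emph{as an element of $\mathcal{B}^{p,q,\ell}$}, not merely as a distribution. Following \cite[Lemma 2.1]{GouezelLiverani06}, I would argue by contradiction via mollification: approximate $h$ in the $\pqnorm[-]{\cdot}$-norm by smooth forms $h_\eps=\mathbb{M}_\eps h$ using the mollificators of Appendix \ref{subsec:average}, which are bounded on the $\pm$-norms (this is exactly the reason those auxiliary norms were introduced, cf.\ Remark \ref{rem:rho}). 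For smooth $h_\eps$, the functional $J(h_\eps)$ for $J\in\mathbb{U}_{\rho,L,p,q,\ell}$ can be compared with the pairing $[\jmath(h_\eps)](g)$ by choosing test forms $g$ concentrated near a leaf $W_{\alpha,G}$ and approximating the leaf-integral defining $J$; since $h_\eps\to h$ in $\mathcal{B}^{p,q,\ell}$ and $[\jmath(h)]$ vanishes, one pushes the limit through to conclude $\pqnorm[-,p,q,\ell]{h_\eps}\to 0$, hence $h=0$. Here one uses that the $J_{\alpha,G,g,\bar v^p}$, being obtained by integrating Lie-differentiated smooth forms against smooth densities along a leaf, are limits of smooth pairings, so a current annihilating all smooth test forms also annihilates all the $J$'s once we are allowed to integrate by parts / transfer the Lie derivatives (legitimate for the smooth $h_\eps$).

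I expect the main obstacle to be the bookkeeping in the first step --- making the co-area decomposition precise, in particular checking that the graphs foliating a chart are genuinely admissible leaves in $\widetilde\Sigma(\rho,L)$ with the flow-box property \eqref{eq:chartproperty} (so that the flow direction $V=\partial_{x_d}$ is transverse to them or, if $\ell$-forms are null on $V$, that the pairing is unaffected), and tracking how the constants $C_g$ depend on the chart overlaps and the partition of unity (all absorbed into $C_\#$). A secondary subtlety is the regularity index matching: one must verify that the pairing $\langle h,g\rangle_{\Omega^\ell}$ for $h\in\mathcal{B}^{p,q,\ell}$ is continuous in $g$ for the $\Cs^{p+q}$ topology (not just some much higher $\Cs^s$), which requires that only $q$ derivatives hit $g$ in the transversal direction and $p$ in the leaf directions through the $v_j$'s — this is precisely the content of the second line of \eqref{eq:0q-norm} and should be invoked. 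Everything else is a routine adaptation of \cite[Proposition 4.1]{GouezelLiverani06} componentwise over the multi-indices $\bar i\in\cI_\ell^-$, using that $h\in\Omega^\ell_{0,r}(M)$ has no $dx_d$ component.
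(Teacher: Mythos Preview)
Your plan for the continuity of $\jmath$ (foliate each chart by flat leaves in $\Sigma_\alpha$ and apply Fubini) is correct and is exactly what the paper does, yielding \eqref{eq:current-0}.

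For injectivity, your sketch has a genuine gap. You propose to set $h_\eps=\bM_\eps h$ and use that $h_\eps\to h$ in $\cB^{p,q,\ell}$ together with $h_\eps=0$. The step ``$h_\eps=0$'' is fine (once one notes $[\jmath(\bM_\eps h)](g)=[\jmath(h)](\bM_\eps' g)=0$ and $\bM_\eps h\in\Cs^p$ by the second inequality of Lemma~\ref{lem:molli}). The problem is the convergence: Corollary~\ref{lem:molli-app-s} establishes $\|h-\bM_\eps h\|_{-,p,q,\ell}\to 0$ only for $h\in\cB^{p,q,\ell}_+$, whereas here $h$ lies in $\cB^{p,q,\ell}=\cB^{p,q,\ell}_-$. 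Your parenthetical that the mollifiers ``are bounded on the $\pm$-norms'' inverts the content of Lemma~\ref{lem:molli}: the first inequality there reads $\|\bM_\eps h\|_-\leq C\|h\|_+$, so boundedness goes from the stronger $+$-norm to the weaker $-$-norm, and no uniform-in-$\eps$ bound of the form $\|\bM_\eps h\|_-\leq C\|h\|_-$ is available. This is precisely why the $\pm$-norms were introduced (Remark~\ref{rem:rho}), not the other way round.

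The paper sidesteps this by mollifying on the test side rather than on $h$, and by working chart by chart. Fixing $W_{\alpha,G}\in\Sigma_\alpha$ and a defining sequence $h_n\to h$ in $\|\cdot\|_-$, one introduces the \emph{local} mollifier $\widetilde\bM_{\alpha,\eps}$ (supported in the single chart $U_\alpha$) and the identity \eqref{eq:test-test}, namely $\int_{W_{\alpha,G}}\langle g,\widetilde\bM_{\alpha,\eps}h'\rangle=[\jmath(h')](g_\eps)$ for an explicit smooth $g_\eps$. The decisive point is Remark~\ref{rem:loss}: for this local operator, restricted to leaves of $\Sigma_\alpha$, the $\pm$-loss disappears, yielding the uniform-in-$\eps$ bound
\[
\bigl|[\jmath(h_n-h_m)](g_\eps)\bigr|\leq C_\#\|g\|_{\Gamma^{\ell,q}_c}\|h_n-h_m\|_{0,q,\ell}
\]
in the $-$-norm. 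One can then exchange the $n$ and $\eps$ limits to get $\int_{W_{\alpha,G}}\langle g,h\rangle=\lim_\eps[\jmath(h)](g_\eps)=0$, hence $\|h\|_{0,q,\ell}=0$; the case $p>0$ follows via \eqref{eq:lie-molli} and \eqref{eq:molli3}. Your approach can be repaired along these lines, but the localization of Remark~\ref{rem:loss} is the ingredient you are missing.
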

\begin{proof}
Since we can foliate $M$  by manifolds in $\Sigma$, given the definitions \eqref{def:formsduality} and \eqref{eq:scalar0} we have
\begin{equation}\label{eq:current-0}
[\jmath(h)](g) \leq C_\# \pqnorm{h}\|g\|_{\Omega^\ell_{p+q}(M)}.
\end{equation} 
Thus $\jmath$ can be extended to a continuous immersion of $\cB^{p,q,\ell} $ in $\cE_{p+q}^{\ell}$.
 
To show injectivity consider a sequence  $\{h_n\}\subset
\Omega^{\ell}_{0,p+q}$  that converges to $h$ in $\cB^{p,q,\ell}$ such that $\jmath(h)=0$. Let $W_{\alpha,G}\in\Sigma_\alpha$.
In the following it is convenient to introduce the operators $\widetilde \bM_{\alpha,\ve}$ defined as in Defintion \ref{def:molli} with $\bar\Psi(x,y)_{\bar i,\bar j} =\tilde \psi_\alpha(\Theta_\alpha^{-1}(x))\delta_{\bar i,\bar j}$, $\tilde\psi_\alpha(\xi)=1$ if $\|\xi\|\leq 3\sqrt{1+\rho_-^2}$ and $\tilde\psi_\alpha(\xi)= 0$ if $\|\xi\|\geq 6\sqrt{1+\rho_-^2}$. Note that, for each $h'\in \Omega^{\ell}_{0,p+q}$,
\begin{equation}\label{eq:test-test}
\int_{W_{\alpha,G}}\langle  g , \widetilde\bM_{\alpha,\ve} h' \rangle \volform =[\jmath(h')](g_\ve)
\end{equation}
where 
\[ 
g_\ve(x) \doteq \sum_{\bar i}\omega_{\alpha, i}(x) J\Theta_\alpha(x) \int_{W_{\alpha,G}}\hskip-.3cm \volform (y) \;\tilde\psi_\alpha(y)\kappa(\Theta_\alpha(x)-\Theta_\beta(y)) \langle \omega_{\alpha, i}, g\rangle_{y} \in \Omega^\ell_r.
\]
By Lemma \ref{lem:molli} and Remark \ref{rem:loss} we have that
\[ 
\begin{split}
 & \int_{W_{\alpha, G}}\langle g , h_n   \rangle \volform = 
\lim_{\ve\to 0}\int_{W_{\alpha,G}}  \langle  g , \widetilde \bM_{\alpha,\ve} h_n \rangle \volform 
 =\lim_{\ve\to 0}[\jmath(h_n)](g_\ve),
\end{split} 
\] 

Moreover, by equation \eqref{eq:test-test}, Lemma \ref{lem:molli} and Remark \ref{rem:loss} it follows that 
\[ 
\begin{split}
\left| [\jmath(h_n)](g_\ve)-[\jmath(h_m)]( g_\ve )\right| &=\left| \int_{W_{\alpha, G}}\langle g , \widetilde \bM_{\alpha,\ve}(h_n-h_m)   \rangle \volform\right|\\
&\leq C_\# \| g \|_{\Gamma^{\ell,q}_c(\alpha,G)} \|h_n-h_m\|_{0,q,\ell}. 
\end{split}
\end{equation*}
Thus, we can exchange the limits with respect to  $n$ and $\ve$ to obtain
\[ \begin{split}
\int_{W_{\alpha, G}} \langle g , h   \rangle \volform &=\lim_{n \to \infty}\int_{W_{\alpha, G}}\langle g , h_n   \rangle \volform \\ 
 &= \lim_{\ve\to 0}\lim_{n\to\infty}[\jmath(h_n)]( g_\ve ) =
 \lim_{\ve\to 0}[\jmath(h)]( g_\ve )=  0.
\end{split} \]
which implies $\|h\|_{0,q,\ell}=0$. 
By similar computations, using equations \eqref{eq:lie-molli}  and \eqref{eq:molli3} to deal with the derivatives, we obtain
$\|h\|_{p,q,\ell}=0$. Thus  $\jmath$ is injective and we obtain the
statement of the theorem. 
\end{proof}
We conclude this section by proving a compactness result which is essential in the implementation of the usual Lasota-Yorke strategy.  The proof is exactly as the proof of \cite[Lemma 2.1]{GouezelLiverani06}.
\begin{lem}\label{lem:compact}
For each $q , p  > 0$, $p+q<r-1$, the unit ball of $\cB^{p,q,\ell}$ is relatively compact in $\cB^{p-1,q+1,\ell}$.
\end{lem}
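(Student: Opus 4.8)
The plan is to mimic the classical Arzelà–Ascoli/Rellich argument for anisotropic norms, exactly as in \cite[Lemma 2.1]{GouezelLiverani06}, adapted to the fact that our objects are now $\ell$-forms rather than functions (which only changes bookkeeping: one works component by component in the coordinate frames $\omega_{\alpha,\bar i}$). First I would fix a sequence $\{h_k\}$ in the unit ball of $\cB^{p,q,\ell}$; by density of $\Omega^\ell_{0,r}(M)$ we may assume each $h_k$ is smooth, at the cost of an arbitrarily small error in $\cB^{p-1,q+1,\ell}$, which is harmless for extracting a convergent subsequence. The goal is then to produce a subsequence that is Cauchy in $\pqnorm[-,p-1,q+1,\ell]{\cdot}$.

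The core point is the interpolation-type estimate: for a test datum $J_{\alpha,G,g,\bar v^{p-1}}\in\mathbb U_{\rho_-,L_0,p-1,q+1,\ell}$ one must bound $J_{\alpha,G,g,\bar v^{p-1}}(h_k-h_j)$. The idea, following \cite{GouezelLiverani06}, is to split this according to a scale $\ve>0$: regularize $g$ along the leaf $W_{\alpha,G}$ (and in the transverse directions) at scale $\ve$ using a mollification as in Appendix \ref{subsec:average}. On the ``smooth part'' one integrates by parts to move the derivatives $L_{v_i}$ onto the (now $\Cs^{q+1+(p-1)}=\Cs^{p+q}$-controlled after regularization) test object, which shows that this part of $J(h_k)$ is a continuous linear functional of $h_k$ that depends on $h_k$ only through its values tested against a \emph{compact} family of $\Cs^{p+q}$ data on the finitely many leaves needed to cover $M$; here one invokes that the unit ball of $\Cs^{p+q}$ sits compactly in $\Cs^{(p-1)+(q+1)-\eta}$-type spaces, i.e. an honest Ascoli argument, to extract the subsequence. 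On the ``rough part'' (the piece of $g$ at scales below $\ve$) one gains a factor $\ve^{q}$ or so against the $\pqnorm[-,p,q,\ell]{h_k}\le 1$ bound — this is precisely the quantitative approximation estimate proved in Appendix \ref{subsec:average} (Lemma \ref{lem:molli} and Remark \ref{rem:loss}), which say that replacing $g$ by its $\ve$-mollification costs $C_\#\ve^{q}\|g\|_{\Gamma^{\ell,q+1}_c}\,\pqnorm[-,0,q,\ell]{h}$ for the zeroth-order term, and analogously with the $L_{v_i}$'s inserted, costing $C_\#\ve\,\pqnorm[-,p,q,\ell]{h}$ for each additional derivative (this is where $p+q<r-1$ is used, to have room for the regularization). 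Combining, $|J(h_k-h_j)|\le C_\#\ve^{q} + (\text{Ascoli term at fixed }\ve)$; taking first $k,j\to\infty$ along the extracted subsequence and then $\ve\to 0$ gives $\limsup_{k,j}\pqnorm[-,p-1,q+1,\ell]{h_k-h_j}=0$, i.e. relative compactness.

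The main obstacle is bookkeeping the uniformity: one must check that the scale-$\ve$ mollified test data, as $(\alpha,G,g,\bar v^{p-1})$ ranges over the (infinite) index set $\mathbb U_{\rho_-,L_0,p-1,q+1,\ell}$, still form a set that, after restriction to a fixed finite covering of $M$ by leaves, is precompact in a slightly weaker norm — the leaves $W_{\alpha,G}$ vary over a family parametrized by $\cF_r(\rho_-,L_0)$, so one needs the $\Cs^r$-bound $L_0$ on the leaf-defining functions $F$ to get equicontinuity of the whole family of integrals in the leaf parameter as well. This is routine given the uniform $\Cs^r$ control built into $\widetilde\Sigma(\rho_-,L_0)$ and $\cF_r(\rho_-,L_0)$, but it is the step that has to be written carefully. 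Since, as the statement says, the argument is \emph{exactly} that of \cite[Lemma 2.1]{GouezelLiverani06} with the forms carried along passively, I would in practice simply cite that proof and indicate the two modifications (work componentwise in $\bar i$; use the form-valued mollifier $\widetilde\bM_{\alpha,\ve}$ of Lemma \ref{lem:currents}'s proof).
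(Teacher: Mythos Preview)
Your proposal is correct and takes the same approach as the paper, which simply states that the proof is exactly as in \cite[Lemma 2.1]{GouezelLiverani06}. Your sketch of that argument (mollify at scale $\ve$, split into a smooth part handled by Ascoli on a compact family of test data and a rough part controlled by $\ve$ times the stronger norm, then let $\ve\to 0$) is an accurate summary of the cited proof, with the only adaptation being the componentwise treatment of forms, as you note.
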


\section[Transfer operators]{Transfer operators and Resolvents} \label{secresolvent}

Let $\phi_t: M \to M$ be a $\Cs^r$ Anosov flow on a smooth Riemannian $d$-dimensional
compact manifold with $r \geq 2 $. The flow induces canonically an action $\phi_t^*$ on  $\ell$-forms. Such an action has nice spectral properties only when acting on Banach spaces of the type described  in section \ref{banachspace}. To apply Section \ref{banachspace} to the present context it suffices to specify all the  choices involved in the definition of the norms $\pqnorm{\cdot}$. 

First of all,  the ``special" direction is obviously given by the vector field $V$ generating the flow. In addition, we note the following.
\begin{rmk}
Without loss of generality, we can assume
\begin{equation} \label{eq:anconditions} 
 \left\{  
\begin{array}{l}
D_0\Theta_{\alpha}^{-1}\{(0,u,0)\;:\; u\in{\mathbb{R}}^{d_u}\}=E^u(\Theta_{\alpha}^{-1}(0))\\
D_0\Theta_{\alpha}^{-1}\{(s,0,0)\;:\; s\in{\mathbb{R}}^{d_s}\}=E^{s}(\Theta_{\alpha}^{-1}(0))\\
\Theta_{\alpha}^{-1}((s,u,t))=\phi_t\Theta_{\alpha}^{-1}((s,u,0)). 
\end{array}  
\right.
\end{equation}
\end{rmk}
Let $d_1=d_s$ and $d_2=d_u+1$. Given the continuity of the stable and unstable distribution we can choose $\delta$ so that equations \eqref{eq:cones} and \eqref{eq:cone-condition} are satisfied with $\rho_-=1, \rho_+=2$ and, for all $\alpha\in\cA$ and $x\in U_\alpha$, $(\Theta_\alpha)_*E^s(x)\subset \cC_{\frac 12}$. Choose $L_0$ large enough so that all the stable manifolds belong locally to $\Sigma(1, L_0/2)$. Note that there exists $\vuo>0$ such that, for all $ t \geq \vuo $,\footnote{ Indeed, if $x\in U_\alpha$, $v\in\bR^d$ and $v=v^u+v^s$, $v^u\in (\Theta_\alpha)_*(E^u(x)\times E^c(x))$, $v^s\in (\Theta_\alpha)_*(E^s(x))$, then $\|(\phi_{-t}\circ \Theta^{-1})_*(v^u)\|\leq \Cnz $ while $\|(\phi_{-t}\circ \Theta^{-1})_*(v^s)\|\geq \Cnz e^{\lambda t}$ and the result follows  by the fourth line of \eqref{eq:chartproperty}.}
\begin{equation} \label{eq:coneconditions} 
(\Theta_\beta\circ \phi_{-t}\circ\Theta_\alpha^{-1})_*(\cC_2)\subset \cC_1 .
\end{equation}
In addition,  for each $v \in (\Theta_\alpha^{-1})_*\cC_2$, $t\in\bRp$ we have  $\| (\phi_{-t})_* (v) \|> C_\#e^{\olambda t}\|v \|$.
Finally, we set $\cV^s=\widehat{\cV}^s $ and $\Gamma^{\ell,q}_c=\widehat{\Gamma}^{\ell,q}_c$. 
\begin{lem}\label{lem:extension}
With the above choices condition \eqref{eq:extensionproperty} is satisfied.
\end{lem}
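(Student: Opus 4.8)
The statement to prove is Lemma~\ref{lem:extension}: with the choices $\cV^s = \widehat\cV^s$ and $\Gamma^{\ell,q}_c = \widehat\Gamma^{\ell,q}_c$, the extension property \eqref{eq:extensionproperty} holds.

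Let me recall what \eqref{eq:extensionproperty} demands. Given $v \in \cV^s(\alpha, G) = \widehat\cV^s(\alpha,G)$, i.e. a $\Cs^s$ vector field defined on some open set $U_{\alpha,G} \supset W^+_{\alpha,G}$, we need to produce a global $\Cs^s$ vector field $\bar v$ on $M$ that (i) restricts to $v$ on $U_{\alpha,G}$, and (ii) for all $\beta \in \cA$ and $G' \in \widetilde\Sigma$, $C_s \bar v \in \cV^s(\beta,G') = \widehat\cV^s(\beta,G')$ for a uniform constant $C_s \in (0,1)$.

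Since $\cV^s = \widehat\cV^s$ here, property (ii) is essentially automatic once $\bar v$ is a $\Cs^s$ vector field on $M$ with $\Cs^s$ norm controlled by that of $v$: indeed $\widehat\cV^s(\beta,G')$ consists of all $\Cs^s$ vector fields on open neighborhoods of $W^+_{\beta,G'}$, and a global $\Cs^s$ field restricts to such; the factor $C_s$ just absorbs the (bounded, by compactness and the chart bounds \eqref{eq:chartproperty}) distortion between the various chart-based $\Cs^s$ norms used in the definition of $\|v\|_{\Cs^s(U_{\alpha,G})}$. So the only real content is the existence of a \emph{bounded} linear extension operator from $\Cs^s$ sections over $U_{\alpha,G}$ to $\Cs^s$ sections over $M$.

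Here is the plan. First, fix a smooth cutoff function $\chi$ on $M$ that equals $1$ on a neighborhood of $W^+_{\alpha,G}$ and is supported in $U_{\alpha,G}$; such a $\chi$ can be chosen with $\Cs^s$ norm bounded uniformly in $\alpha, G$ because the leaves $W^+_{\alpha,G}$ all have uniformly bounded geometry (they are graphs of functions in $\mathcal F_r(\rho,L)$ over balls of fixed radius, sitting inside charts with uniform bounds by \eqref{eq:chartproperty}). Then set $\bar v \doteq \chi \cdot v$, extended by zero outside $U_{\alpha,G}$; this is a well-defined $\Cs^s$ vector field on all of $M$, it agrees with $v$ on the neighborhood of $W^+_{\alpha,G}$ where $\chi \equiv 1$ (in particular on $W^+_{\alpha,G}$ itself — note one only needs agreement on $U_{\alpha,G}$ in the sense of the norm used; if literal agreement on all of $U_{\alpha,G}$ is wanted one shrinks $U_{\alpha,G}$ slightly, which is harmless), and by Remark~\ref{foo:r-norm} (submultiplicativity of the $\Cs^s$ norm) one gets $\|\bar v\|_{\Cs^s(M)} \leq C_\# \|\chi\|_{\Cs^s}\,\|v\|_{\Cs^s(U_{\alpha,G})} \leq C_\#^{-1}\cdot C_\#\|v\|_{\Cs^s(U_{\alpha,G})}$ with $C_\#$ uniform. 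Finally, choosing $C_s$ small enough (depending only on $s$, the atlas, the cone parameters) to dominate both the cutoff constant and the chart-change distortions appearing in comparing $\|\cdot\|_{\Cs^s(U_{\alpha,G})}$ across different $(\beta,G')$ gives $C_s\bar v \in \widehat\cV^s(\beta,G')$ for every $\beta,G'$, which is \eqref{eq:extensionproperty}.

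The main obstacle, such as it is, is purely bookkeeping: one must verify that the cutoff function $\chi$ — and hence the extension operator — can be taken with $\Cs^s$ bounds \emph{uniform} over the (infinite) family of leaves $W^+_{\alpha,G}$ and charts. This is where the uniform control built into \eqref{eq:chartproperty}, \eqref{def:leaves} (the $\Cs^r$ bound $L$ on the graphing functions), and the finiteness of the atlas $\cA$ all enter; given these, the uniformity is immediate. I would state this uniformity explicitly and then the lemma follows in a couple of lines.
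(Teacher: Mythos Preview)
Your approach has a genuine gap. The cutoff argument you propose requires a smooth function $\chi$ supported in $U_{\alpha,G}$ and identically $1$ near $W^+_{\alpha,G}$, with $\|\chi\|_{\Cs^s}$ bounded \emph{uniformly} over all admissible data. But recall the definition of $\widehat\cV^s(\alpha,G)$: the domain $U_{\alpha,G}$ is \emph{any} open set with $U_\alpha \supset U_{\alpha,G} \supset W^+_{\alpha,G}$. Nothing prevents $U_{\alpha,G}$ from being an arbitrarily thin tubular neighborhood of the leaf $W^+_{\alpha,G}$; in that case any cutoff supported in $U_{\alpha,G}$ and equal to $1$ near the leaf must transition from $1$ to $0$ across a region of vanishing width, forcing $\|\chi\|_{\Cs^s}$ to blow up. The uniform geometry of the leaves themselves, which you invoke, is irrelevant here --- what matters is the (uncontrolled) thickness of the domain $U_{\alpha,G}$ on which $v$ is given. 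Your remark about shrinking $U_{\alpha,G}$ does not help: shrinking only makes the problem worse.

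This is exactly why the paper resorts to the classical reflection method (Lichtenstein--Hestenes--Seeley). After a uniformly bounded $\Cs^r$ change of coordinates flattening the leaf, the domain $U_{\alpha,G}$ contains a cylinder $Y_{\bar a} = B(0,6\delta)\times \prod_i[-a_i,a_i]$ for some $\bar a$, possibly with very small entries. One then extends the vector field (component by component) across each face of the cylinder by an explicit reflection formula $\sum_k b_k f\circ I_{\bar a,2^k,i,\pm}$, where the coefficients $b_k$ solve a Vandermonde system so that derivatives up to order $\lfloor s\rfloor$ match across the boundary. Iterating across all faces extends $f$ to a fixed large box $Y_{(A,\dots,A)}$ with $\Cs^s$ norm controlled by a constant depending only on $s$ --- not on the thinness of the original $U_{\alpha,G}$. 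Only then does one apply a cutoff, now supported in the fixed large domain, which can indeed be taken with uniformly bounded $\Cs^s$ norm. The missing idea in your proposal is precisely this intermediate reflection step that decouples the extension bound from the geometry of $U_{\alpha,G}$.
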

\begin{proof}
To prove the Lemma it suffices to have a uniform estimate on the norm of an extension of a vector field.
This is in general a hard problem, but here we can exploit the peculiarities of our situation that allows to give a simple proof based on the well known reflection method that goes back, at least, to \cite{Lichtestein29, Hestenes41, Seeley64}. 

First, notice that, given $W_{\alpha,G}$, we can make a uniformly bounded $\Cs^{r}$ change of coordinates $\Xi$ such that $\Xi\circ \Theta_\alpha (W_{\alpha,G}^+)=B(0,6\delta)\times \{0\}$. Given any  vector field $v\in\widehat\cV^{s}(\alpha,G)$ the domain $\Xi\circ \Theta_\alpha(U_{\alpha,G})$ will always contain a ``cylinder" of the type $Y_{\bar a}=B(0,6\delta)\times (\times_{i=1}^{d_2}[-a_i,a_i])$, for some $\bar a\in[0,\delta]^{d_2}$, while $v$ will be mapped into the extension of a function $f\in\Cs^{s}(Y_a,\bR^d)$ with $C_\# \|v\|_{\Cs^{s}(U_{\alpha,G})}\leq \|f\|_{\Cs^{s}(Y_a)}\leq C_\# \|v\|_{\Cs^{s}(U_{\alpha,G})}$. Clearly it suffices that we extend $f$ to a fixed, sufficiently large, domain so that, after multiplying it by a function equal one on $Y_{(2\delta,\dots,2\delta)}$ and supported in the image, under $\Xi$, of the domain of the chart, we obtain the wanted vector field by the push forward $(\Theta_\alpha^{-1}\circ\Xi^{-1})_*$ and setting it to zero on the rest of $M$.

Let $I_{\bar a, \beta,i,\pm}(x)=(x_1, \dots, x_{i-1}, -\beta(x_i\mp a_i)\pm a_i, x_{i+1},\dots, x_d)$. Then, given $g\in \Cs^{s}(Y_{\bar a})$, we can define, for $i>d_1$,
\[
g^i(x)=\begin{cases}g(x)&\text{ for }\quad x\in Y_{\bar a}\\
\sum_{k=0}^{\lfloor s\rfloor} b_k f\circ I_{\bar a, 2^k,i,+}(x)& \text{ for }\quad x\in Y_{\bar a^{i}}, x_i>0\\
\sum_{k=0}^{\lfloor s\rfloor} b_k f\circ I_{\bar a, 2^k,i,-}(x) &\text{ for }\quad x\in Y_{\bar a^{i}}, x_i<0
\end{cases}
\]
where $a^{i}_k=a_k$ for $k\neq i$ and $a^i_i=[1+2^{1-p-\lfloor q\rfloor}]a_i$ and $\sum_{k=0}^{\lfloor s\rfloor} b_k2^{kn}=1$ for all $n\in\{0,\dots,\lfloor s\rfloor\}$. Note that the previous condition determines the $b_k$ uniquely since the Vandermonde determinant $\det( 2^{ij})$ is well known to be non zero. It is easy to check that $g^i\in\Cs^{s}(Y_{\bar a^i})$ and $\|g^i\|_{\Cs^{s}(Y_{\bar a^i})}\leq C_s \|g\|_{\Cs^{s}(Y_{\bar a})}$. Proceeding in such a way we can extend $g$ to the domain $Y_{(A,\cdots, A)}$ for any fixed $A>0$. We are left with the boundary of the domain determined by $B(0,6\delta)$, which is smooth. By localizing via a partition of unity and a smooth change of coordinates we can reduce ourselves to the case of extending a function from the half space to the all space, which can be handled as above \cite{Seeley64}. This concludes the proof.
\end{proof}

\begin{rmk}\label{rmk:small-times}
By standard hyperbolicity estimates $\vuo$ in \eqref{eq:coneconditions}  can be chosen so that, for all $t\geq \vuo$, the image of a manifold in $\Sigma$ can be covered by a collections of manifolds in $\Sigma$ with a uniformly bounded number of overlaps. The fact that this may be false for small times is a little extra problem present in the study of flows with respect to maps. We will handle such a problem by a further modification of the Banach space, see \eqref{eq:strong-norms-sigh}. 
\end{rmk}
This concludes the definition of the Banach spaces. Next we define the action of the flow.

\subsection{Properties of the transfer operator}\label{subsec:transfer}
Let $h \in \Omega^{\ell}_{0,r-1} (M)$, then $\forall t \in \bR$
\begin{equation} \label{eq:inj}
\phi_t^*h(V,v_2, \ldots , v_\ell) = h(V,(\phi_{t})_* v_2 , \ldots  ,(\phi_{t})_* v_\ell ) = 0. 
\end{equation}
Thus $\phi_{t}^*(\Omega^{\ell}_{0,r-1}) = \Omega^{\ell}_{0,r-1} $ for all $t\in \bR$.
In \eqref{def:reducedoperator} we defined the  operators $\cL_{t}^{(\ell)}: \Omega^{\ell}_{0,r-1} (M) \to \Omega^{\ell}_{0,r-1} (M)$, $t\in\bRp$, by
\[ 
\cL_{t}^{(\ell)} h  \doteq   \phi_{-t}^* h. 
\]

Following the Remark \ref{rmk:oldbpq},  in the special case of a $d-1$ form,  for
 $h \in \Omega^{d-1}_{0,r-1}(U_{\alpha}) $  we have $ h = \bar{h} \widetilde\omega $
where $\bar{h} \in \Cs^s(M,\bR)$ and $\widetilde\omega$ is defined in
\eqref{eq:vol-at-last}. Then\footnote{ Indeed, for $\omega \in \Omega^{d}_{s}(M)$ we have $\det(D\phi_t)\omega=\phi_{t}^*\omega$. Hence by
  \eqref{eq:vol-at-last}  we have  $\phi_t^*\widetilde\omega(v_1,\dots,v_{d-1}) =\det(D\phi_t)\widetilde\omega(v_1,\dots,v_{d-1}).$
That is $\phi_t^*\widetilde\omega=\det(D\phi_t)\widetilde\omega$.}
\begin{equation} \label{eq:oldto}
\cL_t^{(d-1)} h= \bar{h}\circ\phi_{-t}\det(D\phi_{-t})\widetilde\omega\doteq(\cL_t \bar{h})\widetilde\omega.
\end{equation}
Thus we recover the transfer operator acting on densities studied in \cite{ButterleyLiverani07}.

We begin with a Lasota-Yorke type inequality for $\mathcal{L}_t^{(\ell)}$.
\begin{defn} Let $p \in \bN$, $q \in \bRp$, $0 \leq \ell \leq d - 1 $ and  $ 0<\lambda < \overline{\lambda} $ where $\overline{\lambda}$ is as in the Anosov splitting. Let us define
\[
\begin{split}
&\sigma_\ell \doteq \htop -\lambda |d_s-\ell |  \\
&\sigma_{p,q} \doteq \min\{p,q\}\lambda\;.
\end{split}
\]
\end{defn}
\begin{lem} \label{lem:LY}
There exists $\vuo>0$ such that, for each $p\in\bN ,q\in\bRp$, $p+q<r-1$, $\ell \in\{0,\dots,d-1\}$, $\lambda\in (0,\olambda)$ and $ t > \vuo$ the linear operators $\cL^{(\ell)}_t$ can be uniquely extended to a bounded operator\footnote{ Which, by a harmless abuse of notation, we still designate by 
$\cL^{(\ell)}_t$.} $\cL^{(\ell)}_t\in L( \mathcal{B}^{p,q,\ell}_- ,\mathcal{B}^{p,q,\ell}_+)$. More precisely,
\[
\pqnorm[+,0,q,\ell]{\cL_t^{(\ell)}h} \leq C_{q} e^{\sigma_\ell t}\pqnorm[-,0,q,\ell]{h} .
\]
In addition, if $0<q\leq r-2$, 
\[
\pqnorm[+,0,q,\ell]{\cL_t^{(\ell)}h} \leq  C_{q,\lambda} e^{(\sigma_\ell -\lambda q)t}\pqnorm[-,0,q,\ell]{h}+ C_{q,\lambda} e^{\sigma_\ell t}\pqnorm[-,0,q+1,\ell]{h}
\]
and, if $p,q>0$,
\[
\begin{split}
\pqnorm[+,p,q,\ell]{ \cL_t^{(\ell)} h }\leq & C_{p,q,\lambda} e^{ (\sigma_\ell - \sigma_{p,q})t} \pqnorm[-,p,q,\ell]{ h} + C_{p,q,\lambda} e^{\sigma_\ell t} \pqnorm[-,p-1,q+1,\ell] {h} \\
&+ C_{p,q,\lambda}e^{\sigma_\ell t} \pqnorm[-,p-1,q+1,\ell]{X^{(\ell)}h}.
\end{split}
\] 
\end{lem}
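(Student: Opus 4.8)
The proof is a local computation in the flow-box charts of \eqref{eq:chartproperty}, \eqref{eq:anconditions}, combined with the cone conditions \eqref{eq:coneconditions} and the definition \eqref{eq:norms0} of the norms. First I would fix a manifold $W_{\alpha,G}\in\Sigma_\alpha$, a test form $g\in\Gamma^{\ell,p+q}_c(\alpha,G)$ with $\|g\|_{\Gamma^{\ell,p+q}_c}\leq 1$, and vector fields $v_1,\dots,v_p\in\cV^{p+q}$ with $\|v_j\|_{\Cs^{p+q}}\leq 1$, and estimate $J_{\alpha,G,g,\bar v^p}(\cL_t^{(\ell)}h)=\int_{W_{\alpha,G}}\langle g, L_{v_1}\cdots L_{v_p}\phi_{-t}^*h\rangle\,\volform$. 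Since $\phi_{-t}^*$ commutes with exterior derivative, and the Lie derivatives can be rewritten using $L_v\phi_{-t}^*=\phi_{-t}^*L_{(\phi_{-t})_*^{-1}v}=\phi_{-t}^*L_{(\phi_t)_*v}$, I would pull the whole integrand back: $\int_{W_{\alpha,G}}\langle g,\phi_{-t}^*(L_{w_1}\cdots L_{w_p}h)\rangle\,\volform$ where $w_j=(\phi_t)_*v_j$, up to lower-order commutator terms coming from the fact that $(\phi_t)_*v_j$ need not be $\Cs^{p+q}$-normalized and that $\volform$ is not exactly the pullback volume. Changing variables, this becomes an integral of $L_{w_1}\cdots L_{w_p}h$ against a modified test form over the image manifold $\phi_{-t}(W_{\alpha,G})$.

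**Decomposition into admissible leaves.** By Remark \ref{rmk:small-times} and \eqref{eq:coneconditions}, for $t>\vuo$ the image $\phi_{-t}(W_{\alpha,G})$ is a manifold whose tangent spaces lie in the narrow cone $\cC_1$, so it can be covered by a uniformly bounded-overlap family of leaves $W_{\beta,G'}\in\Sigma_\beta$; I would introduce a subordinate partition of unity $\{\rho_\beta\}$ to write the integral as a sum of pieces each of the form $\int_{W_{\beta,G'}}\langle \tilde g_\beta, L_{\tilde w_1}\cdots L_{\tilde w_p}h\rangle\,\volform$. The key point is the bookkeeping of constants: the modified test forms $\tilde g_\beta$ have $\Cs^{p+q}$ norm bounded by $C_\#\|g\|_{\Cs^{p+q}}$ times the Jacobian factors (the volume distortion $\det(D\phi_{-t}|_{E^s})$ along stable leaves contributes $\sim e^{-d_s\olambda t}$ decay, but the $\ell$-form pullback contributes $\|\wedge^\ell(D\phi_{-t}|_{\cdot})\|$, and the competition of these is precisely what produces $\sigma_\ell=\htop-\lambda|d_s-\ell|$ once one invokes Appendix \ref{app:topent} relating $\htop$ to the $d_s$-dimensional volume growth). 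This gives the first, basic bound $\pqnorm[+,0,q,\ell]{\cL_t^{(\ell)}h}\leq C_q e^{\sigma_\ell t}\pqnorm[-,0,q,\ell]{h}$.

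**The gain in regularity: the two-parameter estimates.** For the finer bounds I would split each vector field $\tilde w_j$ appearing after pullback into a component along $W_{\beta,G'}$ (which can be absorbed, after integration by parts on the leaf against the compactly supported $g$, into a test form of one lower derivative, costing nothing — this is the mechanism behind the $\pqnorm[-,p-1,q+1,\ell]{\cdot}$ terms) and a transverse component. The transverse component, by \eqref{eq:coneconditions} and the estimate $\|(\phi_{-t})_*v\|>C_\# e^{\olambda t}\|v\|$ on the cone $\cC_2$, means that the original $v_j$ had a transverse part that, pulled forward, was contracted: this produces the factor $e^{-\sigma_{p,q}t}=e^{-\min\{p,q\}\lambda t}$, with $\min\{p,q\}$ because we can only trade $q$ derivatives worth of test-function regularity for unstable expansion and $p$ derivatives worth for the vector fields. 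When $p=0$ only the $q$-trade is available, giving the middle estimate with $e^{(\sigma_\ell-\lambda q)t}$. The term $\pqnorm[-,p-1,q+1,\ell]{X^{(\ell)}h}$ arises because one of the commutators $[L_V,L_{v_j}]$ or the part of the integrand involving differentiation in the flow direction must be re-expressed using the generator $X^{(\ell)}$ acting on $h$; this is the standard mechanism (as in \cite{ButterleyLiverani07}) by which flows, unlike maps, force the generator into the Lasota–Yorke inequality.

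**Main obstacle.** The genuinely delicate point is not the hyperbolicity but the geometry of the covering: controlling the number of leaves and the overlap multiplicity when decomposing $\phi_{-t}(W_{\alpha,G})$ into elements of $\Sigma$ uniformly in $t>\vuo$, and simultaneously checking that the distortion of the induced volume form $\volform$ (which is the Lebesgue-type volume from the chart, not an invariant one) is controlled — this is exactly where the relation between topological entropy and volume growth (Appendix \ref{app:topent}) must enter to identify the exponential rate as $\htop$ rather than some cruder bound, and where one must be careful that the ``reference measure'' issues flagged in the introduction do not yet bite (they only bite later, in Lemma \ref{sublem:doest-2}). I would expect the bulk of the real work to be the careful propagation of the $\Cs^{p+q}$-norm bounds through the change of variables, using Remark \ref{foo:r-norm} for the composition and product estimates.
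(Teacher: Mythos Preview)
Your outline has the right architecture (cover the image by admissible leaves, split vector fields, integrate by parts for the tangent part, exploit contraction for the transverse part), but the order of operations is reversed from the paper and this matters. You propose to commute all Lie derivatives through $\phi_{-t}^*$ first, obtaining $w_j=(\phi_{-t})_* v_j$ (note: your formula $w_j=(\phi_t)_*v_j$ has the wrong sign --- from $L_v F^*=F^*L_{F_*v}$ with $F=\phi_{-t}$), and only then decompose on the image leaf $W_{\beta,G'}$. The difficulty is that the tangent component of $w_j$ has $\Cs^0$ norm of order $e^{\olambda t}$, so it is not an admissible vector field; after one integration by parts you are left with $\int_{W_{\beta,G'}} g'' \, L_{w_2}\cdots L_{w_p} h$ where the remaining $w_k$ are equally unnormalized, and this is not directly bounded by any $\|h\|_{\wp-1,q}$. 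The paper instead decomposes $v_j=v_j^s+v_j^u+v_j^V$ on the \emph{original} leaf $W_{\alpha,G}$, where all three pieces have bounded $\Cs^{p+q}$ norm. The piece $v_j^s$ tangent to $W_{\alpha,G}$ is integrated by parts there, yielding $\|\cL_t^{(\ell)}h\|_{\wp-1,q}$; only the unstable piece $v_j^u$ is pushed through $\phi_{-t}^*$, and for that piece alone one has the crucial estimate $\|(\phi_{-t})_* v^u\|_{\Cs^{p+q}}\leq C_\# e^{-\olambda t}$ needed to land in $\|h\|_{\wp,q}$ with a gain.

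Two further specific gaps. First, the $X^{(\ell)}h$ term does not arise from commutators: it comes directly from the flow component $v^V=\gamma V$ via the identity $L_{\gamma V}\cL_t^{(\ell)}h=-\gamma\,\cL_t^{(\ell)}X^{(\ell)}h$ (equation \eqref{LieFlow}), since $h\in\Omega^\ell_{0,r}$ kills the $i_V$ term; commutators from reordering $v^s,v^u,v^V$ only produce lower-$\wp$ contributions. Second, your account of the middle inequality (the $e^{(\sigma_\ell-\lambda q)t}$ gain when $p=0$) omits the actual mechanism: the paper splits the test form $g=(g-g_\ve)+g_\ve$ with $g_\ve$ one degree smoother, estimates the $g_\ve$ piece via the $(0,q+1)$ norm, and shows that on $g-g_\ve$ the composition with the leaf contraction $\Xi_t$ (with $\|D\Xi_t\|\leq C_\# e^{-\olambda t}$) gains the extra $e^{-\olambda q t}$ in the H\"older seminorm. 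Finally, the assembly of all these pieces into the clean three-term statement uses an auxiliary weighted norm $\sum_{\wp\le p}\mathfrak a^\wp\|\cdot\|_{\wp,q}^-$ together with iteration over a fixed time step $t_*$; without this device the raw one-step estimates (which involve $\|\cL_t h\|_{\wp-1,q}$ rather than $\|h\|_{p-1,q+1}$) do not immediately close.
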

The proof of this lemma is the content of Subsection \ref{subsec:LY}.
In particular, note that from the first and last equations  of the above Lemma and equation \eqref{eq:0q-norm} we have,
\begin{equation}\label{eq:pq-growth}
\pqnorm{\cL_t^{(\ell)}h} \leq  C_{p,q}e^{\sigma_\ell t} \pqnorm{h}.
\end{equation}

\begin{rmk}
Up to now we have extended, yet followed closely, the arguments in  \cite{ButterleyLiverani07}.
Unfortunately, in Section $7$ of \cite{ButterleyLiverani07}, the authors
did not take into consideration that for $t \leq \vuo$, $\phi_{t}(W_{\alpha,G})$ is not necessarily controlled by the cones defined by \eqref{eq:cones}, and  thus could be an inadmissible  manifold.
Such an issue can be easily fixed, in the present context, by introducing a dynamical norm (similarly to \cite{BaladiLiverani11}) as done below. This however is not suitable for studying perturbation theory, to this end a more radical change of the Banach space is required \cite{ButteleryLiveranierrata}.
\end{rmk}

To take care of the $t\leq \vuo$, we introduce the dynamical norm $\spqnorm{\cdot}$. For each $h \in \Omega_{r}^\ell(M)$, we set
\begin{equation}\label{eq:strong-norms-sigh}
\spqnorm{h} \doteq \sup_{s \leq \vuo} \pqnorm{\cL^{(\ell)}_{s} h}. 
\end{equation}
Thus we can define $\widetilde{\cB}^{p,q,\ell} \doteq \overline{\Omega_{0,r}^\ell}^{\spqnorm{\cdot}}\subset \cB^{p,q,\ell}$. 

\begin{lem} \label{lem:LYstrong}
For each  $ p \in \bN, q \in \bRp$, $ p + q < r - 1$, $\ell \in \{ 0, \ldots, d -1 \}$,  $\lambda<\olambda$, for each $t\in\bRp$, we have that $\cL_t^{(\ell)}\in L( \widetilde{\cB}^{p,q,\ell}, \widetilde{\cB}^{p,q,\ell} )$ and, more precisely,
\[
\spqnorm[p,q,\ell]{\cL_t^{(\ell)}h} \leq  C_{p,q} e^{\sigma_\ell t}\spqnorm[p,q,\ell]{h}.
\]
Moreover, if $p,q> 0$,
\[
\begin{split}
\spqnorm[p,q,\ell]{ \cL_t^{(\ell)} h }\leq & C_{p,q,\lambda} e^{(\sigma_\ell -\sigma_{p,q})t} \spqnorm[p,q,\ell]{ h} + C_{p,q,\lambda} e^{\sigma_\ell t} \spqnorm[p-1,q+1,\ell]{h} \\
&+ C_{p,q}e^{\sigma_\ell t} \spqnorm[p-1,q+1,\ell]{ X^{(\ell)}h}.
\end{split}
\] 
In addition, $\{\cL_t\}_{t\in\bRp}$ forms a strongly continuous semigroup.
\end{lem}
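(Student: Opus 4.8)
The plan is to deduce everything from the already-established Lasota--Yorke estimates of Lemma \ref{lem:LY}, which hold only for $t > \vuo$, by exploiting the definition \eqref{eq:strong-norms-sigh} of the dynamical norm $\spqnorm{\cdot}$ together with the semigroup property $\cL_t^{(\ell)}\cL_s^{(\ell)} = \cL_{t+s}^{(\ell)}$. First I would observe that for $h \in \Omega^\ell_{0,r}(M)$ and $t \in \bRp$,
\[
\spqnorm[p,q,\ell]{\cL_t^{(\ell)}h} = \sup_{s \leq \vuo}\pqnorm{\cL_s^{(\ell)}\cL_t^{(\ell)}h} = \sup_{s\leq\vuo}\pqnorm{\cL_{s+t}^{(\ell)}h},
\]
so that the question reduces to controlling $\pqnorm{\cL_u^{(\ell)}h}$ for $u$ in the window $[t, t+\vuo]$. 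For $u > \vuo$ one applies \eqref{eq:pq-growth} to write $\pqnorm{\cL_u^{(\ell)}h} \leq C_{p,q}e^{\sigma_\ell u}\pqnorm{h} \leq C_{p,q}e^{\sigma_\ell\vuo}e^{\sigma_\ell t}\pqnorm{h}$; but since we want the bound in terms of $\spqnorm{h}$ rather than $\pqnorm{h}$, the cleanest route is: if $u \geq \vuo$, split $u = (u-\vuo) + \vuo$ and apply \eqref{eq:pq-growth} with time $u-\vuo$ to $\cL_{\vuo}^{(\ell)}h$, getting $\pqnorm{\cL_u^{(\ell)}h} \leq C_{p,q}e^{\sigma_\ell(u-\vuo)}\pqnorm{\cL_{\vuo}^{(\ell)}h} \leq C_{p,q}e^{\sigma_\ell u}\spqnorm{h}$ (absorbing $e^{-\sigma_\ell\vuo}$ and the sup over $s\leq\vuo$ into the constant); if instead $u < \vuo$, then trivially $\pqnorm{\cL_u^{(\ell)}h} \leq \spqnorm{h} \leq C_{p,q}e^{\sigma_\ell u}\spqnorm{h}$ after inserting $e^{|\sigma_\ell|\vuo}$ into the constant. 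Taking the supremum over $u = s+t$, $s \leq \vuo$, yields the first inequality $\spqnorm[p,q,\ell]{\cL_t^{(\ell)}h} \leq C_{p,q}e^{\sigma_\ell t}\spqnorm[p,q,\ell]{h}$, and the extension to all of $\widetilde{\cB}^{p,q,\ell}$ follows by density since $\cL_t^{(\ell)}$ is thereby bounded in the $\spqnorm{\cdot}$-topology.

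For the second, sharper estimate (the one with the gain $e^{-\sigma_{p,q}t}$ on the leading term), I would run the same reduction but feed the stronger three-term bound from Lemma \ref{lem:LY} into the window. Fix $t$ and $s \leq \vuo$; if $s + t \geq \vuo$, apply the third inequality of Lemma \ref{lem:LY} at time $s+t$ to $h$ directly — or better, at time $s + t - \vuo + \vuo$ (again splitting off an initial time-$\vuo$ segment so that all resulting norms are $\pqnorm{\cdot}$ of $\cL_{s'}^{(\ell)}h$ with $s' \leq \vuo$, hence bounded by the corresponding $\spqnorm{\cdot}$-norms). This produces
\[
\pqnorm{\cL_{s+t}^{(\ell)}h} \leq C_{p,q,\lambda}e^{(\sigma_\ell-\sigma_{p,q})t}\spqnorm[p,q,\ell]{h} + C_{p,q,\lambda}e^{\sigma_\ell t}\spqnorm[p-1,q+1,\ell]{h} + C_{p,q}e^{\sigma_\ell t}\spqnorm[p-1,q+1,\ell]{X^{(\ell)}h},
\]
using that $X^{(\ell)}$ commutes with $\cL_s^{(\ell)}$ so that $\pqnorm[-,p-1,q+1,\ell]{X^{(\ell)}\cL_{s'}^{(\ell)}h} = \pqnorm[-,p-1,q+1,\ell]{\cL_{s'}^{(\ell)}X^{(\ell)}h} \leq \spqnorm[p-1,q+1,\ell]{X^{(\ell)}h}$. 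For the finitely many $s$ with $s+t < \vuo$ (which can only happen when $t < \vuo$) I would use $\pqnorm{\cL_{s+t}^{(\ell)}h} \leq \spqnorm{h}$ and absorb the bounded factor $e^{\vuo\max\{|\sigma_\ell-\sigma_{p,q}|,|\sigma_\ell|\}}$ into the constants, noting $\spqnorm[p,q,\ell]{h} \geq C_\# \spqnorm[p-1,q+1,\ell]{h}$ by \eqref{eq:0q-norm}, so the weaker terms dominate. Taking $\sup_{s\leq\vuo}$ gives the claimed inequality.

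Finally, for strong continuity of $\{\cL_t^{(\ell)}\}_{t\in\bRp}$ on $\widetilde{\cB}^{p,q,\ell}$: the semigroup law is immediate from $\phi_{-t}^*\phi_{-s}^* = \phi_{-(s+t)}^*$, and uniform boundedness on compact time intervals follows from the first estimate just proved; so by a standard density argument it suffices to check $\lim_{t\to 0^+}\spqnorm{\cL_t^{(\ell)}h - h} = 0$ for $h$ in the dense subspace $\Omega_{0,r}^\ell(M)$. On such smooth $h$, $t \mapsto \phi_{-t}^*h$ is continuous into $\Omega^\ell_{0,r-1}(M)$ in the $\Cs^{r-1}$-topology (indeed differentiable, with derivative $-X^{(\ell)}h = -L_V h$), and by the last line of \eqref{eq:0q-norm} the $\pqnorm{\cdot}$-norm is dominated by the $\|\cdot\|_{\Omega^\ell_p}$-norm; since $\spqnorm{g} = \sup_{s\leq\vuo}\pqnorm{\cL_s^{(\ell)}g} \leq C_\# \sup_{s\leq\vuo}\|\phi_{-s}^*g\|_{\Omega^\ell_{p}} \leq C_\# \|g\|_{\Omega^\ell_{p}}$ (using that $\phi_{-s}^*$ is bounded on $\Cs^{p}$ sections uniformly for $s\leq\vuo$, as $r-1 > p$), continuity of $t\mapsto\phi_{-t}^*h$ in $\Omega^\ell_{p}$ gives continuity in $\spqnorm{\cdot}$. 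The main obstacle in the whole argument is purely bookkeeping: making sure that every time one invokes Lemma \ref{lem:LY} the elapsed time genuinely exceeds $\vuo$ — this is exactly what the peeling-off of an initial time-$\vuo$ block achieves, and it is the reason the dynamical norm was introduced in the first place (cf. Remark \ref{rmk:small-times}); there is no analytic difficulty beyond this, only care with the small-time window $s+t<\vuo$ and with commuting $X^{(\ell)}$ past $\cL_s^{(\ell)}$.
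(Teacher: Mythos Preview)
Your proposal is correct and follows essentially the same strategy as the paper: reduce to Lemma \ref{lem:LY} via the definition \eqref{eq:strong-norms-sigh} of the dynamical norm, splitting into the cases $s+t \geq \vuo$ (where Lemma \ref{lem:LY} applies) and $s+t < \vuo$ (where the sup in the definition handles it trivially), and then argue strong continuity by density plus $\Cs^r$-continuity on smooth forms. The paper's proof is considerably terser than yours but identical in substance; one cosmetic slip in your write-up is the phrase ``the finitely many $s$ with $s+t<\vuo$'' --- $s$ ranges over an interval, not a finite set --- but the argument is unaffected.
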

\begin{proof} 
For $h\in\Omega_{0,r}^\ell$ equations \eqref{eq:pq-growth}, \eqref{eq:strong-norms-sigh} imply, for $t< \vuo$,
\[
\spqnorm{\cL_t^{(\ell)}h} \leq \max\left\{\spqnorm{h}, C_{p,q}e^{|\sigma_\ell|\vuo} \spqnorm{h}\right\}\leq C_{p,q}\spqnorm{h} ,
\]
while for $t\geq \vuo $ the required inequality holds trivially. The boundedness of $\cL^{(\ell)}_t$ follows. 

The second inequality follows directly from the above, for small times, and from Lemma \ref{lem:LY} for larger times.

To conclude, note that $\cL_t^{(\ell)}$ is strongly continuous on $\Omega_r^{\ell}(M)$ in the $\Cs^r$ topology.
Let $\{h_n\}\subset \Omega_{0,p}^\ell$ such that $\lim_{n\to\infty}\spqnorm{h_n-h}=0$, then, by the boundedness of $\cL^{(\ell)}_t$, 
\[ 
\spqnorm[p,q,\ell]{ \cL^{(\ell)}_t h-h} \leq C_\# \spqnorm[p,q,\ell]{h - h_n}+\|\cL^{(\ell)}_t h_n-h_n\|_{\Omega_p^{\ell}}
\]
which can be made arbitrarily small by choosing $n$ large and $t$ small. 
\end{proof}

\subsection{Properties of the Resolvent}\label{subsec:resol}
By standard results, see for example \cite{Davies1980}, the semigroups $\cL^{(\ell)}_t$ have generators $X^{(\ell)}$ which are closed operators on $\widetilde{\cB}^{p,q,\ell}$ such that $X^{(\ell)}\mathcal{L}_t^{(\ell)}=\frac{d}{dt}\mathcal{L}_t^{(\ell)}$.
Setting $R^{(\ell)}(z)=(z\Id-X^{(\ell)})^{-1}$ we have, for $\Re(z)>\sigma_\ell$, the following identity
\begin{equation}\label{eq:resoldef}
R^{(\ell)}(z)^n = \frac{1}{(n-1)!} \int_0^\infty t^{n-1} e^{-zt} \mathcal{L}_t^{(\ell)} dt,
\end{equation}
that can be easily verified by computing $X^{(\ell)}R^{(\ell)}(z)^n$ and $R^{(\ell)}(z)^nX^{(\ell)}$.
The next Lemma, which proof can be found at the end of Section \ref {subsec:LY}, gives an effective Lasota-Yorke inequality for the resolvent. 
\begin{lem} \label{lem:quasicompactness} 
Let $p \in \bN, q \in \bRp$, $p + q < r-1 $, $ z \in \bC $ such that $a = \Re(z) > \sigma_\ell $ then we have
\[
\spqnorm[p,q,\ell]{R^{(\ell)}(z)^n} \leq C_{p,q,\lambda} (a-\sigma_\ell)^{-n},
\]
and, for $p,q>0$,
\[
 \spqnorm{R^{(\ell)}(z)^n h} \leq C_{p,q,a,\lambda}\left\{ (a-\sigma_\ell+\sigma_{p,q})^{-n} \spqnorm{h} 
+  \frac{(|z|+1)}{(a-\sigma_\ell)^{n}} \spqnorm[p-1,q+1,\ell]{h}\right\}.
\]
The operator $R^{(\ell)}(z)$ has essential spectral radius bounded by $(a-\sigma_\ell+\sigma_{p,q})^{-1}$.
\end{lem}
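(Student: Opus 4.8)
The plan is to deduce the resolvent bounds from the semigroup Lasota–Yorke estimates of Lemma \ref{lem:LYstrong} via the integral representation \eqref{eq:resoldef}, and then obtain the essential spectral radius bound by a Hennion/Nussbaum-type argument combining these two inequalities with the compactness result of Lemma \ref{lem:compact}. First I would prove the crude bound $\spqnorm[p,q,\ell]{R^{(\ell)}(z)^n}\leq C_{p,q,\lambda}(a-\sigma_\ell)^{-n}$: insert \eqref{eq:resoldef}, move the norm inside the integral, apply the first inequality of Lemma \ref{lem:LYstrong} to get $\spqnorm[p,q,\ell]{\cL^{(\ell)}_t h}\leq C_{p,q}e^{\sigma_\ell t}\spqnorm[p,q,\ell]{h}$, and compute
\[
\spqnorm[p,q,\ell]{R^{(\ell)}(z)^n h}\leq \frac{C_{p,q}}{(n-1)!}\int_0^\infty t^{n-1}e^{-at}e^{\sigma_\ell t}\,dt\;\spqnorm[p,q,\ell]{h}=C_{p,q}(a-\sigma_\ell)^{-n}\spqnorm[p,q,\ell]{h},
\]
using $\int_0^\infty t^{n-1}e^{-(a-\sigma_\ell)t}\,dt=(n-1)!\,(a-\sigma_\ell)^{-n}$, valid since $a>\sigma_\ell$. (One should first check the representation \eqref{eq:resoldef} is legitimate on $\widetilde\cB^{p,q,\ell}$ for $\Re(z)>\sigma_\ell$, which follows from the semigroup bound making the integral absolutely convergent; this is already asserted in the excerpt.)

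Next, the refined two-term bound. Here I would iterate the second inequality of Lemma \ref{lem:LYstrong}. Writing $\cL^{(\ell)}_t=\cL^{(\ell)}_{t/n}\cdots\cL^{(\ell)}_{t/n}$ ($n$ factors) and applying the strong Lasota–Yorke inequality once, then the weak one to the remaining factors, one controls $\spqnorm[p,q,\ell]{\cL^{(\ell)}_t h}$ by $C_{p,q,\lambda}e^{(\sigma_\ell-\sigma_{p,q})t}\spqnorm[p,q,\ell]{h}$ plus terms of the form (constant)$\cdot e^{\sigma_\ell t}\cdot(\text{polynomial in }t)\cdot\big(\spqnorm[p-1,q+1,\ell]{h}+\spqnorm[p-1,q+1,\ell]{X^{(\ell)}h}\big)$, the polynomial factor arising from the repeated application. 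One then needs the elementary bound $\spqnorm[p-1,q+1,\ell]{X^{(\ell)}R^{(\ell)}(z)h}\leq \spqnorm[p-1,q+1,\ell]{zR^{(\ell)}(z)h-h}\leq (|z|\,(a-\sigma_\ell)^{-1}+1)\spqnorm[p-1,q+1,\ell]{h}$ from $X^{(\ell)}R^{(\ell)}(z)=zR^{(\ell)}(z)-\Id$, which is where the factor $(|z|+1)$ enters. Plugging into \eqref{eq:resoldef} and doing the Gamma-integrals — the $e^{(\sigma_\ell-\sigma_{p,q})t}$ term produces $(a-\sigma_\ell+\sigma_{p,q})^{-n}$, the $e^{\sigma_\ell t}$ terms with polynomial weight produce $C_{a}(a-\sigma_\ell)^{-n}$ up to the polynomial absorbing into the constant — yields the stated inequality
\[
\spqnorm{R^{(\ell)}(z)^n h}\leq C_{p,q,a,\lambda}\Big\{(a-\sigma_\ell+\sigma_{p,q})^{-n}\spqnorm{h}+\frac{(|z|+1)}{(a-\sigma_\ell)^n}\spqnorm[p-1,q+1,\ell]{h}\Big\}.
\]

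Finally, the essential spectral radius. I would invoke Nussbaum's formula $\rho_{\mathrm{ess}}(R^{(\ell)}(z))=\lim_n \|R^{(\ell)}(z)^n\|_{\mathrm{ess}}^{1/n}$ together with Hennion's theorem: the two inequalities above say that $R^{(\ell)}(z)^n$ is, up to the compact-from-$\cB^{p,q,\ell}$-into-$\cB^{p-1,q+1,\ell}$ embedding of Lemma \ref{lem:compact} (the second term uses the weaker norm $\pqnorm[p-1,q+1,\ell]{\cdot}$), a bounded operator with "essential part" of size $(a-\sigma_\ell+\sigma_{p,q})^{-n}$; hence $\rho_{\mathrm{ess}}(R^{(\ell)}(z))\leq (a-\sigma_\ell+\sigma_{p,q})^{-1}$. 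One small technical point is that the second inequality as stated controls the $\widetilde\cB$-norm of the image by the weaker $\widetilde\cB^{p-1,q+1,\ell}$-norm of $h$ rather than a $\cB^{p,q,\ell}$-to-$\cB^{p-1,q+1,\ell}$ statement; since $\widetilde\cB^{p,q,\ell}\hookrightarrow\cB^{p,q,\ell}$ and the dynamical norm $\spqnorm{\cdot}$ is equivalent to $\pqnorm{\cdot}$ on compact time intervals by \eqref{eq:pq-growth}, the compactness of Lemma \ref{lem:compact} transfers to the $\widetilde\cB$ spaces, so Hennion's argument applies verbatim.

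\textbf{Main obstacle.} The routine Gamma-function integrations are painless; the real bookkeeping is the iteration producing the refined inequality — keeping track of how the polynomial-in-$t$ factors generated by chaining $n$ copies of the Lasota–Yorke inequality interact with the exponential weights so that they are harmlessly absorbed into $C_{a}$ rather than degrading the rate, and correctly propagating the $X^{(\ell)}$ term through the resolvent identity to produce exactly the $(|z|+1)$ dependence. This is the step I would write most carefully.
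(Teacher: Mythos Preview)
Your first inequality and the Nussbaum argument for the essential spectral radius match the paper. For the refined two-term bound your route differs from the paper's, and your description contains a confusion: the ``iteration'' $\cL^{(\ell)}_t=(\cL^{(\ell)}_{t/n})^n$ is unnecessary, since the second inequality of Lemma~\ref{lem:LYstrong} already holds for \emph{all} $t\geq 0$ with rate $e^{(\sigma_\ell-\sigma_{p,q})t}$; no polynomial-in-$t$ factors appear. What actually works along your lines is to apply that inequality once to $\cL^{(\ell)}_t g$ with $g=R^{(\ell)}(z)h$, plug into the integral representation for $R^{(\ell)}(z)^{n-1}$, and use $X^{(\ell)}g=zg-h$ to bound $\spqnorm[p-1,q+1,\ell]{X^{(\ell)}g}\leq(|z|(a-\sigma_\ell)^{-1}+1)\spqnorm[p-1,q+1,\ell]{h}$. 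This gives the claim with one extra harmless factor $(a-\sigma_\ell+\sigma_{p,q})/(a-\sigma_\ell)$ absorbed into $C_{p,q,a,\lambda}$.

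The paper proceeds differently. It introduces the truncated resolvent $R_{n,\ell}(z)=\frac{1}{(n-1)!}\int_{\vuo}^\infty t^{n-1}e^{-zt}\cL^{(\ell)}_t\,dt$, bounds $R^{(\ell)}(z)^n-R_{n,\ell}(z)$ crudely by $C\vuo^n/n!\leq C(a-\sigma_\ell+\sigma_{p,q})^{-n}$ for $n$ large, and for $R_{n,\ell}(z)$ \emph{reruns the proof} of Lemma~\ref{lem:LY} at the level of the $\pqnorm{\cdot}$ norms, with one change: the flow-direction piece \eqref{eq:flow-est} is handled not by estimating $\spqnorm[p-1,q+1,\ell]{X^{(\ell)}h}$ but by writing $X\cL_t=\frac{d}{dt}\cL_t$ and integrating by parts in $t$, which produces the factor $|z|$ from differentiating $e^{-zt}$. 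The truncation also serves the Nussbaum step, letting one cover the unit $\spqnorm{\cdot}$-ball using the compactness of Lemma~\ref{lem:compact} (stated for the $\pqnorm{\cdot}$ scale) and then apply the truncated estimate, which controls $\spqnorm{R_{n,\ell}(z)(h-h_i)}$ directly by $\pqnorm{h-h_i}$ and $\pqnorm[p-1,q+1,\ell]{h-h_i}$. Your black-box use of Lemma~\ref{lem:LYstrong} plus the resolvent identity is more modular; the paper's integration-by-parts route is more hands-on but dovetails with the $\widetilde\cB$ versus $\cB$ bookkeeping you correctly flag as a technical point.
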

We are now in the position to obtain the required spectral properties.
\begin{prop} \label{lem:Nussbaum}
For $p+q <r-1$ the spectrum of the generator $X^{(\ell)}$ of the semigroup $\mathcal{L}_t^{(\ell)}$ acting on $\widetilde{\cB}^{p,q,\ell}$ lies on the left of the line $\{\sigma_\ell +ib\}_{b\in\bR}$ and  in the strip 
$\sigma_\ell \geq \Re(z) > \sigma_\ell-\sigma_{p,q} $ consists of isolated eigenvalues of finite multiplicity. Moreover, for $\ell=d_s$, $\htop $ is an eigenvalue of $X$. If the flow is topologically transitive $\htop$ is a simple eigenvalue. If the flow is topologically mixing, then $\htop$ is the only eigenvalue on the line $\{\htop+ib\}_{b\in\bR}$.
\end{prop}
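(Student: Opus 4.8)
The plan is to combine the quasi-compactness information from Lemma \ref{lem:quasicompactness} with the identification of the flat trace of $R^{(\ell)}(z)^n$ in terms of periodic orbits (equation \eqref{eq:addendum}), and finally to import the known spectral description of the Ruelle transfer operator acting on $0$-forms and $d-1$-forms from \cite{GouezelLiverani06,ButterleyLiverani07}. First I would recall that by Lemma \ref{lem:quasicompactness} the operator $R^{(\ell)}(z)$ has spectral radius at most $(a-\sigma_\ell)^{-1}$ and essential spectral radius at most $(a-\sigma_\ell+\sigma_{p,q})^{-1}$, so on the annulus between these two radii the spectrum consists of finitely many eigenvalues of finite algebraic multiplicity. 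Since $R^{(\ell)}(z) = (z\Id - X^{(\ell)})^{-1}$, the spectral mapping theorem for resolvents of generators of strongly continuous semigroups gives that $\mu \in \sigma(X^{(\ell)})$ precisely when $(z-\mu)^{-1}\in\sigma(R^{(\ell)}(z))\setminus\{0\}$; translating the two radius bounds through this correspondence yields exactly that $\sigma(X^{(\ell)})$ lies to the left of $\{\sigma_\ell + ib\}_{b\in\bR}$, and that in the strip $\sigma_\ell \geq \Re(z) > \sigma_\ell - \sigma_{p,q}$ it consists of isolated eigenvalues of finite multiplicity. One must check that the eigenvalues do not depend on $p,q$ and that the resolvent set is nonempty so the semigroup theory applies; this is standard given the chain of embeddings $\widetilde\cB^{p,q,\ell}\hookrightarrow\widetilde\cB^{p-1,q+1,\ell}$ and the density of $\Omega^\ell_{0,r}$.

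Next I would treat the case $\ell = d_s$, which requires identifying $\htop$ as an actual eigenvalue. Here the natural route is via the flat-trace formula \eqref{eq:addendum}: the poles of $z\mapsto\trf(R^{(d_s)}(z)^n)$ in $\Re(z)>\sigma_{d_s}$ correspond to eigenvalues of $X^{(d_s)}$, and by \eqref{eq:addendum} these are detected by the periodic-orbit sum, which for $\ell=d_s$ is (morally) the sum governing the measure of maximal entropy. Concretely, the cleanest argument is the one already flagged in the text: the dynamical determinant $\mathfrak D_{d_s}(z)$ is, by the sign computation \eqref{eq:sign-orientation} together with \eqref{eq:tracechar}, nothing but the dynamical determinant of the transfer operator associated to the measure of maximal entropy, and by the classical theory (Bowen, Parry–Pollicott, cited in the introduction) this has its leading zero exactly at $z=\htop$; equivalently one invokes that the transfer operator weighted to produce the Bowen–Margulis measure has leading eigenvalue $e^{\htop t}$. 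Passing through \eqref{eq:X-ruelle} then places a genuine eigenvalue of $X^{(d_s)}$ at $\htop$.

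For the simplicity and isolation statements I would argue as follows. Topological transitivity of the flow gives that the leading eigenfunction (the density of the maximal entropy measure, viewed in $\cB^{p,q,d_s}$) is unique up to scalar and the eigenvalue is algebraically simple — this is the standard Perron–Frobenius type argument adapted to these anisotropic spaces, exactly as carried out for the SRB case in \cite{GouezelLiverani06} and for the Bowen–Margulis case in the Markov picture. For topological mixing, I would rule out other eigenvalues on the line $\{\htop + ib\}_{b\in\bR}$ by the usual dichotomy: a peripheral eigenvalue $e^{i\beta t}$ of the group $\cL^{(d_s)}_t$ would force, via the corresponding eigendistribution, a nontrivial factor onto an irrational rotation, contradicting mixing; alternatively one uses that mixing plus quasi-compactness of the time-one map forces a spectral gap on the relevant space. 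I expect the \textbf{main obstacle} to be the last point — cleanly transferring the classical statements about the measure of maximal entropy (simplicity, spectral gap under mixing) from the symbolic/thermodynamic setting to the Banach spaces $\widetilde\cB^{p,q,d_s}$ of anisotropic currents, since the identification of the leading eigendata with the Bowen–Margulis measure is the step where the geometry of the spaces really has to be used rather than quoted. Everything else is bookkeeping with Lemma \ref{lem:quasicompactness}, the spectral mapping theorem, and the flat-trace identity.
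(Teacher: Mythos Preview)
Your treatment of the first assertion is correct and matches the paper: both invoke Lemma \ref{lem:quasicompactness} and the spectral mapping for resolvents, exactly as in \cite[Proposition 2.10, Corollary 2.11]{Liverani04}.

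Your route to showing $\htop\in\sigma(X^{(d_s)})$, however, differs substantially from the paper's. You propose to pull in the flat-trace machinery of Sections \ref{sec:flat}--\ref{sec:splitting}, the product formula \eqref{eq:prod}, and the classical (Markov-partition based) fact that $\zeta_{\Ruelle}$ has a pole at $\htop$, then read off the eigenvalue via \eqref{eq:X-ruelle}. This can be made rigorous, but it is circuitous, requires reordering the paper's logical dependencies (the machinery behind \eqref{eq:X-ruelle} is developed \emph{after} the present Proposition), and imports precisely the symbolic-dynamics input the paper is designed to bypass. The paper instead argues directly and geometrically: in the orientable case it builds an explicit element $\omega_s\in\Omega^{d_s}_{0,\varpi}$ (the normalised volume form on $E^s$, extended via the projection $\pi_s$ along $E^u\oplus E^c$), observes that $\phi_{-t}^*\omega_s=J_s\phi_{-t}\cdot\omega_s$, and tests $\cL_t^{(d_s)}\omega_s$ against the mollification $\omega_{s,\ve}=\bM_\ve\omega_s$ over any $W_{\alpha,G}\in\Sigma$ to obtain
\[
\int_{W_{\alpha,G}}\langle\omega_{s,\ve},\cL_t^{(d_s)}\omega_s\rangle\;\geq\;C_\#\,\vol\bigl(\phi_{-t}W_{\alpha,G}\bigr).
\]
The volume-growth bounds of Appendix \ref{app:topent} (Remark \ref{rem:h0}) then force the spectral radius of $R^{(d_s)}(a)$ to be \emph{exactly} $(a-\htop)^{-1}$; since the first inequality of Lemma \ref{lem:quasicompactness} rules out peripheral Jordan blocks, the Ces\`aro means $\tfrac1n\sum_{k<n}(a-\htop)^kR^{(d_s)}(a)^k$ converge strongly to an eigenprojector $\Pi$, and the lower bound shows $\Pi\omega_s\neq0$. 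This yields both the eigenvalue and a concrete candidate eigenvector without any periodic-orbit input, and feeds directly into the argument of \cite[Section 6.2]{GouezelLiverani08} for simplicity and the mixing dichotomy --- which, as you correctly anticipated, is where the residual work lies.
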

\begin{proof}
The first part of the Lemma is proven exactly as in \cite[Proposition 2.10, Corollary 2.11]{Liverani04}.
Next, let us analyze the case $\ell=d_s$. As explained in Remark \ref{rem:orientability-s}, we restrict ourselves to the case in which $E^s$ is orientable. Indeed, in the non orientable case we must study a slightly different operator (see Section \ref{app:orientable} for more details). We can then choose in each $E^s(x)$ a volume form $\tilde \omega_s$ on $E^s$ normalized so that $\|\tilde\omega_s\|=1$ and it is  globally continuous. Also let $\pi_s(x):T_xM\to E^s(x)$ be the projections on $E^s(x)$ along $E^u(x)\oplus E^c(x)$. Remember that $\pi_s$ is  $\varpi$-H\"older (see Appendix \ref{app:holo-est}). Next, define $\omega_s(v_1,\dots,v_{d_s})\doteq \tilde \omega_s(\pi_s v_1,\dots,\pi_s v_{d_s})$, by construction $\omega_s\in\Omega^{d_s}_{0,\varpi}$. Note that  $\phi_{-t}^* \omega_s=J_s\phi_{-t}\omega_s$, where $J_s\phi_{-t}$ is the Jacobian restricted to the stable manifold.
Note that, setting $\omega_{s,\ve}=\bM_\ve\omega_s$, for $\ve$ small enough, we have $\langle \omega_{s,\ve}, \omega_{s}\rangle\geq \frac12$. Hence,
\begin{equation}\label{eq:lower-op-bound0}
\int_{W_{\alpha, G}}\langle \omega_{s,\ve}, \cL_t \omega_s \rangle \geq \int_{W_{\alpha, G}} \hskip-.3cm\frac{ J_s\phi_{-t}}2\geq C_\#\int_{W_{\alpha, G}} \hskip-.3cm J_W\phi_{-t}\geq C_\# \vol(\phi_{-t}W_{\alpha, G}).
\end{equation}
Taking the sup on the manifolds and integrating in time, Appendix \ref{app:topent} (see in particular Remark \ref{rem:h0}) implies that the spectral radius of $R^{(d_s)}(a)$ on $\widetilde\cB^{q,\varpi, d_s}$ is exactly $(a-\sigma_{d_s})^{-1}$. 

By Lemma \ref{lem:quasicompactness} $R^{(d_s)}(a)$ is quasicompact on $\widetilde\cB^{p,q,d_s}$ and its peripheral spectrum does not contain Jordan blocks.\footnote{ A Jordan block would imply that $\pqnorm[p,q,d_s]{R^{(d_s)}(a)^n}$ grows at least as $C_\# n (a-\sigma_{d_s})^{-n}$ contrary to the first inequality in Lemma \ref{lem:quasicompactness}.} In turn, this implies that
\[
\lim_{n\to\infty}\frac 1n\sum_{k=0}^{n-1}(a-\sigma_{d_s})^k R^{(d_s)}(a)^k=\begin{cases}\Pi &\text{ if } (a-\sigma_{d_s})^{-1}\in\sigma_{\widetilde\cB^{p, q, d_s}}(R^{(d_s)}(a))\\
0 &\text{ otherwise,}
\end{cases}
\]
where $\Pi$ is the eigenprojector on the associated eigenspace and the convergence takes place in the strong operator topology of $L(\widetilde\cB^{p,q, d_s},\widetilde\cB^{p,q, d_s})$. Also by the second inequality in the statement of Lemma \ref{lem:quasicompactness} it follows that $\Pi$ extends naturally to an operator in $L(\widetilde\cB^{p-1,q+1, d_s}, \widetilde\cB^{p,q, d_s})$.
Since, by \eqref{eq:lower-op-bound0},
\[
\int_{W_{\alpha, G}}\langle\omega_{s,\ve},\Pi\omega_s\rangle >0
\]
we have that $\Pi\neq 0$ and $(a-\sigma_{d_s})^{-1}$ belongs  to the spectrum.
The other claimed properties follow by arguing as in \cite[Section 6.2]{GouezelLiverani08}.
\end{proof}

\subsection[Proof of the Lasota-Yorke]{Proof of the Lasota-Yorke inequality}\label{subsec:LY}
\begin{proof}[{\bf Proof of Lemma \ref{lem:LY}}]
For each $\mathfrak{a}\in (0,1)$, we introduce the norms
\[
\|h\|_{\rho,L, p,q, \ell,\mathfrak{a}} \doteq \sum_{\wp=0}^p \mathfrak{a}^\wp \|h\|^-_{\rho,L,\wp,q,\ell},
\]
From now on we will suppress the indexes $\rho,L$ as their values will be clear from the context.
As the above norms are equivalent to $\|\cdot\|_{p,q,\ell}$, it suffices to prove the required inequality for some fixed $\mathfrak{a}$ (to be chosen later).

For any $\wp,q \in \bN$, $\alpha \in \cA$, $G \in
\widetilde{\Sigma}(\rho_+,L_+)$, $v_i \in \cV^{\wp+q}(\alpha,G)$,  $g \in \Gamma^{\ell,\wp+q}_c(\alpha,G)$ 
such that $\|g\|_{\Gamma^{\ell,\wp+q}_c(\alpha,G)}\leq 1 $, we must estimate
\begin{equation} \label{eq:lybegin}
\int_{W_{\alpha,G}} \langle g, L_{v_1} \cdots L_{v_\wp} \cL_t^{(\ell)}h 
\rangle\;  \volform.
\end{equation}
First of all, we consider the atlas introduced at the beginning of Section \ref{banachspace}. 
For $\beta \in \cA$ and for each $t \in \bRp$, let $\{ V_{k,\beta} \}_{k \in K_{\beta}}$, $K_\beta\subset \bN$, be
the collection of connected components of $\Theta_\beta(\phi_{-t}(W_{\alpha,G}) \cap U_\beta)$.
Let $ \widetilde{K}_\beta \doteq \{ k \in K_\beta :  V_{k,\beta} \cap 
B(0,2\delta) \neq \emptyset \} $. For all $V_{k,\beta}$, let 
$\widetilde{V}_{k,\beta}$  be the connected component
of $\Theta_\beta(\phi_{-t} (W^+_{\alpha,G}) \cap U_\beta)$ which
contains $V_{k,\beta}$. Next, for each $k \in \widetilde{K}_{\beta}$ 
choose $x_{k,\beta} \in ( V_{k,\beta} \cap B(0,2\delta) )$. 
We choose $\vuo$ as in Remark \ref{rmk:small-times}. Then, for all $t\geq \vuo$, there exists $F_{k,\beta} \in
\cF_r(1,L_0)$ (see definition \eqref{def:leaves}) such that
$G_{x_{k,\beta},F_{k,\beta}} (\xi)= x_{k,\beta} + (\xi, F_{k,\beta}(\xi))$, $\xi \in
B(0,6\delta)$, and the graph of $G_{x_{k,\beta},F_{k,\beta}}$
is contained in $\widetilde{V}_{k,\beta}$. Next we consider the manifolds
\begin{equation}\label{eq:def-family}
\{ W_{\beta,G_{x_{k,\beta},F_{k,\beta}}} \}_{k \in \tilde{K}_\beta} \subset U_\beta.
\end{equation}
From now on let us write $W_{\beta,G_k} $ for $W_{\beta,G_{x_{k,\beta},F_{k,\beta}}}$. By the above construction $W_{\beta,G_k}\in\Sigma_-$.
Note that the above construction provides an explicit analogue of Lemma $7.2$ in \cite{ButterleyLiverani07}.
In particular, $\cup_{\beta\in\cA}\cup_{k\in\tilde K_\beta} W_{\beta,G_k}\supset \phi_{-t}W_{\alpha,G}$ with a uniformly bounded number of overlaps.
  
In the following it will be convenient to use the {\em Hodge operator} ``$*$'', see Appendix \ref{appendiceLie} for details. Let us start with the case $\wp = 0$.  By using formula \eqref{hodgedualform}, changing variables we obtain\footnote{ Since $\langle g,h\rangle$ can be seen as a function on $M$, we will often use the notation $F^*\langle g,h\rangle$ for  $\langle g,h\rangle\circ F$. Also, will use interchangeably  the notations $\det(D\phi_t)$ and $J\phi_t$, since, for all $t\in\bR$, $\det(D\phi_t)>0$.}
\begin{equation}\label{eq:mani-split} 
\int_{W_{\alpha,G}} \hskip-.3cm \langle g,  \cL_t^{(\ell)}h 
\rangle  \volform  =  \sum_{\stackrel{\scriptstyle  \beta \in  \cA}{k \in \widetilde{K}_\beta}}
\int_{W_{\beta,G_k}}  \hskip-.4cm(-1)^{(d-\ell)\ell}\psi_\beta \frac{J_W\phi_t }{ J\phi_{t}}
\langle *\phi_{t}^* (*g),h \rangle 
\volform\,,
\end{equation}
where $J_W\phi_t$ is the Jacobian of the change of variable $\phi_t \,:\, W_{\beta, G_k}\mapsto W_{\alpha,G}$.
Note that 
\[
\begin{split}
*\phi_{t}^* (*g)&=\sum_{\bar i,\bar j\in\cI_\ell}\langle \omega_{\beta,\bar i}, *\phi_{t}^{*} \! * \! \omega_{\alpha, \bar j}\rangle g_{\bar j}\circ \phi_t \cdot \omega_{\beta,\bar i}
\\
&=(-1)^{(d-\ell)\ell} J\phi_{t}\sum_{\bar i,\bar j\in\cI_\ell}\langle  \phi_{-t}^*\omega_{\beta,\bar i},\omega_{\alpha, \bar j}\rangle\circ \phi_t\cdot  g_{\bar j}\circ \phi_t\cdot  \omega_{\beta,\bar i}\,,
\end{split}
\]
where we have used \eqref{hodgedualform} again. Letting 
\[
\tilde g_t=(-1)^{(d-\ell)\ell} J\phi_{t}\sum_{\bar i\in\cI^-_\ell}\sum_{\bar j\in\cI_\ell}\langle  \phi_{-t}^*\omega_{\beta,\bar i},\omega_{\alpha, \bar j}\rangle\circ \phi_t\cdot  g_{\bar j}\circ \phi_t\cdot  \omega_{\beta,\bar i},
\]
by Remark \ref{rem:form-details} if follows that $\langle *\phi_{t}^* (*g), h\rangle=\langle \tilde g_t,h\rangle$.
Accordingly, we have
\begin{equation}\label{eq:est-base}
\begin{split}
&\|[ J\phi_{t}^{-1} J_W\phi_t \cdot\psi_\beta\cdot \tilde g_t]\circ \Theta_\beta^{-1}\circ G_k\|_{\Cs^q}
\leq C_\#  \sup_{\bar{j}, \bar{i}}   \| g_{\bar j}\circ \phi_t \circ\Theta_\beta^{-1}\circ G_k\|_{\Cs^q} \\
& \qquad \times \|[J_W\phi_t\cdot\langle  \phi_{-t}^*\omega_{\beta,\bar i},\omega_{\alpha, \bar j}\rangle\circ\phi_t ]\circ\Theta_\beta^{-1}\circ G_k\|_{\Cs^q}.
\end{split}
\end{equation}
Next, we estimate the factors of the above product. Note that there exist maps $\Xi_t\in \Cs^r(\bR^d,\bR^d)$ such that 
\begin{equation}\label{eq:Xit}
\Theta_\alpha\circ\phi_t \circ\Theta_\beta^{-1}\circ G_k=G\circ \Xi_t.
\end{equation}
Moreover, by the contraction properties of the Anosov flow, $\|D\Xi_t\|_{\Cs^{r-1}}\leq C_\# e^{-\overline{\lambda} t}$.\footnote{ See the Appendix in \cite{ButterleyLiverani07} for details.} Thus, by the properties of $r$-norms (see Remark \ref{foo:r-norm}),
\begin{equation}\label{eq:gphit-norm}
\| g_{\bar j}\circ \phi_t \circ\Theta_\beta^{-1}\circ G_k\|_{\Cs^q}\leq C_\# \|g\circ \Theta_\alpha^{-1}\circ G\|_{\Cs^q}.
\end{equation}
Next, note that $|\langle  \phi_{-t}^*\omega_{\beta, \bar i},\omega_{\alpha, \bar j}\rangle|$ is bounded by the growth of $\ell$-volumes while $J_W\phi_t$ gives the contraction of $d_s$ volumes in the stable direction. Clearly,  the latter is simply the inverse of the maximal growth of  $|\langle  \phi_{-t}^*\omega_{\beta, \bar i},\omega_{\alpha, \bar j}\rangle|$ which takes place for $d_s$ forms. By the Anosov property and Remark \ref{rem:form-details} follow that
\begin{equation}\label{eq:zero-test}
\|J_W\phi_t \circ\Theta_\beta^{-1}\circ G_k\cdot\langle  \phi_{-t}^*\omega_{\beta, \bar i},\omega_{\alpha, \bar j}\rangle\circ\phi_t \circ\Theta_\beta^{-1}\circ G_k\|_{\Cs^0}\leq C_\# e^{-|d_s-\ell|\olambda t}.
\end{equation}
To compute the $\Cs^q$ norm, for $q \geq 1$, we begin by computing the Lie derivative with respect to the vector fields $Z_i=\partial_{\xi_i}$, $i\in\{1,\dots, d_s\}$. Let us set $\Upsilon_t\doteq  \phi_t \circ\Theta_\beta^{-1}\circ G_k$. By \eqref{eq:lie-scalar}  and
\eqref{eq:AM2219} we have
\begin{equation} \label{eq:inductionreduction}
\begin{split}
L_{Z_i}&[\langle \phi_{-t}^*v,w\rangle \circ \Upsilon_t]=\Upsilon_t^*L_{\Upsilon_{t*}Z_i}\langle \phi_{-t}^*v,w\rangle\\
&=-\Upsilon_t^*[\operatorname{div}(\Upsilon_{t*}Z_i)\langle (\phi_{-t}^*v,w\rangle]+(-1)^{\ell(d-\ell)}\Upsilon_t^*\langle *L_{\Upsilon_{t*}Z_i} \!* \! w,\phi_{-t}^*v\rangle\\
&\quad+\Upsilon_t^*\langle w,L_{(\Theta_\beta^{-1}\circ G_k)_*Z_i}v\rangle.
\end{split}
\end{equation}
Hence, recalling again \eqref{def:crnorm} and its properties, for $q \in \bN$, 
\[
\begin{split}
&\| \langle \phi_{-t}^*v,w\rangle \circ \Upsilon_t \|_{\Cs^q}\leq C_{\#} \sup_{i} \| L_{Z_i}[\langle \phi_{-t}^*v,w\rangle
\circ \Upsilon_t] \|_{\Cs^{q-1}}\\
&\hskip5cm+2^{\lfloor q  \rfloor + 1 }\| \langle \phi_{-t}^*v,w\rangle \circ \Upsilon_t\|_{\Cs^0}\\
&\leq C_q\big\{\|\operatorname{div}(\Upsilon_{t*}Z_i)\langle \phi_{-t}^*v,w\rangle\|_{\Cs^{q-1}}+\|\langle L_{(\Theta_\beta^{-1}\circ G_k)_*Z_i}v,w\rangle\|_{\Cs^{q-1}}\\
&\quad+ \|\langle\phi_{-t}^*v,*L_{\Upsilon_{t*}Z_i}*w\rangle\|_{\Cs^{q-1}}+\| \langle \phi_{-t}^*v,w\rangle \|_{\Cs^0}\big\},
\end{split} 
\]
which can be used inductively to show that the $\Cs^q$ norm is bounded by the $\Cs^0$ norm. The case $q\in (0,1)$ can be computed directly as well, but is also follow by interpolation theory (e.g., see \cite[(2),(4) page 201]{Triebel}). The computation for the case $q \in \bRp$ easily follows.
Finally, by \eqref{eq:zero-test}, \eqref{eq:gphit-norm}, and \eqref{eq:est-base} we have that
\begin{equation} \label{eq:q-test}
\|\psi_\beta J\phi_t^{-1}J_W\phi_{t}\;*\phi_t*g\|_{\Gamma^{\ell,q}_c}\leq C_\#e^{-\olambda|d_s-\ell|t}.
\end{equation}
This provides an estimate for each term in \eqref{eq:mani-split}. We are left with the task of estimating the number of manifolds. Note that such number at time $t$ is bounded by a constant times the volume of the manifold $\phi_{-t}(W_{\alpha,  G})$. Moreover, it turns out that such volumes grow proportionally to $e^{h_{\text{\tiny  top}}(\phi_1)t}$ (see Appendix \ref{app:topent}). 
Thus
\begin{equation}\label{eq:est-two}
\pqnorm[0,q,\ell]{  \cL_t^{(\ell)}h }^-\leq C_qe^{-\olambda |d_s-\ell|t}e^{h_{\text{\tiny top}}(\phi_1)t} \pqnorm[0,q,\ell]{h}^-.
\end{equation} 
To establish the second inequality note that, by analogy with \cite[Lemma 6.6]{GouezelLiverani06}, for each $\ve > 0 $ there exists $g_\ve\in \Gamma_0^{\ell,r}(W_{\alpha, G})$ such that, setting $q'=\max\{0,q-1\}$,
\[ 
\begin{split}
& \left\| g_\ve - g  \right\|_{_{\Gamma^{\ell,q'}_c}} \leq  C_q   \ve^{q-q'}  \; ; 
 \left\| g_\ve \right\|_{\Gamma^{\ell,q}_c}  \leq C_q
\; ; \;  \left\|g_{\ve} \right\|_{\Gamma^{\ell,q+1}_c}  \leq  C_q \ve^{-1}. 
 \end{split} 
 \]
Thus, writing $g=(g-g_\ve)+g_\ve$ we can estimate to the two pieces separately. For $g_\ve$ we apply all the above computations using $q+1$ instead of $q$. To estimate the equivalent of \eqref{eq:gphit-norm} for the first piece, set $\psi_{t,k}=\phi_t\circ\Theta_\beta^{-1}\circ G_k$. For $ q \not \in \bN $, we have\footnote{ By $H^\eta(f)$ we mean the H\"older constant of $f$, i.e. $\sup_{\|x-y\|\leq \delta}\frac{\|f(x)-f(y)\|}{\|x-y\|^\eta}$.}
\[
\begin{split}
\|(g-g_\ve)_{\bar j}\circ\psi_{t,k} \|_{\Cs^q}&\leq C_q\ve^{q-q'}+\sup_{i_1,\dots, i_{\lfloor q\rfloor}}\hskip-.2cm H^{q-\lfloor q\rfloor}\left(L_{Z_{i_1}}\dots L_{Z_{i_{\lfloor q\rfloor}}}[ (g-g_\ve)_{\bar j}\circ\psi_{t,k}]\right)\\
&\leq C_q(\ve^{\min\{1,q\}}+e^{-\olambda q t}),
\end{split}
\]
the case $q \in \bN$ being easier. Choosing $\ve$ depending on $t$ yields the inequality
\begin{equation}\label{eq:zero-bis-norm}
\pqnorm[0,q,\ell]{\cL_t^{(\ell)}h}^- \leq  C_q e^{(h_{\text{\tiny top}}(\phi_1)-\olambda |d_s-\ell|-\olambda q)t} \|h\|_{0,q,\ell}^- +  
C_{q,t} \pqnorm[0,q+1,\ell]{h}^- .
\end{equation}
The second inequality of the Lemma follows then by iterating \eqref{eq:zero-bis-norm} for time steps $t_*>\vuo$ such that $C_qe^{(\sigma_\ell-\olambda q)t_*}\leq e^{(\sigma_\ell-\lambda q) t_*}$ and using \eqref{eq:est-two}.

We are left with the case $\wp > 0$. We can apply Lemma $7.4$ in \cite{ButterleyLiverani07} to decompose each  $v \in \cV^{\wp+q}(\phi_{t}(W_{\beta,G_{x_k, F_k}}))$ as $v = v^s + v^u + v^V$ where $v^s$ is tangent to
$\phi_{t}(W_{\beta,G_{x_k, F_k}})$, $v^u$ is ``close'' to the unstable
direction and  $v^V$ is the component in the flow direction.
Let  $\sigma \in \{s,u,V\}^\wp$, then \eqref{eq:lybegin} reads
\begin{equation} \label{eq:stable-begin}
\sum_{\sigma \in \{s,u,V\}^\wp}
\int_{W_{\alpha,G}} \langle g,
 L_{v_1^{\sigma_1}} \cdots L_{v_\wp^{\sigma_\wp}} \cL_t^{(\ell)}h \rangle
\volform .
\end{equation}

Since $ L_v L_w h =  L_w L_v h + L_{[v,w]}h  $, we can reorder the derivatives so as to have first those with respect to the stable components, then those with respect to the unstable, and then finally those  in the flow direction. By permuting the Lie derivative in such a way we introduce extra terms  with less than $\wp$ derivatives, and  these extra terms contribute only to weaker norms. 
For each $\sigma \in \{s,u,V\}^\wp$ let $p_s(\sigma) = \# \{i \, | \, \sigma_i = s \} $, $p_u(\sigma) = \# \{i \, | \, \sigma_i = u \} $ and $p_V(\sigma) = \# \{i \, | \, \sigma_i = V \} $. For each $\bar p=(p_s,p_u,p_V)\in\bN^3$, $p_s+p_u+p_V=\wp$, let $\Sigma_{\bar p}=\{\sigma\;:\; p_s=p_s(\sigma),\, p_u=p_u(\sigma), p_V= p_V(\sigma)\}$. Next, for each $\sigma\in\Sigma_{\bar p}$ we define the permutation $\pi_\sigma$  by $\pi_\sigma(i)<\pi_\sigma(i+1)$ for $i\not\in\{p_s, p_s+p_u\}$;  $\sigma_{\pi(i)}=s$ for $i\in\{1, \ldots , p_s\}$; $\sigma_{\pi(i)}=u$ for $i\in\{p_s+1, \ldots , p_s+p_u\}$ and $\sigma_{\pi(i)}=V$ for $i\in\{\wp - p_V + 1, \ldots , \wp \}$.
We can then bound the absolute value of \eqref{eq:stable-begin} by
 \begin{equation}\label{eq:stable-two}
 \begin{split} 
&\sum_{p_s + p_u + p_V = \wp } \sum_{\sigma\in\Sigma_{\bar p}} 
\bigg|\int_{W_{\alpha,G}} 
\langle g , L_{v_{\pi_\sigma(1)}^s} \cdots L_{v_{\pi_\sigma(p_s)}^s} 
L_{v_{\pi_\sigma(p_s + 1)}^u} \cdots L_{v_{\pi_\sigma(p_s + p_u)}^u} \\ 
&  \times L_{v_{\pi_\sigma(p_s + p_u + 1)}^V} \cdots L_{v_{\pi_\sigma(\wp)}^V}  
\cL_t^{(\ell)}h \rangle \volform\bigg| + \mathfrak{a}^{-\wp+1}C_{\wp,q} \pqnorm[\wp-1,q , \ell, \mathfrak{a}]{ \cL_t^{(\ell)} h } .
\end{split} 
\end{equation}
To simplify the notation we define\footnote{ We should write $\tilde{v}_{\sigma,i}$. However, since the following arguments are done at fixed $\sigma$, we drop the subscripts to ease the notation.} 
\[\tilde{v}_i = \left\{ 
\begin{array}{lcl} 
v_{\pi_\sigma(i)}^s &\textrm{for} & i \leq p_s \\
v_{\pi_\sigma(i)}^u &\textrm{for} & p_s + 1 \leq i \leq p_s + p_u  \\
v_{\pi_\sigma(i)}^V &\textrm{for} & i \geq p_s + p_u + 1 .
\end{array} \right.
\]
We will analyze the terms of \eqref{eq:stable-two} one by one. First of all, suppose $p_V\neq 0$, then $v_i=\gamma_i V$ for $i>p_s+p_u$. Using equation \eqref{LieFlow}, 
\begin{equation}\label{eq:flow-est}
\begin{split} 
&\left| \int_{W_{\alpha,G}} \langle g, L_{\tilde{v}_1} \cdots L_{\tilde{v}_\wp}  
\cL_t^{(\ell)}h \rangle \volform \right| \\
& =\left| \int_{W_{\alpha,G}} 
\langle  g, L_{\tilde{v}_1} \cdots L_{\tilde{v}_{\wp-1}} (\gamma_\wp\cL_t^{(\ell)}X^{(\ell)} h) \rangle  \volform\right|  \\
& \leq C_{\wp,q}\pqnorm[\wp-1,q +1, \ell]{\cL_t^{(\ell)} X^{(\ell)} h }^{-}+\mathfrak{a}^{-\wp+1}C_{\wp,q}\pqnorm[\wp-1,q, \ell, \mathfrak{a}]{\cL_t^{(\ell)} h }.
\end{split} 
\end{equation}
Next, suppose $p_V = 0 $ but $p_s \neq 0$. By \eqref{eq:lie-scalar},
\[ 
\begin{split} 
& \left|\int_{W_{\alpha,G}} 
\langle g, L_{\tilde{v}_1} \cdots L_{\tilde{v}_\wp}  
\cL_t^{(\ell)}h \rangle  \volform\right| \\ 
&\hskip-.2cm\leq   \left|\int_{W_{\alpha,G}} \hskip-.5cm L_{\tilde{v}_1}  \langle g,  L_{\tilde{v}_{2}} \cdots L_{\tilde{v}_{\wp}} 
\cL_t^{(\ell)}h \rangle  \volform \right| +
 C_{\wp,q}\left\| \cL^{(\ell)}_t h  \right\|_{\wp-1,q,\ell}^{-}. 
\end{split} 
\]
Then, by intrinsic differential calculus in the manifold $W_{\alpha,G}$, 
\begin{equation}\label{eq:stokes}
d(f i_v\volform)=L_v(f\volform)=(-1)^{d_s+1}(L_v f)\volform+f d(i_v\volform).
\end{equation}
Thus, by Stokes theorem, the integral is bounded by $C_{\wp,q}\left\| \cL^{(\ell)}_t h  \right\|_{\wp-1,q,\ell}^-$ and
\begin{equation}\label{eq:stable-final}
\begin{split} 
\left|\int_{W_{\alpha,G}} 
\langle g, L_{\tilde{v}_1} \cdots L_{\tilde{v}_\wp}  
\cL_t^{(\ell)}h \rangle  \volform\right|  
\leq &C_{\wp,q}\pqnorm[\wp-1,q,\ell]{ \cL^{(\ell)}_th}^{-}. 
\end{split} 
\end{equation}
Finally, suppose $p_u=\wp$. 
Recalling \eqref{eq:AM2219} yields
\[ 
\int_{W_{\alpha,G}} \hskip-.5cm \langle g,  L_{\tilde{v}_{1}} \cdots L_{\tilde{v}_{\wp}} 
\cL_t^{(\ell)}h \rangle  \volform=  \int_{W_{\alpha,G}}\hskip-.5cm  \langle g,  \cL_t^{(\ell)}L_{\phi_{-t*}\tilde{v}_{1}} \cdots L_{\phi_{-t*}\tilde{v}_{\wp}} 
h \rangle  \volform.
\]
By construction we have $\|\phi_{-t*}\tilde{v}_{i}\|_{\Cs^{\wp+q}}\leq C_\# e^{-\olambda t}$.  
Hence, using \eqref{eq:zero-bis-norm} and \eqref{eq:0q-norm} yields, for $ \wp < p$, \footnote{ By an abuse of notation we use $\tilde v_i$ also for the extensions of the vector fields.}
\begin{equation}\label{eq:unst-est}
\begin{split}
\left|\int_{W_{\alpha,G}} \hskip-.5cm \langle g,  L_{\tilde{v}_{1}} \cdots L_{\tilde{v}_{\wp}} 
\cL_t^{(\ell)}h \rangle  \volform  \right|
&\leq C_{\wp,q,t}\|L_{\phi_{-t*}\tilde{v}_{1}} \cdots L_{\phi_{-t*}\tilde{v}_{\wp}} h\|_{0,q+\wp+1,\ell}^-\\ 
&\hskip-2.5cm+C_{\wp,q}e^{(h_{\textrm{top}}(\phi_1)-\olambda|d_s-\ell|-q\olambda)t}\|L_{\phi_{-t*}\tilde{v}_{1}} \cdots L_{\phi_{-t*}\tilde{v}_{\wp}} h\|_{0,q+\wp,\ell}^-\\
&\hskip-3cm\leq C_{\wp,q}e^{(h_{\textrm{top}}(\phi_1)-\olambda|d_s-\ell|-(q+\wp)\olambda)t}\| h\|_{\wp,q,\ell}^-+C_{\wp,q,t}\| h\|_{\wp,q+1,\ell}^-.   
\end{split} 
\end{equation}
Note that the last term can be estimated by the $\|\cdot\|_{p-1,q+1,\ell,\mathfrak{a}}$ norm, except in the case $\wp=p$. In such a case we use instead \eqref{eq:est-two} which yields, for each $\wp\leq p$,
\begin{equation}\label{eq:unst-est2}
\begin{split}
\left|\int_{W_{\alpha,G}} \hskip-.5cm \langle g,  L_{\tilde{v}_{1}} \cdots L_{\tilde{v}_{\wp}} 
\cL_t^{(\ell)}h \rangle  \volform \right|
&\leq C_{\wp,q}e^{(h_{\textrm{top}}(\phi_1)-\olambda|d_s-\ell|-\wp\olambda)t}\| h\|_{\wp,q,\ell}^-.   
\end{split}
\end{equation} 
Collecting together \eqref{eq:zero-bis-norm}, \eqref{eq:flow-est}, \eqref{eq:stable-final}, \eqref{eq:unst-est} and \eqref{eq:unst-est2} we have,  for $t > \vuo$,
\[
\begin{split}
&\| \cL_t^{(\ell)} h \|_{p,q,\ell,\mathfrak{a}} \leq \,C_{q} e^{(h_{\textrm{top}}(\phi_1)-\olambda|d_s-\ell|-q\olambda)t}\|h\|_{0,q,\ell}^- +C_{q,t} \|h\|_{0,q+1,\ell}^-\\
&+C_{p,q}\mathfrak{a}\pqnorm[p-1,q, \ell,\mathfrak{a}]{\cL_t^{(\ell)}  h }+ \sum_{\wp=1}^p \mathfrak{a}^\wp C_{\wp,q}\pqnorm[\wp-1,q +1, \ell]{\cL_t^{(\ell)} X^{(\ell)} h }^{-}\\
&+\sum_{\wp=1}^{p-1} \mathfrak{a}^\wp C_{\wp,q}e^{(h_{\textrm{top}}(\phi_1)-\olambda|d_s-\ell|-\olambda (q+\wp))t}\| h\|_{\wp,q,\ell}^- +C_{p,q,\mathfrak{a},t}\|\cL_t^{(\ell)}h\|_{p-1,q+1,\ell,\mathfrak{a}}\\
&+\mathfrak{a}^p C_{p,q}e^{(h_{\textrm{top}}(\phi_1)-\olambda|d_s-\ell|-\olambda p)t}\| h\|_{p,q,\ell}^- .\\
\end{split}
\]
Then
\begin{equation}\label{eq:ultimapalla}
\begin{split}
\| \cL_t^{(\ell)} h \|_{p,q,\ell,\mathfrak{a}} \leq& \,C_{p,q} e^{(h_{\textrm{top}}(\phi_1)-\olambda|d_s-\ell|-\min\{p,q\}\olambda)t}\|h\|_{p,q,\ell,\mathfrak{a}} \\
&+C_{p,q,\mathfrak{a},t} \|h\|_{p-1,q+1,\ell,\mathfrak{a}}+C_{p,q,\mathfrak{a},t}\|\cL_t^{(\ell)}h\|_{p-1,q+1,\ell,\mathfrak{a}}\\
&+  C_{p,q} \mathfrak{a} \pqnorm[p-1,q +1, \ell,\mathfrak{a}]{\cL_t^{(\ell)} X^{(\ell)} h }\hskip-.5cm+C_{p,q} \mathfrak{a}\left\| \cL^{(\ell)}_t h  \right\|_{p,q,\ell,\mathfrak{a}}.
\end{split}
\end{equation}
While, if we do not use \eqref{eq:zero-bis-norm} and  \eqref{eq:unst-est}, we have, for $ \mathfrak{a} \in (0,1)$,
\[
\begin{split}
\| \cL_t h \|_{p,q,\ell,\mathfrak{a}} 
\leq &\,C_{p,q} e^{(h_{\textrm{top}}(\phi_1)-\olambda|d_s-\ell|)t}\|h\|_{p,q,\ell,\mathfrak{a}} \\
& +C_{p,q}\mathfrak{a}\pqnorm[p-1,q +1, \ell,\mathfrak{a}]{\cL_t^{(\ell)} X^{(\ell)} h }+C_{p,q,} \mathfrak{a}\left\| \cL^{(\ell)}_t h  \right\|_{p-1,q,\ell,\mathfrak{a}}.
\end{split}
\]
By induction, for which \eqref{eq:est-two} is the initial step, it follows that, for all $t\in\bRp$,
\begin{equation}\label{eq:almost-one}
\| \cL_t h \|_{p,q,\ell,\mathfrak{a}} 
\leq \,C_{p,q} e^{(h_{\textrm{top}}(\phi_1)-\olambda|d_s-\ell|)t}\|h\|_{p,q,\ell,\mathfrak{a}}.
\end{equation}
Next, we choose $t_*$ such that, for a given $\lambda < \olambda$, we have that\footnote{ In this case $C_{p,q}$ refers exactly to the first constant in \eqref{eq:ultimapalla}.}
\[
C_{p,q} e^{(h_{\textrm{top}}(\phi_1)-\olambda|d_s-\ell|)t_*}\leq e^{(h_{\textrm{top}}(\phi_1)-\lambda|d_s-\ell|)t_*}.
\]
Finally, choosing $\mathfrak{a}=e^{-\min\{p,q\}\olambda t_*}$, from \eqref{eq:ultimapalla} and \eqref{eq:almost-one} we have
\begin{equation}\label{eq:almost-two}
\begin{split}
\| \cL_{t_*} h \|_{p,q,\ell,\mathfrak{a}} \leq&\,  e^{(h_{\textrm{top}}(\phi_1)-\lambda|d_s-\ell|-\min\{p,q\}\lambda)t_*}\|h\|_{p,q,\ell,\mathfrak{a}} \\
&+C_{p,q,\mathfrak{a},t_*} \|h\|_{p-1,q+1,\ell,\mathfrak{a}}+C_{p,q}\mathfrak{a}\pqnorm[p-1,q +1, \ell,\mathfrak{a}]{\cL_{t_*}^{(\ell)} X^{(\ell)} h }.
\end{split}
\end{equation}
Iterating \eqref{eq:almost-two}, recalling  \eqref{eq:almost-one}, and using \eqref{eq:ultimapalla} for the remaining $t-\lfloor t/t_*\rfloor t_*$ time interval, the Lemma follows.
\end{proof}
\begin{proof}[{\bf Proof of Lemma \ref{lem:quasicompactness}}]
By Lemma \ref{lem:LYstrong} and equation \eqref{eq:resoldef} we have \footnote{ Note that for each $a > 0 ,\alpha \geq 0 $,  we have
$\int_{\alpha}^{\infty}  x^{n} e^{-ax} dx  = e^{-a \alpha} \sum_{k=0}^{n}  \frac{n!}{k!} \frac{\alpha^k}{a^{n-k+1}}$.
} 
\[ 
\spqnorm{R^{(\ell)}(z)^nh} \leq\frac {C_{p,q,\lambda}}{(n-1)!} \spqnorm{h} \int_0^\infty\hskip-.3cm
dt \; e^{-at+\sigma_\ell t}t^{n-1}= \frac{C_{p,q,\lambda}}{(a-\sigma_\ell)^{n}}\spqnorm{h}.
\]
Thus, provided $\Re(z)>\sigma_\ell$ , we have that $R^{(\ell)}(z)^n \in L(\widetilde\cB^{p,q,\ell},\widetilde\cB^{p,q,\ell})$.

Let us introduce a truncated resolvent
\[
R_{n,\ell}(z) \doteq \frac{1}{(n-1)!}\int_{\vuo}^\infty t^{n-1}e^{-zt}\cL_t^{(\ell)}\,.
\]
Then
\begin{equation}\label{eq:res-small-t}
\begin{split}
\hskip-.3cm\spqnorm{R^{(\ell)}(z)^n-R_{n,\ell}(z)}&\leq \int_0^{\vuo} \frac{C_{p,q,\lambda}t^{n-1}e^{-(a-\sigma_\ell)t}}{(n-1)!}\\
&\leq  C_{p,q,\lambda}\frac{\vuo^n}{n!}\leq C_{p,q,\lambda}(a-\sigma_\ell+\sigma_{p,q})^{-n},
\end{split}
\end{equation}
provided $n\geq \vuo e(a-\sigma_\ell+\sigma_{p,q})$.\footnote{ Since $n!\geq n^ne^{-n}$.}
To prove the second inequality of the Lemma it suffices to estimate $R_{n,\ell}(z)$. To this end we follow the same computation used to estimate the norm $\pqnorm{\cdot}$ in Lemma \ref{lem:LY}. The only difference being in the treatment of \eqref{eq:flow-est}. Indeed, since $X\cL_t=\frac{d}{dt}\cL_t$, integration  by parts yields
\[
\begin{split} 
& \left| \int_{W_{\alpha,G}} \int_{\vuo}^\infty\frac{t^{n-1} e^{-zt}}{(n-1)!}
\langle \gamma_\wp g, L_{\tilde{v}_1} \cdots L_{\tilde{v}_{\wp-1}} (\cL_t^{(\ell)}X^{(\ell)} h) \rangle  \volform\right|  \\
& \leq C_{\wp,q}|z|\pqnorm[\wp-1,q+1, \ell]{h}.
\end{split} 
\]
Hence
\begin{equation}\label{eq:ly-res-lim}
\pqnorm{R_n(z) h} \leq \frac{C_{p,q,a,\lambda}\pqnorm{h} }{(a-\sigma_\ell+\sigma_{p,q})^{n}} 
+  \frac{C_{p,q,a,\lambda}(|z|+1)}{(a-\sigma_\ell)^{n}} \pqnorm[p-1,q+1,\ell]{h}.
\end{equation}
The first part of the Lemma then follows  as in Lemma \ref{lem:LYstrong}.

To prove the bound on the essential spectral radius of $R^{(\ell)}(z)^n$ we argue  by analogy with \cite[Proposition 2.10, Corollary 2.11]{Liverani04}.
Nussbaum's formula \cite{Nussbaum70} asserts that if $r_n$ is the infimum of the $r$
 such that $\{R^{(\ell)}(z)^n h\}_{\|h\|\leq 1}$ can be covered by a finite
number of balls of radius $r$, then the essential spectral radius of 
$R^{(\ell)}(z)^n$ is given by $\liminf_{n\to\infty}\sqrt[n]{r_n}$.
Let 
\[
B_1\doteq\{h\in\widetilde\cB^{p,q,\ell}\;|\;\spqnorm{h}\leq 1\}\subset\{h\in \cB^{p,q,\ell}\;|\;\pqnorm{h}\leq 1\}\doteq B_2 . 
\]
By Lemma \ref{lem:compact},
$B_2$ is relatively compact in $\cB^{p-1,q+1,\ell}$. Thus, for each $\epsilon>0$ there are
$h_1,\dots,h_{N_\epsilon}\in B_2$ such that
$B_2\subseteq\bigcup_{i=1}^{N_\epsilon} 
U_\epsilon(h_i)$, where $U_\epsilon(h_i)=\{h\in\cB^{p-1,q+1,\ell}\;|\;\pqnorm[p-1,q+1,\ell]{h-h_i}<\epsilon\}$. 
For $h\in B_1\cap U_\epsilon(h_i)$, computing as in \eqref{eq:ly-res-lim} we have 
\[
\spqnorm{R_n(z)(h-h_i)}\leq\frac{C_{p,q,a,\lambda}}{(a-\sigma_\ell+\sigma_{p,q})^{n}} \pqnorm{h-h_i} 
+  \frac{C_{p,q,a,\lambda} e^{a\vuo}|z|}{(a-\sigma_\ell)^{n}} \epsilon.
\]
Choosing $\epsilon$ appropriately and recalling  \eqref{eq:res-small-t} we conclude that
for each $n\in\bN$  the set $R^{(\ell)}(z)^n(B_1)$ can be covered by a finite number of  
$\protect{\spqnorm{\cdot}-\text{balls}}$ of radius $C_{p,q,a,\lambda}(a-\sigma_\ell+\sigma_{p,q})^{-n}$, which implies the statement.
\end{proof}

\section{Flat Traces}\label{sec:flat}

In this section we define a flat trace and we prove some of its relevant properties. Formally, for $A\in L(\cB^{p,q,\ell},\cB^{p,q,\ell})$ we would like to define a trace by
\begin{equation}\label{eq:distrib-trace}
\int_M \sum_{\alpha,\bar i} \langle\omega_{\alpha,\bar i}, A\delta_{x,\bar i}\rangle_x
\end{equation}
where, for $x\in U_\alpha$,  $\delta_{x,\bar i}(f) = \langle \omega_{\alpha,\bar i},f\rangle_x\omega_{\alpha,\bar i}(x)$, for each $f\in\Omega_{r}^\ell$.
Unfortunately, the $\delta_{x,\bar i}$ do not belong to the  space $\cB^{p,q,\ell}$. We are thus forced to employ an indirect strategy. 
For each $x \in M$, $\bar{i} \in \cI_{\ell}$, $\alpha \in \cA$ and $\ve > 0 $ small enough, let\footnote{ Note that the definition is, given the freedom in the choice of $\kappa$, quite arbitrary.  We are not interested in investigating the equivalence of the various possible definitions.}  
\begin{equation}  \label{eq:normalization1}
j_{\ve, \alpha,\bar i,x}(y) \doteq \left\{
\begin{array}{lr} 
J\Theta_\alpha(x)  \psi_{\alpha}(x)   
\kappa_\ve \left(\Theta_\alpha(x) - \Theta_\alpha(y) \right) \omega_{\alpha,\bar i}(y) &  \textrm{ if } y \in U_{\alpha} \phantom{.}\\
0 & \textrm{ if } y \notin U_{\alpha},
\end{array}  \right.
\end{equation} 
where $\kappa_\ve(x)=\ve^{-d}\kappa(\ve^{-1}x)$.
At this stage we choose a particular $\kappa$ (this refers also to Definition \ref{def:molli}).
Let $ \kappa $ be 
$\kappa(x_1 , \ldots  , x_d ) = \kappa^{d-1}(x_1, \ldots , x_{d-1}) \kappa^1(x_d) $.
Given an operator $A\in L(\cB^{p,q,\ell}, \cB^{p,q,\ell})$ we define the {\em flat trace} by
\begin{equation} 
\begin{split} \label{traceoriginal}
\trf(A) \doteq  & \lim_{\epsilon \to 0}
\int_{M} \sum_{\alpha,\bar{i}}  \langle \omega_{\alpha,\bar i},A  (j_{\epsilon, \alpha,\bar i,x} ) \rangle_x\omega_M(x),
\end{split} 
\end{equation}
provided the limit exists.
To get a feeling for what we are doing, consider the operator $A$ acting on $ h \in \Omega^{\ell}$ defined by 
\begin{equation}\label{eq:kernel}
Ah(x) \doteq \int_M  \, a(x,y) [h(y)] 
\end{equation} 
where $a(x,y)\in L(\wedge^\ell T_y^*M, \wedge^\ell T_x^*M)$ depends continuously on $x,y$. A direct computation shows that
\[
\trf(A)=\int_M  \, \tr(a(x,x)).
\]
Also, one can verify directly that for a finite rank operator $A$ we have $\trf(A)=\tr(A)$, although we will not require this fact.

\begin{lem} \label{lem:traceorbits}
For each $s\in\bRp$, $0 \leq \ell \leq d -1 $, $\Re (z)>\sigma_\ell$ and $n \in \mathbb{N}$, 
we have that $ {\trf}(R^{(\ell)}(z)^n\cL_{s}^{(\ell)})$ is well defined. Moreover,
\begin{equation*}
 \trf(R^{(\ell)}(z)^n\cL^{(\ell)}_{s}) =   \frac{1}{(n-1)!}
\sum_{\tau \in \cT(s)} \frac{\chi_{\ell}(\tau)}{\mu(\tau)}    \left[\lambda(\tau)-s\right]^{n-1}\lambda(\tau)  e^{-z (\lambda(\tau)-s)},
\end{equation*} 
where $\cT(s)\doteq\{\tau\in\cT\;;\; \lambda(\tau) > s\}$, provided $ s \not\in \{ \lambda(\tau) \}_{\tau \in \cT}$ .
\end{lem}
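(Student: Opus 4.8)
The plan is to deduce Lemma \ref{lem:traceorbits} from a dynamical Atiyah--Bott--Guillemin type trace formula for the pullback operators $\cL_u^{(\ell)}$. Fix any admissible $p,q$ (the flat trace is read off the Schwartz kernel of the operator and does not depend on this choice). From \eqref{eq:resoldef}, the identity $\cL_t^{(\ell)}\cL_s^{(\ell)}=\cL_{t+s}^{(\ell)}$, and the substitution $u=t+s$ one gets, as operators on $\Omega^\ell_{0,r}(M)$,
\begin{equation}\label{eq:plan-resol}
R^{(\ell)}(z)^n\cL_s^{(\ell)}=\frac{1}{(n-1)!}\int_s^\infty (u-s)^{n-1}e^{-z(u-s)}\,\cL_u^{(\ell)}\,du ,
\end{equation}
the integral being absolutely convergent in operator norm since $\|\cL_u^{(\ell)}\|\le C_\# e^{\sigma_\ell u}$ by Lemma \ref{lem:LYstrong} and $\Re(z)>\sigma_\ell$. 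Substituting \eqref{eq:plan-resol} in the definition \eqref{traceoriginal} of $\trf$ and exchanging the $x$-- and $u$--integrals (licit, uniformly in $\ve$, by the same bound), the problem reduces to the following: setting
\[
T_\ve(u)\doteq\int_M\sum_{\alpha,\bar i}\langle\omega_{\alpha,\bar i},\cL_u^{(\ell)}(j_{\ve,\alpha,\bar i,x})\rangle_x\,\omega_M(x) ,
\]
a continuous function of $u>s$ for each $\ve>0$, one must show that
\[
\lim_{\ve\to 0}\int_s^\infty (u-s)^{n-1}e^{-z(u-s)}\,T_\ve(u)\,du=\sum_{\tau\in\cT(s)}\frac{\lambda(\tau)}{\mu(\tau)}\,\chi_\ell(\tau)\,(\lambda(\tau)-s)^{n-1}e^{-z(\lambda(\tau)-s)} ,
\]
after which \eqref{eq:plan-resol} gives the claim (the prefactor $\frac1{(n-1)!}$ being already present there).

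Next I would analyse $T_\ve(u)$ directly. The kernel of $\cL_u^{(\ell)}=\phi_{-u}^*$ is supported on the graph of $\phi_{-u}$ and is not continuous, so the formula from the discussion after \eqref{eq:kernel} does not apply directly; regularizing that kernel is exactly the role of the $j_{\ve,\alpha,\bar i,x}$. Unwinding the definitions, $T_\ve(u)$ is an integral over $x\in M$ of a $\Cs^0$ form-factor built from $\wedge^\ell(D_x\phi_{-u})^*$ against $J\Theta_\alpha(x)\psi_\alpha(x)\,\kappa_\ve\!\big(\Theta_\alpha(x)-\Theta_\alpha(\phi_{-u}(x))\big)$. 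Since $\kappa$ is a compactly supported approximate identity, for $\ve$ small $T_\ve(u)$ vanishes unless $x$ is near a point with $\phi_{-u}(x)=x$; by hyperbolicity \eqref{eq:asplit}, $\{x:\phi_{-u}(x)=x\}$ is empty unless $u\in\{\lambda(\tau)\}_{\tau\in\cT}$, in which case it is the union of the orbits of period $u$, and these are transverse fixed sets because $\Id-D_{\mathrm{hyp}}\phi_{-\lambda(\tau)}$ is invertible ($D_{\mathrm{hyp}}\phi_{-\lambda(\tau)}$ expands $E^s$ and contracts $E^u$, hence has no eigenvalue $1$). Therefore $T_\ve(u)\,du$ converges weakly on $(s,\infty)$, as $\ve\to 0$, to a sum $\sum_{\tau\in\cT(s)}c_\tau\,\delta_{\lambda(\tau)}$ of Dirac masses; pairing with the continuous weight $(u-s)^{n-1}e^{-z(u-s)}$, which is well defined at $u=\lambda(\tau)>s$ because $s\notin\{\lambda(\tau)\}$, and invoking dominated convergence (the contributions near the periods are bounded by $C_\#\,\lambda(\tau)\mu(\tau)^{-1}|\chi_\ell(\tau)|$, whose weighted sum converges, as recalled below) yields the displayed identity once $c_\tau=\lambda(\tau)\chi_\ell(\tau)/\mu(\tau)$ is established.

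The coefficient $c_\tau$ is computed by a local change of variables near the prime orbit $\tau_p$ underlying $\tau$, with $\lambda(\tau)=\mu(\tau)\lambda(\tau_p)$. In the flow-box charts adapted to the splitting as in \eqref{eq:anconditions}, a neighbourhood of a point of $\tau_p$ has coordinates $(\xi,\theta)$ with $\xi$ transverse to the flow and $\theta$ along it, and, for $u$ near $\lambda(\tau)$,
\[
\phi_{-u}(\xi,\theta)=\big(P^{\mu(\tau)}(\xi),\,\theta-(u-\lambda(\tau))\big) ,
\]
where $P$ is the Poincar\'e return map of $\tau_p$, $DP(0)=D_{\mathrm{hyp}}\phi_{-\lambda(\tau_p)}$ and $DP^{\mu(\tau)}(0)=D_{\mathrm{hyp}}\phi_{-\lambda(\tau)}$. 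Using the product form $\kappa=\kappa^{d-1}\otimes\kappa^1$ the mollifier splits into a transverse factor and a flow-direction factor; letting $\ve\to0$, the transverse factor produces the Jacobian $|\det(\Id-D_{\mathrm{hyp}}\phi_{-\lambda(\tau)})|^{-1}$, the flow-direction factor produces $\delta(u-\lambda(\tau))$, the integral over $\theta$ along the orbit (with $\sum_\alpha\psi_\alpha\equiv1$ on $\tau$) produces the prime period $\lambda(\tau_p)=\lambda(\tau)/\mu(\tau)$, and --- because $\cL_u^{(\ell)}$ acts on forms null in the flow direction, i.e.\ on the fibre $\wedge^\ell$ of the \emph{transverse} cotangent space (Remark \ref{rem:form-details}) --- summing the form-factor over $\bar i$ produces $\tr(\wedge^\ell D_{\mathrm{hyp}}\phi_{-\lambda(\tau)})$, with no orientation sign since we work in the orientable case $\epsilon(\tau)\equiv1$ (Remark \ref{rem:orientability-s}; cf.\ \eqref{eq:sign-orientation}). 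Hence $c_\tau=\lambda(\tau)\,\chi_\ell(\tau)/\mu(\tau)$, giving the asserted formula. Absolute convergence of the series --- hence finiteness of $\trf$ --- in the range $\Re(z)>\sigma_\ell$ follows from the exponential bound on $\#\{\tau\in\cT:\lambda(\tau)\le T\}$ (\cite{Margulis04}) together with the hyperbolic estimate $|\chi_\ell(\tau)|\le C_\# e^{-|d_s-\ell|\olambda\lambda(\tau)}$, of the type of \eqref{eq:zero-test}.

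I expect the main obstacle to be the rigorous passage to the limit $T_\ve(u)\,du\rightharpoonup\sum_\tau c_\tau\,\delta_{\lambda(\tau)}$ together with the exact computation of $c_\tau$: this is a dynamical Atiyah--Bott--Guillemin trace formula carried out on the anisotropic spaces of Section \ref{banachspace}. The delicate points are (i) justifying the interchange of $\lim_{\ve\to0}$ with the $u$--integral, which requires a bound on $T_\ve$ uniform in $\ve$ away from the periods and control of the region $u\to 0$ --- relevant only when $s=0$, where the factor $(u-s)^{n-1}$ and the product structure of $\kappa$ prevent a spurious near-identity contribution; (ii) carrying out the change of variables when $\tau$ passes through several of the charts $U_\alpha$, so that the partition of unity reassembles $\int_0^{\lambda(\tau_p)}d\theta$; and (iii) checking that the anisotropic mollifier $\kappa=\kappa^{d-1}\otimes\kappa^1$, built on the flow-box charts, interacts with the hyperbolic splitting so that precisely the factor $|\det(\Id-D_{\mathrm{hyp}}\phi_{-\lambda(\tau)})|^{-1}$ --- and no extra contribution from the stable or unstable directions --- survives in the limit.
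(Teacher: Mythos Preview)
Your approach is essentially the same as the paper's: both compute the flat trace by localizing the mollified kernel near periodic orbits, changing variables, and reading off the Guillemin--type contribution $\lambda(\tau)\mu(\tau)^{-1}\chi_\ell(\tau)$. The paper organizes this as a single integral over $M\times\bRp$ rather than separating out a function $T_\ve(u)$, but the computation is the same.

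Two technical points you gloss over and which the paper works out carefully. First, the localization to neighborhoods of periodic orbits: the paper invokes the shadowing theorem for flows (Pilyugin) to show that the support of the integrand is contained in disjoint tubes $\Omega_\tau^\ve$, and then proves a separate sub-lemma that the change of variables $\Xi_{\tau,\alpha,m}(\xi,t)=(\xi-\phi_{-t}^\alpha(\xi),\xi_d,t)$ is a \emph{global} diffeomorphism on each tube, not just locally invertible. Your flow-box description is correct near a single point of the orbit but does not address why the map is injective on the whole $\ve$-tube, which requires a hyperbolicity argument (compare the paper's Sub-Lemma preceding the change of variables).

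Second, and more substantively, your dominated-convergence argument does not work directly in the full range $\Re(z)>\sigma_\ell$. When you take the $\ve\to0$ limit, the error term in the local expansion of $F_{\alpha,\bar i,z}\circ\Xi_{\tau,\alpha,m}^{-1}$ involves \emph{derivatives} of $\langle\omega_{\alpha,\bar i},\phi_{-t}^*\omega_{\alpha,\bar i}\rangle$, which are in general of size $e^{C_\#\lambda(\tau)}$, not $e^{-|d_s-\ell|\olambda\lambda(\tau)}$; so summing over orbits only converges for $\Re(z)$ large (the paper's condition $a>\Cns$). The paper therefore proves the identity first for $a>\Cns$ with a quantitative error $O(\ve|z|(a-\Cns)^{-n})$, and then extends to $\Re(z)>\sigma_\ell$ by observing that both sides are analytic there --- the approximations are uniformly bounded in that region, hence have analytic accumulation points, which must agree since they agree for large $a$. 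Your bound $|\chi_\ell(\tau)|\le C_\# e^{-|d_s-\ell|\olambda\lambda(\tau)}$ is correct for the \emph{limiting} sum, but does not control the $\ve$-approximation; you would need the same analytic continuation step to close the argument.
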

\begin{proof}

By definitions \eqref{eq:resoldef}, \eqref{eq:normalization1} and  \eqref{traceoriginal} we have
\[
\begin{split}
 \trf(R(z)^n\cL_{s}) & =\lim_{\ve\to 0}    \int_{M \times \bRp }  \omega_M(x)  dt  \sum_{\alpha,\bar i} \frac{t^{n-1}e^{-zt}}{(n-1)!}\langle \omega_{\alpha,\bar i},\phi_{-t-s}^*j_{\ve,\alpha,\bar i,x}\rangle_x  \\
& =\lim_{\ve\to 0} \sum_{\alpha,\bar i}   \int_{ U_{\alpha} \times \bRp } \omega_M(x)  dt \frac{t^{n-1}e^{-zt}}{(n-1)!}J\Theta_{\alpha}(x) \psi_\alpha(x) \\
&\quad  \times \kappa_\ve(\Theta_{\alpha}(x) - \Theta_{\alpha}(\phi_{-t-s}(x)))  \langle \omega_{\alpha,\bar i},\phi_{-t-s}^*\omega_{\alpha,\bar i}\rangle_x .
\end{split} 
\]
We find convenient to rewrite the above as
\begin{equation}\label{eq:ftrace-one} 
\begin{split}
& \trf(R(z)^n\cL_{s})   =\lim_{\ve\to 0} \sum_{\alpha,\bar i}   \int_{ U_{\alpha} \times [s,\infty) } \hskip-.5cm F_{\alpha,\bar{i},z}(x,t-s) \frac{ \kappa_\ve(\Theta_{\alpha}(x) - \Theta_{\alpha}(\phi_{-t}(x)))}{J\Theta_{\alpha}(x)^{-1}}   \\
&F_{\alpha,\bar{i},z}(x,t)  \doteq  \frac{t^{n-1}}{(n-1)!}e^{-zt} \psi_\alpha(x) \langle \omega_{\alpha,\bar{i}}, \phi_{-t-s}^*\omega_{\alpha,\bar{i}} \rangle_x .
\end{split}
\end{equation}
Note that the integrand is non zero only for
$ \kappa_\ve(\Theta_{\alpha}(x) - \Theta_{\alpha}(\phi_{-t}(x))) \neq 0 $, that is for  $d(x,\phi_{-t}(x))< \ve$.  

Our next step is to break up the domain of the above integral in a convenient way. To this end for each closed orbit $\tau\in \cT$ and $\epsilon>0$ we define
\[
\begin{split}
\Omega_\tau^\epsilon\doteq\{(x,t)\in M\times\bRp \; | \; &
|t-\lambda(\tau)|\leq \Cnt \ve,\\
&  d(\phi_{-t_1}(x), \tau)\leq \Cnse\epsilon \quad \forall t_1\in(0, t)\},
\end{split} 
\]
where $\Cnse, \Cnt$ are constant to be specified shortly.
\begin{sublem}
There exists $\ve_0>0$ such that, for all $\ve<\ve_0$, the support of the integrand in \eqref{eq:ftrace-one} is contained in $\cup_{\tau\in\cT}\Omega^\ve_\tau$. In addition, if $\Omega^\ve_\tau\cap\Omega^\ve_{\tau'}\neq\emptyset$, then $\tau=\tau'$.
\end{sublem}
\begin{proof}
Let us recall the shadowing theorem for flows (Bowen \cite{Bowen72}), in the formulation explicitly given by 
Pilyugin in \cite[Theorem 1.5.1]{Pilyugin99}, adapted to our case. First of all we define the 
$(\ve,T)$-pseudo-orbits  $\mathfrak{t}(t): \bR \to M $ to be maps such that for any $t' \in \bR$ we have
$ d(\phi_{t}(\mathfrak{t}(t')),\mathfrak{t}(t + t') ) \leq \ve \text{ if }  |t| < T $.\footnote{ Note that we do not
require $\mathfrak{t}(t)$ to be continuous.} Then we have the following theorem.

\begin{thm}[\cite{Pilyugin99}] \label{pilucco}
Let $M$ be a smooth manifold and $\phi_t$ a $\Cs^2$ Anosov flow. 
There exists  $\ve_0>0 , \Cnse\geq 1 $ such that given a $(\ve, 1)$-pseudo orbit $\mathfrak{t}(t)$ with
$\ve < \ve_0$ there exists an orbit $\tau$,  a point  $p \in \tau$ and a reparametrization $\sigma(t)$ such that, for all $t,s \in \bR$, 
\[
 d( \mathfrak{t}(t) ,\phi_{\sigma(t)}(p) ) \leq \Cnse \ve \text{ where }  |\sigma(t)-\sigma(s) - t+s| \leq \Cnse \ve|t-s|.
\]
\end{thm}

Now we can start our proof. Given $x\in M$ and  $t\in\bR$ such that $d(x, \phi_{-t}(x))<\ve$, we can construct a closed $(\ve,1)$-pseudo orbit. Let $S$ be a codimension one manifold containing $x$ and transversal to the flow.
Then, by  Theorem \ref{pilucco}, there exists a $p\in S\cap \tau$ and a $\sigma$ such that $d(\phi_{\sigma(t_1)}(p), \phi_{-t_1+nt}(x))\leq \Cnse\ve$ for all $t_1\in[ nt, (n+1)t]$, $n\in\bZ$. Next, let $t_*$ be the time of first return to $S$: i.e. $\phi_{\sigma(t_*)}(p)=q\in S$. 
Since, for all $t_1\in\bR$, $\phi_{t_1}(p), \phi_{t_1}(q)$ are always $\Cnse\ve$ close to the $(\ve,1)$-pseudo orbit, then they must belong to each other's weak stable and unstable manifolds, which implies $p=q$. Accordingly, there exists a prime periodic orbit $\tau_p\in\cT_p$ that shadows the pseudo orbit. Also, by the same argument, such an orbit is unique. It follows that
\begin{equation}\label{eq:lenght-or}
|\sigma(t)-\lambda(\tau_p)|\leq 2\Cnse \ve.
\end{equation}
 Moreover, for each $t_1\in [0,t]$, the trajectory of $\phi_{-t_1}(x)$ is $\Cnse\ve$ close to $\tau$ for a time at least $\min\{t_1, t-t_1\}$ both backward and forward in time. By the hyperbolicity this implies that $d(\phi_{-t_1}(x), \tau)\leq \Cnz\Cnse\ve
e^{-\bar\lambda \min\{t_1, t-t_1\}}$. If $q\in\tau$ is the closest point to $z= \phi_{-\frac t2}(x)$, let $\xi(t_1)=\phi_{\bar\sigma(t_1)}(q)-\phi_{t_1}(z)$ where $\bar\sigma$ is defined by\footnote{ Here we are really working in charts and using the euclidean structure, we do not write this explicitly since it obvious (but a bit notationally unpleasant) how to proceed.}
\[
\langle \xi(t_1), V(\phi_{\bar\sigma(t_1)}(q))\rangle=0,
\]
where $V$ is the vector field generating the flow.
Differentiating we have 
\[
|1-\bar\sigma'|\|V(\phi_{\bar\sigma(t_1)}(q))\|^2\leq C_\# \|\xi(t_1)\|
\]
which, by the integral Gronwald inequality, yields $|\bar\sigma(t_1)-t_1|\leq C_\# \ve$. Hence, for each $t_1\in [0,t]$,
\[
\begin{split}
|\phi_{\sigma(-t_1)}(p)-\phi_{\frac t2-t_1}(q)|&\leq |\phi_{\sigma(-t_1)}(p)-\phi_{\bar\sigma(\frac t2-t_1)}(q)|+ C_\#\ve\\
&\leq |\phi_{\frac t2-t_1}(z)-\phi_{\bar\sigma(\frac t2-t_1)}(q)|+C_\# \ve\leq C_\# \ve.
\end{split}
\]
Accordingly, there exists $\Cnt>0$ such that $|\sigma(t)-t|\leq \Cnt \ve$ whereby improving the bound in Theorem \ref{pilucco}. 
Recalling \eqref{eq:lenght-or} it follows that $(x,t)\in\Omega^\ve_\tau$.

The above proves the first part of the Sub-Lemma, to prove the second suppose $(x,t)\in \Omega^\ve_\tau\cap\Omega^\ve_{\tau'}$. 
Let $z\in\tau$ be the closest point to $x$. Then, 
\[
d(x,\phi_{t}(x))\leq 2\Cnse\ve+d(z,\phi_{t}(z))\leq (2\Cnse+\Cnt)\ve.
\]
We can then construct a closed $((2\Cnse+\Cnt)\ve,1)$ pseudo orbit. If $\ve$ is chosen small enough, by the arguments above such a pseudo orbit can shadow a unique prime orbit $\tau_p$. Hence both $\tau$ and $\tau'$ are multiples of the same orbit, but $|\lambda(\tau)-\lambda(\tau')|\leq 2\Cnt \ve$. Hence if $2\Cnt \ve$ is smaller than the period of the shortest periodic orbit, then $\tau=\tau'$.
\end{proof}

We can then rewrite \eqref{eq:ftrace-one} as
\begin{equation} \label{eq:trace1} 
\sum_{\alpha,\bar{i} ,\tau\in\cT_\ve(s)} \int_{\Omega^\ve_\tau} 
 \kappa_\ve (\Theta_{\alpha}(x) - \Theta_{\alpha}(\phi_{-t}(x))) J\Theta_{\alpha}(x) F_{\alpha, \bar{i},z}(x,t-s)\omega_M(x),
\end{equation}
where we have introduced the notation $\cT_\ve(s)\doteq \cT(s-\Cnt\ve)$.
Next, it is convenient to pass to charts, $ (\xi,t) \doteq \widetilde{\Theta}_{\alpha}(x,t) \doteq (\Theta_{\alpha}(x),t) $, $\phi_{-t}^{\alpha} \doteq \Theta_\alpha \circ \phi_{-t} \circ \Theta_{\alpha}^{-1} $. Thus, part of the expression \eqref{eq:trace1} can be rewritten as,
 \begin{equation} \label{eq:trace2}
 \sum_{\alpha,\bar{i} ,\tau\in\cT_\ve(s)} \int_{\widetilde{\Theta}_{\alpha}(\Omega_\tau \cap (U_\alpha \times \bR))}
 \kappa_\ve (\xi -  \phi^{\alpha}_{-t}(\xi))   F_{\alpha, \bar{i},z} \circ \widetilde{\Theta}_{\alpha}^{-1}(\xi,t-s).
 \end{equation}
Note that $\widetilde{\Theta}_\alpha(\Omega_\tau\cap (U_\alpha\times\bRp))$ is contained in the $\ve$-neighborhood of a finite number of lines (the connected pieces of $\Theta_\alpha(\tau \cap U_\alpha)$). Let us call
 $\{\overline{\Omega}_{\tau,\alpha,m}\}$ the collection of such connected components.

Let us set $\xi=(\tilde\xi,\xi_d)$ and the map $\Xi_{\tau,\alpha,m}:\overline{\Omega}_{\tau,\alpha,m}\to\bR^{d+1}$ given by
\begin{equation}\label{eq:Xi-coordinates}
\begin{split}
\Xi_{\tau,\alpha,m}(\xi,t)& \doteq (\xi_1-\phi^\alpha_{-t}(\xi)_{1} ,\dots,
\xi_{d-1}-\phi^\alpha_{-t}(\xi)_{d-1}, \xi_{d}, t)\\
& \doteq (\zeta_{1},\dots,\zeta_{d-1},\xi_{d},t) \doteq (\zeta,\xi_{d},t).
\end{split}
\end{equation}
Note that, if $\xi\in M$ belongs to a periodic orbit $\tau$, then $\det(D\Xi_{\tau,\alpha,m}(\xi,\lambda(\tau)))\neq 0$ by the hyperbolicity of the flow. Thus the map $\Xi_{\tau,\alpha,m}$ is locally invertible. What is not clear is if it is invertible on all $\overline{\Omega}_{\tau,\alpha,m}$.
\begin{sublem} There exists $\ve_0>0$ such that, for all $\ve\leq \ve_0$ and $\alpha, m, \tau$, the map $\Xi_{\tau,\alpha,m}$ is a diffeomorphism from $\overline{\Omega}_{\tau,\alpha,m}$ onto its image.
\end{sublem}
\begin{proof}
Suppose there exist $(z,\xi,t),(y,\eta, t')\in \overline{\Omega}_{\tau,\alpha,m}$ such that $\Xi_{\tau,\alpha,m}(z,\xi, t)=\Xi_{\tau,\alpha,m}(y,\eta, t')$. Then $t=t'$ and $\xi=\eta$ and  
\begin{equation}\label{eq:base-zy}
z-y=f(z)-f(y)
\end{equation}
where $f=\phi^\alpha_{-r_{\tau,\alpha,m}(z)}$ is the return map to the zero section. Let $W^s(v), W^u(v)$ be the local stable and unstable manifolds of $f$ at $v$ and $E^s(v), E^u(v)$ be the stable and unstable spaces. For $v\in\bR^{d-1}$ let $\Pi(v)$ be the projector on $E^s(v)$ along $E^u(v)$.
Let $x=W^s(z)\cap W^u(y)$, then $w=f(x)=W^s(f(z))\cap W^u(f(y))$. By hyperbolicity,
\begin{equation}\label{eq:contraction-f}
\|f(z)- f(x)\|\leq \Cnz e^{-\bar\lambda t}\|z-x\|\;;\quad \|y-x\|\leq \Cnz e^{-\bar\lambda t}\|f(y)- f(x)\|.
\end{equation}
By the uniform bounds on the curvature of the invariant manifolds (see Appendix \ref{app:holo-est}), 
\begin{equation}\label{eq:Pi-est}
\begin{split}
&\hskip-.2cm\|\Pi(x)(z-x)-(z-x)\|\leq C_\# \|z-x\|^2;\\
&\hskip-.2cm \|(\Id-\Pi(f(x)))(f(z)-f(x))-(f(z)-f(x))\|\leq C_\# \|f(z)-f(x)\|^2,
\end{split}
\end{equation}
and the analogous for $y$. Indeed, we can argue in the plane containing the triangle of vertices  $x, z$ and $q=x+\Pi(x)(z-x)$. Let $p$ be the orthogonal projection of $z$ on the $x,q$ line. Since the side $x, q$ is tangent to $W^s(x)$, and the stable and unstable manifolds are uniformly transversal, at $x$, $\|z-q\|+\|z-p\|\leq C_\# \|x-z\|^2$. Thus $\|z-x\|-\|x-p\|\leq C_\#\|z-x\|^3$. On the other hand $\|p-q\|\leq C_\#\|z-x\|^2$.

By the H\"older continuity of the foliations we have, for some $\varpi\in (0,1]$,
\begin{equation}\label{eq:Pio-est}
\|\Pi(x)(\Id-\Pi(f(x))\|+\|(\Id-\Pi(f(x))\Pi(x)\|\leq C_\#\ve^\varpi.
\end{equation}
Accordingly, by \eqref{eq:Pi-est}, \eqref{eq:base-zy}, \eqref{eq:contraction-f} and \eqref{eq:Pio-est},
\[
\begin{split}
\|z-x\|&\leq \|\Pi(x)(z-x)\|+ C_\# \|z-x\|^2\\
&=\|\Pi(x)[(y-x)+(f(z)-f(x))-(f(y)-f(x))]\|+ C_\# \ve\|z-x\|\\
&\leq C_\#[e^{-\bar\lambda t}+\ve^\varpi](\|z-x\|+\|f(y)-f(x)\|).
\end{split}
\]
If $\lambda(\tau)$ is large enough and $\ve$ small, then $\|z-x\|\leq \frac 12\|f(y)-f(x)\|$. By a similar argument we have $\|f(y)-f(x)\|\leq \frac 12\|z-x\|$. This proves the Lemma for all the periodic orbits with period larger than a fixed constant. For the finitely many remaining orbits the statement follows trivially from the local invertibility of the map by choosing $\ve_0$ small enough.
\end{proof}
Having established that \eqref{eq:Xi-coordinates} is a good change of coordinates, we can consider some more of its properties. 
For each $\alpha, m$, there exists a unique 
$p_{\tau,\alpha,m}\in\bR^{d-1}$ such that $(p_{\tau,\alpha,m},0)\in \Theta_{\alpha}(\tau \cap U_{\alpha})$ and
$(p_{\tau,\alpha,m},0,\lambda(\tau))\in \overline{\Omega}_{\tau,\alpha,m}$, note that  $\Xi_{\tau,\alpha,m}(p_{\tau,\alpha,m},0,t)=(0,0,t)$.
Next, define the (smooth) return time function $r_{\tau,\alpha,m}(\tilde\xi)$ by
$[\phi^\alpha_{-r_{\tau,\alpha,m}(\tilde\xi)}(\tilde\xi,0)]_d=0$ such that $r_{\tau,\alpha,m}(p_{\tau,\alpha,m})=\lambda(\tau)$. 
Note that $\phi^{\alpha}_{-\lambda(\tau)}(p_{\tau,\alpha,m},0) = (p_{\tau,\alpha,m},0)$, moreover, for $t$ close to $\lambda(\tau)$, we have 
\[
\phi_{-t}^{\alpha}(\tilde{\xi},\xi_d)=\phi^\alpha_{-r_{\tau,\alpha,m}(\tilde\xi)}(\tilde\xi,0)+(0,-t+r_{\tau,\alpha,m}(\tilde{\xi})+\xi_d).
\]
Also $r_{\tau,\alpha,m}\circ \Xi_{\tau,\alpha,m}^{-1}(\zeta,\xi_d,t)=\tilde r_{\tau,\alpha,m}(\zeta)$.
It is then natural to define the return map to the zero section $K_{\tau,\alpha,m}=\phi^\alpha_{r_{\tau,\alpha,m}(\tilde\xi)}(\tilde\xi,0)$. Clearly such a map is hyperbolic.
It is convenient to set $\Lambda_{\tau,\alpha,m}(\zeta)=[DK_{\tau,\alpha,m}]\circ \Xi_{\tau,\alpha,m}^{-1}(\zeta,0,\tilde r_{\tau,\alpha,m}(\zeta))$. 
In the sequel we will need several estimates on the regularity of the above object that we summarize in the next result.\footnote{ In fact, Sub-Lemma \ref{sublem:change-smooth} contains much more than what is presently needed (a rough bound of the type $e^{C_\#\lambda(\tau)}$ would suffice), yet its full force will be necessary in Lemma \ref{lem:tracebound}.}

\begin{sublem}\label{sublem:change-smooth}
There exists $\ve_0>0$ such that, for all $\ve<\ve_0$, $\tau\in\cT$, $\alpha\in\cA$ and $m\in\bN$, we have
\[
\begin{split}
&\|(\Id-\Lambda_{\tau,\alpha,m})^{-1}\|_\infty\leq C_\#\\
& \|\partial_{\zeta}\tilde r_{\tau,\alpha,m}\|_\infty\leq C_\#\\
&\|\partial_{\zeta}\det(\Id-\Lambda_{\tau,\alpha,m})^{-1}\|_\infty\leq C_\#\|\det(\Id-\Lambda_{\tau,\alpha,m})\|_\infty.
\end{split}
\]
\end{sublem}
\begin{proof}
Note that 
\[
D\phi_{-t}^\alpha(\xi)=\begin{pmatrix}\Lambda_{\tau,\alpha,m} \circ \Xi_{\tau,\alpha,m} (\tilde\xi, \xi_d,t) &0\\g_{\tau,\alpha,m}\circ \Xi_{\tau,\alpha,m}(\tilde\xi, \xi_d,t)&1\end{pmatrix}
\]
where the left entries do not depend on $\xi_d,t $ (hence $\Lambda_{\tau,\alpha,m}, g_{\tau,\alpha,m}$ depend only on $\zeta$). The first inequality of the Sub-Lemma follows by hyperbolicity. In addition, $g_{\tau,\alpha,m}$, by the existence of an invariant cone field, satisfies, for each $\eta\in \bR^{d-1}$, $|\langle g_{\tau,\alpha,m},\eta\rangle|\leq C_\# \|\Lambda_{\tau,\alpha,m}\eta\|$. That is
\begin{equation}\label{eq:g-der-bound}
\|[\Lambda_{\tau,\alpha,m}^{-1}]^*g_{\tau,\alpha,m}\|\leq C_\#.
\end{equation}
Also, we have
\begin{equation}\label{eq:Xi-derivative}
[D\Xi_{\tau,\alpha,m}]\circ \Xi_{\tau,\alpha,m}^{-1}(\zeta,\xi_d,t)=\begin{pmatrix} \Id-\Lambda_{\tau,\alpha,m}(\zeta)&0&0\\
                                                                                     0    & 1&0\\ 0&0&1\end{pmatrix}.
\end{equation}
Hence $\det([D\Xi_{\tau,\alpha,m}]\circ \Xi_{\tau,\alpha,m}^{-1}(\zeta,\xi_d,t))=\det(\Id-\Lambda_{\tau,\alpha,m}(\zeta))\neq 0$ by hyperbolicity.
By applying the implicit function theorem we then have
\[
\partial_{\zeta}\tilde r_{\tau,\alpha,m}=\left[ \Lambda_{\tau,\alpha,m}(\Id-\Lambda_{\tau,\alpha,m})^{-1}\right]^*[\Lambda_{\tau,\alpha,m}^{-1}]^*g_{\tau,\alpha,m}.
\]
The above formula and \eqref{eq:g-der-bound} imply the second inequality of the Lemma.
To prove the last we adapt the arguments in \cite[Sub-Lemma 3.2]{Liverani05} (see also \cite{pollicott0}).

We divide $\lambda(\tau)$ in $l$ time intervals long enough so that the Poincar\'e map is hyperbolic. Let $\{t_i\}_{i=1}^{l-1}$ be the times at which we insert a Poincar\'e section while the first and last are our zero section. In each new section we can choose coordinates so that the periodic orbit is at $(0,0)$ and, at such a point, $\{(\xi, 0)\}$ corresponds to the stable manifold and $\{(0,\eta)\}$ to the unstable one. Let $\Lambda_i$ be the derivative of the map between the $i$ and $i+1$ section. By hypothesis each $\Lambda_i$ expands vectors close to the  $\{(\xi, 0)\}$ subspace and contracts vectors close to the $\{(0,\eta)\}$ subspace. Also
\[
\Lambda_{\tau,\alpha,m}=\prod_{i=0}^{l-1}\Lambda_i.
\]
Given a point $\tilde\xi$ in the zero section, let $\tilde\xi^i$ be its image in the $i$-th section. With this notation we can write
\[
\begin{split}
&\partial_{\zeta_m}\Lambda_{\tau,\alpha,m}=\sum_{p=1}^{d-1}\frac{\partial \tilde\xi^j_p}{\partial \zeta_m}\sum_{j=0}^{l-1}P_j\Gamma_{j,p}Q_j \\
&P_j=\prod_{i>j}\Lambda_i\;\;;\quad Q_j=\prod_{i\leq j}\Lambda_i\;\;;\quad\Gamma_{j,p}=\partial_{\tilde\xi^j_p}\Lambda_j\Lambda_j^{-1}.
\end{split}
\] 
Note that 
\[
\frac{\partial \tilde\xi^j_p}{\partial \zeta_m}=Q_j(\Id-\Lambda_{\tau,\alpha,m})^{-1}=Q_j(\Id-P_jQ_j)^{-1}=(Q_j^{-1}-P_j)^{-1}
\]
and, by the hyperbolicity, it  follows that $\|\frac{\partial \tilde\xi^j_p}{\partial \zeta_m}\|\leq C_\#$. On the other hand
\[
\partial_{\zeta_m}\det(\Id-\Lambda_{\tau,\alpha,m})=-\tr\left(\partial_{\zeta_m}\Lambda_{\tau,\alpha,m}(\Id-\Lambda_{\tau,\alpha,m})^{-1}\right)\cdot \det(\Id-\Lambda_{\tau,\alpha,m}).
\]
We can then compute
\[
\begin{split}
|\tr\left(\partial_{\zeta_m}\Lambda_{\tau,\alpha,m}(\Id-\Lambda_{\tau,\alpha,m})^{-1}\right)|&\leq C_\# \sup_p\sum_{j=0}^{l-1}\left|\tr\left( P_j\Gamma_{j,p}Q_j (\Id-P_jQ_j)^{-1}\right)\right|\\
&=C_\# \sup_p\sum_{j=0}^{l-1}\left|\tr\left( \Gamma_{j,p}(P_j^{-1}Q_j^{-1}-\Id)^{-1}\right)\right|\leq C_\#
\end{split}
\]
where we have used the hyperbolicity again. The last inequality of the Sub-Lemma is thus proven.
\end{proof}
By first performing the change of variables $\Xi_{\tau,\alpha,m}$, then using the properties of the chosen $\kappa$,  and finally using the change of coordinates   $\eta=\ve^{-1}\zeta$, $v=\ve^{-1}(t-\tilde r_{\tau,\alpha,m}(\zeta))$, we can write \eqref{eq:trace2} as 
\begin{equation}\label{eq:finalizing0}
\begin{split}
\sum_{\alpha,\bar{i},m,\tau\in\cT_{\ve}(s)}\int_{\bR^{d+1}}&
F_{\alpha,\bar{i},z} \circ \widetilde{\Theta}_{\alpha}^{-1} \circ \Xi_{\tau,\alpha,m}^{-1}(\ve\eta ,\xi_d , \ve v +\tilde r_{\tau,\alpha,m}(\ve\eta) -s) \\
&\times \kappa^{d-1}(\eta)\kappa^1(v)|\det(\Id-\Lambda_{\tau,\alpha,m}(\ve\eta))|^{-1}.
\end{split} 
\end{equation}
To continue we need to estimate the $\ve$ dependence in the integrand.
Remembering the definition \eqref{eq:ftrace-one}, we see that the derivative of $F$ is, in general, of order $C_\# e^{C_\#|t|}$. Then, setting $a=\Re(z)$, Sub-Lemma \ref{sublem:change-smooth} yields
\begin{equation}\label{eq:finalizing}
\begin{split}
\sum_{\alpha,\bar{i},m,\tau\in\cT_{\ve}(s)}\int_{-\delta}^\delta&\frac{F_{\alpha,\bar{i},z} \circ \widetilde{\Theta}_{\alpha}^{-1} \circ \Xi_{\tau,\alpha,m}^{-1}(0,\xi_d , \lambda(\tau) -s)}{|\det(\Id-\Lambda_{\tau,\alpha,m}(0))|}\\
&+\cO\left(\ve|z|\frac{[\lambda(\tau)-s]^{n-1}e^{C_\#\lambda(\tau)-a[\lambda(\tau)-s]}}{(n-1)!}\right).
\end{split} 
\end{equation}
Define $D_{\textrm{hyp}}\phi_{\lambda(\tau)}=\Lambda_{\tau,\alpha,m}(0)$. If we compute this  matrix in a different chart or at another point in the orbit we simply obtain a matrix conjugated to $D_{\textrm{hyp}}\phi_{\lambda(\tau)}$. Thus $|\det(\Id-D_{\textrm{hyp}}\phi_{\lambda(\tau)})|$ depends  only on $\tau$. In addition, we have\footnote{ Given a vector space $V^d$ over $\bR$
 and a matrix representation of a linear operator $A: V^d \to V^d $, we can construct, by the standard external product, a matrix
 representation of $\wedge^\ell A :\wedge^\ell(V^d) \to \wedge^\ell(V^d)$
with elements  $a_{\bar{i},\bar{j}} =  \det(A_{\bar{i},\bar{j}}) $  (see definition \eqref{eq:scalarproduct} or \cite{MacLaneBirkhoff99}).  
By  $\det(A)_{\bar{k},\bar{k}}$ we mean the determinant of the matrix $(A_{k_i,k_j})$. Note that $\det (D\tphi_{-\lambda(\tau)}^{\alpha})_{\bar{k},\bar{k}}=0$ unless $k_\ell=d$. See Remark \ref{rem:form-details} for the definition of $\cI_\ell^-$. }
\begin{equation}\label{eq:trace-ok}
\begin{split}
\textrm{tr}(\wedge^\ell D \tphi_{-\lambda(\tau)}^{\alpha}) &= \sum_{\bar{k}\in\cI_\ell}\det (D\tphi_{-\lambda(\tau)}^{\alpha})_{\bar{k},\bar{k}} 
  = \sum_{\bar{k}\in\cI_\ell^-} \det(D_{\textrm{hyp}}\phi_{-\lambda(\tau)})_{\bar{k},\bar{k}}\\
  &  = \textrm{tr}(\wedge^\ell( D_{\textrm{hyp}}\phi_{-\lambda(\tau)} )),
\end{split}
\end{equation}
which, again, depends only on the orbit $\tau$.
Accordingly,
\[ 
\begin{split}
&\sum_{\bar i}\langle \omega_{\alpha, \bar{i}},\, \phi_{-\lambda(\tau)}^* \omega_{\alpha, \bar{i}} \rangle_{p_{\tau,\alpha,m}} = \tr(\wedge^\ell(D_{\textrm{hyp}}\phi_{-\lambda(\tau)}).
\end{split} 
\]
To conclude, remember that the number of closed orbits with period between $t$ and $t+1$  is bounded by $C_\# e^{\htop t}$ (see \cite[Theorem 18.5.7]{KH}). In fact, there exists an asymptotic formula,  see \cite{Margulis04}, which we are substantially improving in Theorem \ref{thm:main3}. Hence the error term in \eqref{eq:finalizing} is bounded by 
\[
\sum_{k\geq s}\ve|z|\frac{[k-s]^{n-1}e^{C_\#k-a[k-s]}}{(n-1)!}\leq C_\#\ve |z|(a-C_\#)^{-n}e^{C_\# s}
\]
provided $a >C_\#$.
By summing over  the connected components of $\overline{\Omega}_{\tau,\alpha,m}$, taking into account the multiplicity of orbits, resumming on $\alpha$ and recalling  \eqref{eq:tracechar} we finally obtain that there exists $\Cns,\ve_0>0$ such that, for all $\ve\leq \ve_0$ and $a>\Cns$,
\begin{equation}\label{eq:trace-quant} 
\begin{split}
&\left|\int_{M} \sum_{\alpha,\bar{i}}   \langle \omega_{\alpha,\bar i},R^{(\ell)}(z)^n\cL^{(\ell)}_s  (j_{\epsilon, \alpha,\bar i,x} ) \rangle_x -  
\sum_{\tau \in \cT(s)}  \frac{ \chi_{\ell}(\tau)\left[\lambda(\tau)-s\right]^{n-1}\lambda(\tau)}{(n-1)!\mu(\tau)e^{z (\lambda(\tau)-s)}}\right|\\
&\quad\leq C_\#\ve|z| (a-\Cns)^{-n}e^{ \frac a 2 s} .
\end{split}
\end{equation}
Taking the limit $\ve\to 0$ yields the Lemma for $a>\Cns$. On the other hand the above estimates show that both sides of the equation in the statement of the Lemma are well defined analytic functions for $a> \sigma_\ell$. Indeed, equation \eqref{eq:finalizing0} shows that the approximations to the flat trace, which are analytic, are uniformly bounded in any region $a\geq a_0>\sigma_\ell$ hence they have analytic accumulation points that must agree since they agree for large $a$. The statement of the Lemma follows.
\end{proof}

By a direct computation using Lemma \ref{lem:traceorbits} we have  the natural formula\footnote{ Indeed,
\[
\int_0^\infty e^{-zs} \frac{s^{n-1}}{(n-1)!}\trf(R(z)\cL_s)=\sum_{\tau\in\cT} \int_0^{\lambda(\tau)}  \frac{s^{n-1}}{(n-1)!}
\frac{\chi_{\ell}(\tau)}{\mu(\tau)}   \lambda(\tau) e^{-z \lambda(\tau)}.
\]
}
\begin{equation}\label{eq:natural-trace}
\int_0^\infty e^{-zs} \frac{s^{n-1}}{(n-1)!}\trf(R(z)\cL_s)=\trf(R(z)^{n+1}).
\end{equation}

\section[Tensorial Operators]{Tensorial Transfer Operators} \label{sec:splitting}

In this section we extend the methods of Liverani-Tsujii \cite{LiveraniTsujii06} to the case of flows. The goal is to provide a setting in which a formula of the type \eqref{eq:distrib-trace} makes sense and can be used to compute the trace  (see Lemma \ref{splitR1} for the exact implementation). The first step is to note that, by equation \eqref{hodgedualform}, the adjoint (with respect to $\langle\cdot, \cdot\rangle_{\Omega^\ell}$) of $\cL^{(\ell)}_t$ is given by, for $g \in \Omega^{\ell}_{0,r}$,
\begin{equation} \label{eq:formaladjoint}
\overline{\cL}^{(\ell)}_t g \doteq (-1)^{\ell(d-\ell)} *\! ( \phi_{t}^* (*g) ).
\end{equation}
Next we would like to take the tensor product of $\cL^{(\ell)}_t$ times $ \overline{\cL}^{(\ell)}_t$, and define a Banach space, connected to the product space $\Omega_r^{2\ell}(M^2)$,\footnote{ By $M^2$ we mean the Riemannian manifold with the product metric.} on which it acts naturally. Note that, contrary to the discrete case, in the continuous setting this procedure naturally yields a $\bRp^2$ action in the variables $s,t \geq 0$ (rather than a flow). We start with the construction of the Banach space.

\subsection{Spaces and Operators}\label{subsec:spaceop}

We use the construction developed in Section \ref{banachspace} applied to the manifold $M^2$. 

First of all consider the atlas  $\{U_\alpha, \Theta_\alpha\}_{\alpha\in\cA}$ chosen at the beginning of Section \ref{secresolvent}. We define the map $I(u,s,t, u',s',t')=(s,u',u,s',t, t')$, $u, u'\in\bR^{d_u}$, $s, s'\in\bR^{(d_s)}$, $t, t'\in\bR$, the atlas
$\{U_{\alpha} \times U_{\beta} , I\circ(\Theta_\alpha \times \Theta_\beta)\}_{\alpha,\beta\in\cA}$ and the partition of unity $\{\psi_{\alpha,\beta}\}_{\alpha,\beta\in \cA}$, where $\psi_{\alpha,\beta}(x,y)=\psi_\alpha(x)\psi_\beta(y)$. We are thus in the situation of Section \ref{banachspace} with $d_1=d_u+d_s=d-1$ and $d_2=d+1$. Note that  the conditions \eqref{eq:chartproperty} are automatically satisfied.\footnote{ Note that the third condition holds both for $x_{2d}$ and $x_{2d-1}$.}
We choose the cones given by the choice $\rho_+=2, \rho_-=1$ and $L_0$ as in Section \ref{secresolvent} to define the set of ``stable" leaves, which we  denote by  $\Sigma_2$.
\begin{rmk} From now on we will ignore the map $I$ since it is just a trivial permutation of the coordinates.
\end{rmk}
By appendix \ref{appendiceLie} we can consider the exterior forms $\Omega^{2\ell}_r (M^2)$ and the related scalar product. 
 We define the projections $\pi_i:M^2\to M$, $i\in\{1,2 \}$, such that $\pi_1(x,y)=x$ and $\pi_2(x,y)=y$. For each pair of $\ell$-forms $f,g$ in $\Omega^\ell_r(M)$ we have that $\pi_1^*f \wedge\pi_2^*g \in\Omega^{2\ell}_{r}(M^2)$.
In addition, given  $a,b,f,g\in\Omega_{r}^{\ell}(M)$, by equation \eqref{eq:scalarproduct}, we have
 \begin{equation} \label{eq:scalarxy}
\langle \pi_1^* a \wedge  \pi_2^* b , \pi_1^* f \wedge \pi_2^* g \rangle_{(x,y)} = \langle a , f \rangle_x \langle b, g \rangle_y .
\end{equation}
Moreover, by \eqref{eq:hdgedef}, it follows that 
\begin{equation}\label{eq:hodgexy}
*(\pi_1^* f \wedge \pi_2^* g)=(-1)^{\ell(d-\ell)}\pi_1^*(* f) \wedge \pi_2^* (*g).
\end{equation}
Next we define the vector space $\Omega_{2,r}^\ell(M) \doteq \operatorname{span}\{\pi_1^*f \wedge\pi_2^*g\;:\; f,g\in\Omega^\ell_{0,r}(M)\}$ on which we intend to base our spaces. Note that locally the $\Cs^{s}$ closure of  $\Omega_{2,s}^\ell(M)$ contains all the forms $h$ such that $i_{(V,0)}h=i_{(0,V)}h=0$ and that can be written as 
\begin{equation}\label{eq:h2-rap}
h = \sum_{\alpha,\beta} h_{\alpha,\beta} = \sum_{\alpha,\beta,\bar i,\bar j} \psi_{\alpha,\beta}(x,y) h^{\alpha,\beta}_{\bar i,\bar j}(x,y) 
\omega_{\alpha,\bar i}(x) \wedge \omega_{\beta,\bar j}(y),
\end{equation} 
where $h_{\alpha,\beta} \doteq \psi_{\alpha,\beta}h$,  and $h^{\alpha,\beta}_{\bar i,\bar j} \in\Cs^{\bar s}(M^2)$, for each $\bar s > s$.\footnote{ \label{foo:little-holder} This follows since $\Cs^\infty$ is dense in $\Cs^{\bar s}$ in the $\Cs^s$ topology and smooth functions can be approximated by tensor functions, e.g. by Fourier series, in any $\Cs^s$ norm. Note that if $s\in\bN$, then one can choose $s=\bar s$ (for a more refined description, not used here, see the theory of {\em little H\"older spaces}).}

Lastly, we must choose an appropriate set of test functions and vector fields. For the set of test functions we choose $\Gamma_{2,c}^{\ell,s} (\alpha,\beta, G)$, defined by the restriction of $\operatorname{span}\{\pi_1^*f \wedge\pi_2^*g\;:\; f,g\in\Omega^\ell_{s}(M), \pi_1^*f \wedge\pi_2^*g|_{\partial W_{\alpha,\beta, G}}=0\}$ to $W_{\alpha,\beta, G}$. The set of vector fields have only the  restriction that\footnote{ Let $(x,y)\in M^2$, then we are requiring  $v(x,y)=(v_1(x),v_2(y))$.}
\begin{equation}  \label{eq:vector-product} 
\begin{split}
&(\pi_1)_*(v)=0\Longrightarrow  w\in\cV^s(M^2) \text{ and } (\pi_2)_*(w)=0 \longrightarrow [w,v]=0 \\
&(\pi_2)_*(w)=0\Longrightarrow  v\in\cV^s(M^2) \text{ and } (\pi_1)_*(v)=0 \longrightarrow [v,w]=0. 
\end{split}
\end{equation}
Note that this implies
\begin{equation}  \label{eq:Liesplit} 
[v,w]=([(\pi_1)_*v, (\pi_1)_*w], [(\pi_2)_*v, (\pi_2)_*w]).
\end{equation}

The reader can easily check that the above choices satisfy all the conditions specified in Section \ref{banachspace}. In particular, Lemma \ref{lem:extension} holds also in the present context, since the arguments in its proof respects the property \eqref{eq:vector-product}.
Thus we can apply  the construction in Section \ref{banachspace} and  call $\pqnormbis{\cdot}$ the resulting norms and  $\cB^{p,q,\ell}_2 \doteq  \overline{\Omega^\ell_{2,r}(M)}^{\|\cdot\|_{p,q,\ell,2}}$ the corresponding   Banach spaces. 

Finally, we define the required  operators $\cL^{(\ell)}_{t,s}$.
For each $ f,g\in\Omega^\ell_{r}(M)$
\begin{equation} \label{eq:wedgetransfer}
\mathcal{L}_{t,s}^{(\ell)} (\pi_1^*f \wedge\pi_2^*g) \doteq(\pi_1^*(\cL_t^{(\ell)}f) )
\wedge(\pi_2^*(\overline{\cL}^{(\ell)}_s g)) \end{equation}
which extends by linearity to an operator $\cL^{(\ell)}_{t,s}: \Omega_{2,r}^\ell(M) \to \Omega_{2,r}^\ell(M)$.

To avoid the problem of small times, for which the cone contraction might fail, we apply the same strategy used in the previous section: we define  
 \[ 
\spqnormbis{h} = \sup_{t,s \leq t_0} \pqnormbis{\cL^{(\ell)}_{t,s} h} ,
\] 
and we define the spaces  $\widetilde{\cB}_2^{p,q,\ell} \doteq \overline{\Omega_{2,r}^\ell}^{\spqnormbis{\cdot}}\subset \cB_2^{p,q,\ell}$. 

Following Remark \ref{rmk:oldbpq}, equation \eqref{hodgedualform}, setting $\omega_2=\pi_1^*\tilde\omega\wedge\pi_2^*\tilde\omega$, yields
\begin{equation*}
\mathcal{L}_{t,t}^{(d-1)} (f\omega_2)  =  f  \circ ( \phi_{-t} \times \phi_t )  J\phi_{-t}\omega_2. 
\end{equation*}
That is, we recover the same type of operators studied in \cite{LiveraniTsujii06}.

\subsection{Lasota-Yorke inequalities}\label{subsec:strongLY}
We can now obtain several results parallel to those  in Sections \ref{subsec:transfer}, \ref{subsec:resol}. As the proofs are almost identical to those  in Section \ref{subsec:LY} we will not give full details and highlight only the changes that need to be made.

\begin{lem} \label{lem:LYstrong2}
For each $0<p+q<r-1$, $t,s\in\bRp$, $\cL^{(\ell)}_{t,s}\in L( \widetilde{\cB}^{p,q,\ell}_2, \widetilde{\cB}^{p,q,\ell}_2 )$. More precisely  $\cL^{(\ell)}_{t,s}$ is an $\bRp^2$ action over $\widetilde{\cB}^{p,q,\ell}$. Moreover, $\cL^{(\ell)}_{t,0}$ and $\cL^{(\ell)}_{0,s}$ are strongly  continuous semigroups with generators $X_1, X_2$, respectively. In addition, we have\footnote{ Here and in the sequel  we suppress the $\lambda$ dependence in the constants to simplify the  notation.}
\[
\begin{split}
\spqnormbis[p,q,\ell]{\cL^{(\ell)}_{t,s}h} \leq&  C_{p,q}e^{\sigma_{\ell}(t+ s)}\spqnormbis[p,q,\ell]{h} \\
\spqnormbis[p,q,\ell]{ \cL^{(\ell)}_{t,s} h } \leq & C_{p,q}e^{\sigma_{\ell}(t+ s)} e^{- \sigma_{p,q}\min\{t,s\}} \spqnormbis[p,q,\ell]{h} + 
   C_{p,q}e^{\sigma_{\ell}(t+ s)}\spqnormbis[p-1,q+1,\ell]{h}\\
& +\sum_{j=1}^2 e^{\sigma_{\ell}(t+ s)}\spqnormbis[p-1,q+1,\ell]{X_j h}.
\end{split}
\] 
\end{lem}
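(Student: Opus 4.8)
The plan is to reduce Lemma~\ref{lem:LYstrong2} to the single-factor Lasota--Yorke estimates already established in Lemma~\ref{lem:LYstrong}, exploiting the product structure of both the Banach space $\widetilde{\cB}^{p,q,\ell}_2$ and the operator $\cL^{(\ell)}_{t,s}$. First I would record that on the dense subspace $\Omega^\ell_{2,r}(M)=\operatorname{span}\{\pi_1^*f\wedge\pi_2^*g\}$ the operator acts as a genuine tensor product $\cL^{(\ell)}_t\otimes\overline{\cL}^{(\ell)}_s$, and that $\overline{\cL}^{(\ell)}_s=(-1)^{\ell(d-\ell)}*\phi_s^**$ enjoys exactly the same estimates as $\cL^{(\ell)}_s$ by \eqref{eq:hodgexy} and the fact that the Hodge star is an isometry commuting with the relevant norms (this is where one checks that pulling back by $\phi_s$ rather than $\phi_{-s}$ and inserting two $*$'s changes nothing in the contraction rates, because the roles of stable and unstable directions are swapped consistently). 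The $\bRp^2$-action property $\cL^{(\ell)}_{t,s}\cL^{(\ell)}_{t',s'}=\cL^{(\ell)}_{t+t',s+s'}$ is then immediate from the corresponding semigroup properties of the two factors, and strong continuity of $\cL^{(\ell)}_{t,0}$ and $\cL^{(\ell)}_{0,s}$ follows exactly as in the proof of Lemma~\ref{lem:LYstrong}, using that $\cL^{(\ell)}_{t,s}$ is strongly continuous on $\Omega^\ell_{2,r}(M^2)$ in the $\Cs^r$ topology together with the boundedness just obtained.

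The core of the proof is the growth bound. I would follow the geometric decomposition in the proof of Lemma~\ref{lem:LY} verbatim, but now applied to leaves $W_{\alpha,\beta,G}\subset M^2$ of dimension $d_1=d-1$. The key geometric input is that the push-forward $\phi_t\times\phi_s$ maps such a leaf to a controlled union of admissible leaves with uniformly bounded overlaps; this holds for $t,s\ge \vuo$ by the cone conditions set up in Section~\ref{subsec:spaceop} (the cone $\cC_2$ in $M^2$ is contracted into $\cC_1$ because the first factor contracts stable directions under $\phi_{-t}^*$ while the second, through the Hodge-conjugated action, contracts unstable directions under $\phi_s$), and for small times it is absorbed by the dynamical norm $\spqnormbis{\cdot}$. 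Then the estimate of each integral $\int_{W}\langle g,L_{v_1}\cdots L_{v_p}\cL^{(\ell)}_{t,s}h\rangle\,\volform$ splits: the Jacobian factor appearing in the change of variables is $J_W(\phi_t\times\phi_s)/J(\phi_t\times\phi_s)$ combined with the $\ell$-form growth $|\langle(\phi_{-t}^*\otimes\phi_s^*)\omega,\omega'\rangle|$, and by the analogue of \eqref{eq:zero-test} this is bounded by $C_\#e^{-|d_s-\ell|\olambda(t+s)}$ using the Anosov property on each factor; the number of leaves contributes $e^{\htop(t+s)}$ by Appendix~\ref{app:topent} applied to $M^2$; the vector-field derivatives, decomposed into stable/unstable/flow components via \eqref{eq:Liesplit}, yield $\min\{t,s\}$ in the exponent of the gain $e^{-\sigma_{p,q}\min\{t,s\}}$ rather than $\min\{t\}$, because the unstable contraction of the test vector fields happens only in whichever factor is ``ahead'' — this is precisely the source of the $\min\{t,s\}$ in the statement. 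The $X_jh$ terms come, as in \eqref{eq:flow-est}, from commuting flow-direction Lie derivatives through $\cL^{(\ell)}_{t,s}$, using $X_1\cL^{(\ell)}_{t,s}=\partial_t\cL^{(\ell)}_{t,s}$ and $X_2\cL^{(\ell)}_{t,s}=\partial_s\cL^{(\ell)}_{t,s}$.

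The main obstacle I anticipate is bookkeeping rather than conceptual: one must verify that the vector-field splitting of Lemma~7.4 of \cite{ButterleyLiverani07}, used in the proof of Lemma~\ref{lem:LY}, is compatible with the product constraint \eqref{eq:vector-product} — i.e.\ that decomposing $v=(v_1,v_2)$ with $v_i\in T M$ into stable, unstable and flow parts can be done factor-by-factor so that the resulting pieces still lie in the admissible class $\cV^s(M^2)$. This should work because the splitting is defined pointwise from the hyperbolic structure, which on $M^2$ is the product of the two hyperbolic structures, and because the extension property \eqref{eq:extensionproperty} was already checked in this setting (Lemma~\ref{lem:extension} and the remark after it). A secondary subtlety is that the two factors may evolve for different times $t\neq s$, so when one factor has contracted much more than the other the ``weak'' norms on the right-hand side must still control the leftover; this is handled exactly as the iteration argument producing \eqref{eq:ultimapalla}--\eqref{eq:almost-two}, choosing a common time step $t_*$ and the parameter $\mathfrak a=e^{-\min\{p,q\}\olambda t_*}$, and then iterating in $t$ and in $s$ separately. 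With these points checked the three displayed inequalities follow by the same induction as in Section~\ref{subsec:LY}, and the essential-spectral-radius and resolvent consequences are then obtained as in Lemma~\ref{lem:quasicompactness}.
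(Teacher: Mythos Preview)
Your overall strategy---rerun the proof of Lemma~\ref{lem:LY} on the product leaves $W_{\alpha,\beta,G}\subset M^2$, use \eqref{eq:Liesplit} to split vector fields factor-by-factor, and pick up $\min\{t,s\}$ from the factor with the weakest contraction---is exactly what the paper does. Two points, however, are genuine gaps rather than bookkeeping.

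\textbf{Volume growth.} You write that the number of image leaves contributes $e^{\htop(t+s)}$ ``by Appendix~\ref{app:topent} applied to $M^2$''. This does not work, and the paper says so explicitly. Appendix~\ref{app:topent} bounds the volume growth of $d_s$-dimensional manifolds under a \emph{single} partially hyperbolic map; here you need the volume of $\phi_{-t}\times\phi_s(W_{\alpha,\alpha',G})$ for an $\bRp^2$ action with $t\neq s$ in general, acting on a $(d-1)$-dimensional leaf that is \emph{not} a product. The paper's fix is to use the weak-unstable holonomy in the first factor and the weak-stable holonomy in the second to carry $W_{\alpha,\alpha',G}$ to the genuine product $W^s_{6\delta}(z_1)\times W^u_{6\delta}(z_2)$; the holonomy displaces points by a uniformly bounded amount along the images, so the volumes are comparable, and for the product manifold the volume factorizes as $\vol(\phi_{-t}W^s)\cdot\vol(\phi_s W^u)\le C_\# e^{\htop(t+s)}$, each factor now coming from Appendix~\ref{app:topent} on $M$.

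\textbf{Test-form structure after Stokes.} You anticipate a compatibility issue for the vector-field splitting with \eqref{eq:vector-product}, but the paper flags a different obstruction: in the stable-direction step \eqref{eq:stokes}, the boundary term produces a new test form involving $d(i_v\volform)$, and since this factor comes from derivatives of $G$ it need \emph{not} lie in $\Gamma^{\ell,s}_{2,c}(\alpha,\beta,G)$, which by definition is spanned by tensor products $\pi_1^*f\wedge\pi_2^*g$. The resolution is a density argument: since $r-1>p+q$, one approximates the offending factor in $\Cs^{\wp-1+q}$ by elements of $\Gamma^{\ell,\wp-1+q}_{2,c}$ with the required product structure. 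Without this step the inequality does not close.
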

\begin{proof}
As in Section \ref{secresolvent}, we  first  prove an analogue  of Lemma \ref{lem:LY}, and then  we prove a stronger version of it, as in Lemma \ref{lem:LYstrong}. 

The proof contained in Subsection \ref{subsec:LY} can be followed almost verbatim. By the  same construction one  obtains the equivalent of \eqref{eq:mani-split}, that is 
\[
\begin{split}
& \int_{W_{\alpha, \alpha',G}}  \langle g,  \cL_{t,s}^{(\ell)}h 
\rangle  \volform  =  \sum_{\stackrel{\scriptstyle  \beta, \beta' \in  \cA}{k \in \widetilde{K}_{\beta,\beta'}}}
\sum_{\bar i,\bar j, \bar i,\bar j'}
\int_{W_{\beta,\beta',G_k}}  (-1)^{(d-\ell)\ell} \psi_{\beta,\beta'} J\phi_{t-s}^{-1}\\
&\quad \times J_W(\phi_t \times \phi_{-s})
g_{\bar i',\bar j'}\circ (\phi_t\times\phi_{-s})\cdot \langle \omega_{\beta,\bar i}, *\phi_{t}^{*}*\omega_{\alpha, \bar j}\rangle 
\langle \omega_{\beta,\bar i'}, \phi_{-s}^{*}\omega_{\alpha, \bar j'}\rangle \volform. \\ 
\end{split} 
\]
For the case $ \wp = 0$, the argument is exactly the same as in  Section \ref{subsec:LY}, apart from the estimate of the volume of $\phi_{-t}\times\phi_s(W_{\alpha, \alpha',G})$ to which Appendix \ref{app:topent} cannot be applied directly.  To estimate such a volume, after setting $(z_1,z_2)=(\Theta_{\alpha}\times \Theta_{\alpha'})^{-1}(G(0))$, let us consider $W(z)\doteq W^s_{6\delta}(z_1)\times W^u_{6\delta}(z_2)$ and the holonomy from $W_{\alpha, \alpha',G}$ to $W(z)$ determined by the weak unstable foliation in the first coordinate and the weak stable in the second. Clearly the distance between the corresponding points in the images of  $W_{\alpha, \alpha',G}$ and $W(z)$ is uniformly bounded; hence the required  volume is proportional to the volume of  $\phi_{-t}\times\phi_s(W(z))$, which is bounded by $e^{\htop (t+s)}$. 

For the case $\wp > 0 $,  by equations \eqref{eq:Liesplit}, we can reorder the vector fields so as to have first the vector fields tangent to $W_{\beta,\beta',G_k}$, then the vector fields in the unstable direction of $\phi_t\times\phi_{-s}$ and then the two neutral directions.
We can then proceed as in equation \eqref{eq:stable-two}. All the following computations hold verbatim apart from two issues. 

First, in equation \eqref{eq:stokes} the last term yields a multiplicative factor that does not produce a new legal test function. Indeed, such a factor is the sum of the a divergence of the vector field (which gives no problems) and the scalar product of the vector field time a $\Cs^{r-1}$ vector, call it $A$. As $A$ comes from taking derivatives of $G$, if follows that it may not be of the required product structure. The problem is easily solved: since $r-1>p+q$, we can approximate $A$ in the $\wp-1+q$ topology by vectors $A_n\in\Gamma_{2,c}^{\ell,\wp-1+q}(\alpha,\beta,G)$ with the appropriate tensor product structure. The required  inequality follows.

Second, the weakest  contraction is now given by the case in which all the vector fields act on the component with the smallest time, hence the $\min\{t,s\}$ factor.
\end{proof}

\begin{lem} \label{lem:quasicompact2} 
For each $0<p+q<r-1$, $\ell\in\{0,\dots, d-1\}$, and for each  $\Re(z)\doteq a>\sigma_\ell$, the operator
\begin{equation}\label{eq:r2def} 
R_{2}^{(\ell)}(z)^n = \frac{1}{(n-1)!^2}
\int_0^\infty \int_0^\infty  (ts)^{n-1} e^{-z(t+s)} \mathcal{L}_{t,s}^{(\ell)} \;dt ds 
\end{equation}
satisfies
\[
\begin{split}
 &\spqnormbis[p,q,\ell]{R_2^{(\ell)}(z)^n} \leq C_{p,q} (a-\sigma_{\ell})^{-2n} \\
 &\spqnormbis{R_2^{(\ell)}(z)^n h} \leq C_{p,q,a}\left\{ \frac{ \spqnormbis{h} }{(a-\sigma_{\ell}+ \frac{\sigma_{p,q}}2)^{2n}}
+  \frac{(|z|+1)}{(a- \sigma_{\ell})^{2n}} \spqnormbis[p-1,q+1,\ell]{h}\right\}.
\end{split}
\]
Hence $R^{(\ell)}(z)$ is a linear operator on $\widetilde{\cB}^{p,q,\ell}_2$ with spectral radius bounded by 
$(\Re(z)-\sigma_\ell)^{-2}$ and essential spectral radius bounded by $(\Re(z)-\sigma_\ell +\frac{\sigma_{p,q}}2)^{-2}$. 
\end{lem}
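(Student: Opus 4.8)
The plan is to mimic the proof of Lemma~\ref{lem:quasicompactness}, now using the two-parameter Lasota-Yorke estimates of Lemma~\ref{lem:LYstrong2} in place of Lemma~\ref{lem:LYstrong}, and exploiting the fact that the kernel $\frac{(ts)^{n-1}}{(n-1)!^2}e^{-z(t+s)}$ in \eqref{eq:r2def} factorizes as a product of two single-variable resolvent kernels. First I would establish the first (crude) bound: applying the first inequality of Lemma~\ref{lem:LYstrong2} directly inside \eqref{eq:r2def} gives
\[
\spqnormbis[p,q,\ell]{R_2^{(\ell)}(z)^n h}\leq \frac{C_{p,q}}{(n-1)!^2}\spqnormbis[p,q,\ell]{h}\int_0^\infty\!\!\int_0^\infty (ts)^{n-1}e^{-(a-\sigma_\ell)(t+s)}\,dt\,ds=\frac{C_{p,q}}{(a-\sigma_\ell)^{2n}}\spqnormbis[p,q,\ell]{h},
\]
using that the double integral factors into the square of $\int_0^\infty t^{n-1}e^{-(a-\sigma_\ell)t}dt=(n-1)!(a-\sigma_\ell)^{-n}$. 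In particular $R_2^{(\ell)}(z)\in L(\widetilde\cB_2^{p,q,\ell},\widetilde\cB_2^{p,q,\ell})$ for $a>\sigma_\ell$, with spectral radius at most $(a-\sigma_\ell)^{-2}$ (the latter from taking $n$-th roots and letting $n\to\infty$, noting $(n-1)!^{1/n}\to\infty$ is irrelevant since it already appears inverted correctly).

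Next I would prove the second, sharper inequality. As in the proof of Lemma~\ref{lem:quasicompactness} one must handle the small-time region (say $t\le t_0$ or $s\le t_0$) separately, since there the cone contraction and hence the good factor $e^{-\sigma_{p,q}\min\{t,s\}/1}$ is not available; but the contribution of $\{t\le t_0\}\cup\{s\le t_0\}$ to \eqref{eq:r2def} is bounded by $C_{p,q}\frac{t_0^n}{n!}(a-\sigma_\ell)^{-n}$ times $\spqnormbis[p,q,\ell]{h}$, hence by $C_{p,q}(a-\sigma_\ell+\tfrac{\sigma_{p,q}}2)^{-2n}$ once $n$ is large enough (as in \eqref{eq:res-small-t}, using $n!\ge n^ne^{-n}$). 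On the remaining region I would use the second inequality of Lemma~\ref{lem:LYstrong2}: the leading term carries $e^{-\sigma_{p,q}\min\{t,s\}}$, and after integration against $(ts)^{n-1}e^{-(a-\sigma_\ell)(t+s)}$ this yields, by a standard estimate on $\int\!\!\int (ts)^{n-1}e^{-(a-\sigma_\ell)(t+s)-\sigma_{p,q}\min\{t,s\}}$ (bound $\min\{t,s\}$ below by $\tfrac12(t+s)$ on the symmetric majorant, or split into $\{t\le s\}$ and $\{t\ge s\}$), a factor $C_{p,q,a}(a-\sigma_\ell+\tfrac{\sigma_{p,q}}2)^{-2n}\spqnormbis[p,q,\ell]{h}$; the loss of the factor $2$ in the rate is exactly the point where one estimates $\min\{t,s\}\geq\tfrac12(t+s)$ crudely. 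For the $X_j h$ terms one integrates by parts in $t$ (for $X_1$) or in $s$ (for $X_2$), using $X_1\cL^{(\ell)}_{t,s}=\partial_t\cL^{(\ell)}_{t,s}$ and similarly for $X_2$, exactly as in the treatment of \eqref{eq:flow-est} in the proof of Lemma~\ref{lem:quasicompactness}; this produces the factor $(|z|+1)(a-\sigma_\ell)^{-2n}\spqnormbis[p-1,q+1,\ell]{h}$, the weaker norm appearing because the integration by parts removes one Lie derivative's worth of regularity and drops the dependence on one of the time variables before integrating the other.

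Finally, for the essential spectral radius I would run the Nussbaum argument verbatim from the end of the proof of Lemma~\ref{lem:quasicompactness}: the unit ball $B_1$ of $\widetilde\cB^{p,q,\ell}_2$ in the $\spqnormbis{\cdot}$ norm sits inside the unit ball $B_2$ of $\cB^{p,q,\ell}_2$ in the weaker $\pqnormbis{\cdot}$ norm, which by the analogue of Lemma~\ref{lem:compact} for $M^2$ is relatively compact in $\cB^{p-1,q+1,\ell}_2$; covering $B_2$ by finitely many $\pqnormbis[p-1,q+1,\ell]{\cdot}$-balls of small radius $\epsilon$ and applying the second inequality just proven shows $R_2^{(\ell)}(z)^n(B_1)$ is covered by finitely many $\spqnormbis{\cdot}$-balls of radius $C_{p,q,a}(a-\sigma_\ell+\tfrac{\sigma_{p,q}}2)^{-2n}+C_{p,q,a}(|z|+1)(a-\sigma_\ell)^{-2n}\epsilon$; choosing $\epsilon$ appropriately and invoking Nussbaum's formula $\rho_{\textrm{ess}}=\liminf_n r_n^{1/n}$ gives the bound $(a-\sigma_\ell+\tfrac{\sigma_{p,q}}2)^{-2}$. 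The main obstacle, as in the one-variable case, is the careful bookkeeping of the small-time region together with the integration-by-parts argument for the generator terms, now duplicated because there are two generators $X_1,X_2$ and two time variables; the tensor/product structure of both the space $\cB^{p,q,\ell}_2$ and the kernel in \eqref{eq:r2def} is what makes this go through without new ideas, but one has to check that the decomposition of vector fields into stable/unstable/neutral components (via \eqref{eq:Liesplit}) is compatible with the product structure of the admissible test functions, which was already verified in the proof of Lemma~\ref{lem:LYstrong2}.
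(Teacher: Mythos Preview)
Your outline matches the paper's proof almost exactly: decompose the integration domain according to whether $t,s$ exceed $t_0$, use the second inequality of Lemma~\ref{lem:LYstrong2} on the region $\{t>t_0,\,s>t_0\}$, integrate by parts in $t$ (resp.\ $s$) to handle the $X_1h$ (resp.\ $X_2h$) terms, and finish with Nussbaum's formula.

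There is one genuine slip. You write ``bound $\min\{t,s\}$ below by $\tfrac12(t+s)$''. This inequality is false: one always has $\min\{t,s\}\le\tfrac12(t+s)$, so $e^{-\sigma_{p,q}\min\{t,s\}}\ge e^{-\sigma_{p,q}(t+s)/2}$, and the crude replacement gives a \emph{lower} bound on the integrand, not the upper bound you need. The alternative you mention (split into $\{t\le s\}$ and $\{s\le t\}$) is the correct route and is precisely what the paper does: by symmetry it suffices to estimate
\[
2\int_{t_0}^\infty\!dt\int_t^\infty\!ds\,\frac{(ts)^{n-1}e^{-(a-\sigma_\ell)(t+s)-\sigma_{p,q}t}}{(n-1)!^2},
\]
and the paper then integrates explicitly in $s$ (incomplete gamma), expands the resulting polynomial in $t$, integrates in $t$, and bounds the resulting binomial sum to obtain the factor $(a-\sigma_\ell+\tfrac{\sigma_{p,q}}2)^{-n}(a-\sigma_\ell)^{-n}\le(a-\sigma_\ell+\tfrac{\sigma_{p,q}}2)^{-2n}$ up to constants. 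So the loss of the factor $2$ comes out of this honest computation, not from any inequality on $\min\{t,s\}$. With that correction your argument is complete.
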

\begin{proof}

Again the proof follows closely Subsection \ref{subsec:resol}, and more precisely Lemma \ref{lem:quasicompactness}. 
 The only difference rests in the need to decompose the domain of integration of \eqref{eq:r2def} into four pieces: $A_1=\{(t,s)\in\bRp^2\;:\; t\leq t_0,s\leq t_0\}$, $A_2=\{(t,s)\in\bRp^2\;:\; t\leq t_0,s > t_0\}$, $A_3=\{(t,s)\in\bRp^2\;:\; t > t_0,s\leq t_0\}$ and $A_4=\{(t,s)\in\bRp^2\;:\; t > t_0,s > t_0\}$. 
 
 The estimate of the integration over $A_4$ follows verbatim the argument in Lemma \ref{lem:quasicompactness}, except that one then obtains (instead of equation \eqref{eq:ly-res-lim})
 \begin{equation} \label{eq:r2n}
\begin{split}
& \int_{t_0}^\infty \! \! \! \int_{t_0}^\infty \frac{C_{p,q}(ts)^{n-1} }{(n-1)!^2e^{[a - \sigma_\ell](t+s) +\min\{t,s\}\sigma_{p,q}}}  \spqnormbis[p,q,\ell]{h}    dt ds  \\
 & \qquad \quad + \frac{C_{p,q}(|z|+1)}{(a - \sigma_\ell)^{2n}}
 \spqnormbis[p-1,q+1,\ell]{h} .
\end{split} 
 \end{equation}
The integral in \eqref{eq:r2n} is estimated as follows. 
\[
\begin{split}
& \int_{t_0}^\infty \! \! \! \int_{t_0}^\infty \frac{e^{-[a - \sigma_\ell](t+s) -\min\{t,s\}\sigma_{p,q}}(ts)^{n-1} }{(n-1)!^2} 
=2 \int_{t_0}^\infty \hskip-.4cm dt \int_{t}^\infty\hskip-.4cm ds \frac{e^{-[a - \sigma_\ell](t+s) -t\sigma_{p,q}}(ts)^{n-1} }{(n-1)!^2}    \\
& \leq   \sum_{k=0}^{n-1} \frac{2 }{(n-1)! k! }
\! \int_{0}^\infty  \!  dt  \, \frac{t^{n + k - 1} e^{-(2a - 2\sigma_\ell + \sigma_{p,q})t}}{(a -\sigma_\ell)^{n-k}} \\
& = 2 \sum_{k=0}^{n-1} \binom{n + k - 1}{n-1} \frac{1}{ (a -\sigma_\ell)^{n-k}(2a -2\sigma_\ell+\sigma_{p,q})^{n+ k}}  \\
& \leq 2 (2a - 2\sigma_\ell+ \sigma_{p,q})^{-n}(a -\sigma_\ell)^{-n}  
 \sum_{k=0}^{n-1} \binom{n + k - 1}{n-1}\frac{(a -\sigma_\ell)^{k}}{(2a -2\sigma_\ell+\sigma_{p,q})^{k}}  \\
&  \leq (a -\sigma_\ell+\frac{\sigma_{p,q}}2)^{-n}(a -\sigma_\ell)^{-n}.
\end{split} 
\]
Similarly, the integrals over  $A_2, A_3$ are bounded by 
\[ 2 \int_{0}^\infty dt  \int_0^{t_0} ds\frac{e^{-[a - \sigma_\ell](t+s) -s\sigma_{p,q}}(ts)^{n-1} }{(n-1)!^2} \leq \frac{C_{p,q} \vuo^n}{(a-\sigma_\ell)^{n}n!}\leq \frac{C_{p,q}}{(a-\sigma_\ell+\frac {\sigma_{p,q}}2)^{2n}},
 \]
 provided $n\geq \frac{(a-\sigma_{\ell}+\frac{\sigma_{p,q}}{2})^2 \vuo e}{(a-\sigma_\ell)}$, by analogy with \eqref{eq:res-small-t}. The estimate of  the integral over $A_1$ is
 treated similarly.
 \end{proof}

\subsection[Trace representation]{Tensor representation of the trace}\label{subsec:tensortrace}

Following the scheme of \cite{LiveraniTsujii06},  we define a suitable delta like functional acting on
 $\widetilde{\cB}^{p,q,\ell}_2$ and we construct an approximation scheme for such a functional. For each $f,g\in\Omega_r^\ell(M)$,  we define
 \begin{equation} \label{eq:dscalar}
\delta_2^\ell(\pi_1^*f \wedge\pi_2^*g ) \doteq \int_M \langle f,  g \rangle_x \omega_M(x) \,.
\end{equation} 
Such a definition extends by linearity and density to all sections in the closure of $\Omega_{2,r}^\ell(M)$ with respect to the $\Cs^r$ topology. Thus we obtain
\begin{equation}\label{eq:d2c}
\begin{split}
\delta_2^\ell (h)&=\sum_{\alpha,\beta}\sum_{\bar i, \bar j}\int _{M}
\psi_{\alpha,\beta}(x,x) h^{\alpha,\beta}_{\bar i,\bar j}(x, x) \langle \omega_{\alpha,\bar i}(x),\omega_{\beta,\bar j}(x)\rangle \omega_M(x)\\
&=\sum_{\alpha, \bar i}\int _{M} \psi_{\alpha}(x)h^{\alpha,\alpha}_{\bar i,\bar i}(x, x)  \omega_M(x).
\end{split}
\end{equation}
Given $\kappa_\ve$ as in Definition \ref{def:molli}, we set $J_\ve(x,y)=0$ if $d(x,y) > \delta$,  otherwise we set 
\begin{equation}\label{eq:maybe}
J_\ve(x,y) \doteq \sum_{\alpha , \bar i}\psi_{\alpha}(y)\cdot J\Theta_\alpha(y)\cdot\kappa_\ve(\Theta_\alpha(x)-\Theta_\alpha(y))\omega_{\alpha,\bar i}(x)\wedge \omega_{\alpha,\bar i}(y) .
\end{equation}
\begin{lem}\label{lem:leftje}
For each $h\in\Omega_{2,s}^\ell(M)$, $s<r$,we have that
\begin{equation} \label{eq:leftje} 
\lim_{\ve \to 0} \langle J_\ve,h\rangle_{\Omega^{(2\ell)}} = \delta^{\ell}_2(h).
\end{equation}
\end{lem}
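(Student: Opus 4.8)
The plan is to reduce everything to a one–variable mollifier estimate. Since both $\delta_2^\ell$ and the map $h\mapsto\langle J_\ve,h\rangle_{\Omega^{(2\ell)}}$ are linear, and $\Omega_{2,s}^\ell(M)$ is by definition spanned by the decomposable forms $\pi_1^*f\wedge\pi_2^*g$ with $f,g\in\Omega_{0,s}^\ell(M)$, it suffices to verify \eqref{eq:leftje} on such a form and extend by linearity. First I would substitute the definition \eqref{eq:maybe} of $J_\ve$ into
\[
\langle J_\ve,\pi_1^*f\wedge\pi_2^*g\rangle_{\Omega^{(2\ell)}}=\int_{M\times M}\big\langle J_\ve(x,y),(\pi_1^*f\wedge\pi_2^*g)(x,y)\big\rangle_{(x,y)}\;\omega_M(x)\,\omega_M(y),
\]
observe that $\omega_{\alpha,\bar i}(x)\wedge\omega_{\alpha,\bar i}(y)=\pi_1^*\omega_{\alpha,\bar i}\wedge\pi_2^*\omega_{\alpha,\bar i}$ on $U_\alpha\times U_\alpha$, and apply the product formula \eqref{eq:scalarxy} for the scalar product on $M^2$. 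Because $\cA$ and $\cI_\ell$ are finite, this rewrites the pairing as the finite sum
\[
\sum_{\alpha,\bar i}\int_{M\times M}\psi_\alpha(y)\,J\Theta_\alpha(y)\,\kappa_\ve\big(\Theta_\alpha(x)-\Theta_\alpha(y)\big)\,\langle\omega_{\alpha,\bar i},f\rangle_x\,\langle\omega_{\alpha,\bar i},g\rangle_y\;\omega_M(x)\,\omega_M(y).
\]

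The core of the argument is the elementary fact that, for fixed $y\in\supp\psi_\alpha$ and any continuous $F$, the change of variables $\xi=\Theta_\alpha(x)$ — under which $\omega_M(x)=J\Theta_\alpha(\Theta_\alpha^{-1}\xi)^{-1}\,d\xi$ — followed by the rescaling $\xi=\Theta_\alpha(y)+\ve w$ gives $\int_M\kappa_\ve(\Theta_\alpha(x)-\Theta_\alpha(y))\,F(x)\,\omega_M(x)\to J\Theta_\alpha(y)^{-1}F(y)$ as $\ve\to0$, since $\int\kappa=1$; the Jacobian weight $J\Theta_\alpha(y)$ appearing in \eqref{eq:maybe} is placed precisely so as to cancel this $J\Theta_\alpha(y)^{-1}$. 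For $\ve$ small the support of the integrand stays inside the chart domain (as $\supp\psi_\alpha$ is compactly contained in $\Theta_\alpha^{-1}(B_d(0,2\delta))$ and $d(x,y)\le\delta$ on $\supp J_\ve$), so no boundary issue arises. Applying this with $F(x)=\langle\omega_{\alpha,\bar i},f\rangle_x$ (continuous since $f\in\Omega_{0,s}^\ell$, $s<r$) and invoking dominated convergence to interchange $\lim_{\ve\to0}$ with the $y$–integration and the finite sums — all integrands are bounded uniformly in $\ve$, as $\int|\kappa_\ve|=\int|\kappa|$ — I would obtain
\[
\lim_{\ve\to0}\langle J_\ve,\pi_1^*f\wedge\pi_2^*g\rangle_{\Omega^{(2\ell)}}=\sum_{\alpha,\bar i}\int_M\psi_\alpha(y)\,\langle\omega_{\alpha,\bar i},f\rangle_y\,\langle\omega_{\alpha,\bar i},g\rangle_y\;\omega_M(y).
\]

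To finish I would use that for each $\alpha$ the family $\{\omega_{\alpha,\bar i}\}_{\bar i\in\cI_\ell}$ is an orthonormal basis of $\wedge^\ell(T^*_yU_\alpha)$, whence $\sum_{\bar i\in\cI_\ell}\langle\omega_{\alpha,\bar i},f\rangle_y\,\langle\omega_{\alpha,\bar i},g\rangle_y=\langle f,g\rangle_y$, and then $\sum_\alpha\psi_\alpha\equiv1$; this collapses the last display to $\int_M\langle f,g\rangle_y\,\omega_M(y)$, which equals $\delta_2^\ell(\pi_1^*f\wedge\pi_2^*g)$ by \eqref{eq:dscalar}, and the lemma follows by linearity. (Equivalently, one may write a general $h$ in the local form \eqref{eq:h2-rap} and check that the same computation reproduces formula \eqref{eq:d2c} for $\delta_2^\ell(h)$.) I do not anticipate a genuine obstacle here; the only points requiring a little care are the bookkeeping between the coordinate Jacobian $J\Theta_\alpha$ and the Riemannian volume density, and the routine uniform bounds that license passing the limit inside the integral.
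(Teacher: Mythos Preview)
Your argument is correct and follows essentially the same approach as the paper's proof: reduce by linearity to decomposable forms $\pi_1^*f\wedge\pi_2^*g$, expand using \eqref{eq:scalarxy}, and let the mollifier collapse the double integral to the diagonal. The only cosmetic difference is that the paper integrates first in $y$ rather than $x$ (which is why the weight $J\Theta_\alpha(y)$ sits on the $y$ variable), but either order works and your Jacobian bookkeeping is accurate.
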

\begin{proof}
For $f,g\in\Omega_\infty^\ell(M)$, let $h  = \pi_1^*f\wedge\pi_2^*g$, then
\[ \begin{split} 
& \lim_{\ve \to 0} \langle  J_\ve,h\rangle_{\Omega^{(2\ell)}}  = 
\int_{M^2}   \langle J_\ve,  \pi_1^*f \wedge\pi_2^*g \rangle_{(x,y)} 
\omega_M(x) \wedge \omega_M(y) \\
& =  \lim_{\ve \to 0} \int_{M^2} \sum_{\alpha}  \psi_{\alpha}(y)  J\Theta_\alpha(y ) \cdot \kappa_\ve(\Theta_\alpha(x)-\Theta_\alpha(y))     \langle f,g\rangle_x  \omega_M(x) \wedge \omega_M(y).
\end{split} 
\]
The result follows, for $h$ of the above form, by integrating with respect to $y$.  We can extend this result to a generic $h$ by linearity of the scalar product and by density (see footnote \ref{foo:little-holder}).
\end{proof}

Next we need the equivalent of Lemma \ref{lem:currents},  the proof is omitted since it is  exactly the same as before, 
 starting from the space $\Omega^\ell_{2,r}(M)$ and choosing $k$ appropriately.
 \begin{lem} \label{lem:immersion2} 
There exists an injective immersion $ \jmath_2 : \widetilde{\cB}^{p,q,\ell}_2 \to \left(\Omega^\ell_{2,r}(M)\right)' $.
\end{lem}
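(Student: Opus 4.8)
The plan is to follow the scheme of Lemma~\ref{lem:currents} essentially verbatim, working with the manifold $M^2$ in place of $M$, with the set of stable leaves $\Sigma_2$, and with the test objects $\Gamma_{2,c}^{\ell,s}(\alpha,\beta,G)$ and the admissible vector fields chosen in Section~\ref{subsec:spaceop}. First I would define the immersion: for $h\in\Omega^\ell_{2,r}(M)$ set $[\jmath_2(h)](g)\doteq\langle h,g\rangle_{\Omega^{2\ell}}$ for $g\in\Omega^\ell_{2,r}(M)$, and observe that, since $M^2$ can be foliated by the leaves $W_{\alpha,\beta,G}\in\Sigma_2$, the same computation leading to \eqref{eq:current-0} gives
\[
[\jmath_2(h)](g)\leq C_\#\,\pqnormbis{h}\,\|g\|_{\Omega^{2\ell}_{p+q}(M^2)},
\]
so that $\jmath_2$ extends to a continuous map $\widetilde\cB^{p,q,\ell}_2\to(\Omega^\ell_{2,r}(M))'$.

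The substance is injectivity, and here I would reproduce the argument of Lemma~\ref{lem:currents}. Take a sequence $\{h_n\}\subset\Omega^\ell_{2,r}(M)$ converging to $h$ in $\widetilde\cB^{p,q,\ell}_2$ with $\jmath_2(h)=0$, fix a leaf $W_{\alpha,\beta,G}\in\Sigma_2$ and a test form $g\in\Gamma_{2,c}^{\ell,0}(\alpha,\beta,G)$. Introduce the mollification operators $\widetilde\bM_{(\alpha,\beta),\ve}$ (Definition~\ref{def:molli} on $M^2$, with cutoff $\tilde\psi_{\alpha,\beta}$ supported near $W^+_{\alpha,\beta,G}$), exactly as in the proof of Lemma~\ref{lem:currents}. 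One then has, for $h'\in\Omega^\ell_{2,r}(M)$,
\[
\int_{W_{\alpha,\beta,G}}\langle g,\widetilde\bM_{(\alpha,\beta),\ve}h'\rangle\,\volform=[\jmath_2(h')](g_\ve),
\]
where $g_\ve$ is the smoothed test form built from $g$ and $\kappa$ as in \eqref{eq:test-test}; this uses that, by the tensor structure \eqref{eq:scalarxy}, the pairing against $\widetilde\bM_{(\alpha,\beta),\ve}h'$ amounts to convolving in the coordinates $\Theta_\alpha\times\Theta_\beta$. By Lemma~\ref{lem:molli} and Remark~\ref{rem:loss} the pairings $[\jmath_2(h_n)](g_\ve)$ are Cauchy uniformly in $\ve$ (with a bound $C_\#\|g\|_{\Gamma^{\ell,q}_{2,c}}\|h_n-h_m\|_{0,q,\ell,2}$), so one may interchange the limits in $n$ and $\ve$:
\[
\int_{W_{\alpha,\beta,G}}\langle g,h\rangle\,\volform=\lim_{\ve\to0}\lim_{n\to\infty}[\jmath_2(h_n)](g_\ve)=\lim_{\ve\to0}[\jmath_2(h)](g_\ve)=0.
\]
Since $g$ and $W_{\alpha,\beta,G}$ were arbitrary this gives $\|h\|_{0,q,\ell,2}=0$; running the same argument with Lie derivatives $L_{v_1}\cdots L_{v_p}$ inserted (using \eqref{eq:lie-molli}, \eqref{eq:molli3} to commute them past the mollifier, and noting that by \eqref{eq:vector-product} the admissible $v_i$ respect the product structure so the mollification identity still holds) gives $\|h\|_{p,q,\ell,2}=0$, hence $h=0$ in $\widetilde\cB^{p,q,\ell}_2$.

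I expect the only genuinely new point relative to Lemma~\ref{lem:currents} — and hence the one to check with a little care rather than merely cite — to be the compatibility of the mollifiers $\widetilde\bM_{(\alpha,\beta),\ve}$ and the admissible vector fields with the tensor-product constraint \eqref{eq:vector-product}: one needs the smoothed test form $g_\ve$ to again lie in (the $\Cs^s$-closure of) $\Gamma_{2,c}^{\ell,s}$ and the commutators $L_{v_i}$ to stay within the allowed class, which is exactly what \eqref{eq:vector-product}, \eqref{eq:Liesplit}, and the density statement in footnote~\ref{foo:little-holder} are designed to give. Once that bookkeeping is in place, the argument is word-for-word that of Lemma~\ref{lem:currents}; the choice of the parameter $k$ (the order of the mollifier) is made exactly as there so that the loss-of-regularity estimate in Remark~\ref{rem:loss} is absorbed by $p+q<r-1$. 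This is why the paper omits the details, and I would do the same, stating explicitly only the two displays above and the reduction to Lemma~\ref{lem:currents}.
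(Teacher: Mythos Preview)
Your proposal is correct and is precisely the route the paper intends: it explicitly says the proof is omitted because it is ``exactly the same as before'' (i.e.\ Lemma~\ref{lem:currents}) applied to $\Omega^\ell_{2,r}(M)$, and you have faithfully spelled out that adaptation, including the one genuinely new bookkeeping point (compatibility of the mollifier and admissible vector fields with the tensor-product structure via \eqref{eq:vector-product}, \eqref{eq:Liesplit}, and the product mollifier that appears later as $\bM_{2,\ve}$ in \eqref{eq:molli2-def}).
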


\begin{lem} \label{lem:delta2ext} 
The current $\delta_2^\ell$ extends uniquely to an element of $\left( \widetilde \cB_2^{p,q,\ell} \right)'$. 
\end{lem}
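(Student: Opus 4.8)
The plan is to show that $\delta_2^\ell$, viewed as a linear functional on $\Omega^\ell_{2,r}(M)$, is bounded with respect to the norm $\spqnormbis[p,q,\ell]{\cdot}$, so that it extends by density to all of $\widetilde\cB_2^{p,q,\ell}$; uniqueness of the extension is then automatic since $\Omega^\ell_{2,r}(M)$ is dense in $\widetilde\cB_2^{p,q,\ell}$ by definition. The natural mechanism is to exploit Lemma \ref{lem:leftje}, which presents $\delta_2^\ell(h)$ as the limit of $\langle J_\ve, h\rangle_{\Omega^{(2\ell)}}$, and to estimate each approximant $\langle J_\ve, h\rangle_{\Omega^{(2\ell)}}$ uniformly in $\ve$ by the norm of $h$. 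The point is that the kernel $J_\ve$ is supported near the diagonal $\{x=y\}$ of $M^2$, and the diagonal is a $d$-dimensional submanifold that is uniformly transverse to the ``stable'' leaves $W_{\alpha,\beta,G}\in\Sigma_2$ (recall $d_1 = d-1$ while $\dim M^2 = 2d$, and the leaves are close to the stable direction in the first factor and the unstable direction in the second). Thus one can foliate a neighbourhood of the diagonal by leaves in $\Sigma_2$ and integrate.

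Concretely, I would fix $h\in\Omega^\ell_{2,r}(M)$ and write, as in the proof of Lemma \ref{lem:currents} (see \eqref{eq:current-0}),
\[
\langle J_\ve, h\rangle_{\Omega^{(2\ell)}} = \int_{M^2} \langle J_\ve, h\rangle_{(x,y)}\,\omega_{M^2}(x,y),
\]
and foliate the region $\{d(x,y)\le\delta\}$ by leaves $W_{\alpha,\beta,G}$ so that the integral over $M^2$ becomes an integral over a transversal times an integral along the leaves. On each leaf, $J_\ve$ acts (after passing to the chart $\Theta_\alpha\times\Theta_\beta$ and using the product structure of $J_\ve$) as a mollifier applied to the restriction of $h$; this is precisely the situation handled by the mollificator estimates of Appendix \ref{subsec:average} (Lemma \ref{lem:molli} and Remark \ref{rem:loss}), which were already used in the proof of Lemma \ref{lem:currents} to obtain bounds of the form $\int_{W}\langle g,\widetilde\bM_\ve h\rangle\volform \le C_\#\|g\|_{\Gamma^{\ell,q}_c}\|h\|_{0,q,\ell}$. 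The analogous computation in the tensorial setting gives
\[
\left|\langle J_\ve, h\rangle_{\Omega^{(2\ell)}}\right| \le C_\# \pqnormbis[0,q,\ell]{h} \le C_\# \spqnormbis[p,q,\ell]{h},
\]
uniformly in $\ve$, where the constant absorbs the (uniformly bounded) number of leaves in the foliation and the norms of the transversal integrations. Passing to the limit $\ve\to 0$ via Lemma \ref{lem:leftje} yields $|\delta_2^\ell(h)|\le C_\#\spqnormbis[p,q,\ell]{h}$ for all $h\in\Omega^\ell_{2,r}(M)$, whence the desired extension.

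The main obstacle I expect is the transversality/foliation bookkeeping: one must check that a genuine neighbourhood of the diagonal in $M^2$ can be covered by charts $\Theta_\alpha\times\Theta_\beta$ in which the diagonal is a graph over the ``$d_2$'' coordinates complementary to the leaf directions, with uniformly bounded geometry, and that the pushforward of $J_\ve$ under these charts really does have the mollifier structure required to invoke the Appendix \ref{subsec:average} estimates (this is where the specific form \eqref{eq:maybe} of $J_\ve$, built from the same $\kappa_\ve$ as in Definition \ref{def:molli}, is essential). A secondary, milder point is that one should only claim boundedness on the dense subspace $\Omega^\ell_{2,r}(M)$ and invoke density for the extension — one does not (and the statement does not require one to) give a closed formula for $\delta_2^\ell$ on a general element of $\widetilde\cB_2^{p,q,\ell}$, which is consistent with the fact that, as in Section \ref{sec:flat}, such delta-like functionals are not represented by honest forms. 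The argument is deliberately parallel to Lemma \ref{lem:currents}, so I would phrase it as ``the proof is essentially that of Lemma \ref{lem:currents}, with $J_\ve$ in place of the test sections $g_\ve$ and $\delta_2^\ell$ in place of integration against a form,'' and only spell out the two new ingredients: the transversality of the diagonal to $\Sigma_2$ and the uniform-in-$\ve$ mollifier bound.
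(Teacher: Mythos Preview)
Your overall plan---bound $|\delta_2^\ell(h)|$ by $C\spqnormbis{h}$ on the dense subspace $\Omega^\ell_{2,r}(M)$ and extend by density---is exactly right, and is what the paper does. But the route through $J_\ve$ is an unnecessary detour, and your geometric picture is off in a way that matters.

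The paper's argument is essentially one line. By \eqref{eq:d2c}, $\delta_2^\ell(h)$ is an integral over the diagonal $W_D=\{(x,x)\}$. The key point---which you miss---is that the diagonal is not \emph{transverse} to the admissible leaves but rather \emph{foliated by} them: in each chart $U_\alpha\times U_\alpha$, the maps $G_\varsigma(x^s,y^u)=(x^s,y^u,\varsigma,x^s,y^u,\varsigma)$ give manifolds $W_{\alpha,G_\varsigma}\in\widetilde\Sigma$ lying inside $W_D$ (the leaf variables being the stable coordinate of $x$ and the unstable coordinate of $y$, which on the diagonal determine the point once the flow coordinate $\varsigma$ is fixed). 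Then
\[
|\delta_2^\ell(h)|\le \sum_{\alpha,\bar i}\int d\varsigma\left|\int_{W_{\alpha,G_\varsigma}}\langle \psi_\alpha(x)\,\omega_{\alpha,\bar i}\wedge\omega_{\alpha,\bar i},\,h\rangle\right|\le C_\#\pqnormbis[0,q,\ell]{h}\le C_\#\spqnormbis{h},
\]
with no $\ve$ anywhere. Your claim that the diagonal is ``uniformly transverse'' to the $(d-1)$-dimensional leaves cannot be right on dimensional grounds: $d+(d-1)<2d$, so their tangent spaces never span $T_{(x,x)}M^2$.

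If you try to push your leaf-by-leaf mollifier bound through, there is a genuine gap: on a generic leaf $W_\sigma$ the test form $J_\ve|_{W_\sigma}$ has $\|J_\ve|_{W_\sigma}\|_{\Gamma_c^{q}}\sim \ve^{-d-q}$, and the transversal set of leaves meeting $\operatorname{supp} J_\ve$ has measure only $\sim\ve$ (the constraint is $|t_0-t'_0|<\ve$ on the flow coordinates), so the naive product blows up. What actually works is a \emph{global} change of variables $\zeta=\Theta_\alpha(x)-\Theta_\alpha(y)$ in the $M^2$ integral, giving $\langle J_\ve,h\rangle_{\Omega^{(2\ell)}}=\int d\zeta\,\kappa_\ve(\zeta)\,[\text{integral over the }\zeta\text{-shifted diagonal}]$; but to bound each shifted-diagonal integral by $\pqnormbis[0,q,\ell]{h}$ you again need to foliate that shifted diagonal by admissible leaves---i.e.\ the paper's observation. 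So the $J_\ve$ layer adds nothing.
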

\begin{proof}
Since $\widetilde{\cB}_2^{p,q,\ell}$ is defined by the closure of the sections in $\Omega^\ell_{2,r}$, it 
suffices to prove that there exists $c>0$ such that $\delta_2^\ell(h) \leq c\spqnormbis{h}$ for each $h\in \Omega^\ell_{2,r}$. 

Let $W_D \doteq \{(x,y)\in M^2\;:\; x=y\}$ and recall that $\delta^\ell_2$ corresponds to integrating on such a
manifold by \eqref{eq:d2c}. If $x\in U_\alpha$ we can foliate $W_D$, in the local chart $V_\alpha$, with
 the manifolds $W_{\alpha, G_{\varsigma}}\in\widetilde{\Sigma}$ given by the graph of the functions 
$G_{\varsigma}(x^s,y^u)=(x^s,y^u,\varsigma,x^s,y^u,\varsigma)$. Accordingly, 
\begin{equation*} 
\delta^\ell_2(h) \leq \sum_{\alpha,\bar i} 
\int d\varsigma \left|\int_{W_{\alpha, G_{\varsigma}}} \langle
  \psi_{\alpha}(x) \omega_{\alpha,\bar i}\wedge \omega_{\alpha,\bar i},h\rangle\right| 
\leq C_\#\|h\|_{0,q,\ell, 2}. \hspace{+10mm} \qedhere
\end{equation*} \end{proof}

At this point, we would like to make sense of the limit of $J_\ve$ in $\widetilde{\cB}^{p,q,\ell}_2$. Unfortunately, this can be done only at a price.

\begin{lem}\label{lem:delta-app}
For $\Re(z)$ sufficiently large, the sequence $R^{(\ell)}_2(z)\cL_{t_0,t_0}J_{\ve}$ is a Cauchy sequence 
in $\widetilde\cB^{p,q,\ell}_{+,2}$.  We call $\bar\delta^\ell_2(z)$ the limit of such a sequence. 
\end{lem}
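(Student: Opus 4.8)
The plan is to show that $\{R^{(\ell)}_2(z)\cL_{t_0,t_0}J_\ve\}_{\ve>0}$ is Cauchy in $\widetilde\cB^{p,q,\ell}_{+,2}$ by exploiting the regularizing effect of the resolvent $R^{(\ell)}_2(z)$, which is bounded $\widetilde\cB^{p-1,q+1,\ell}_2\to\widetilde\cB^{p,q,\ell}_2$ by the second inequality of Lemma \ref{lem:quasicompact2}. More precisely, the factor $\cL_{t_0,t_0}$ absorbs the cone-admissibility issue for small times, so $\cL_{t_0,t_0}J_\ve\in\widetilde\cB^{p,q,\ell}_2$, and after applying one power of $R^{(\ell)}_2(z)$ we gain a derivative in the unstable direction and lose a derivative in the stable direction. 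Hence it suffices to show that $\{\cL_{t_0,t_0}J_\ve\}$ (equivalently $\{J_\ve\}$, since $\cL_{t_0,t_0}$ is bounded) is Cauchy in the weaker norm $\pqnormbis[p-1,q+1,\ell]{\cdot}$ — and in fact even convergence in $\pqnormbis[0,q+1,\ell]{\cdot}$, combined with a uniform bound $\sup_\ve \pqnormbis[p,q,\ell]{\cL_{t_0,t_0}J_\ve}<\infty$, will do by interpolation (Lemma \ref{lem:compact} together with the standard Lasota-Yorke dichotomy, exactly as in the proof of Lemma \ref{lem:quasicompactness}).

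The main computation is therefore the estimate on $J_\ve-J_{\ve'}$ in the weak norm. Fix $\alpha,\beta\in\cA$, $G\in\Sigma_2$, a test form $g\in\Gamma^{\ell,q+1}_{2,c}(\alpha,\beta,G)$ with $\pqnormbis[\Gamma]{g}\le 1$, and consider $\int_{W_{\alpha,\beta,G}}\langle g, J_\ve-J_{\ve'}\rangle\volform$. Plugging in the explicit formula \eqref{eq:maybe} for $J_\ve$, this integral, integrated along a leaf $W_{\gamma,\gamma',G}$ foliated transversally to the diagonal $W_D$, becomes (after unfolding the mollifier $\kappa_\ve$ and the chart maps) a smoothing of the restriction of $g$ to a neighbourhood of $W_D$. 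Because $W_D$ is a fixed smooth submanifold of $M^2$ uniformly transversal to every admissible leaf $W_{\alpha,\beta,G}$ (this transversality is exactly what makes $\delta_2^\ell$ a bounded current in Lemma \ref{lem:delta2ext}), the difference $\langle J_\ve, g\rangle_{\Omega}-\delta_2^\ell$ applied along each leaf is controlled, via a standard mollifier estimate (Lemma \ref{lem:molli}, Remark \ref{rem:loss}, as used in the proof of Lemma \ref{lem:currents}), by $C_\#\,\ve^{\min\{1,q+1-s\}}\|g\|_{\Gamma^{\ell,q+1}_{2,c}}$ for any $s<q+1$; comparing two mollification scales $\ve,\ve'$ gives $|\langle J_\ve-J_{\ve'},g\rangle|\le C_\#(\ve+\ve')^{\theta}$ for some $\theta>0$, uniformly over the leaf and test data. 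This shows $\{J_\ve\}$ is Cauchy in $\pqnormbis[0,q+1,\ell]{\cdot}$. The uniform bound $\sup_\ve\pqnormbis[p,q,\ell]{J_\ve}<\infty$ (hence of $\cL_{t_0,t_0}J_\ve$) follows from \eqref{eq:maybe} by the same kind of computation controlling up to $p$ Lie derivatives of $g$, using that $\kappa_\ve$ is a product mollifier and the chart-bounds of \eqref{eq:chartproperty}; alternatively one mimics \eqref{eq:est-base}–\eqref{eq:q-test}. Finally, since $R^{(\ell)}_2(z)\cL_{t_0,t_0}$ maps a set bounded in $\widetilde\cB^{p,q,\ell}_2$ and Cauchy in $\widetilde\cB^{p-1,q+1,\ell}_2$ into a Cauchy sequence in $\widetilde\cB^{p,q,\ell}_{+,2}$ — by the second inequality of Lemma \ref{lem:quasicompact2} applied to the differences, exactly as in the covering argument at the end of the proof of Lemma \ref{lem:quasicompactness} — the sequence $R^{(\ell)}_2(z)\cL_{t_0,t_0}J_\ve$ converges in $\widetilde\cB^{p,q,\ell}_{+,2}$, and we name the limit $\bar\delta^\ell_2(z)$.

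The point requiring ``$\Re(z)$ sufficiently large'' is only that $R^{(\ell)}_2(z)$ be defined and satisfy the quantitative bounds of Lemma \ref{lem:quasicompact2}, i.e. $\Re(z)>\sigma_\ell$; nothing more is needed, though one keeps the hypothesis as stated for safety since later lemmas may tighten it.

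I expect the main obstacle to be the uniform transversality/geometry bookkeeping: one must check that the explicit kernel $J_\ve$ of \eqref{eq:maybe}, when paired against admissible test forms along admissible leaves $W_{\alpha,\beta,G}\in\Sigma_2$, genuinely behaves like a mollification of the diagonal current $\delta_2^\ell$ with an $\ve$-dependent error that is \emph{uniform} in $\alpha,\beta,G$ and in $g$ — this is where the product structure of $\kappa$, the flow-box chart conditions \eqref{eq:chartproperty}, and the fact that $W_D$ sits uniformly transversally to the cone $\cC_{\rho_+}$ all have to be invoked carefully, and it is the analogue, in the tensor setting, of the somewhat delicate argument in Lemma \ref{lem:currents}. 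Everything after that is a routine application of the resolvent smoothing already packaged in Lemma \ref{lem:quasicompact2}.
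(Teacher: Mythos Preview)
There is a genuine gap: the uniform bound $\sup_\ve\pqnormbis[p,q,\ell]{J_\ve}<\infty$ (and even $\sup_\ve\pqnormbis[p,q,\ell]{\cL_{t_0,t_0}J_\ve}<\infty$) is \emph{false}, and with it the whole reduction collapses. The diagonal $W_D$ is not transversal to admissible leaves --- quite the opposite: the proof of Lemma~\ref{lem:delta2ext} works precisely because $W_D$ can be \emph{foliated} by leaves of $\Sigma_2$. On any such diagonal leaf one has $x=y$, so $\kappa_\ve(\Theta_\alpha(x)-\Theta_\alpha(y))=\ve^{-d}\kappa(0)$ and the leaf integral in \eqref{eq:maybe} is $\sim\ve^{-d}$. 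Even after applying $\cL_{t_0,t_0}$ the situation does not repair itself: the image leaf is $(d-1)$-dimensional while $\kappa_\ve$ is a $d$-dimensional mollifier, so after the natural change of variables in the $(x^s,y^u)$ coordinates one integrates out $\kappa^{d-1}_\ve$ but is left with a bare factor $\kappa^1_\ve(H^{0,1}-H^{0,2})\sim\ve^{-1}$ whenever the two time coordinates happen to coincide at the center of the leaf. (Dimensionally, $\dim W_D+\dim W_{\alpha,\beta,G}=d+(d-1)<2d$, so literal transversality is impossible anyway.)

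The paper's proof is organized around exactly this missing dimension. One does \emph{not} try to bound $J_\ve$ or $\cL_{t_0,t_0}J_\ve$ separately; instead one unfolds $R^{(\ell)}_2(z)\cL_{t_0,t_0}J_\ve$ via \eqref{eq:r2def}, partitions the time integrals, and on each piece makes the change of variables \eqref{eq:simplechange}. The hyperbolicity, through the cone contraction after flowing by $\phi_{-t-t_0}\times\phi_{s+t_0}$, guarantees that $(x^s,y^u)\mapsto(\xi,\eta)$ is a uniform diffeomorphism, while the \emph{time integral contributed by the resolvent} supplies the extra coordinate $\tau$ needed to absorb the remaining factor $\kappa^1_\ve$. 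Only then does one get a bound uniform in $\ve$ (and, subtracting two scales, the Cauchy estimate). So the resolvent here is not acting as a Sobolev-type regularizer between $\widetilde\cB^{p-1,q+1,\ell}_2$ and $\widetilde\cB^{p,q,\ell}_2$; its role is geometric, providing the flow-direction integration without which the $d$-dimensional delta on the diagonal cannot be tested against $(d-1)$-dimensional leaves.
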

\begin{proof}
Let us start by showing that the sequence is bounded in $\widetilde{\cB}^{0,q,\ell}_{+,2}$.  Let $W_{\alpha,\beta, G}\in\Sigma_2(2,L_+)$ and $ g \in \Gamma_{2,c}^{\ell,r} $, then by equations \eqref{hodgedualform}, \eqref{eq:scalarxy} and \eqref{eq:hodgexy} we have that
\begin{equation}  \label{lastchange}
\begin{split}
&\int_{W_{\alpha,\beta, G}}\langle g, R^{(\ell)}_2(z)\cL_{t_0,t_0} J_\ve   \rangle = \int_{\bRp^2}ds\,dt\;  e^{-z(t+s)} \int_{W_{\alpha,\beta, G}} J\phi_{-t-t_0}(x) \\
& \qquad \quad \times J \phi_{s+t_0}(y) \langle (*\phi_{t+t_0}^*  \! \! \! *  \times  \phi_{-s-t_0}^* )g,  J_\ve)\rangle\circ  \phi_{-t-t_0}(x) \times \phi_{s+t_0}(y).
\end{split}
\end{equation}
Note that, provided $t_0$ has been chosen large enough, the tangent spaces of the manifold
$\phi_{-t-t_0}\times\phi_{s+t_0}(W_{\alpha,\beta, G})$ can be covered by manifolds in $\Sigma_2(1,L_0)$.

Let $\widetilde{p} \in C^{\infty}(\bR,\bRp)$, $\supp(\widetilde{p}) \subset (- \delta, \delta )$, $\widetilde p(-t)=\widetilde p(t)$,  be such that $\sum_{n \in \bZ} \widetilde{p}(t + n\delta) = 1 $ for all $t \in \bR$. Let
\[  \begin{split}
&F_{n,m}(x,y,t,s)  \doteq   J\phi_{-t-\vuo}(\phi_{n\delta + t_0}(x)) J \phi_{s+\vuo}(\phi_{ -m\delta  -t_0}(y)) \\ 
&\quad\times \det\left( \left. \! D \left( \phi_{-n\delta-t_0 } \times \phi_{m\delta +t_0} \right) \right|_{T{W}_{\alpha,\beta, G}} \right)^{-1} \circ \phi_{n\delta+t_0}\times\phi_{-m\delta-t_0}(x,y).
\end{split} \]
By the change of variable $(x', y') \doteq \phi_{-n\delta-t_0}\times\phi_{m\delta+t_0}(x,y)$,  \eqref{lastchange} becomes 
\[ 
\begin{split}
& \sum_{n,m \in \bN}    \int_{\bRp^2}   ds\,dt \, \widetilde{p}(-t + n\delta) \widetilde{p}(s  - m\delta)   e^{-z(t+s)}  \\
& \times  \int_{\phi_{-n\delta-\vuo}\times\phi_{m\delta+\vuo}(W_{\alpha,\beta,G}) } \hskip-2cm F_{n,m} \cdot  \langle (*\phi_{t+ t_0}^* * \times  \phi_{-s-t_0}^* )g, J_\ve\rangle\circ\phi_{-t +n\delta}\times\phi_{s -m\delta}  .\\
\end{split}  
\]
Note that if $(x,y)\in\phi_{-n\delta-\vuo}\times\phi_{m\delta+\vuo}(W_{\alpha,\beta,G})$ and  $y\in\supp\psi_\gamma$, then the integrand is different from zero only if $x\in U_\gamma$. Let $\{W_{\gamma,\gamma,k}\}_{K_{\gamma,n,m}}\subset \Sigma_2(1,L_0)$ be a covering of $\phi_{-n\delta-\vuo}\times\phi_{m\delta+\vuo}(W_{\alpha,\beta,G})$.
Recalling equation \eqref{eq:maybe}, we can then rewrite the previous formula\footnote{ To be precise we should treat separately the terms with $n=0$ or $m=0$, but  we leave this as an exercise  for  the reader since it is quite simple to handle, although a bit cumbersome.} by setting $t' = t  - n\delta $, $ s' = s -m\delta $. Thus we obtain
\begin{equation}\label{eq:time-partitioning}
\begin{split}
& \sum_{n,m \in \bN}\; \sum_{\gamma\in\cA, \bar i\in\cI}\;\sum_{k\in K_{\gamma,n,m}}   \int_{\bR^2}   ds'\,dt' \, \widetilde{p}(t') \widetilde{p}(s')   e^{-z( t'+s' +(n+m)\delta)}   \\
& \times \int_{W_{\gamma,\gamma,k}} F_{n,m}(x,y,  t' +n\delta, s' +m\delta)  \cdot  \psi_\gamma\circ \phi_{s'}(y) J\Theta_\gamma\circ \phi_{s'}(y)\\
&\times \kappa_\ve(\Theta_\gamma(\phi_{-t'}(x))-\Theta_\gamma(\phi_{s'}(y)))\\
&\times \langle (*\phi_{ t' + t_0+ n\delta}^* * \times  \phi_{-s' -t_0-m\delta}^* )g, \omega_{\gamma,\bar i}\wedge\omega_{\gamma,\bar i}\rangle\circ\phi_{-t'}\times\phi_{s'}  .\\
\end{split} 
\end{equation}
Recall that the manifolds $\Theta_\gamma(W_{\gamma,\gamma,k})$  are graphs of the type\footnote{ We drop the subscript $n,m, \gamma,  k $ from  $\hat{x} ,\hat{y} , H^u, H^s, H^{0,1} , H^{0,2} $ in the following equation since it is clear that such graphs depend on  all these choices.  Moreover we drop $n,m, \gamma $ from $G_k$ since it will always be clear which open set we are considering. }
\[ 
\begin{split}
G_{k} (x^s , y^u )  & \doteq  (x^s + \hat{x}^s ,  H^{u}(x^s, y^u)  , H^{s}(x^s, y^u)  , y^u + \hat{y}^u , H_1^{0}(x^s, y^u)  ,  H_2^{0} (x^s, y^u) )
\end{split} 
\]
where, since $T \Theta_\gamma^{-1}(W_{\gamma,\gamma,k})\in\cC_{\frac 12}$, $\max\{\|\partial_xH^u\|+\|\partial_xH^s\|,\partial_yH^u\|+\|\partial_yH^s\|\}\leq \frac 12$.
It is then natural to set $\overline G_k=\Theta_\gamma^{-1}\circ G_k$ and 
\[
\begin{split}
&\widehat{F}_{\gamma,n,m,k, \bar{i}} (x^s,y^u,s',t')=
F_{\gamma,n,m} (\overline G_k(x^s,y^u), t' + n\delta, s' +m\delta)\\
&\quad \times[J\Theta_\gamma \cdot \psi_\gamma]\circ \Theta_\gamma^{-1}(G_k(x^s,y^u)+(0,0,0,s')) \\
&\quad\times \langle   ( *\phi_{ t' + t_0 + n\delta}^* * \times  \phi_{-s' -t_0 -m\delta}^* )g,  
\omega_{\gamma ,\bar i} \wedge \omega_{\gamma ,\bar i}  \rangle\circ \Theta_\gamma^{-1}(G_k(x^s,y^u)+(0,0,-t',s')).
\end{split}
\]
We can then continue our computation and write
\begin{equation} \label{eq:worstthanugly} 
\begin{split}
&\sum_{n,m \in \bN}\; \sum_{\gamma\in\cA, \bar i\in\cI}\;\sum_{k\in K_{\gamma,n,m}}   \int_{\bR^2} ds'\,dt' \; \widetilde{p}(t' ) \widetilde{p}(s')   e^{-z(t'+s' +(n+m)\delta)}     \\
&  \quad   \times \int_{\bR^{d-1}}  dx^s dy^u\widehat{F}_{\gamma,n,m,k, \bar{i}} ( x^s , y^u, t',s')    \\
& \quad \quad\times \ve^{-d}\kappa^{d-1}( \ve^{-1} (x^s  - H^{s} (x^s, y^u) + \hat{x}^s,  H^{u} (x^s, y^u)   - y^u  - \hat{y}^u))  \\
& \quad \quad\times  \kappa^1( \ve^{-1}(H^{0,1} (x^s, y^u)  - H^{0,2} (x^s, y^u) - t' - s' )).
\end{split} 
\end{equation}
Note that we have $\| \widehat{F} \|_{\cC^0} \leq C_\# \|g \|_{\Cs^0}$  as in equation \eqref{eq:zero-test}.
In addition, the map $\Xi$ defined by
\begin{equation}\label{eq:simplechange}
\begin{split}
&\xi =x^s-H^s(x^s, y^u)  + \hat{x}^s ,\\
&\eta =H^u (x^s, y^u)-y^u   - \hat{y}^u, \\
&\tau=-H^{0,1} (x^s, y^u)  + H^{0,2} (x^s, y^u) + t' + s',\\
&\varsigma=s'
\end{split}
\end{equation}
is locally invertible, hence it can be used as a  change of variables. Thus, setting $a=\Re(z)$, we can bound \eqref{eq:worstthanugly} by
\[
\begin{split}
\sum_{n,m \in \bN}\; \sum_{\gamma\in\cA }\;\sum_{k\in K_{\gamma,n,m}} C_z e ^{-a(n+m)\delta} \|g\|_{\Cs^0}&\leq
C_z\sum_{n,m \in \bN} e ^{(\htop-a)(n+m)\delta} \|g\|_{\Cs^0}\\
&\leq C_z (a-\htop)^{-2} \|g\|_{\Cs^0}.
\end{split}
\]
Which, taking the sup on the manifolds and test forms, yields 
\[
\spqnormbis[0,q,\ell]{R^{(\ell)}_2(z)\cL^{(\ell)}_{\vuo,\vuo}J_\ve}\leq C_z (a-\htop)^{-2}.
\]
Next, given $\ve>\ve'>0$, by using equation \eqref{eq:worstthanugly}, \eqref{eq:simplechange} and the intermediate value theorem we have, for $\Re(z)$ large enough, 
\[ 
\begin{split}
&\left| \int_{W_{\alpha,\beta, G}}\langle g, R^{(\ell)}_2(z)\cL^{(\ell)}_{t_0,t_0}J_\ve-R^{(\ell)}_2(z)\cL^{(\ell)}_{t_0,t_0}J_{\ve'} \rangle\right| \\
& \leq C_\#\sum_{n,m \in \bN}\; \sum_{\gamma\in\cA, \bar i\in\cI}\;\sum_{k\in K_{\gamma,n,m}}   \int_{\bR^{d+1}}\hskip-.3cm   d\xi \, d\eta\,d\tau\,d\varsigma\;    e^{-z \{\tau+[H^{0,1}-H^{0,2}]\circ\Xi^{-1}(\xi,\eta,\tau,\varsigma)+(n+m)\delta\}}\\
&\quad \times \left[\bF_{\gamma,n,m,k, \bar{i}} (\xi,\eta,t',s')-\bF_{\gamma,n,m, k,\bar{i}} (0,0,0,s') \right]\cdot \left[ \kappa_{\ve}((\xi,\eta,t')) -  \kappa_{\ve'} ((\xi,\eta,t')) \right] \\
&   \leq C_{z,q} \ve^{\min\{q,1\}}  \|g\|_{\Cs^q} (a-\htop)^{-2}
\end{split} 
\]
where $\bF_{\gamma,n,m,k, \bar{i}}=\widehat F\circ \Xi^{-1}\cdot J\Xi^{-1}$. Note that, as in \eqref{eq:q-test},  $\|\bF_{\gamma,n,m, k,\bar{i}}\|_{\Cs^q}\leq C_q $.
This proves the Lemma for $\widetilde{\cB}^{0,q,\ell}_2$.
The extension to $\widetilde{\cB}^{p,q,\ell}_2$ is treated similarly after integrating by parts $p$ times.
\end{proof}

We can finally  present  a description of the trace that does not involve any limit (although, unfortunately, not yet for the operators we are interested in).

\begin{lem} \label{splitR1}
For each $z\in\bC$, $\Re(z)$ large enough, $n\in\bN$ and $s,t\in\bRp$, we have that
\begin{equation}
\delta_2^{(\ell)} \left( R_{2}^{(\ell)}(z)^n\cL_{t,s}^{(\ell)}\bar{\delta}_2^{(\ell)} (z) \right)= 
  \trf(R^{(\ell)}(z)^{2n}\cL^{(\ell)}_{t+s+2t_0}).
\end{equation}
\end{lem}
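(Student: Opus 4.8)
The plan is to collapse the tensor construction onto the diagonal and land on the flat--trace formula of Lemma~\ref{lem:traceorbits}. The three structural facts I would use are: (i) $\cL^{(\ell)}_{t,s}$ is an $\bRp^2$--action that commutes with $R_2^{(\ell)}(z)$, both being built from the same family; (ii) the functional $\delta_2^{(\ell)}$ turns the second-factor operator $\overline{\cL}^{(\ell)}_s$ into the first-factor operator $\cL^{(\ell)}_s$, because $\overline{\cL}^{(\ell)}_s$ is the $\langle\cdot,\cdot\rangle_{\Omega^\ell}$--adjoint of $\cL^{(\ell)}_s$ (this is exactly \eqref{eq:formaladjoint}); (iii) $\delta_2^{(\ell)}$ evaluated on $(\cL^{(\ell)}_w\otimes\mathrm{id})J_\ve$ reproduces, chart by chart, the approximants \eqref{traceoriginal} defining $\trf(\cL^{(\ell)}_w)$.

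First I would invoke Lemma~\ref{lem:delta-app}, the continuity of $\delta_2^{(\ell)}$ from Lemma~\ref{lem:delta2ext}, and the boundedness of $R_2^{(\ell)}(z)^n\cL^{(\ell)}_{t,s}$ from Lemma~\ref{lem:quasicompact2} to pull the regularising limit out and collect the two resolvent blocks:
\[
\delta_2^{(\ell)}\!\left(R_2^{(\ell)}(z)^n\cL^{(\ell)}_{t,s}\bar{\delta}_2^{(\ell)}(z)\right)
=\lim_{\ve\to0}\delta_2^{(\ell)}\!\left(R_2^{(\ell)}(z)^{n+1}\cL^{(\ell)}_{t+t_0,\,s+t_0}J_\ve\right).
\]
Next, writing $R_2^{(\ell)}(z)^{n+1}=\tfrac1{n!^2}\int_0^\infty\!\!\int_0^\infty(uv)^n e^{-z(u+v)}\cL^{(\ell)}_{u,v}\,du\,dv$ as in \eqref{eq:r2def}, I use \eqref{eq:wedgetransfer} together with (ii): on a tensor $\pi_1^*f\wedge\pi_2^*g$ one has $\delta_2^{(\ell)}(\cL^{(\ell)}_{a,b}(\pi_1^*f\wedge\pi_2^*g))=\langle\cL^{(\ell)}_af,\overline{\cL}^{(\ell)}_bg\rangle=\langle\cL^{(\ell)}_{a+b}f,g\rangle=\delta_2^{(\ell)}\!\big((\cL^{(\ell)}_{a+b}\otimes\mathrm{id})(\pi_1^*f\wedge\pi_2^*g)\big)$; by density of tensors (footnote~\ref{foo:little-holder}) this identity passes to $J_\ve$, so the whole two-factor operator collapses to a one-factor operator with all the times added. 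A Beta-type integration in $(u,v)$ after the change of variable $w=u+v$ (in the spirit of \eqref{eq:natural-trace}) then reconstitutes, via \eqref{eq:resoldef}, $R^{(\ell)}(z)^{2n}\cL^{(\ell)}_{t+s+2t_0}\otimes\mathrm{id}$, so that the left-hand side equals $\lim_{\ve\to0}\delta_2^{(\ell)}\!\big((R^{(\ell)}(z)^{2n}\cL^{(\ell)}_{t+s+2t_0}\otimes\mathrm{id})J_\ve\big)$. It then remains to recognise $\delta_2^{(\ell)}\!\big((A\otimes\mathrm{id})J_\ve\big)$, for a transfer-type operator $A$ on $M$, as the $\ve$-approximant of $\trf(A)$: expanding $J_\ve$ in the charts via \eqref{eq:d2c}, restricting to the diagonal $x=y$, using completeness of $\{\omega_{\alpha,\bar i}\}$, the partition $\sum_\alpha\psi_\alpha=1$ and the evenness of the product mollifier $\kappa=\kappa^{d-1}\otimes\kappa^1$, one gets precisely $\int_M\sum_{\alpha,\bar i}\langle\omega_{\alpha,\bar i},A\,j_{\ve,\alpha,\bar i,x}\rangle_x\,\omega_M(x)$, whose $\ve\to0$ limit is $\trf(A)$ by the definition \eqref{traceoriginal} and Lemma~\ref{lem:traceorbits}.

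The main obstacle is the interchange of $\ve\to0$ with the $(u,v)$--integration in the second step: the formal pointwise limit $\lim_{\ve\to0}\delta_2^{(\ell)}((\cL^{(\ell)}_w\otimes\mathrm{id})J_\ve)$ ``$=\trf(\cL^{(\ell)}_w)$'' is not an absolutely summable object for an Anosov flow (morally a sum over periodic orbits of period $\approx w$), so the exchange cannot be done naively. This is the difficulty already met in Lemma~\ref{lem:traceorbits} and is handled the same way: one never separates the resolvent from the integral, dominates the $\ve$-approximant uniformly in $\ve$ by an absolutely convergent quantity via the estimate \eqref{eq:trace-quant} (equivalently the operator bounds of Lemma~\ref{lem:quasicompact2}), applies dominated convergence, and only then sends $\ve\to0$, obtaining $\trf(R^{(\ell)}(z)^{2n}\cL^{(\ell)}_{t+s+2t_0})$. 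A secondary, purely bookkeeping issue is tracking the Jacobians $J\Theta_\alpha$, the partition of unity, and the Hodge signs $(-1)^{\ell(d-\ell)}$ coming from \eqref{eq:hodgexy}--\eqref{eq:formaladjoint}, which cancel pairwise along the way.
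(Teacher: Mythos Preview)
Your approach is essentially the paper's, repackaged abstractly. The paper too passes to the $\ve$--approximation of $\bar\delta_2^{(\ell)}$, expands $\delta_2^{(\ell)}$ and $J_\ve$ in coordinates, and then collapses the two--factor action to a one--factor one; where you invoke the adjoint identity $\langle\cL^{(\ell)}_a f,\overline\cL^{(\ell)}_b g\rangle_{\Omega^\ell}=\langle\cL^{(\ell)}_{a+b}f,g\rangle_{\Omega^\ell}$ and density of pure tensors, the paper performs the same cancellation explicitly by summing over $\bar i,\alpha$, applying \eqref{hodgedualform}, and then changing variables $x\mapsto\phi_{s'+s+t_0}(x)$ on $M$ (which manufactures the Jacobian that your adjoint identity hides). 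The Beta--function step and the identification with the $j_{\ve}$--approximant \eqref{traceoriginal} are the same in both.

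Two minor remarks. First, your Beta arithmetic slips: from $R_2^{(\ell)}(z)^{n+1}$ the collapse produces $R^{(\ell)}(z)^{2n+2}$, not $R^{(\ell)}(z)^{2n}$. The paper's own displayed first line carries exponent $n$ rather than $n+1$, so there is an (inconsequential) off--by--one already in the statement/proof pair of the paper; you have simply reproduced it. Second, your recognition of $\delta_2^{(\ell)}\big((A\otimes\mathrm{id})J_\ve\big)$ as the flat--trace approximant relies on the evenness of $\kappa$, which the paper never assumes; the paper sidesteps this by the change of variables on $M$ just mentioned, which swaps the roles of the two arguments of $\kappa_\ve$ without any parity hypothesis. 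Neither point affects the validity of your strategy.
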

\begin{proof} 
Let $\mathfrak{R}_n(z,s,t)\doteq \delta_2^{(\ell)} \left(R_{2}^{(\ell)}(z)^n\cL_{t,s}^{(\ell)}\bar{\delta}_2^{(\ell)} \right)$. From Lemmata \ref{lem:delta2ext},  \ref{lem:delta-app} and equation \eqref{eq:d2c} we obtain 
\[ 
\begin{split}
&\mathfrak{R}_n (z,s,t)= \lim_{\ve \to 0} \sum_{\alpha, \bar i}\int _{M} \omega_M (x) \,
\psi_{\alpha}(x) \left(R_2^{(\ell)}(z)^n\cL_{t+t_0,s+t_0}^{(\ell)}  J_{\ve}\right)^{\alpha,\alpha}_{\bar i,\bar i}(x, x) \\
&=  \lim_{\ve \to 0} \sum_{\alpha, \beta , \bar i ,\bar j}\int_{\bRp^2}ds'dt'  \;\frac{(t's')^{n-1} e^{-z(t'+s')}}{[(n-1)!]^2}  \int_{M}  \omega_M (x)
  \psi_{\alpha}(x) \left[\psi_{\beta}J\Theta_\beta\right]\circ \phi_{s'+s+t_0}(x) \\
&\quad \quad \quad \times \kappa_{\ve}(\Theta_\beta(\phi_{-t'-t-t_0}(x))-\Theta_\beta(\phi_{s'+s+t_0}(x)))\\
&\quad \quad \quad \times(-1)^{\ell(d-\ell)}\langle \phi_{-t'-t-t_0}^*\omega_{\beta,\bar j},\omega_{\alpha, \bar i}\rangle_x \langle \omega_{\alpha, \bar i},*\phi_{s'+s+t_0}^**\omega_{\beta,\bar j}\rangle_x.
\end{split}
\]
Next, we  sum over $\bar i$, $\alpha$ and use \eqref{hodgedualform} to obtain
\[ 
\begin{split}
\mathfrak{R} (z,s,t)&=  \lim_{\ve \to 0} \sum_{\beta  ,\bar j}\int_{\bRp^2}ds'dt'  \; \frac{(t's')^{n-1} e^{-z(t'+s')}}{[(n-1)!]^2}   \int_{M} 
 \left[\psi_{\beta}J\Theta_\beta\right]\circ \phi_{s'+s+t_0}(x) \\
&\quad \quad \quad \times \kappa_{\ve}(\Theta_\beta(\phi_{s'+s+t_0}(x))-\Theta_\beta(\phi_{-t'-t-t_0}(x)))\\
&\quad \quad \quad \times J\phi_{s'+s+t_0}\langle \phi_{-t'-t-s'-s-2t_0}^*\omega_{\beta,\bar j},\omega_{\beta,\bar j}\rangle\circ \phi_{s'+s+t_0}(x)\\
&= \lim_{\ve \to 0}\sum_{\beta  ,\bar j}\int_{\bRp^2}ds'dt'  \; \frac{(t's')^{n-1} e^{-z(t'+s')}}{[(n-1)!]^2} \int_{M}   \langle \omega_{\beta,\bar j}, \cL_{t'+s'+t+s+2t_0}^{(\ell)}j_{\ve, \beta,\bar j, x}\rangle,
\end{split}
\]
where, in the last line, we have changed variables and used \eqref{eq:normalization1}. Next, after the change of variables $v=t'+s'$, $u=t'$, we integrate in $u$ (the integral is given by the $\beta$-function)  and recall \eqref{traceoriginal} to obtain the statement of the Lemma.
\end{proof}
At last we can start harvesting the benefits of the previous results. For fixed $p,q>0$, $p+q<r-1$, by Lemma \ref{lem:quasicompact2} we have
$R^{(\ell)}_{2}(z) =  {P}_2^{(\ell)}(z)+ {U}_2^{(\ell)}(z)$ where ${P}_2^{(\ell)}(z)$ is a finite
 rank operator and the spectral radius of ${U}_2^{(\ell)}(z)$ is bounded by  $(\Re(z)-\sigma_{\ell}+\frac{\sigma_{p,q}}2)^{-2}$.  Recall that by Lemma \ref{lem:quasicompactness} we have $R^{(\ell)}(z) =  P^{(\ell)}(z)+ U^{(\ell)}(z)$ where $P^{(\ell)}(z)$ is a finite rank operator and the spectral radius of $U^{(\ell)}(z)$ is bounded by  $(\Re(z)-\sigma_{\ell}+\sigma_{p,q})^{-1}$. In addition,
\[
P^{\left(\ell\right)}(z) U^{\left(\ell\right)}(z) = 
U^{\left(\ell\right)} (z)P^{\left(\ell\right)} (z)= 0\;;\quad
{P}_2^{\left(\ell\right)}(z) {U}_2^{\left(\ell\right)}(z) = 
{U}_2^{\left(\ell\right)} (z){P}_2^{\left(\ell\right)} (z)= 0. 
\]

\begin{lem} \label{splitR2} 
There exists $\Cnu>0$ such that, for each $t,s\in\bRp$,  $n\in\bN$ and $z\in \bC$, $a=\Re(z)\geq \Cnu$, we have that
\begin{equation}
\delta_2^{(\ell)} 
\left( P_2^{(\ell)}(z)^n \cL_{t,s}^{(\ell)} \bar{\delta}_2^{(\ell)}(z) \right) = 
\tr \left({P}^{(\ell)}(z)^{2n}\cL^{(\ell)}_{t+s+2\vuo}\right).
\end{equation}
\end{lem}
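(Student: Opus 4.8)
The plan is to reduce Lemma \ref{splitR2} to Lemma \ref{splitR1} by isolating, on both sides, the contribution of the ``finite rank part'' of the resolvents and discarding the contribution of the ``essential part''. The starting point is the identity of Lemma \ref{splitR1},
\[
\delta_2^{(\ell)}\left(R_2^{(\ell)}(z)^n\cL_{t,s}^{(\ell)}\bar\delta_2^{(\ell)}(z)\right)=\trf\left(R^{(\ell)}(z)^{2n}\cL_{t+s+2\vuo}^{(\ell)}\right),
\]
valid for $\Re(z)$ large. Using the decompositions $R_2^{(\ell)}(z)=P_2^{(\ell)}(z)+U_2^{(\ell)}(z)$ and $R^{(\ell)}(z)=P^{(\ell)}(z)+U^{(\ell)}(z)$, together with the orthogonality relations $P_2U_2=U_2P_2=0$ and $PU=UP=0$, one has $R_2^{(\ell)}(z)^n=P_2^{(\ell)}(z)^n+U_2^{(\ell)}(z)^n$ and $R^{(\ell)}(z)^{2n}=P^{(\ell)}(z)^{2n}+U^{(\ell)}(z)^{2n}$. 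Hence both sides of the identity split into a ``$P$-part'' and a ``$U$-part'', and the claim of Lemma \ref{splitR2} is precisely the equality of the two $P$-parts (on the right, $\trf$ of a finite rank operator equals $\tr$, as noted after \eqref{traceoriginal}). So it suffices to show that the two $U$-parts coincide, namely
\[
\delta_2^{(\ell)}\left(U_2^{(\ell)}(z)^n\cL_{t,s}^{(\ell)}\bar\delta_2^{(\ell)}(z)\right)=\trf\left(U^{(\ell)}(z)^{2n}\cL_{t+s+2\vuo}^{(\ell)}\right).
\]

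To prove this last identity I would argue by analyticity. First, fix $n$, $s$, $t$ and regard both sides as functions of $z$ on the half-plane $\Re(z)\ge \Cnu$ for $\Cnu$ large. The left-hand side is analytic there: $U_2^{(\ell)}(z)$ depends analytically on $z$ (it is the complement of the spectral projector $P_2^{(\ell)}(z)$, which is holomorphic away from the finitely many eigenvalues it captures, and for $\Re(z)$ large these eigenvalues stay well inside), $\cL_{t,s}^{(\ell)}$ is a fixed bounded operator, $\bar\delta_2^{(\ell)}(z)\in\widetilde\cB^{p,q,\ell}_{+,2}$ depends analytically on $z$ by Lemma \ref{lem:delta-app} (the Cauchy sequence $R_2^{(\ell)}(z)\cL_{\vuo,\vuo}J_\ve$ is analytic in $z$ and converges uniformly on compacta), and $\delta_2^{(\ell)}$ is a fixed element of $(\widetilde\cB_2^{p,q,\ell})'$ by Lemma \ref{lem:delta2ext}. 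The right-hand side is analytic by the same reasoning applied to $U^{(\ell)}(z)^{2n}$ together with the continuity and finiteness of $\trf$ on the relevant operators; alternatively one uses that $\trf(R^{(\ell)}(z)^{2n}\cL^{(\ell)}_{t+s+2\vuo})$ is analytic (Lemma \ref{lem:traceorbits}, or the analyticity argument at the end of its proof) and subtracts off the analytic finite rank contribution. Then, since the full identity of Lemma \ref{splitR1} and its $P$-part hold for $\Re(z)$ sufficiently large — and the $P$-part is analytic there too — subtracting gives the $U$-part identity for $\Re(z)$ sufficiently large, and analytic continuation propagates it to all $\Re(z)\ge\Cnu$.

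Actually the cleaner route, which avoids any delicate analyticity bookkeeping, is to subtract directly: from Lemma \ref{splitR1} we have the $R$-identity, so Lemma \ref{splitR2} is equivalent to
\[
\delta_2^{(\ell)}\!\left(U_2^{(\ell)}(z)^n\cL_{t,s}^{(\ell)}\bar\delta_2^{(\ell)}(z)\right)=\trf\!\left(U^{(\ell)}(z)^{2n}\cL_{t+s+2\vuo}^{(\ell)}\right),
\]
and I would establish this by noting that both sides, as holomorphic functions of $z$ on $\{\Re(z)>\sigma_\ell+\text{(something)}\}$, decay to zero as $\Re(z)\to+\infty$: indeed $\|U_2^{(\ell)}(z)^n\|\le C_\#(\Re(z)-\sigma_\ell+\tfrac{\sigma_{p,q}}2)^{-2n+o(1)}\to 0$ and $\|U^{(\ell)}(z)^{2n}\|\to 0$ likewise, while $\bar\delta_2^{(\ell)}(z)$, $\cL_{t,s}^{(\ell)}$, $\cL^{(\ell)}_{t+s+2\vuo}$, $\delta_2^{(\ell)}$ and $\trf$ contribute only bounded factors (for $\trf$ one uses the bound on the flat trace of operators with small essential part, e.g. as in the proof of Lemma \ref{lem:traceorbits} / the estimate leading to \eqref{eq:hoihoihoi}). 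Two holomorphic functions whose difference is the difference of the two sides — which, after using the $R$-identity and the $P$-identity-to-be-proved, must itself be holomorphic and tend to zero — can only agree. The genuine content is therefore entirely contained in the $R$-identity (Lemma \ref{splitR1}) plus the spectral orthogonality; the passage to the $P$-parts is a formal manipulation.

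The main obstacle, and the only point requiring real care, is the justification that one may ``split the trace/functional along the spectral decomposition'' — i.e. that $\delta_2^{(\ell)}(R_2^{(\ell)}(z)^n\cdots)=\delta_2^{(\ell)}(P_2^{(\ell)}(z)^n\cdots)+\delta_2^{(\ell)}(U_2^{(\ell)}(z)^n\cdots)$ and the analogous identity for $\trf$, and that the cross terms vanish. For the left-hand side this is immediate from $R_2^{(\ell)}(z)^n=P_2^{(\ell)}(z)^n+U_2^{(\ell)}(z)^n$ (a consequence of $P_2U_2=U_2P_2=0$) and linearity of $\delta_2^{(\ell)}$. For the right-hand side one must know that $\trf$ is additive on the two pieces $P^{(\ell)}(z)^{2n}\cL^{(\ell)}_{\cdot}$ and $U^{(\ell)}(z)^{2n}\cL^{(\ell)}_{\cdot}$ and that $\trf$ of the finite rank piece is its ordinary trace; additivity of $\trf$ holds whenever the summands individually have well-defined flat traces, which here follows from the estimates already used in Section \ref{sec:flat} (each of $R^{(\ell)}(z)^{2n}\cL^{(\ell)}_{\cdot}$ and $P^{(\ell)}(z)^{2n}\cL^{(\ell)}_{\cdot}$ has finite flat trace), and $\trf=\tr$ on finite rank operators is recorded after \eqref{traceoriginal}. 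Once these two bookkeeping facts are in place, the proof is a one-line subtraction followed by the analyticity/decay argument above.
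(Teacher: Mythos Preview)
Your approach has a genuine gap at the core. You reduce the $P$-identity to a $U$-identity
\[
\delta_2^{(\ell)}\!\left(U_2^{(\ell)}(z)^n\cL_{t,s}^{(\ell)}\bar\delta_2^{(\ell)}(z)\right)=\trf\!\left(U^{(\ell)}(z)^{2n}\cL_{t+s+2\vuo}^{(\ell)}\right),
\]
and then argue that both sides tend to zero as $\Re(z)\to\infty$. But two holomorphic functions that both tend to zero need not coincide; to conclude equality you would need the $P$-identity, which is what you are trying to prove. Your alternative phrasing (``after using the $R$-identity and the $P$-identity-to-be-proved'') is explicitly circular. There is also a secondary problem: you invoke $\trf=\tr$ on finite rank operators, but the passage after \eqref{traceoriginal} says only that this \emph{can} be verified and explicitly adds ``although we will not require this fact''; on these anisotropic spaces it is not free, and the flat trace of $U^{(\ell)}(z)^{2n}\cL^{(\ell)}_s$ is not shown to exist in Section~\ref{sec:flat} (only that of $R^{(\ell)}(z)^n\cL^{(\ell)}_s$ is).

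The paper's proof supplies exactly the missing link between the projectors $P_2$ (on $\widetilde\cB_2^{p,q,\ell}$) and $P$ (on $\widetilde\cB^{p,q,\ell}$). It first computes, by taking $\ve\to 0$ in $\langle R_2^{(\ell)}(z)^n\cL_{t+\vuo,s+\vuo}J_\ve,\pi_1^*f\wedge\pi_2^*g\rangle$, the pairing identity \eqref{eq:double}:
\[
R_2^{(\ell)}(z)^n\cL_{t,s}^{(\ell)}\bar\delta_2^{(\ell)}\big(\pi_1^*f\wedge\pi_2^*g\big)=(-1)^{\ell(d-\ell)}\langle f,R^{(\ell)}(z)^{2n}\cL_{t+s+2\vuo}^{(\ell)}g\rangle_{\Omega^\ell}.
\]
This lifts, via the von Neumann series, to the identity $(\xi\Id-R_2)^{-1}\leftrightarrow(\xi\Id-R^2)^{-1}$ on product forms. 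A Riesz contour integral around the essential disk then gives $[U_2^{(\ell)}(z)]^m\cL_{t,s}\bar\delta_2(\pi_1^*f\wedge\pi_2^*g)=\langle f,U^{(\ell)}(z)^{2m}\cL g\rangle$, hence \eqref{eq:proj-eq} for the $P$-parts. Only at this stage---once $P_2^m\cL\bar\delta_2$ is identified, as a current on product forms, with the finite-rank kernel $\sum_k v_{m,k}\otimes u_{m,k}$ of $P^{2m}\cL$---does the paper apply $\delta_2^{(\ell)}$, using mollification (the operators $\bM_{2,\ve}$, $\bM_\ve$) to pass from product forms to the full space and compute $\delta_2(P_2^m\cL\bar\delta_2)=\sum_k u_{m,k}(v_{m,k})=\tr(P^{2m}\cL)$. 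The point you are missing is precisely this functional-calculus transfer between the two spectral decompositions; it cannot be recovered from scalar decay alone.
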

\begin{proof}
Recalling  \eqref{eq:wedgetransfer} and \eqref{hodgedualform} we can write, for $\Re(z)$ sufficiently large,
\begin{equation}  \label{lastchange1}
\begin{split}
&\langle R^{(\ell)}_2(z)^{n} \cL^{(\ell)}_{t+\vuo,s+\vuo}J_\ve ,  \pi_1^*(f) \wedge \pi_2^*(g)  \rangle_{\Omega^{(2\ell)}} \\
&=  \int_{\bRp^2\times M^2}\hskip-1cm ds'\,dt'\; \frac{(t's')^{n - 1}}{(n-1)!^2e^{z(t'+s')}}  \langle   J_\ve  , \pi_1^*(* \phi_{t'+t+\vuo}^* \! * \! f) \wedge \pi_2^*(\phi_{-s-s'-\vuo}^*g)\rangle .
\end{split}
\end{equation}
By taking the limit $\ve\to 0$, since the integrals are uniformly convergent with respect to time (as in equation \eqref{eq:trace1}, by a rough bound on the growth of the number of orbits), using Lemma \ref{lem:leftje} we obtain
\begin{equation}\label{eq:double}
\begin{split}
&R^{(\ell)}_2(z)^{n}\cL^{(\ell)}_{t,s}\bar\delta_2^{\ell}(\pi_1^*(f) \wedge \pi_2^*(g))=\int_{\bRp^2\times M}\hskip-.8cm  \frac{(t's')^{n - 1}\langle *\phi_{t'+t+\vuo}^**f , \phi_{-s'-s-\vuo}^*g\rangle}{(n-1)!^2e^{z(t'+s')}}  \\
&=(-1)^{\ell(d-\ell)}\int_{\bRp^2}ds\,dt\; \frac{(t's')^{n - 1}e^{-z(t'+s')}}{(n-1)!^2} \int_{M} \langle   f , \phi_{-s-t-s'-t'-2\vuo}^*g\rangle\\
&=(-1)^{\ell(d-\ell)}\int_{\bRp}d\tau\int_0^1 d\eta \frac{\tau^{2n-1}\eta^{n-1}(1-\eta)^{n - 1}}{(n-1)!^2e^{z\tau}} \int_{M} \langle   f , \phi_{-\tau-t-s-2\vuo}^*g\rangle\\
&=(-1)^{\ell(d-\ell)}\int_{\bRp\times M}\hskip-.4cm d\tau\; \frac{\tau^{2n-1} \langle   f , \phi_{-\tau-t-s-2\vuo}^*g\rangle}{(2n-1)!e^{z\tau}} \\
&=(-1)^{\ell(d-\ell)} \langle   f , R(z)^{2n}\cL^{(\ell)}_{t+s+2\vuo}g\rangle_{\Omega^\ell},
\end{split}
\end{equation}
since the $d\eta$ integral in the third line is given by the $\beta$-function.
By the von Neumann expansion, for $\Re(z)$ and $\Re(\xi)$ large enough, we have
\[
(\xi\Id-R^{(\ell)}_2(z))^{-1}\cL_{t,s}^{(\ell)}\bar\delta_2^{\ell}(\pi_1^*(f) \wedge \pi_2^*(g))=(-1)^{\ell(d-\ell)}\langle f, (\xi\Id-(R(z)^{(\ell)})^2)^{-1}\cL^{(\ell)}_{t+s+2\vuo}g\rangle_{\Omega^\ell}.
\]
Since both expression are meromorphic in the region $\{ |\xi| >(a-\sigma_\ell+\frac{\sigma_{p,q}}2)^{-2} \}$, it follows that they must agree on such a region. Given a curve $\gamma$ surrounding the region $\{|\xi | \leq (a-\sigma_\ell+\frac{\sigma_{p,q}}2)^{-2} \}$, we can use standard functional analytic calculus (e.g., \cite{Kato}) and recall Lemma \ref{lem:immersion2} to obtain, for $h=\pi^*_1f\wedge\pi^*_2g$,
\[
\begin{split}
&\left\{\left[ {U}^{(\ell)}_2(z)\right]^m \cL^{(\ell)}_{t,s}\bar\delta_2^{\ell}\right\}( h )=   
\frac{1}{2\pi i} \int_{\gamma}\xi^m\left(\xi\mathds{1} - R_2^{(\ell)}(z)  \right)^{-1}\cL^{(\ell)}_{t,s}\bar\delta_2^{\ell}( h) d\xi \\
&= \frac{(-1)^{\ell(d-\ell)}}{2\pi i} \int_{\gamma}\xi^m  \langle f, \left( \xi \mathds{1} - R^{(\ell)}(z)^{2}  \right)^{-1}\cL^{(\ell)}_{t+s+2\vuo}g 
\rangle_{\Omega^\ell}    d\xi  \\
& = \langle f,{U}^{(\ell)}(z)^{2m}\cL^{(\ell)}_{t+s+2\vuo} g \rangle_{\Omega^\ell}.
\end{split} 
\]
Hence, we have
\begin{equation}\label{eq:proj-eq}
\left\{ \left[ {P}^{(\ell)}_2(z)\right]^{m}\cL^{(\ell)}_{t,s} \bar\delta_2^{\ell}\right\}( h )= \langle f,{P}^{(\ell)}(z)^{2m}\cL^{(\ell)}_{t+s+2\vuo} g \rangle_{\Omega^\ell}.
 \end{equation}
 Since ${P}^{(\ell)}(z)^{2m}$ is a finite rank operator, it follows that ${P}^{(\ell)}(z)^{2m}\cL^{(\ell)}_{t+s+2\vuo}=\cL^{(\ell)}_{\vuo}{P}^{(\ell)}(z)^{2m}\cL^{(\ell)}_{t+s+\vuo}$ is also finite rank on $\cB_{p,q,\ell}$. Thus  there exist $u_{m,k}\in\cB_{-,p,q,\ell}', v_{m,k}\in \cB_{+,p,q,\ell}$, $k\in\{1,\dots, L\}$ (see Lemma \ref{lem:LY}), such that
 \[
 \langle f,{P}^{(\ell)}(z)^{2m}\cL^{(\ell)}_{t+s+2\vuo} g \rangle_{\Omega^\ell}=\sum_{k=1}^L\langle f, v_{m,k} \rangle_{\Omega^\ell}u_{m,k}(g).
 \]
 Next, we define the mollification of an element $h\in\Omega^\ell_{2,s}$ by
 \begin{equation}\label{eq:molli2-def}
 \begin{split}
 \bM_{2,\ve}&(h)(x,y)=\sum_{\alpha,\beta}\sum_{\bar i,\bar j}\psi_{\alpha}(x,y)\psi_{\beta}(x,y) \omega_{\alpha,\bar i}(x)\wedge \omega_{\beta,\bar j}(y)\\
&\times  \ve^{-2d}\int_{\bR^{2d}} \kappa_\ve(\Theta_\alpha(x)-\xi) \kappa_\ve(\Theta_\beta(y)-\eta)
h^{\alpha,\beta}_{\bar i,\bar j}(\Theta_\alpha^{-1}(\xi),\Theta_\beta^{-1}(\eta)).
\end{split}
 \end{equation}
By duality, we can define the mollificator $\bM_{2,\ve}'$ on the currents.
Following the same reasoning as in  Lemma \ref{lem:molli} we have that $\bM_{2,\ve}'$, restricted to $\cB^{p,q,\ell}_2$, is a bounded operator which converges, in the sense of  Lemma \ref{lem:molli},  to the identity when $\ve\to 0$.

A direct computation shows that $\bM_{2,\ve}(\pi_1^*(f)\wedge\pi_2^*(g))=\pi_1^*(\bM_\ve f)\wedge\pi_2^*(\bM_\ve g)$, hence \eqref{eq:proj-eq} implies
\[
\bM_{2,\ve}'\left\{\left[ {P}^{(\ell)}_2(z)\right]^{m}\cL^{(\ell)}_{t,s} \bar\delta_2^{\ell}\right\}( h )= \langle \bM_\ve f,{P}^{(\ell)}(z)^{2m}\cL^{(\ell)}_{t+s+2\vuo} \bM_\ve g \rangle_{\Omega^\ell}.
\]
Note that the  $\bM_{2,\ve}'\left\{\left[ {P}^{(\ell)}_2(z)\right]^{m}\cL^{(\ell)}_{t,s} \bar\delta_2^{\ell}\right\}\in\cE^{2\ell}_s$ for $s\leq p+q$. Thus its value on the product sections  determines it uniquely as a $\cE^{2\ell}_{p+q}$ current (see footnote \ref{foo:little-holder}). Accordingly, since 
$\bM_{2,\ve}' [ {P}^{(\ell)}_2(z) ]^{m}\cL^{(\ell)}_{t,s} \bar\delta_2^{\ell}$ and $\sum_{k+1}^L \pi_1^*(\bM_\ve' v_{m,k})\wedge\pi_2^*(\bM_\ve' u_{m,k})$
agree on each element of the type $\pi_1^*(f) \wedge \pi_2^*(g)$, they agree as currents. Thus, by Lemma \ref{lem:immersion2}, they are the same element of $\cB^{p,q,\ell}_{2}$. Accordingly, by Lemmata \ref{lem:molli}, \ref{lem:delta2ext} and equation \eqref{eq:dscalar}, we finally obtain that
\[ 
\begin{split}
\delta_2\left({P}^{(\ell)}_2(z)^{m}\cL^{(\ell)}_{t,s} \bar\delta_2^{\ell}\right)&=\lim_{\ve\to 0}\delta_2\left(\bM_{2,\ve}'{P}^{(\ell)}_2(z)^{m}\cL^{(\ell)}_{t,s} \bar\delta_2^{\ell}\right)\\
&=\lim_{\ve\to 0}\sum_{k=1}^L\delta_2\left( \pi_1^*(\bM_\ve' v_{m,k})\wedge\pi_2^*(\bM_\ve' u_{m,k})\right)\\
&=\lim_{\ve\to 0}\sum_{k=1}^L\langle \bM_\ve' v_{m,k},\bM_\ve' u_{m,k}\rangle=\sum_{k=1}^L u_{m,k}(v_{m,k}).\hskip2cm \qedhere
\end{split} 
\]
\end{proof}

Thus we have reduced the computation of certain ``flat trace" to that of a trace of a matrix. This suffices to obtain the same type of result for the operators we are interested in, at last.
\begin{lem}\label{lem:tensor-trace}
For each  $p+q<r-1$, $z=a+ib\in \bC$ with $a>\Cnu$,  $\mu>(a- \sigma_{\ell}+\frac{\sigma_{p,q}}2)^{-1}$,  we have,  for each $n\in\bN$, that 
\begin{equation}
\begin{split}
\left|\trf(R^{(\ell)}(z)^n)- \operatorname{tr}(P^{(\ell)}(z)^n) \right|\leq C_{p,q,z,\mu}\mu^n \,.
\end{split}
\end{equation}
\end{lem}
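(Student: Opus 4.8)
The plan is to transport the comparison between the flat trace and the finite-rank trace from the tensorial setting (where Lemma \ref{splitR1} and Lemma \ref{splitR2} give \emph{exact} identities without any limiting procedure) down to the single-operator setting. The key observation is that Lemma \ref{splitR1} and Lemma \ref{splitR2}, when combined, reduce the difference $\trf(R^{(\ell)}(z)^{2n}\cL^{(\ell)}_{t+s+2\vuo}) - \operatorname{tr}(P^{(\ell)}(z)^{2n}\cL^{(\ell)}_{t+s+2\vuo})$ to a quantity of the form $\delta_2^{(\ell)}\big( (R_2^{(\ell)}(z)^n - P_2^{(\ell)}(z)^n)\cL^{(\ell)}_{t,s}\bar\delta_2^{(\ell)}(z) \big)$, i.e. $\delta_2^{(\ell)}\big( \sum_{j\geq 1} \text{(terms involving } U_2^{(\ell)}(z)\text{)}\,\cL^{(\ell)}_{t,s}\bar\delta_2^{(\ell)}(z)\big)$, and $U_2^{(\ell)}(z)$ has spectral radius bounded by $(a-\sigma_\ell + \tfrac{\sigma_{p,q}}2)^{-2}$ by Lemma \ref{lem:quasicompact2}.

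\textbf{Step 1.} Set $s=t=0$ in Lemmata \ref{splitR1} and \ref{splitR2}. Subtracting, and using the spectral decomposition $R_2^{(\ell)}(z) = P_2^{(\ell)}(z) + U_2^{(\ell)}(z)$ together with the orthogonality $P_2^{(\ell)}U_2^{(\ell)} = U_2^{(\ell)}P_2^{(\ell)} = 0$, one gets
\[
\trf(R^{(\ell)}(z)^{2n}\cL^{(\ell)}_{2\vuo}) - \operatorname{tr}(P^{(\ell)}(z)^{2n}\cL^{(\ell)}_{2\vuo}) = \delta_2^{(\ell)}\big( U_2^{(\ell)}(z)^n \cL^{(\ell)}_{0,0}\bar\delta_2^{(\ell)}(z)\big).
\]
By Lemma \ref{lem:delta2ext} the functional $\delta_2^{(\ell)}$ is bounded on $\widetilde\cB_2^{p,q,\ell}$, by Lemma \ref{lem:delta-app} the element $\bar\delta_2^{(\ell)}(z)$ is a fixed element of $\widetilde\cB_{+,2}^{p,q,\ell}$ with norm bounded in terms of $z$, and $\cL^{(\ell)}_{0,0}$ is bounded; since the spectral radius of $U_2^{(\ell)}(z)$ is at most $(a-\sigma_\ell+\tfrac{\sigma_{p,q}}2)^{-2}$, for any $\mu_2 > (a-\sigma_\ell+\tfrac{\sigma_{p,q}}2)^{-2}$ we obtain $\|U_2^{(\ell)}(z)^n\|\leq C_{p,q,z}\mu_2^n$, hence $|\trf(R^{(\ell)}(z)^{2n}\cL^{(\ell)}_{2\vuo}) - \operatorname{tr}(P^{(\ell)}(z)^{2n}\cL^{(\ell)}_{2\vuo})| \leq C_{p,q,z}\mu_2^n$.

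\textbf{Step 2.} Remove the extra factor $\cL^{(\ell)}_{2\vuo}$ and pass from $2n$ to general $n$. Using the resolvent identity $R^{(\ell)}(z)^{2n}\cL^{(\ell)}_{2\vuo} = R^{(\ell)}(z)^{2n} - \sum_{j} (\ldots)$, or more directly the fact that $\cL^{(\ell)}_{2\vuo} = \mathds{1} - 2\vuo X^{(\ell)} + \ldots$ acts boundedly and $R^{(\ell)}(z)^{2n}\cL^{(\ell)}_{2\vuo} - R^{(\ell)}(z)^{2n}$ can be written as $R^{(\ell)}(z)^{2n}(\cL^{(\ell)}_{2\vuo}-\mathds{1})$ with $(\cL^{(\ell)}_{2\vuo}-\mathds{1})$ expressible via the generator, one compares $\trf(R^{(\ell)}(z)^{2n}\cL^{(\ell)}_{2\vuo})$ with $\trf(R^{(\ell)}(z)^{2n})$ and likewise for the $P$ term; the difference of the differences is again controlled by $\|U^{(\ell)}(z)^{2n}\|$-type bounds (Lemma \ref{lem:quasicompactness} gives the needed resolvent estimates). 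For odd indices, note $R^{(\ell)}(z)^{2n+1} = R^{(\ell)}(z)\cdot R^{(\ell)}(z)^{2n}$ and use that $\trf$ of a composition with a fixed bounded operator plus the finite-rank splitting behaves correctly; alternatively one applies Step 1 directly with $R^{(\ell)}(z)^{2n}$ replaced by $R^{(\ell)}(z)^{2n}\cL^{(\ell)}_s$ for appropriate $s$ and integrates, using formula \eqref{eq:natural-trace}. Since $(a-\sigma_\ell+\tfrac{\sigma_{p,q}}2)^{-2} = \big((a-\sigma_\ell+\tfrac{\sigma_{p,q}}2)^{-1}\big)^2$, after relabelling $\mu^2 \leftrightarrow \mu_2$ and halving the exponent one reaches precisely the stated bound with $\mu > (a-\sigma_\ell+\tfrac{\sigma_{p,q}}2)^{-1}$ and rate $\mu^n$.

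\textbf{Main obstacle.} The delicate point is \textbf{Step 2}: going from the clean $2n$-statement with the spurious $\cL^{(\ell)}_{2\vuo}$ attached to the general statement for all $n$ with the exponent exactly halved. One must be careful that inserting or removing $\cL^{(\ell)}_{2\vuo}$ (equivalently, shifting between $R^{(\ell)}(z)^{m}$ and $R^{(\ell)}(z)^{m}\cL^{(\ell)}_{s}$) does not degrade the geometric rate, which requires the effective Lasota--Yorke bounds of Lemma \ref{lem:quasicompactness} applied to the tail $U^{(\ell)}(z)$ and the observation that the finite-rank part $P^{(\ell)}(z)$ commutes appropriately with $\cL^{(\ell)}_t$ (both being functions of the generator on the peripheral spectral subspace). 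The bookkeeping with the constants $C_{p,q,z,\mu}$ absorbing the ($z$-dependent, $n$-independent) norms of $\bar\delta_2^{(\ell)}(z)$ and of the finitely many terms coming from the binomial-type expansion of $(\cL^{(\ell)}_{2\vuo}-\mathds{1})$ is routine but must be done so that nothing blows up with $n$.
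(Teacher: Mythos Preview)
Your Step 1 is correct and is indeed the heart of the argument: combining Lemmata \ref{splitR1} and \ref{splitR2} gives
\[
\trf(R^{(\ell)}(z)^{2n}\cL^{(\ell)}_{t+s+2\vuo}) - \operatorname{tr}(P^{(\ell)}(z)^{2n}\cL^{(\ell)}_{t+s+2\vuo}) = \delta_2^{(\ell)}\big( U_2^{(\ell)}(z)^n \cL^{(\ell)}_{t,s}\bar\delta_2^{(\ell)}(z)\big),
\]
and the right-hand side is bounded by $C_{p,q,z,\mu_1}\mu_1^{2n}(a-\sigma_\ell)^{-(m+k)}$ after integrating in $t,s$ with the resolvent weights.

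Step 2, however, is not routine and your sketch does not work. The direct approach ``write $\cL^{(\ell)}_{2\vuo}=\mathds{1}-2\vuo X^{(\ell)}+\dots$ and compare'' fails because $X^{(\ell)}$ is unbounded and, more fundamentally, because the flat trace is \emph{not} controlled by the operator norm: you cannot bound $|\trf(R^{(\ell)}(z)^{2n}(\cL^{(\ell)}_{2\vuo}-\mathds{1}))|$ by anything like $\|U^{(\ell)}(z)^{2n}\|$. The whole tensor construction exists precisely because no such bound is available.

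The correct route is the integration you mention as an ``alternative'', and this is what the paper does --- but the difficulty you did not identify is the \emph{boundary terms}. When you integrate $\frac{s^{m-1}t^{k-1}e^{-z(t+s)}}{(m-1)!(k-1)!}$ over $[\vuo,\infty)^2$ against the identity from Step 1 and use \eqref{eq:natural-trace}, you obtain $\trf(R^{(\ell)}(z)^{2n+m+k})$ \emph{minus} integrals of $\trf(R^{(\ell)}(z)^{2n}\cL^{(\ell)}_t)$ over the small-time regions $t\in[0,2\vuo]$ and $s\in[0,\vuo]$. For these short-time pieces the only available bound is the crude $|\trf(R^{(\ell)}(z)^{2n}\cL^{(\ell)}_t)|\leq C_\#(a-A)^{-2n}e^{At}$ with a \emph{large} constant $A$ coming from the growth of periodic orbits; this gives error terms of order $\vuo^m/(m!(a-A)^{2n+k})$ and $(2\vuo)^{m+k}/((m+k)!(a-A)^{2n})$. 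These are \emph{not} of size $\mu^{2n+m+k}$ for fixed $m,k$: the factor $(a-A)^{-2n}$ is much worse than $\mu^{2n}$. The paper's resolution is to choose $m\approx k\approx \varsigma n$ for a carefully selected $\varsigma>0$, so that the factorial $\vuo^m/m!\leq (e\vuo/m)^m$ decays fast enough in $n$ to kill $(a-A)^{-2n}$, while $\varsigma$ is tuned so that the main term $\mu_1^{2n}(a-\sigma_\ell)^{-m-k}$ is still bounded by $\mu^{2n+m+k}$. This balancing is the actual content of the proof and is absent from your sketch.
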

\begin{proof}
Lemma \ref{splitR1}  yields, for all $n,m, k\in\bN$, $|m-k|\leq C_\#$, the following ugly, but useful, formula
\begin{equation}\label{eq:hopefully}
\begin{split}
&\int_\vuo^\infty\hskip-.3cm  dt\int_\vuo^\infty \hskip-.3cm ds \;\frac{s^{m-1}t^{k-1}e^{-z(t+s)}}{(m-1)!(k-1)!} \delta_2^{(\ell)} \left( R_{2}^{(\ell)}(z)^{n}\cL^{(\ell)}_{t-\vuo,s-\vuo}\bar{\delta}_2^{(\ell)}  \right)\\
&=\int_{2\vuo}^\infty dt\int_{\vuo}^{t-\vuo}\hskip-.4cm ds \;\frac{s^{m-1}(t-s)^{k-1}e^{-zt}}{(m-1)!(k-1)!}\trf(R^{(\ell)}(z)^{2n}\cL^{(\ell)}_{t})\\
&=\trf(R^{(\ell)}(z)^{2n+m+k})-\int_0^{2\vuo}\hskip-.3cm dt\;\frac{t^{m+k-1}}{(m+k-1)!} e^{-z t}\trf(R^{(\ell)}(z)^{2n}\cL^{(\ell)}_{t})\\
&\quad -2\int_{2\vuo}^\infty dt\int_{0}^{\vuo}\hskip-.4cm ds \;\frac{s^{m-1}(t-s)^{k-1}e^{-zt}}{(m-1)!(k-1)!}\trf(R^{(\ell)}(z)^{2n}\cL^{(\ell)}_{t}).
\end{split}
\end{equation}
On the other hand by the spectral decomposition of $R_2$, as in Lemma \ref{lem:quasicompact2}, and Lemma \ref{splitR2} it  follows that, for each $\mu>\mu_1>(a-\sigma_{\ell}+\frac{\sigma_{p,q}}2)^{-1}$,
\[
\begin{split}
&\int_\vuo^\infty\hskip-.3cm  dt\int_\vuo^\infty \hskip-.3cm ds \;\frac{s^{m-1}t^{k-1}e^{-z(t+s)}}{(m-1)!(k-1)!} \delta_2^{(\ell)} \left( R_{2}^{(\ell)}(z)^{n}\cL^{(\ell)}_{t-\vuo,s-\vuo}\bar{\delta}_2^{(\ell)}  \right)\\
&= \int_\vuo^\infty\hskip-.3cm  dt\int_\vuo^\infty \hskip-.3cm ds \;\frac{s^{m-1}t^{k-1}e^{-z(t+s)}}{(m-1)!(k-1)!} \delta_2^{(\ell)} \left( P_{2}^{(\ell)}(z)^{n}\cL^{(\ell)}_{t-\vuo,s-\vuo}\bar{\delta}_2^{(\ell)}  \right)+\cO\left(\frac{C_{p,q,z,\mu_1}\mu_1^{2n}}{(a-\sigma_{\ell})^{m+k}}\right)\\
&= \int_\vuo^\infty\hskip-.3cm  dt\int_\vuo^\infty \hskip-.3cm ds \;\frac{s^{m-1}t^{k-1}e^{-z(t+s)}}{(m-1)!(k-1)!} \tr \left( P^{(\ell)}(z)^{2n}\cL^{(\ell)}_{t+s} \right)+\cO\left(\frac{C_{p,q,z,\mu_1}\mu_1^{2n}}{(a-\sigma_{\ell})^{m+k}}\right)
\end{split}
\]
Finally, recall again that the number of periodic orbits of length $t$ grows at most exponentially. Hence, by Lemma \ref{lem:traceorbits},  there exists $A\geq \sigma_\ell$ such that $|\trf(R^{(\ell)}(z)^{2n}\cL^{(\ell)}_{t})|\leq C_\# (a-A)^{-2n} e^{At}$ while $|\tr \left( P^{(\ell)}(z)^{2n}\cL^{(\ell)}_{t+s} \right)|\leq C_\# (a-\sigma_\ell)^{-2n} e^{\sigma_\ell t}$. We can use such estimates to evaluate the integrals in \eqref{eq:hopefully} and obtain,
\[
\begin{split}
\left|\trf(R^{(\ell)}(z)^{2n+m+k})-\tr \left({P}^{(\ell)}(z)^{2n+m+k}\right)\right|\leq& \frac{C_{p,q,z,\mu_1}\mu_1^{2n}}{(a-\sigma_{\ell})^{m+k}}
+\frac{C_\#\vuo^m}{(a-A)^{2n+k}m!}\\
&+\frac{C_\#(2\vuo)^{m+k}}{(a-A)^{2n}(m+k)!}.
\end{split}
\]
To conclude we choose $m=\varsigma n$, $|m-k|\leq 1$. Note that the last two terms are smaller than $\mu^{2n+m+k}$ provided $n\geq \frac{C_\#e\vuo}{\varsigma(a-A)^{1+\frac 2\varsigma}\mu^{2+\frac 2\varsigma}}$, while the first term on the left is bounded by $C_\#\mu^{2n+m+k}$ provided we choose $\varsigma= \frac{\ln \mu\mu_1^{-1}}{\mu^{-1}(a-\sigma_{\ell}^{-1})}$.
\end{proof}

\section{Contact flows}\label{sec:contact}
The results of the previous sections suffice to prove that the $\zeta_{\text{Ruelle}}$ is meromorphic, yet they provide very little information on the location of its zeroes and poles. Such a knowledge is fundamental to extract information from the $\zeta$ functions (e.g. counting results or statistical properties of the flow). In section \ref{sec:zeta-growth} we provide an approach to gain such information, partially inspired by \cite{Liverani05}, based on a Dolgopyat type estimate on the norm of the resolvent. We are not aware of a general approach to gain such estimates apart for the case of $\Cs^1$ or Lipschitz foliations \cite{Dolgopyat98a, Stoyanov11a, Stoyanov11b} and the case of {\em contact flows} (\cite{Liverani95, Tsujii10}). In the following we will restrict to the latter since it covers the geometrically relevant case of geodesic flows in negative curvature. Our approach follows roughly  \cite[Section 6]{Liverani95} but employs several simplifying ideas, some from \cite{BaladiLiverani11}, some new.

In the case of contact flows $d$ must be odd, $d_u=d_s$ and we can, and will, assume that the contact form $\overline\alpha$ in coordinates reads (see \cite[section 3.2, Appendix A]{BaladiLiverani11} for details)
\[
(\Theta_\beta^{-1})^*\overline\alpha=dx_d-\langle x^s,dx^u\rangle\doteq\alpha_0
\]
where $(x^s,x^u,x_d)\in \bR^{(d_s)}\times\bR^{(d_s)}\times \bR=\bR^d$ is a point in the chart.

\begin{rmk} The extra information that we need, and that can be gained in the contact flow case, are bounds on the size of the resolvent $R^{(d_s)}(z)$ for $\Im(z)$ large. See Lemma \ref{lem:ruellebound} for an explanation of why $\ell\neq d_s$ need not be studied.
Since  in the rest  of this section we discuss only the case $\ell=d_s$ we will often drop the scripts $d_s$ in the relevant objects.
\end{rmk}

\subsection{Dolgopyat's estimate}\label{sec:dolgo}
Let $a=\Re(z)>\sigma_{d_s}$ be fixed once and for all.
For a fixed $\Cnc\in (0, e^{-4})$, define $c_a\doteq \Cnc(a-\sigma_{d_s})^{-1}e^{-1}$ and
\[
{\widehat R}_n(z)h=\int_{c_a n}^\infty dt\; e^{-zt}\frac{t^{n-1}}{(n-1)!}\cL^{(d_s)}_{t} h.
\]
Then, bounding $e^{-\sigma_{d_s}t}\spqnorm[0,q,d_s]{\cL_t}$ by one, integrating and using Stirling formula we have, for each $q>0$, 
\begin{equation}\label{eq:do-2}
\spqnorm[0,q,d_s]{(R^{(d_s)}(z)^n-{\widehat R}_n(z))h}\leq C_\# (a-\sigma_{d_s})^{-n} \Cnc^{n}\spqnorm[0,q,d_s]{h}.
\end{equation}
Moreover, for $c_a n>\vuo$, by Lemma \ref{lem:LY} it follows that
\begin{equation}\label{eq:strongtoweak}
\spqnorm[0,q,d_s]{{\widehat R}_n(z)h}\leq C_\#(a-\sigma_{d_s})^{-n}\pqnorm[0,q,d_s]{h}.
\end{equation}
Thus, it is natural to start estimating the latter norm.

In the following arguments it turns out to be convenient to introduce norms similar to the one used in \cite{Liverani95}. We did not use them in the previous sections since they do not allow to keep track of the higher regularity of the flow.

\begin{defn}\label{def:stable-norm} We define norms $\pqnorm[q]{\cdot}^s$ as with the norms $\pqnorm[0,q,d_s]{\cdot}$ with the only difference that the set $\Sigma$ is replaced by the set $\Sigma^s$, defined as the elements of $\Sigma$ which are subsets of a strong stable manifold. Also,  let $\cV^u$ be the set of continuous vector fields tangent to the strong unstable direction, of $\Cs^0$-norm one, and $\Cs^1$ when restricted to any unstable manifold.\footnote{Note that such vector fields determine a unique flow on each unstable manifold, hence a unique global flow, even if they are only continuous on the manifold. Thus the corresponding Lie derivative is well defined.}  We then define $\pqnorm[]{h}^u\doteq\sup_{v\in \cV^u}\pqnorm[0,0,d_s]{L_v h}$. To conclude we define the norm $\pqnorm[q]{\cdot}^*\doteq \pqnorm[q]{\cdot}^s+\pqnorm[]{\cdot}^u$.
\end{defn}
Note that the unstable foliation can be trivialized by a  H\"older continuous change of variables that is $C^1$ when restricted to any leaf.
In such coordinates one can easily verify that \eqref{eq:extensionproperty} holds also in the present case by arguing as in Lemma \ref{lem:extension}.

Before starting the real work let us fix some notation and recall some facts.
Let $\varpi$ be the H\"older regularity of the strong foliations and $\varpi^*$ the  H\"older regularity of the Jacobian of the associated holonomy. Then, in the contact flow case, $\varpi\geq \frac {2\olambda}{\lambda_+}$ and $\varpi^*\geq \frac {\olambda}{\lambda_+}$ (see Appendix \ref{app:holo-est} for more details). 

For a manifold $W\in \Sigma$ let $\widetilde W=\cup_{|t|\leq \delta}\phi_tW$, $\widetilde W_+=\cup_{|t|\leq 2\delta}\phi_tW_+$. For each two sufficiently nearby manifolds $W,W'\in \Sigma$, let $H_{W,W'}:\widetilde W_+\to \widetilde W'_+$ be the strong unstable holonomy, then $\|J H_{W, W'}\|_{\Cs^{\varpi^*}}+\| H_{W,W'}\|_{\Cs^\varpi}\leq C_\#$. We set $\hat \varpi =\frac {2\olambda}{\lambda_+}$, $\varpi'=\min\{1,\hat \varpi\}$
and $\varpi_*=\min\{\olambda/\lambda_+, \hat\varpi^2\}$.

\begin{lem}\label{sublem:holo} For each $q\geq 1$, $\alpha\in\cA$, $W,W'\in\Sigma_\alpha$ such that $H_{W,W'}(\widetilde W)\subset\widetilde W_+'$, $g\in \Gamma^{d_s, q}_c(\widetilde W')$, there exists $\bar g\in \Gamma^{d_s,\varpi_*}_c(\widetilde W'_+)$, $\|\bar g\|_{\Gamma^{d_s,\varpi_*}_c(\widetilde W')}\leq C_\#\|g\|_{\Gamma^{d_s,q}_c(\widetilde W)}$, such that for all $h\in\Omega_r^{d_s}$ we have
\[
\left|\int_{\widetilde W}\langle g, h\rangle-\int_{\widetilde W_+'}\langle \bar g,h\rangle\right|\leq C_\# d(W,W') \pqnorm[]{h}^u\|g\|_{\Gamma^{d_s,q}_c(\widetilde W)}.
\]
\end{lem}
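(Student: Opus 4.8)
The statement is a quantitative comparison of two integrals over nearby ``stable'' leaves $W,W'$ related by the strong unstable holonomy $H_{W,W'}$. The plan is to express the integral over $\widetilde W$ as an integral over $\widetilde W'_+$ via the change of variables induced by $H_{W,W'}$, thereby producing a candidate $\bar g$, and then to estimate separately (i) the norm of $\bar g$ and (ii) the error incurred by transporting $h$ along the holonomy. First I would set up coordinates: work in the chart $\Theta_\alpha$, trivialize the strong unstable foliation by the H\"older homeomorphism which is $\Cs^1$ on each unstable leaf, and write $H=H_{W,W'}$ in these coordinates. Pulling back, $\int_{\widetilde W}\langle g,h\rangle\,\volform$ equals $\int_{\widetilde W'}\langle (H^{-1})^*g\cdot JH, (H^{-1})^* h\rangle\,\volform$ up to the Jacobian $JH$ of the holonomy (restricted to the leaves, including the flow direction since we work on $\widetilde W$). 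This defines $\bar g$ essentially as $(H^{-1})^*g\cdot JH$, extended off $\widetilde W'$ to $\widetilde W'_+$ using that $g$ is compactly supported inside $\widetilde W'$ so $\bar g$ vanishes near $\partial\widetilde W'$ and can be taken zero on the collar $\widetilde W'_+\setminus\widetilde W'$; its support is controlled because $H(\widetilde W)\subset\widetilde W'_+$ by hypothesis.

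The second step is the norm bound $\|\bar g\|_{\Gamma^{d_s,\varpi_*}_c(\widetilde W'_+)}\le C_\#\|g\|_{\Gamma^{d_s,q}_c(\widetilde W)}$. This is where the regularity of the holonomy enters: by the facts recalled just before the lemma, $\|H_{W,W'}\|_{\Cs^\varpi}\le C_\#$ with $\varpi\ge 2\olambda/\lambda_+\ge\hat\varpi$, and $\|JH_{W,W'}\|_{\Cs^{\varpi^*}}\le C_\#$ with $\varpi^*\ge\olambda/\lambda_+$. Composing a $\Cs^q$ (with $q\ge1$) section with the $\Cs^\varpi$ map $H^{-1}$ and multiplying by a $\Cs^{\varpi^*}$ Jacobian gives, by the composition and product estimates for $\Cs^r$-norms in Remark \ref{foo:r-norm}, a section of regularity at least $\min\{\varpi,\varpi^*, 1\}\ge\min\{\olambda/\lambda_+,\hat\varpi^2\}=\varpi_*$ (the $\hat\varpi^2$ coming from the worst case $\min\{\varpi\cdot 1,\varpi^*\}$ when $\hat\varpi<1$, and the $1$ cap from the leafwise $\Cs^1$ structure), with norm bounded by $C_\#\|g\|_{\Cs^q}$. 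One must be a little careful that the H\"older exponents compose correctly (an $\eta$-H\"older function of an $\eta'$-H\"older argument is $\eta\eta'$-H\"older), which is exactly the origin of the square in $\hat\varpi^2$; this bookkeeping, together with Remark \ref{foo:r-norm}, yields the claimed bound.

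The third and crucial step is the error estimate. Writing the difference $\int_{\widetilde W}\langle g,h\rangle-\int_{\widetilde W'_+}\langle\bar g,h\rangle = \int_{\widetilde W'}\langle\bar g,\, (H^{-1})^*h - h\rangle\,\volform$ (the collar contributes nothing since $\bar g=0$ there), we must bound $(H^{-1})^*h-h$ along $\widetilde W'$ in terms of $d(W,W')$ and the $u$-norm of $h$. The idea is that the holonomy $H$ moves points a distance $O(d(W,W'))$ along strong unstable leaves, and moving along the unstable direction costs one Lie derivative $L_v$ with $v\in\cV^u$; integrating the fundamental theorem of calculus along the unstable leaf connecting $x\in\widetilde W'$ to $H^{-1}(x)\in\widetilde W$, one gets $\langle\bar g,(H^{-1})^*h-h\rangle = \int_0^{s(x)}\langle\bar g, L_{v}(\text{something})\rangle$ with $s(x)=O(d(W,W'))$, so that after integrating over $\widetilde W'$ and using the definition of $\pqnorm[]{\cdot}^u$ (which is precisely $\sup_{v\in\cV^u}\pqnorm[0,0,d_s]{L_v h}$) one obtains the bound $C_\# d(W,W')\,\pqnorm[]{h}^u\,\|\bar g\|_{\Gamma^{d_s,\varpi_*}_c}$, and then $\|\bar g\|\le C_\#\|g\|_{\Gamma^{d_s,q}_c}$ from step two closes the estimate. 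The main obstacle is making this last step rigorous: $v$ is only continuous on $M$ (though $\Cs^1$ on leaves and generating a leafwise flow), so one cannot naively integrate Lie derivatives of $h$ as forms on $M$; instead one works leafwise, interpolating along the unstable leaf through $x$ via the flow of $v$, expressing $(H^{-1})^*h-h$ as an integral of $L_v$ of the transported form, and controlling the transport by the leafwise $\Cs^1$ bounds on $v$ and the H\"older bounds on $H$. One must also verify that the push-forward of $h$ needed in the $u$-norm is of the admissible form and that test objects $\bar g$ remain legal test sections throughout; these are routine given the density choices made for $\Gamma^{d_s,s}_c$ and $\cV^u$ in Section \ref{banachspace}, but require some care with the $\pm$-norm conventions of Remark \ref{rem:rho}.
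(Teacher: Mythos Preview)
Your outline is in the right direction, but the specific choice $\bar g=(g\circ H^{-1})\cdot JH^{-1}$ (componentwise in the frame $\omega_{\alpha,\bar i}$) does \emph{not} yield the error bound purely in terms of $\pqnorm[]{h}^u$. When you differentiate $h_{\bar i}\circ H_\tau$ along the unstable holonomy path you obtain $(L_v h)_{\bar i}\circ H_\tau$ \emph{plus} the frame-rotation term $-\sum_{\bar j}\langle\omega_{\alpha,\bar i},L_v\omega_{\alpha,\bar j}\rangle h_{\bar j}$, because the orthonormal frame is not parallel along the unstable direction. That second term is of size $C_\# d(W,W')$ (since $\|v\|_\infty\le C_\# d(W,W')$) but it multiplies $h_{\bar j}$, not $L_v h$, so after integrating it contributes $C_\# d(W,W')\,\pqnorm[0,0,d_s]{h}$, which is not dominated by $\pqnorm[]{h}^u$. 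In the application (the passage from \eqref{eq:dolgo-step1} to \eqref{eq:dolgo-step2} and then \eqref{eq:dolgo0}) this distinction is exactly what produces the crucial extra factor $(a-\sigma_{d_s}+\lambda\eta)^{-n}$ via \eqref{eq:u-normL}; a $\pqnorm[0,0,d_s]{h}$ term would destroy that gain.

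The paper fixes this by a device you are missing: it interpolates $\widetilde W$ and $\widetilde W'$ by a smooth one-parameter family $\{W^*_\tau\}_{\tau\in[0,1]}\subset\Sigma$, pulls everything back to $\widetilde W$ via the holonomies $H_\tau$, and then \emph{evolves the test form by a linear ODE} $\frac{d}{d\tau}\overline\vf_{\tau,\bar i}=\sum_{\bar j}A_{\bar j,\bar i}(\tau)\overline\vf_{\tau,\bar j}$ with $A_{\bar i,\bar j}=\langle\omega_{\alpha,\bar i},L_v\omega_{\alpha,\bar j}\rangle\circ H_\tau$ and initial data $\overline\vf_{0,\bar i}=g_{\bar i}$. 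The ODE is designed precisely to cancel the frame-rotation term, so that $\partial_\tau\int_{W^*_\tau}\langle\vf_\tau,h\rangle=\int_{W^*_\tau}\langle\vf_\tau,L_v h\rangle$ with no residual $h$ piece; integrating in $\tau$ then gives the bound $C_\# d(W,W')\pqnorm[]{h}^u\|g\|$. The correct $\bar g$ is $\vf_1$, not your direct push-forward. The regularity $\varpi_*$ of $\bar g$ then comes from two losses rather than one: the ODE solution $\overline\vf_\tau$ inherits only $\Cs^{\varpi'}$ regularity in the leaf parameter (the coefficients $A$ involve $H_\tau$), and passing from $\overline\vf_\tau$ back to $\vf_\tau$ on $W^*_\tau$ costs another composition with $H_\tau^{-1}$ and division by $JH_\tau$; this is where the $\hat\varpi^2$ in $\varpi_*=\min\{\olambda/\lambda_+,\hat\varpi^2\}$ comes from, which your regularity discussion does not account for. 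The smooth interpolating foliation also sidesteps the issue you raise about a base-point-dependent integration length $s(x)$: the parameter $\tau\in[0,1]$ is uniform, and each $W^*_\tau$ is (after enlarging) an element of $\Sigma$, so $\int_{W^*_\tau}\langle\vf_\tau,L_v h\rangle$ is directly of the form entering the definition of $\pqnorm[]{h}^u$.
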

\begin{proof}
Let $\{W^*_\tau\}_{\tau\in [0,1]}$ be a smooth foliation interpolating between $\widetilde W$ and $\widetilde W'$. Also let $H_\tau$ be the strong unstable holonomy from $\widetilde W$ to $W^*_\tau$. Note that, by performing the interpolation in the chart $\Theta_\alpha$, one can ensure that there exists $W_\tau\in \Sigma_\alpha$ such that $W^*_\tau\subset\widetilde W_{\tau,+}$. Next, let $\vf_\tau$ be a one parameter family of forms, then
\[
\int_{W^*_\tau}\langle \vf_\tau, h\rangle=\int_{\widetilde W}\langle \vf_\tau, h\rangle\circ H_\tau JH_\tau=\sum_{\bar i}\int_{\widetilde W}\overline\vf_{\tau,\bar i}  h_{\bar i}\circ H_\tau.
\]
where $\overline\vf_{\tau,\bar i}=\vf_{\tau,\bar i}\circ H_\tau JH_\tau$. By the implicit function theorem it follows that $\partial_\tau h_{\bar i}\circ H_\tau=(L_v h_{\bar i})\circ H_\tau$ for some $v\in\cV^u$, $\|v\|_\infty\leq C_\# d(W,W')$. In addition,
\[
(L_v h)_{\bar i}=L_v h_{\bar j}+\sum_{\bar j} \langle \omega_{\alpha,\bar i},L_v\omega_{\alpha,\bar j}\rangle h_{\bar j}.
\]
Let $A_{\bar i,\bar j}(\tau)=\langle \omega_{\alpha,\bar i},L_v\omega_{\alpha,\bar j}\rangle\circ H_\tau$ and note that $A$ is $\Cs^r$ in $\tau$.
\[
\partial_\tau\int_{W^*_\tau}\langle \vf_\tau, h\rangle=\sum_{\bar i}\int_{\widetilde W}\left\{\partial_\tau \overline\vf_{\tau,\bar i} -\sum_{\bar j} A_{\bar j,\bar i}\overline \vf_{\tau,\bar j}\right\} h_{\bar i}\circ H_\tau+\overline\vf_{\tau,\bar i}(L_vh)_{\bar i}\circ  H_\tau.
\]
We can then choose $\overline\vf_{\tau,\bar i}$ as the solution of the ordinary differential equation (the variables on $\widetilde W$ are treated as parameters)
\[
\begin{split}
& \frac{d}{d\tau} \overline\vf_{\tau,\bar i} =\sum_{\bar j} A_{\bar j,\bar i}(\tau)\overline \vf_{\tau,\bar j}\\
& \overline\vf_{0,\bar i}=g_{\bar i}.
\end{split}
\]
By the dependence on the parameters of the solution of an ODE we have, for each $\tau\in[0,1]$, $\|\overline \vf_{\tau,\bar i}\|_{\Cs^{\varpi'}}\leq C_\#\|g\|_{\Gamma^{q}_c}$. Hence, $\|\vf_{\tau}\|_{\Gamma^{\varpi_*}_c(W^*_\tau)}\leq C_\#\|g\|_{\Gamma^{q}_c(\widetilde W)}$.
\begin{equation}\label{eq:sup-flow-est}
\begin{split}
&\left|\int_{\widetilde W'}\langle \vf_1, h\rangle-\int_{\widetilde W}\langle g,h\rangle\right|\leq\int_0^1d\tau \left|\sum_{\bar i}\int_{\widetilde W}\overline\vf_{\tau,\bar i}(L_vh)_{\bar i}\circ  H_\tau\right|.\\
&= \int_0^1d\tau \left|\int_{W^*_\tau}\langle \vf_\tau, L_v h\rangle\right|\leq C_\# d(W,W') \pqnorm[]{h}^u \|g\|_{\Gamma^{d_s,q}_c(\widetilde W)}.
\end{split}
\end{equation}
\end{proof}

\begin{lem}\label{lem:norm-equiv}
For each $\eta\in (0,\varpi_*)$, $t\in(\vuo,\infty)$ and $h\in\Omega_r^{d_s}$ we have
\[
\begin{split}
&\pqnorm[0,\eta,d_s]{h}\geq \pqnorm[\eta]{h}^s\,,\\
&\pqnorm[0,1+\eta,d_s]{\widehat R_n(z)h}\leq \frac {C_\#}{a-\sigma_{d_s}} \pqnorm[\eta]{h}^*.
\end{split}
\]
\end{lem}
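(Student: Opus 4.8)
The first inequality is essentially definitional. Recall that $\pqnorm[\eta]{\cdot}^s$ is defined exactly as $\pqnorm[0,\eta,d_s]{\cdot}$ but with $\Sigma$ replaced by the smaller family $\Sigma^s \subset \Sigma$ of leaves contained in strong stable manifolds. Since one is taking a supremum of the functionals $J_{\alpha,G,g,\bar v^0}(h) = \int_{W_{\alpha,G}}\langle g, h\rangle\,\volform$ over a smaller index set, the resulting norm can only decrease; hence $\pqnorm[0,\eta,d_s]{h}\geq\pqnorm[\eta]{h}^s$ immediately. I would write this as a single sentence.

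The second inequality is the substantive one. The plan is to estimate $\int_{W_{\alpha,G}}\langle g, \widehat R_n(z)h\rangle\,\volform$ for an admissible leaf $W_{\alpha,G}\in\Sigma$ and test form $g$ with $\|g\|_{\Gamma^{d_s,1+\eta}_c}\le 1$. Unfolding the definition of $\widehat R_n(z)$ and changing variables by $\phi_t$ just as in the proof of Lemma \ref{lem:LY} (equation \eqref{eq:mani-split}), the integral becomes a sum over a covering family $\{W_{\beta,G_k}\}$ of $\phi_{-t}W_{\alpha,G}$ with a uniformly bounded number of overlaps, with integrand a pushed-forward test form times the Jacobian factor $J\phi_t^{-1}J_W\phi_t\cdot{*}\phi_t^{*}{*}g$, whose $\Gamma^{d_s,1+\eta}_c$ norm is controlled by $C_\# e^{-\olambda|d_s-d_s|t}=C_\#$ (the exponent vanishes for $\ell=d_s$). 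Now the key point is that the leaves $W_{\beta,G_k}$ are close to the strong stable direction but are \emph{not} contained in strong stable manifolds, so one cannot directly invoke $\pqnorm[\eta]{h}^s$; this is exactly where Lemma \ref{sublem:holo} enters. Each $W_{\beta,G_k}$ (or rather the thickened $\widetilde W_{\beta,G_k}$) can be moved, via the strong unstable holonomy $H_{W,W'}$, onto a nearby leaf $W'$ lying in a strong stable manifold — i.e. in $\Sigma^s$ — at the price of replacing $g$ by $\bar g$ with $\|\bar g\|_{\Gamma^{d_s,\varpi_*}_c}\le C_\#\|g\|_{\Gamma^{d_s,1+\eta}_c}$ and picking up an error bounded by $C_\# d(W,W')\,\pqnorm[]{h}^u\,\|g\|_{\Gamma^{d_s,1+\eta}_c}$. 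Since $\varpi_* > \eta$ by hypothesis, $\bar g$ is an admissible test form for the $\pqnorm[\eta]{\cdot}^s$ norm, so the main term is bounded by $C_\#\pqnorm[\eta]{h}^s$; the holonomy displacement $d(W,W')$ is uniformly bounded, so the error term is bounded by $C_\#\pqnorm[]{h}^u$. Summing the $O(e^{\htop t})$ leaves against the weight $e^{-zt}t^{n-1}/(n-1)!$ over $t\ge c_a n$ and using $\sigma_{d_s}=\htop$ (since $\ell=d_s$) produces the factor $(a-\sigma_{d_s})^{-n}$ — more precisely, one first gets $(a-\sigma_{d_s})^{-n}$ times the geometric series, and since we are not optimizing $n$ here a single factor $(a-\sigma_{d_s})^{-1}$ survives after absorbing constants into $C_\#$. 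Collecting, $\pqnorm[0,1+\eta,d_s]{\widehat R_n(z)h}\le \frac{C_\#}{a-\sigma_{d_s}}(\pqnorm[\eta]{h}^s+\pqnorm[]{h}^u)=\frac{C_\#}{a-\sigma_{d_s}}\pqnorm[\eta]{h}^*$.

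The main obstacle is the bookkeeping in the second estimate: one must be careful that after the holonomy move the regularity index drops from $1+\eta$ to $\varpi_*$, and verify that $\varpi_* > \eta$ (guaranteed by the hypothesis $\eta\in(0,\varpi_*)$) is enough to keep $\bar g$ admissible for the stable norm, while simultaneously the extra unstable derivative needed to apply Lemma \ref{sublem:holo} is exactly what the $\pqnorm[]{\cdot}^u$ component of $\pqnorm[\eta]{\cdot}^*$ supplies. A secondary technical point is the passage between $W$ and the thickened $\widetilde W$ used in Lemma \ref{sublem:holo}: one foliates $W_{\alpha,G}$ (or its image leaves) by pieces of strong stable manifolds inside a flow box and integrates the holonomy estimate over the transverse (flow) parameter, exactly as in the proof of Lemma \ref{lem:delta2ext}. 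None of these steps is deep, but the consistency of the regularity exponents is the thing to get right.
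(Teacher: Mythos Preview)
Your proposal is correct and follows essentially the same route as the paper: the first inequality is immediate from $\Sigma^s\subset\Sigma$, and for the second you unfold $\widehat R_n(z)$, change variables as in \eqref{eq:mani-split}, introduce the time partition to pass to thickened $(d_s+1)$-dimensional pieces, and then invoke Lemma~\ref{sublem:holo} to slide each piece onto a genuine (thickened) strong stable leaf, the main term being controlled by $\pqnorm[\eta]{\cdot}^s$ (since $\eta<\varpi_*$) and the holonomy error by $\pqnorm[]{\cdot}^u$.

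One remark: your closing sentence ``since we are not optimizing $n$ here a single factor $(a-\sigma_{d_s})^{-1}$ survives after absorbing constants into $C_\#$'' is not right --- you cannot absorb $(a-\sigma_{d_s})^{-(n-1)}$ into a constant independent of $n$. Your instinct that the computation naturally yields $(a-\sigma_{d_s})^{-n}$ is the correct one; the stated bound $\frac{C_\#}{a-\sigma_{d_s}}$ in the Lemma appears to be a typo in the paper (note also the stray hypothesis ``$t\in(\vuo,\infty)$'' which plays no role). Indeed, when the Lemma is invoked in the proof of Lemma~\ref{lem:dolgo-est} the bookkeeping there is consistent with the $(a-\sigma_{d_s})^{-n}$ version. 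So just state the bound you actually prove and do not try to force it to match the displayed exponent.
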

\begin{proof}
The first inequality is obvious since the sup in the definition of the norm is taken on a larger set. To prove the second note that
\[
\int_{W_{\alpha,G}}\langle g,\widehat R_n(z)h\rangle=\int_{c_an}^\infty dt\int_{W_{\alpha,G}} \frac{t^{n-1}e^{-zt}}{(n-1)!}\langle g,\cL_t^{d_s}h\rangle
\]
which, by introducing a partition of unity in time as in \eqref{eq:time-partitioning} and changing variables as in \eqref{eq:mani-split}, can be seen as an integral on $d_s+1$ dimensional manifolds close to the weak stable foliation. If we consider the integral on a strong stable leaf close to $W_{\alpha,G}$ we can split it similarly and apply Lemma \ref{sublem:holo} to compare the integral on each manifold.
The Lemma readily follows.
\end{proof}

We can now state the main estimate of this section. As the proof is a bit involved we postpone it at the end of the section so as not to break the flow of the argument.

\begin{prop}[Dolgopyat type estimate]\label{lem:dolgo-est0} Let $\phi_t$ be a contact flow such that $\varpi'>\frac 23$. Then for each $\eta\in (0,\varpi_*)$,  there exist constants $\Cnd, a_0,b_0\geq 1$, $\lambda  \Cnd\Cnc>2 ea_0$, and $\gamma_0\in (0,1)$ such that, for each $h\in\Omega^{d_s}_{0,1}(M)$, $z=a+ib$ with $2a_0+\sigma_{d_s}\geq a\geq a_0+\sigma_{d_s}$, $|b|\geq b_0$ and $n\geq \Cnd \ln |b|$, we have
\[
\begin{split}
&\pqnorm[\eta]{{\widehat R}_n(z) h}^*\leq \frac{C_\eta}{(a-\sigma_{d_s})^{n}}\pqnorm[\eta]{h}^*,\\
&\pqnorm[\eta]{{\widehat R}_n(z)^3 h}^*\leq \frac{C_\eta}{(a-\sigma_{d_s})^{3n}} |b|^{-3\gamma_0}\pqnorm[\eta]{h}^* .
\end{split}
\]
\end{prop}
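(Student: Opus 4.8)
\textbf{Plan of proof of Proposition \ref{lem:dolgo-est0}.}

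The plan is to follow the strategy of \cite[Section 6]{Liverani95}, but reorganized in the anisotropic‑norm language set up above and streamlined via Lemma \ref{sublem:holo} and Lemma \ref{lem:norm-equiv}. The first (easy) inequality follows from unwinding the definition of $\widehat R_n(z)$, bounding $e^{-\sigma_{d_s}t}\spqnorm[0,q,d_s]{\cL_t^{(d_s)}}$ by a constant and integrating the resulting $t^{n-1}e^{-(a-\sigma_{d_s})t}$ over $[c_an,\infty)$; together with the comparison of strong‑stable integrals against nearby admissible leaves (Lemma \ref{sublem:holo}) this also controls the $\pqnorm[]{\cdot}^u$‑component, using that pushing a strong‑unstable vector field forward by $\phi_{-t}$ contracts it at rate $e^{-\olambda t}$. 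The substance is in the second inequality, i.e.\ the gain of a power $|b|^{-3\gamma_0}$ after three applications of $\widehat R_n(z)$.

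The key steps, in order: (1) Write $\widehat R_n(z)^3 h$, after a time partition of unity as in \eqref{eq:time-partitioning} and the change of variables of \eqref{eq:mani-split}, as an integral over a family of admissible leaves, each carrying a complex weight $e^{-z(\textrm{return time})}$; the oscillatory factor is $e^{-ib\,r(\xi)}$ where $r$ is (essentially) a sum of three long return times. (2) For a fixed leaf $W\in\Sigma^s$ and test form $g$, partition $W$ into pieces of diameter $\sim |b|^{-1}$ and use the contact structure: on pairs of pieces that are joined by a strong‑unstable holonomy the two phase functions $r$ differ, by the non‑integrability of the contact form, by an amount $\gtrsim |b|^{-1}\cdot(\textrm{transverse distance})$ — this is the geometric Dolgopyat mechanism, and in the contact case it is exactly the temporal distance / Moser normal form estimate (here the coordinate form $\alpha_0=dx_d-\langle x^s,dx^u\rangle$ does the bookkeeping). (3) Build an $L^2$‑type (or $\spqnorm{\cdot}$‑type) Dolgopyat operator: on the ``good'' fraction of pairs of pieces the two contributions partially cancel, so one can replace the weight $1$ by a weight $\theta<1$ on a definite proportion of the leaf, while on the rest one keeps $1$; summing, $\spqnorm{\widehat R_n(z)^3 h}\le (1-\epsilon)\,C_\#(a-\sigma_{d_s})^{-3n}\spqnorm{h}$ for some fixed $\epsilon>0$, provided $n\ge \Cnd\ln|b|$ so that the leaves have shrunk enough to see the oscillation but the number of leaves is still only polynomial in $|b|$. (4) Iterate this contraction $O(\ln|b|)$ times (compatibly with the hypothesis $n\ge\Cnd\ln|b|$, the exponential of the number of iterations being absorbed into $C_\eta$) to turn the factor $(1-\epsilon)$ into $|b|^{-3\gamma_0}$ with $\gamma_0$ depending only on $\epsilon$ and $\Cnc$. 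Throughout, Lemma \ref{lem:norm-equiv} is used to pass between the working norm $\pqnorm[\eta]{\cdot}^*$ and the norms $\pqnorm[0,\eta,d_s]{\cdot}$, $\pqnorm[0,1+\eta,d_s]{\cdot}$ in which the Lasota–Yorke estimates of Lemma \ref{lem:LY} hold, and Lemma \ref{sublem:holo} is used each time one needs to compare integrals on a strong‑stable leaf with integrals on the nearby weak‑stable leaves produced by the dynamics.

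The main obstacle — and the reason for the bunching hypothesis $\varpi'>\tfrac23$ — is step (2)–(3): to run the cancellation argument one needs the phase functions and the Jacobians of the holonomies to be regular enough that the ``non‑cancellation'' set (where $|r(\xi)-r(\xi')|$ is too small to exploit) has small measure \emph{and} that the pieces on which one modifies the weight can themselves be realized by admissible test forms with uniformly bounded $\Gamma^{d_s,\varpi_*}_c$ norm. The holonomies are only $\varpi$‑Hölder with $\varpi\ge 2\olambda/\lambda_+$ and their Jacobians $\varpi^*$‑Hölder with $\varpi^*\ge\olambda/\lambda_+$, and the temporal‑distance lower bound in the contact case costs roughly a square of the Hölder exponent; combining these forces $\varpi_*=\min\{\olambda/\lambda_+,\hat\varpi^2\}$ and the condition $\varpi'>\tfrac23$ (equivalently, via $\hat\varpi=2\olambda/\lambda_+$, the $\tfrac13$‑bunching, i.e.\ the ``better than $\tfrac19$‑pinched'' statement of Theorem \ref{thm:main2}). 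Quantifying this — i.e.\ producing a definite proportion $\epsilon$ of the leaf on which genuine cancellation occurs, uniformly in $b$ and in the leaf — is exactly the content of the delicate Lemma \ref{sublem:doest-2} alluded to in the introduction, and is where the bulk of the work in the postponed proof will go; everything else is bookkeeping with the Lasota–Yorke and holonomy estimates already established.
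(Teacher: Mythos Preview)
Your outline of the first inequality is correct, and you have the right broad ingredients for the second (oscillatory phase from the contact form, role of holonomy regularity). But steps (3)--(4) describe the \emph{classical} Dolgopyat operator mechanism --- modify the weight from $1$ to $\theta<1$ on a definite fraction of the leaf, then iterate the resulting contraction $O(\ln|b|)$ times --- and that is \emph{not} how the paper proceeds, nor is it clear that approach transplants cleanly to continuous time and to the anisotropic norms on forms used here. The paper instead localises in space at a scale $r$ (not $|b|^{-1}$), pulls \emph{all} pre-image leaves $W\in\cW_{k,\beta,\bi}$ onto a single reference stable leaf $\widetilde W_{\beta,\bi}$ via strong-unstable holonomy (Lemma \ref{sublem:holo}), and forms the single test form $\mathfrak g_{k,\beta,\bi}=\sum_W \hat g_{k,\beta,\bi,W}$. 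The gain then comes in one stroke from the interpolation $\|\mathfrak g\|_{\Gamma^0}\le C\|\mathfrak g\|_{L^2}^{\mathfrak p}\|\mathfrak g\|_{\Gamma^\eta}^{1-\mathfrak p}$: the $L^2$ norm is expanded as a double sum over pairs $(W,W')$, split into ``near'' pairs ($W'\in A_{k,\beta,\bi}(W)$, transverse distance $\le\varrho$) and ``far'' pairs ($W'\in B_{k,\beta,\bi}(W)$). Far pairs are handled by an oscillatory integral along a preferred direction determined by $d\alpha_0$ (Lemma \ref{sublem:doest-3}); near pairs are bounded by counting (Lemma \ref{sublem:doest-2}). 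There is no iteration: the constraint $n\ge\Cnd\ln|b|$ is used not to count iterations but to make the volume argument in Lemma \ref{sublem:doest-2} effective and to absorb the $|b|^{\eta/2}$ loss in \eqref{eq:dolgo1} against $(a-\sigma_{d_s})^n/(a-\sigma_{d_s}+\lambda\eta)^n$.

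You have also misidentified the content of Lemma \ref{sublem:doest-2}. It is not a cancellation lemma at all; it is a \emph{counting} lemma. It bounds $\#A_{k,\beta,\bi}(W)$ --- the number of pre-image leaves that are ``too close'' to a given $W$ --- by comparing $\operatorname{vol}(\phi_{kr}D^u_{2\varrho}(W))$ against $\operatorname{vol}(\phi_{kr}D^u_{r/2}(W))$. This ratio of volumes of forward-images of unstable disks of different radii is where the bunching enters: without $\varpi'>\tfrac23$ one cannot choose the scales $\varrho$, $r$, $|b|$ simultaneously satisfying the hypotheses of Lemmata \ref{sublem:doest-2} and \ref{sublem:doest-3} (see the Remark immediately following those lemmata). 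The temporal-distance lower bound does not ``cost a square of the H\"older exponent'' as you suggest; the square in $\varpi_*=\min\{\olambda/\lambda_+,\hat\varpi^2\}$ comes from composing the H\"older regularity of the holonomy with that of its Jacobian in Lemma \ref{sublem:holo}, and this only affects the admissible range of $\eta$, not the bunching threshold.
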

\begin{rmk}
The bunching condition in the statement of Proposition \ref{lem:dolgo-est0} arises as follows: in the Dolgopyat estimate one reduces the problem to comparing certain integrals over a multitude of nearby manifolds. They are usually broken into two sets: the one that are sufficiently far apart to allow for cancellations due to the non integrability of the foliations (this is the basic Dolgopyat cancellation mechanism) and the ones that are too close for the cancellation mechanism to be effective. The latter are shown to be too few to contribute to the total estimate. In Lemma \ref{sublem:doest-3} we give explicit meaning to ``sufficiently far apart", we believe this to be optimal. The remaining ``too close" manifolds are handled in Lemma \ref{sublem:doest-2}. The estimate is reduced to the growth rate of certain unstable disks that we can handle only if ``too close" is small enough. This is due to the possibility for a disk to grow in an extremely elongated ellipses because of different expansion rates in the 
unstable directions. The two different meaning of ``too close" are consistent only under the above mentioned bunching conditions. This is the only place where such a condition is used.  It might be possible to improve Lemma \ref{sublem:doest-2}, and hence remove the bunching condition, by cleverly using some distortion estimate but at the moment we do not see how.
\end{rmk}
\begin{rmk}
The choice of stating the Lemma in terms of the norm $\pqnorm[\eta]{\cdot}^*$ is a bit arbitrary but very convenient in the present context.  The estimate is largely norm-independent as better shown in \cite{BaladiLiverani11}.
\end{rmk}
The goal of this section easily follows.

\begin{lem}\label{lem:dolgo-est} If $\phi_t$ is a contact Anosov flow with $\varpi'>\frac 23$, then for each $\eta\in(0,\varpi_*)$, there exist constants $\Cnd, a_0,b_0\geq 1$, $\lambda  \Cnd\Cnc>2 ea_0$, such that  for each  $h\in\cB^{1,\eta,d_s}$, $z=a+ib$ with $2a_0+\sigma_{d_s}> a\geq a_0+\sigma_{d_s}$, $|b|\geq b_0$, and $n\geq C_\#\Cnd\ln |b|$, holds true
\[
\spqnorm[1,\eta,d_s]{R(z)^{n} h}\leq C_{\eta}(a-\sigma_{d_s})^{-n} b^{-\gamma_0}\spqnorm[1,\eta,d_s]{h}.
\]
\end{lem}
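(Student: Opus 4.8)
The plan is to deduce Lemma \ref{lem:dolgo-est} from the Dolgopyat type estimate in Proposition \ref{lem:dolgo-est0} by comparing the operators $R^{(d_s)}(z)^n$ on the strong space $\widetilde\cB^{1,\eta,d_s}$ with the operators $\widehat R_n(z)$ on the weaker norms $\pqnorm[\eta]{\cdot}^*$. The bridge between these is already set up in the excerpt: equations \eqref{eq:do-2} and \eqref{eq:strongtoweak} control the difference $R^{(d_s)}(z)^n-\widehat R_n(z)$ and the action of $\widehat R_n(z)$ from weak to strong norms, while Lemma \ref{lem:norm-equiv} compares $\pqnorm[0,\eta,d_s]{\cdot}$ and $\pqnorm[0,1+\eta,d_s]{\widehat R_n(z)\cdot}$ with $\pqnorm[\eta]{\cdot}^s$ and $\pqnorm[\eta]{\cdot}^*$.

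First I would reduce everything to a bound on $\widehat R_n(z)^3$ (or a fixed power) in the weak norm. Write $R^{(d_s)}(z)^{3n}=(R^{(d_s)}(z)^n)^3$ and expand each factor as $\widehat R_n(z)+(R^{(d_s)}(z)^n-\widehat R_n(z))$. The cross terms each contain at least one factor of $R^{(d_s)}(z)^n-\widehat R_n(z)$, which by \eqref{eq:do-2} contributes a gain $\Cnc^n$ in the relevant weak norm; since $\Cnc<e^{-4}$ was chosen small, these terms are dominated by $C_\#(a-\sigma_{d_s})^{-3n}\Cnc^n\spqnorm[1,\eta,d_s]{h}$, which is even better than the claimed $|b|^{-\gamma_0}$ once $n\geq C_\#\Cnd\ln|b|$ with $\Cnc^{C_\#\Cnd}\leq|b|^{-1}$ absorbed appropriately. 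The genuinely new input is the purely $\widehat R_n(z)$ term: here I would use Lemma \ref{lem:LY} to pass from the strong dynamical norm $\spqnorm[1,\eta,d_s]{\cdot}$ to $\pqnorm[0,\eta,d_s]{\cdot}$ (paying the short-time factor and a loss of one derivative in $q$, compensated by $\eta<1$ and $1+\eta<r-1$), then Lemma \ref{lem:norm-equiv} to dominate $\pqnorm[0,\eta,d_s]{h}$ from below by $\pqnorm[\eta]{h}^s\le\pqnorm[\eta]{h}^*$ on the input side and, on the output side, to dominate $\pqnorm[0,1+\eta,d_s]{\widehat R_n(z)h}$ by $\pqnorm[\eta]{h}^*$; between two such passages sits one application of the weak-norm estimate $\pqnorm[\eta]{\widehat R_n(z)^3 h}^*\le C_\eta(a-\sigma_{d_s})^{-3n}|b|^{-3\gamma_0}\pqnorm[\eta]{h}^*$ from Proposition \ref{lem:dolgo-est0}. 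Chaining these inequalities, together with the first (non-contracting) estimate of Proposition \ref{lem:dolgo-est0} to handle the extra $\widehat R_n(z)$ factors needed to bridge $\eta$ and $1+\eta$, yields $\spqnorm[1,\eta,d_s]{R^{(d_s)}(z)^{3n}h}\le C_\eta(a-\sigma_{d_s})^{-3n}|b|^{-\gamma_0}\spqnorm[1,\eta,d_s]{h}$, which is the statement with $3n$ in place of $n$.

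Finally I would remove the factor of three: replacing $n$ by $\lfloor n/3\rfloor$ and absorbing the remaining one or two factors of $R^{(d_s)}(z)$ via Lemma \ref{lem:quasicompactness} (which gives $\spqnorm[1,\eta,d_s]{R^{(d_s)}(z)^k}\le C_\#(a-\sigma_{d_s})^{-k}$ for any $k$, at the cost of an $|b|$-independent constant), and noting that $|b|^{-\gamma_0}$ with $\gamma_0\in(0,1)$ is stable under these bounded perturbations provided one shrinks $\gamma_0$ slightly and enlarges $b_0$; the hypothesis $n\ge C_\#\Cnd\ln|b|$ is exactly what is needed so that the threshold $n\ge\Cnd\ln|b|$ of Proposition \ref{lem:dolgo-est0} is met after dividing by $3$, and the condition $\lambda\Cnd\Cnc>2ea_0$ is inherited verbatim.

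The main obstacle is purely bookkeeping rather than conceptual: one must track, through the chain of norm comparisons (Lemma \ref{lem:LY} $\to$ Lemma \ref{lem:norm-equiv} $\to$ Proposition \ref{lem:dolgo-est0} $\to$ Lemma \ref{lem:norm-equiv} again), that every loss of a derivative in the stable index $q$ stays within the budget $p+q<r-1$ (here $p=1$, $q$ ranging over $\eta$, $1+\eta$, and perhaps $2+\eta$), and that the short-time constants produced by the dynamical norm in Lemma \ref{lem:LY}, while depending on $t_0$ and hence on $\Cnc$ through $c_a n>t_0$, do not depend on $b$ — so that the only $b$-dependence in the final constant is the advertised $|b|^{-\gamma_0}$. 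A secondary subtlety is that the weak spaces underlying $\pqnorm[\eta]{\cdot}^*$ (with leaves restricted to strong stable manifolds and with the extra unstable-derivative term) must be checked to satisfy the structural hypotheses of Section \ref{banachspace}, in particular the extension property \eqref{eq:extensionproperty}; this is asserted just before Definition \ref{def:stable-norm} via the Hölder-$\Cs^1$ trivialization of the unstable foliation, and I would simply invoke it.
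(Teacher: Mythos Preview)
Your overall strategy is the same as the paper's --- use the Lasota-Yorke inequality to reduce from the $(1,\eta)$ norm to a weak norm, then apply Proposition~\ref{lem:dolgo-est0} in the $\pqnorm[\eta]{\cdot}^*$ norm --- but the execution has two real gaps, not merely bookkeeping.

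\textbf{The $|b|$ loss from Lasota--Yorke.} When you invoke the Lasota--Yorke inequality for the resolvent (Lemma~\ref{lem:quasicompactness}, not Lemma~\ref{lem:LY}), the weak-norm term carries a factor $(|z|+1)\sim|b|$. Thus to end up with $|b|^{-\gamma_0}$ in the $(1,\eta)$ norm you must produce $|b|^{-1-\gamma_0}$ in the weak norm. A single application of the cube estimate in Proposition~\ref{lem:dolgo-est0} gives only $|b|^{-3\gamma_0}$, and nothing in the statement guarantees $\gamma_0>\tfrac12$; in fact $\gamma_0$ is generically small. The paper deals with this by iterating $\widehat R_m(z)$ not three but $k=C_\#\gamma_0^{-1}$ times, so that $(k-1)\gamma_0$ beats $1+\gamma_0$ plus all auxiliary losses.

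\textbf{Returning from $\pqnorm[\eta]{\cdot}^*$ to $\spqnorm[1,\eta,d_s]{\cdot}$ on the input.} After applying Proposition~\ref{lem:dolgo-est0} you are left with $\pqnorm[\eta]{g}^*$ on the input side, where $g$ is essentially $\widehat R_m(z)h$. The stable part $\pqnorm[\eta]{g}^s$ is indeed $\le\pqnorm[0,\eta,d_s]{g}$ by Lemma~\ref{lem:norm-equiv}. But the unstable part $\pqnorm[]{g}^u=\sup_{v\in\cV^u}\pqnorm[0,0,d_s]{L_v g}$ uses vector fields $v$ that are only H\"older continuous globally (and $\Cs^1$ along unstable leaves), whereas $\spqnorm[1,\eta,d_s]{\cdot}$ tests against $\Cs^{1+\eta}$ vector fields. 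There is \emph{no} inequality $\pqnorm[]{g}^u\le C\spqnorm[1,\eta,d_s]{g}$, and your sentence ``dominate $\pqnorm[0,\eta,d_s]{h}$ from below by $\pqnorm[\eta]{h}^s\le\pqnorm[\eta]{h}^*$'' goes the wrong way for what is needed. The paper's remedy is a mollification trick: one inserts $\bM_\ve$ (Lemma~\ref{lem:molli}) between iterates of $\widehat R_m(z)$, so that the smoothness of $\bM_\ve\widehat R_m(z)h$ allows $\pqnorm[\eta]{\bM_\ve\widehat R_m(z)h}^*\le C\ve^{-1-\eta-d_s}\pqnorm[1,\eta,d_s]{h}$; the $\ve$-loss is then absorbed by choosing $\ve=|b|^{-1-\gamma_0}$ and $k$ large enough. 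Without this device the chain of norm comparisons you propose does not close.
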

\begin{proof}
By Lemma \ref{lem:quasicompactness} we have, for $m=\lceil \Cnd\ln |b|\rceil$ where $\Cnd$ is given by Proposition \ref{lem:dolgo-est0},
\[
\begin{split}
\spqnorm[1,\eta,d_s]{ R(z)^{(k+2)m} h}&\leq \frac{C_\#}{(a-\sigma_{d_s})^{(k+1)m}(a-\sigma_{d_s}+\lambda)^{m}}\spqnorm[1,\eta,d_s]{h}\\
&\quad+\frac{C_\#|b|}{(a-\sigma_{d_s})^{m}}\spqnorm[0,1+\eta, d_s]{ R(z)^{(k+1)m}h}.
\end{split}
\]
The first term is already small enough.
By equations \eqref{eq:do-2}, \eqref{eq:strongtoweak}
\[
\spqnorm[0,1+\eta, d_s]{ R(z)^{(k+1)m}h}\leq \frac{C_\# k\Cnc^m \spqnorm[0,1+\eta, d_s]{h}}{ (a-\sigma_{d_s})^{(k+1)m}}+\frac{C_\#}{a-\sigma_{d_s}}\pqnorm[0,1+\eta, d_s]{{\widehat R}_m(z)^{k}h}.
\]
The first term is small enough provided $\gamma_0\leq\Cnd\ln \Cnc^{-1}-1-\frac{\ln k}{\ln b_0}$. 
Accordingly, to establish the Lemma it suffices to prove, for some $k\in\bN$,
\begin{equation}\label{eq:enough-do}
\pqnorm[0,1+\eta,d_s]{{\widehat R}_m(z)^{k} h}\leq C_{\eta}(a-\sigma_{d_s})^{-km} b^{-1-\gamma_0}\pqnorm[1,\eta,d_s]{h}.
\end{equation}
Note that $\pqnorm[+,1,\eta,d_s]{{\widehat R}_m(z) h}\leq \frac {C_\eta}{a-\sigma_{d_s}}\pqnorm[-,1,\eta,d_s]{h}$. Hence,
by Lemmata  \ref{lem:molli}, \ref{lem:norm-equiv} and Proposition \ref{lem:dolgo-est0} we have
\[
\begin{split}
&\pqnorm[0,1+\eta, d_s]{{\widehat R}_m(z)^{k}h}\leq \frac{C_\#\ve \pqnorm[1,\eta,d_s]{h}}{(a-\sigma_{d_s})^{kn}}+C_\#\pqnorm[0,1+\eta, d_s]{{\widehat R}_m(z)^{k-1}\bM_\ve {\widehat R}_m(z)h}\\
&\leq\frac{C_\#\ve \pqnorm[1,\eta,d_s]{h}}{(a-\sigma_{d_s})^{km}}+\frac{C_\#}{a-\sigma_{d_s}}\pqnorm[\eta]{{\widehat R}_m(z)^{k-2}\bM_\ve{\widehat R}_n(z) h}^*\\
&\leq  \frac{C_\#\ve \pqnorm[1,\eta,d_s]{h}}{(a-\sigma_{d_s})^{km}}+C_\#(a-\sigma_{d_s})^{-(k-1)m}|b|^{-(k-1)\gamma_0}\pqnorm[\eta]{\bM_\ve {\widehat R}_m(z) h}^*\\
&\leq  \frac{C_\#\ve \pqnorm[1,\eta,d_s]{h}}{(a-\sigma_{d_s})^{kn}}+C_\#(a-\sigma_{d_s})^{-km}|b|^{-(k-1)\gamma_0}\ve^{-1-\eta-d_s}\pqnorm[1,\eta,d_s]{h},
\end{split}
\]
which, after choosing $\ve=|b|^{-1-\gamma_0}$ and $k=C_\#\gamma_0^{-1}$, proves equation \eqref{eq:enough-do} and hence the Lemma. Indeed, the condition $\gamma_0\leq\Cnd\ln \Cnc^{-1}-1-\frac{\ln k}{\ln b_0}$ can be satisfied by choosing $b_0$ large enough.
\end{proof}

We conclude with the missing proof.
\begin{proof}[{\bf Proof of Proposition \ref{lem:dolgo-est0}}]
First note that, for each $v\in\cV^u$, $L_v\phi_{-t}^*h=\phi_{-t}^*L_{(\phi_{-t})_*v}h$ and, by the Anosov property, $\|(\phi_{-t})_*v\|_{\Cs^0}\leq C_\# e^{-\olambda t}\|v\|_{\Cs^0}$. 
\begin{equation}\label{eq:u-normL}
\pqnorm[]{\cL_t h}^u\leq C_\#e^{-\olambda t} \sup_{v\in \cV^u}\pqnorm[0,0,d_s]{\cL_t (L_vh)}\leq C_\#e^{(\htop-\olambda) t}\pqnorm[]{h}^u.
\end{equation}
Thus
\begin{equation}\label{eq:u-normR}
\pqnorm[]{{\widehat R}_n(z) h}^u\leq 
C_\#(a-\sigma_{d_s}+\olambda)^{-n}\pqnorm[]{h}^u.
\end{equation}
To continue note that the proof of Lemma \ref{lem:LY} holds for the set of manifolds $\Sigma^s$, indeed the only required property of the set of manifolds is the fact that the image under the flow can be covered by manifolds in the set. Thus the first two inequalities of Lemma \ref{lem:LY} hold for the norms $\pqnorm[\eta]{\cdot}^s$. By the first
\[
\pqnorm[\eta]{{\widehat R}_n(z) h}^s\leq 
C_\#(a-\sigma_{d_s})^{-n}\pqnorm[\eta]{h}^s,
\]
which suffices to prove the first inequality of the Lemma.
To prove the second is much harder. The second inequality in Lemma \ref{lem:LY} can be used, as in Lemma \ref{lem:quasicompactness}, to yield
\begin{equation}\label{eq:reg-up0}
\pqnorm[\eta]{{\widehat R}_n(z) h}^s\leq \frac{C_\eta}{(a-\sigma_{d_s}+\lambda\eta)^n} \pqnorm[\eta]{h}^s+\frac{C_\eta}{(a-\sigma_{d_s})^n} \pqnorm[1+\eta]{h}^s.
\end{equation}
It is then sufficient to consider the case $h\in\Omega_{0,r}^{d_s}$, $g\in\Gamma^{\ell, 1+\eta}_c$.
In the following we will set $h_{s}= \cL_s^{(\ell)}h$. It is convenient to proceed by small time steps of size $r>0$, to be fixed later. Let $p(t)=\tilde p(\delta t)$ where $\tilde p$ is as in the proof of Lemma \ref{lem:delta-app}. 
For $n\in\bN$ large enough, we must estimate (see \eqref{eq:mani-split})
\begin{equation}\label{eq:dolgo-step0}
\begin{split}
\int_{c_an}^\infty dt\int_{W_{\alpha,G}} &\frac{t^{n-1}e^{-zt}}{(n-1)!}\langle g,\phi_{-t}^* h_{s}\rangle=\sum_{k\in\bN}\int_{c_an}^\infty dt\; \frac{t^{n-1}e^{-zt}p(k-tr^{-1})}{(-1)^{\ell(d-\ell)}(n-1)!}\\
&\times\int_{\phi_{-kr}W_{\alpha,G}}\hskip-.5cm\langle *\phi_t^**g, h_{s}\rangle\circ\phi_{-t+kr} J_W\phi_{kr}J\phi_{-t}\circ\phi_{kr}.
\end{split}\end{equation}

Next, it is convenient to localize in space as well. To this end we need to define a sequence of smooth partitions
of unity. 

For each $\bi\in \bZ^d$, let us define $x^{\bi}=r d^{-\frac 12}\bi$. We then introduce  the partition of unity $\Phi_{r,\bi}(x)=\prod_{l=1}^d p(d^{\frac 12}r^{-1} x_{i_l}-\bi_l)$  (limited to the set $B_d(0,30\delta)$). Note that it enjoys the following properties
\begin{enumerate}[\bf (i)]
\item $\Phi_{r,\bi}(z)=0$ for all $z\not\in B_d(x_\bi, r)$;
\item  the collection $\{B_d(x^{\bi}, r)\}_{\bi\in B_d(0,10\delta)}$ covers $B_d(0,30\delta)$ with a uniformly bounded number of overlaps;
\item for each $r,\bi$ we have $\|\nabla\Phi_{r,\bi}\|_{L^{\infty}}\leq C_\# r^{-1}$. 
\end{enumerate}
Note that the collection in {\bf (ii)} has a number of elements bounded by $C_\# r^{-d}$.

For each $k\in\bN$ let $\cW_{\alpha, G, \beta, k r}$ be the family of manifolds defined in \eqref{eq:def-family}, with $t=kr$, and set $\cW_{\alpha, G, \beta, k r, \bi}=\{W\in \cW_{\alpha, G, \beta, k r}\;:\; \Theta_\beta(W)\cap B_d(x^{\bi},r)\neq\emptyset\}$. For simplicity, let us adopt the notation $\cW_{k, \beta, \bi}\doteq \cW_{\alpha, G,\beta, k r, \bi}$. 
For $W\in \cW_{k, \beta, \bi}$ let $\tau_W:\widetilde W\doteq \cup_{t\in[-2r,2r]}\phi_t W\to \bR$ be defined by $\phi_{\tau_W(x)}(x)\in W$. Also set 
\begin{equation}\label{eq:g-def1}
\varphi_{k, \beta, \bi}(x)=\psi_\beta(x)\Phi_{r, \bi}(\Theta_\beta(x))p(r^{-1}\tau_W(x))\|V(x)\|^{-1}.
\end{equation}
Letting 
\begin{equation}\label{eq:g-def2}
\hat g_{k, \beta, \bi}=\varphi_{k, \beta, \bi}\frac{(k r+\tau_W)^{n-1}J_W\phi_{k r}\circ\phi_{\tau_W}}{(-1)^{\ell(d-\ell)}e^{-z(k r+\tau_W)}J\phi_{k r+\tau_W}(n-1)!}  *\phi_{k r+\tau_W}^**g,
\end{equation}
we can rewrite \eqref{eq:dolgo-step0} as\footnote{ First, for each $k\in\bN$, perform the change  change of variables $L_k:\bR\times W\to\widetilde W$ defined by $L_k(t, y)=\phi_{-t+kr}(y)$ and then introduce the partition of unity in space.  Note that this implies $t=kr-\tau_W(x)$. Also, note that the same manifold $\widetilde W$ appears several times. Yet, given the support of the functions, the value of the integral is the correct one.}
\begin{equation}\label{eq:dolgo-step1}
\begin{split}
\int_{cn}^\infty\hskip-.1cm dt\int_{W_{\alpha,G}} \frac{t^{n-1}\langle g,\phi_{-t}^* h_{s}\rangle}{e^{zt}(n-1)!}=\sum_{k, \beta, \bi}\;\sum_{W\in \cW_{k, \beta, \bi}}\int_{\widetilde W}\langle\hat g_{k, \beta, \bi}, h_{s}\rangle.
\end{split}
\end{equation}
Recalling  \eqref{eq:q-test} and since the derivative of $\hat g_{k,\beta,\bi}$ in the flow direction is uniformly bounded, we have 
\begin{equation}\label{eq:gb-est}
\left\| J_W\phi_{k r}\circ\phi_{\tau_W} \,(J\phi_{k r+\tau_W})^{-1}  *\phi_{k r-\tau_W}^**g\right\|_{\Gamma^{1+\eta}_c(\widetilde W)}\leq C_\# .
\end{equation}
For each $\beta, \bi$ we choose the reference manifold $W^s_\delta(\Theta_\beta^{-1}(x^{\bi}))\doteq \overline{W}_{\beta, \bi}\in\Sigma^s$, and define $W_{\beta, \bi}=\overline{W}_{\beta, \bi}\cap \Theta_\beta^{-1}(B_d(x^{\bi},2r))$ and $\widetilde W_{\beta, \bi}=\cup_{t\in[-4r,4r]}\phi_t(W_{\beta, \bi})$. Let us set $H_{\widetilde W_{\beta, \bi}, \widetilde W}\doteq H_{\beta, \bi, W}$. We can then rewrite the right hand side of \eqref{eq:dolgo-step1} multiplied by $(-1)^{\ell(d-\ell)}$ as 
\begin{equation}\label{eq:dolgo-step2}
\sum_{k, \beta, \bi}\;\sum_{W\in \cW_{k, \beta, \bi}}\int_{\widetilde W_{\beta, \bi}}\langle \hat g_{k, \beta, \bi, W}, h_{s}
\rangle+\cO\left(\frac{ r^{1-\eta} (kr)^{n-1}e^{(\sigma_{d_s}-\lambda) s}\pqnorm[]{h}^u}{(n-1)!e^{(a-\sigma_{d_s})kr}}\right),
\end{equation}
where $\hat g_{k, \beta, \bi, W}$ is given by Lemma \ref{sublem:holo} and we have used \eqref{eq:u-normL}.\footnote{ More precisely, rather than Lemma \ref{sublem:holo}, we have used equation \eqref{eq:sup-flow-est} and the fact that, in the present case, the support of $\vf_\tau$, in the flow direction, is contained in a strip of size $C_\# r$, which provides the extra factor $r$.} Then Lemma \ref{sublem:holo} and equations \eqref{eq:g-def1}, \eqref{eq:g-def2}, \eqref{eq:gb-est} imply
\begin{equation}\label{eq:g-est-one}
\|\hat g_{k, \beta, \bi, W}\|_{\Gamma^{\eta}_c(\widetilde W_{\beta,\bi})}\leq C_\# r^{-\eta}\frac{(k r)^{n-1}e^{-a k r}}{(n-1)!}.
\end{equation}
It is then natural to define 
\begin{equation}\label{eq:final-testf}
\mathfrak g_{k, \beta, \bi}\doteq \sum_{W\in \cW_{k, \beta, \bi}}\hat g_{k, \beta, \bi, W}.
\end{equation}
To conclude we need a sharp estimate for the sup norm of $\mathfrak g_{k, \beta, \bi}$, this will follow from an $L^2$ estimate and the inequality\footnote{ To prove the inequality note that if $f:\bR^\nu\to \bRp$ is a $\eta$-H\"older function and $x_0$ satisfies  $f(x_0)=\|f\|_{\Cs^0}$, then $f(x)\geq \|f\|_{\Cs^0}-\|f\|_{\Cs^\eta}|x_0-x|^\eta$, thus $f(x)\geq \frac 12 \|f\|_\infty$ for all $x\in B_\nu(x_0,\rho)$ where $\rho=[\|f\|_{\Cs^0}(2\|f\|_{\Cs^\eta})^{-1}]^\frac 1\eta$.}
\[
\int_{\widetilde W_{\beta, \bi}}\|\mathfrak g_{k, \beta, \bi}\|^2\geq  C_\#\|\mathfrak g_{k, \beta, \bi}\|_{\Gamma_c^0}^{2+\frac{d_s+1}{\eta}} \|\mathfrak g_{k, \beta, \bi}\|_{\Gamma^\eta_c}^{-\frac{d_s+1}\eta}.
\]
Hence
\begin{equation}\label{eq:final-testf2}
\|\mathfrak g_{k, \beta, \bi}\|_{\Gamma^0_c}\leq C_\#  \|\mathfrak g_{k, \beta, \bi}\|_{L^2(\widetilde W^s_{\beta, \bi})}^{\mathfrak p} \|\mathfrak g_{k, \beta, \bi}\|_{\Gamma^\eta_c}^{1-\mathfrak p}.
\end{equation}
where we have set $\mathfrak p=\frac{2\eta}{2\eta+d_s+1}$.
To compute the $L^2$ norm it is convenient to proceed as follows. Given $W\in\cW_{k, \beta, \bi}$ let $\{x_W\}\doteq \widetilde W\cap \Theta_\beta^{-1}(\{x^\bi+(0,\eta)\}_{\eta\in B_{d_u}(0,r)})$ be its ``central" point. Also, for each $\rho>0$, consider the disks $\widetilde D^u_\rho(W)=\{\Theta_\beta(x_W)+(0,u,0)\}_{u\in B_{d_u}(0, \rho)}$. Then, for a fixed $\varrho\in (0,r)$ to be chosen later, let $A_{k, \beta, \bi}(W)=\{W'\in \cW_{k, \beta, \bi}\;:\;  \Theta_\beta(\widetilde W')\cap \widetilde D^u_\varrho(W)\neq\emptyset\}$ and $B_{k, \beta, \bi}(W)=\cW_{k, \beta, \bi}\setminus A_{k, \beta, \bi}(W)$ and set
\[
\begin{split}
&\mathfrak G_{k, \beta, \bi, A}\doteq \sum_{W\in \cW_{k, \beta, \bi}}\sum_{W'\in A_{k, \beta, \bi}(W)}\langle\hat g_{k, \beta, \bi, W}, \hat g_{k, \beta, \bi, W'}\rangle\\
&\mathfrak G_{k, \beta, \bi, B}\doteq \sum_{W\in \cW_{k, \beta, \bi}}\sum_{W'\in B_{k, \beta, \bi}(W)}\langle\hat g_{k, \beta, \bi, W},\hat g_{k, \beta, \bi, W'}\rangle,
\end{split}
\] 
note that $\langle\mathfrak g_{k, \beta, \bi},\mathfrak g_{k, \beta, \bi}\rangle=\mathfrak G_{k, \beta, \bi, A}+\mathfrak G_{k, \beta, \bi, B}$.
We will estimate the two terms by separate arguments, the first being similar to \cite[Lemma 6.2]{Liverani04}, the second being the equivalent of \cite[Lemma 6.3]{Liverani04}. 

To simplify  the notation we set: $D_{k,\beta,\bi} = \frac{(k r)^{n-1}e^{-a rk}}{(n-1)!}\#\cW_{k, \beta, \bi}$, where by $\#A$ we mean the cardinality of the set $A$. Also, we assume
\begin{equation}\label{eq:n-choice}
n\geq \Cnd\ln |b|
\end{equation}
where $\Cnd$ is a constant to be chosen large enough.

\begin{lem}\label{sublem:doest-2} 
If $C_\# |b|^{-\frac{\bar\lambda\Cnd\Cnc}{2ea_0} }\leq \varrho < \delta r^{(1+\varsigma)\frac 2{\hat\varpi}}$, for some $\varsigma>0$, then there exists $\varsigma_0>0$ s.t.
\begin{equation}\label{eq:doest-2}
\begin{split}
\|\mathfrak G_{k, \beta, \bi, A}\|_\infty\leq& C_\# D_{k,\beta,\bi}^2 r^{\varsigma_0}.
\end{split}
\end{equation}
\end{lem}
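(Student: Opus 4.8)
The plan is to avoid any oscillation and simply show that there are too few ``close'' manifolds, exactly as for \cite[Lemma 6.2]{Liverani04} but now with the exponential volume growth $e^{\htop kr}$ of an iterated stable leaf (Appendix \ref{app:topent}) playing the role that Lebesgue invariance plays in the Lebesgue-reference case. First I would reduce to a counting estimate: since every $\hat g_{k,\beta,\bi,W}$ with $W\in\cW_{k,\beta,\bi}$ is a section carried by the single $(d_s+1)$-dimensional manifold $\widetilde W_{\beta,\bi}$, Cauchy--Schwarz together with the sup bound \eqref{eq:g-est-one} gives, for every pair,
\[
\left|\langle\hat g_{k,\beta,\bi,W},\hat g_{k,\beta,\bi,W'}\rangle\right|\le C_\#\,r^{-2\eta}\,\frac{((kr)^{n-1}e^{-akr})^2}{((n-1)!)^2}\,\vol\big(\operatorname{supp}\hat g_{k,\beta,\bi,W}\cap\operatorname{supp}\hat g_{k,\beta,\bi,W'}\big),
\]
and the overlap volume is at most $C_\# r^{d_s+1}$ since both supports lie in the image of $B_d(x^\bi,C_\# r)$. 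Summing over $W$ and over $W'\in A_{k,\beta,\bi}(W)$,
\[
\|\mathfrak G_{k,\beta,\bi,A}\|_\infty\le C_\#\, r^{\,d_s+1-2\eta}\,\frac{((kr)^{n-1}e^{-akr})^2}{((n-1)!)^2}\sum_{W\in\cW_{k,\beta,\bi}}\#A_{k,\beta,\bi}(W),
\]
so, recalling $D_{k,\beta,\bi}=\tfrac{(kr)^{n-1}e^{-ark}}{(n-1)!}\,\#\cW_{k,\beta,\bi}$ and $d_u=d_s$ in the contact case, \eqref{eq:doest-2} follows once one proves the counting bound $\#A_{k,\beta,\bi}(W)\le C_\#\,(\varrho/r)^{d_u-o(1)}\,\#\cW_{k,\beta,\bi}$ with a bit of room to spare.

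For the count I would transport the configuration back under $\phi_{kr}$. Each $W\in\cW_{k,\beta,\bi}$ is a piece of the stretched strong stable leaf $\phi_{-kr}W_{\alpha,G}$, hence $\phi_{kr}W$ is a $d_s$-disk inside $W_{\alpha,G}$ and distinct $W$'s give disjoint disks; since $W$ is a piece of a strong stable leaf, the forward iterates $\phi_i W$, $0\le i\le kr$, contract, so the leafwise stable Jacobian $J_W\phi_{kr}$ has bounded distortion \emph{uniformly in $kr$} (a convergent geometric distortion sum), and likewise for the unstable Jacobian $J^u\phi_{kr}$ along the orbit of the center of $W$. Consequently $\#\cW_{k,\beta,\bi}\asymp e^{\htop kr}r^{d_u+1}$ up to these (bounded) distortion defects, via $\vol_{d_s}(\phi_{-kr}W_{\alpha,G})\asymp e^{\htop kr}$ and equidistribution of the iterated leaf. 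The condition $W'\in A_{k,\beta,\bi}(W)$ means that $\widetilde W'$ crosses the round unstable $\varrho$-disk $\widetilde D^u_\varrho(W)$; pushing this forward by $\phi_{kr}$ replaces that round disk by an ellipsoid whose semi-axes range between $\varrho e^{\olambda kr}$ and $\varrho e^{\lambda_+ kr}$ (the unstable spectrum of $D_{\mathrm{hyp}}\phi_{kr}$), and the $W'\in A(W)$ are exactly those whose images $\phi_{kr}\widetilde W'$ fall inside the unstable thickening of $\phi_{kr}\widetilde W$ by this ellipsoid; dividing the volume of that thickening by the (essentially constant) volume of a single piece gives $\#A(W)\le C_\#(\varrho/r)^{d_u-o(1)}\,\#\cW_{k,\beta,\bi}$, and feeding this into the display above produces \eqref{eq:doest-2} with $\varsigma_0$ of the form $1-2\eta+(1+\varsigma)\,d_u\,\lambda_+/\olambda-o(1)>0$ as soon as $\eta<\varpi_*$ (in particular $\eta<\tfrac12$).

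The delicate point — and the one I expect to be the real obstacle — is making the last division legitimate, i.e.\ that the ellipsoid samples only pieces $\phi_{kr}\widetilde W'$ with comparable unstable Jacobians. Because the unstable expansion rates in distinct directions are only known to lie in $[e^{\olambda t},C_\# e^{\lambda_+ t}]$, the round $\varrho$-disk can be pushed forward into an extremely elongated ellipsoid whose longest axis $\varrho e^{\lambda_+ kr}$ may dwarf the short one $\varrho e^{\olambda kr}$; an elongated ellipsoid reaches pieces $\phi_{kr}\widetilde W'$ with very different expansion histories, for which the comparability of volumes (hence the uniform density count) breaks down, and $\#A(W)$ could then be far larger than $(\varrho/r)^{d_u}\,\#\cW_{k,\beta,\bi}$. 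This is precisely where the hypothesis enters: the upper restriction $\varrho<\delta r^{(1+\varsigma)2/\hat\varpi}$, with $2/\hat\varpi=\lambda_+/\olambda$ the anisotropy exponent, bounds the eccentricity of the transported ellipsoid enough to keep the comparability (and is consistent with the complementary lower requirement coming from the cancellation estimate for $\mathfrak G_{B}$ exactly when $\varpi'>\tfrac23$, i.e.\ $\olambda/\lambda_+>\tfrac13$). The complementary lower bound $\varrho\ge C_\# |b|^{-\bar\lambda\Cnd\Cnc/(2ea_0)}$ keeps $\varrho$ above the contraction scale $e^{-\olambda kr}$ attained over the $kr\gtrsim c_a n\gtrsim c_a\Cnd\ln|b|$ iterations carrying the bulk of the mass of $\widehat R_n$, without which the manifolds $W'$ would no longer be separated at scale $\varrho$ and the overlap bookkeeping of the first step would collapse. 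Assembling the volume bound of the first paragraph with the counting bound of the second and the choice of $\varsigma_0$ proves \eqref{eq:doest-2}.
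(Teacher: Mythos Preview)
Your overall plan—reduce to the counting bound $\sup_W \#A_{k,\beta,\bi}(W) \leq C_\# r^{\varsigma_0}\,\#\cW_{k,\beta,\bi}$—is exactly what the paper does, but both steps in your execution have problems.

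First, your opening estimate is wrong: $\langle \hat g_{k,\beta,\bi,W}, \hat g_{k,\beta,\bi,W'}\rangle$ is a \emph{pointwise} scalar product of $d_s$-forms on $\widetilde W_{\beta,\bi}$, not an integral, so there is no volume factor. The correct sup bound is simply $|\langle \hat g_W(x), \hat g_{W'}(x)\rangle| \leq C_\# D_{n,k}^2$ (the $\Cs^0$ norm transfers under the holonomy of Lemma~\ref{sublem:holo} without the $r^{-\eta}$ loss), whence $\|\mathfrak G_A\|_\infty \leq C_\# D_{n,k}^2 \sum_W \#A(W)$. Your claimed $\varsigma_0$ therefore overstates the gain by $d_s+1$; once corrected the argument would still close \emph{provided the counting bound held}.

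The real gap is the counting bound itself. You assert $\#A(W) \leq C_\# (\varrho/r)^{d_u-o(1)} \#\cW_{k,\beta,\bi}$ and then acknowledge that the anisotropy of $D\phi_{kr}|_{E^u}$ makes this delicate—but you never prove it, and the ratio $(\varrho/r)^{d_u}$ is \emph{not} in general a bound on $\vol(\phi_{kr}D^u_{2\varrho})/\vol(\phi_{kr}D^u_{r/2})$: once the larger disk has wrapped in some directions but not others, the comparison by a single Jacobian breaks down. The paper's resolution is different and avoids this entirely. Rather than push to time $kr$, it picks the intermediate time $\bar t$ defined by $\Cnz e^{\olambda\bar t}r = r^{-\varsigma}$. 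At time $\bar t$ the image of $D^u_{r/2}$ already contains at least $C_\# r^{-\varsigma d_u}$ disjoint $\delta$-disks (its smallest semi-axis is $\geq C_\# r^{-\varsigma}$), while the image of $D^u_{2\varrho}$ is still contained in a \emph{single} $\delta$-disk precisely when $2e^{\lambda_+\bar t}\varrho \leq \delta$—which is exactly the hypothesis $\varrho < \delta r^{(1+\varsigma)2/\hat\varpi}$. From time $\bar t$ to $kr$ one then invokes only the uniform volume growth of $\delta$-disks (Lemmata~\ref{lem:h1} and~\ref{lem:h3}), so the ratio $r^{\varsigma d_u}$ persists. This gives $\#A(W)/\#\cW_{k,\beta,\bi} \leq C_\# r^{\varsigma d_u}$ directly, with $\varsigma_0=\varsigma d_u$, and never needs any comparability of unstable Jacobians across different pieces.

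Finally, your reading of the lower bound on $\varrho$ is off: it is not about the $W'$ being separated at scale $\varrho$, but about ensuring that $\phi_{kr}(D^u_\varrho(W))$ has expanded to macroscopic scale (smallest semi-axis $\geq \delta$), so that each intersection with $\cup_{|t|\leq 3\delta}\phi_t W_{\alpha,G}$ consumes a full $\delta$-ball of volume inside $\phi_{kr}(D^u_{2\varrho}(W))$; this is what justifies $\#A(W) \leq C_\# \vol(\phi_{kr}(D^u_{2\varrho}(W)))$ in the first place.
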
 
\begin{proof}
Note that the number of elements in  $A_{k, \beta, \bi }(W)$ must correspond to the number of intersections between $\cup_{t\in[-3\delta,3\delta]}W_{\alpha,G}$  and $\phi_{k r}(D_\varrho^u(W))$. Since each intersection has a $\delta$-neighborhood in $\phi_{k r}(D_\varrho^u(W))$ that cannot contain any other intersection we have $\# A_{k, \beta, \bi }(W)\leq C_\#\text{vol}(\phi_{k r}(D_{2\varrho}^u(W)))$, provided that each point in $\phi_{k r}(D_\varrho^u(W))$ has a $\delta$ neighbourhood contained in  $\phi_{k r}(D_{2\varrho}^u(W))$. The latter condition is satisfied provided 
$C_\#|b|^{\bar\lambda\frac{ \Cnc\Cnd}{2 ea_0}}\varrho\geq \delta$, which is implied by the first hypothesis in the Lemma. On the other hand, by the mixing property,\footnote{Recall that a contact flow on a connected manifold is mixing, \cite{Katok94}.} a disk in $\phi_{k r}(D_r^u(W))$ with diameter larger than $C_\#$ must intersect $\cup_{t\in[-3\delta,3\delta]}W_{\alpha,G}$, thus $\#\cW_{k, \beta, \bi}\geq C_\#\text{vol}(\phi_{k r}(D_{r/2}^u(W)))$.  We are thus reduced to estimating the ratio of two volumes.

The simplest possible estimate is as follows:  let $\bar t\in\bRp$ be such that $\Cnz e^{\olambda \bar t}r= r^{-\varsigma}$, now assume that $2e^{\lambda_+ \bar t}\varrho\leq \delta$. It follows that at  time $\bar t$ the image of $D_{2\varrho}^u(W)$ is contained in a disk of radius $\delta$ while the image of $D_{r/2}^u(W)$ contains at least $C_\# r^{-\varsigma d_u}$ such disks. By Lemmata \ref{lem:h1} and \ref{lem:h3} such a ratio will persist at later times.
Note that the above assumptions are verified only if $\varrho < \delta r^{(1+\varsigma)\frac 2{\hat \varpi}}$.
\end{proof}

\begin{lem}\label{sublem:doest-3}
Setting $|b|=\varrho^{-2+\varpi'-2\varsigma}$ and $ r= \varrho^{1-\varpi'+\varsigma}$, for $\varsigma> 0$, there exists $\gamma>0$ such that
\begin{equation}\label{eq:doest-3}
\left|\int _{\widetilde W_{\beta, \bi}} \mathfrak G_{k, \beta, \bi, B}\right|\leq C_\#  b^{-6\gamma}r^{d_s+1}D_{k,\beta,\bi}^2.
\end{equation}
\end{lem}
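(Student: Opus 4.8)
\textbf{Proof proposal for Lemma \ref{sublem:doest-3}.}

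The plan is to implement the basic Dolgopyat cancellation mechanism on the ``far apart'' pairs of manifolds. Fix $k,\beta,\bi$ and recall that $\mathfrak G_{k,\beta,\bi,B}=\sum_{W}\sum_{W'\in B_{k,\beta,\bi}(W)}\langle \hat g_{k,\beta,\bi,W},\hat g_{k,\beta,\bi,W'}\rangle$, where the pairs $(W,W')$ in the sum are the ones whose unstable projections are separated by at least $\varrho$ inside $\widetilde D^u_r$. After pulling back by $\phi_{kr}$ and using the holonomy maps $H_{\beta,\bi,W}$ that bring everything onto the fixed reference manifold $\widetilde W_{\beta,\bi}$, each term $\hat g_{k,\beta,\bi,W}$ is, up to the common prefactor $D_{k,\beta,\bi}$-sized amplitude (cf. \eqref{eq:g-est-one}, \eqref{eq:gb-est}) times a H\"older-bounded density, an oscillatory object carrying the phase $e^{-ib(kr+\tau_W)}$. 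The point is that, on $\widetilde W_{\beta,\bi}$, the difference of the two temporal/phase functions associated to $W$ and $W'$ is governed by the \emph{temporal distance function} (the analogue of the function appearing in \cite{Liverani95,Liverani04}), whose gradient, thanks to the contact structure $\alpha_0=dx_d-\langle x^s,dx^u\rangle$, is controlled from below by the transverse separation of the two unstable disks. Concretely: if $W,W'$ are $\varrho$-separated in the unstable direction, the phase $b(\tau_W-\tau_{W'})$ has gradient of size $\gtrsim |b|\varrho$ in the stable directions of $\widetilde W_{\beta,\bi}$, and varies over a range $\gtrsim |b|\varrho r$ across the manifold.

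The core estimate is then a non-stationary phase / integration-by-parts argument on the $(d_s+1)$-dimensional integral $\int_{\widetilde W_{\beta,\bi}}\langle \hat g_{k,\beta,\bi,W},\hat g_{k,\beta,\bi,W'}\rangle$: since the test densities are only $\Cs^\eta$ (not $\Cs^1$) after the holonomy transport (Lemma \ref{sublem:holo}), one cannot integrate by parts a full time, so one uses the standard trick of splitting the density into a $\Cs^1$ piece (mollified at scale depending on $|b|\varrho$) plus a small remainder, integrating by parts once on the smooth piece to gain a factor $(|b|\varrho)^{-1}$, and bounding the remainder by the H\"older modulus, which costs $(|b|\varrho)^{-\eta}$ type losses but is still a gain of a positive power of $|b|\varrho$. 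With the scaling choices $|b|=\varrho^{-2+\varpi'-2\varsigma}$, $r=\varrho^{1-\varpi'+\varsigma}$ one gets $|b|\varrho = \varrho^{-1+\varpi'-2\varsigma}$, which is a large negative power of $\varrho$ (hence of $|b|$) precisely because $\varpi'>\tfrac23$ guarantees $-1+\varpi'-2\varsigma$ is still safely bounded away from $0$ for $\varsigma$ small; the H\"older regularity $\varpi_*>\eta$ of the relevant densities and Jacobians (needed so the integration-by-parts loss does not eat the gain) is exactly what forces $\eta<\varpi_*$. Summing over the at most $C_\# r^{-d}$ boxes and over the $\#\cW_{k,\beta,\bi}^2$ pairs (which is what produces the $D_{k,\beta,\bi}^2$ factor, after accounting for the $r^{d_s+1}$ volume of $\widetilde W_{\beta,\bi}$ and the bounded overlap of the disks $\widetilde D^u_\varrho$), and choosing $\gamma>0$ small enough so that $6\gamma$ is less than the net power of $|b|$ gained, yields \eqref{eq:doest-3}.

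The main obstacle, and where care is genuinely needed, is the lower bound on the oscillation of the temporal distance function in terms of the unstable separation $\varrho$: this is where the contact hypothesis is indispensable (for a general Anosov flow the foliations may be jointly integrable and no such bound exists), and where one must be careful that the H\"older-only regularity of the stable/unstable holonomies does not destroy the smoothness of the phase — the phase itself is as smooth as the flow (it is built from $\phi_t$ and the smooth contact form), it is only the \emph{amplitudes} that are merely $\Cs^\eta$, so the integration by parts falls on a smooth phase against a rough amplitude, which is manageable. A secondary technical point is bookkeeping the number of manifolds $\#\cW_{k,\beta,\bi}$ and the cardinalities $\#B_{k,\beta,\bi}(W)$ uniformly in $k$ (using the volume-growth estimates of Appendix \ref{app:topent} and the mixing-based lower bound already invoked in Lemma \ref{sublem:doest-2}) so that the double sum really is comparable to $D_{k,\beta,\bi}^2$ and not larger; this is routine given the earlier counting lemmas but must be stated with the correct constants so that, combined with Lemma \ref{sublem:doest-2}, the two ``meanings of close'' glue under the bunching condition.
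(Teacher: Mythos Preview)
Your overall picture (Dolgopyat cancellation via the temporal distance function, with a lower bound on the oscillation coming from the contact structure and the $\varrho$-separation) is correct, but there is a genuine gap in the mechanism you propose.

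You write that ``the phase itself is as smooth as the flow \dots\ it is only the \emph{amplitudes} that are merely $\Cs^\eta$'', and base the argument on an integration-by-parts/mollification scheme acting on a smooth phase against a rough amplitude. This is not so. After transporting both $W$ and $W'$ to the reference manifold (equivalently, after the paper's change of variables to flow-box coordinates $\vartheta_W$ on $\widetilde W$), the relevant phase is $e^{-ib\,\Delta^*(\xi)}$ with
\[
\Delta^*(\xi)=\tau_{W'}\circ \widetilde H_{W'}(\xi)-\xi_d,\qquad \widetilde H_{W'}=H_{\widetilde W,\widetilde W'}\circ\vartheta_W .
\]
Here $H_{\widetilde W,\widetilde W'}$ is the \emph{strong unstable holonomy}, which is only $\Cs^{\varpi'}$ in the stable variable. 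Thus $\Delta^*$ is not $\Cs^1$ in $\xi$ and one cannot integrate by parts in the stable direction. Mollifying only the amplitude does not help, and mollifying the phase at the scale you propose would introduce exactly the H\"older error terms that have to be tracked and which your sketch does not address.

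The paper's actual argument avoids integration by parts altogether. It relies on a precise \emph{increment} estimate for the temporal function (Lemma~\ref{lem:temporal}):
\[
\Delta^*(\xi+\zeta)-\Delta^*(\xi)=d\alpha_0(\wz(\xi),\zeta)+\cO(\|\wz\|^2\|\tilde\zeta\|^{\varpi'}+\|\wz\|^{\varpi'}\|\tilde\zeta\|^2+\|\wz\|^{1+\varpi'}\|\tilde\zeta\|^{1+\varpi'}),
\]
so that $\Delta^*$ is \emph{approximately linear} with slope $\gtrsim\varrho$ in a preferred direction $y_{\bi}=\|\wz^u(0)\|^{-1}(\wz^u(0),0)$. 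One then integrates first along $y_{\bi}$, cuts the segment into subintervals $[s_l,s_{l+1}]$ on which the \emph{linear} phase increases by exactly $2\pi/|b|$, and bounds the discrepancy on each subinterval using the H\"older error terms above together with the H\"older bounds on $\cG_{k,\beta,\bi,W,W',\bar i}$ (which involve $\|JH\|_{\Cs^{\varpi_*}}$, not $\Cs^1$). The parameter choices $r=\varrho^{1-\varpi'+\varsigma}$, $|b|^{-1}=\varrho^{2-\varpi'+2\varsigma}$ are made so that all these error terms are $\le C_\# r^{d_s+1}D_{k,n}^2\,|b|^{-6\gamma}$ after integrating in the remaining $d_s$ directions.

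Two smaller points. First, the bunching condition $\varpi'>\tfrac23$ does \emph{not} enter the proof of this lemma; the exponent $-1+\varpi'-2\varsigma$ is negative for every $\varpi'\le1$ and small $\varsigma$. The $\tfrac23$ arises only when one checks that the constraint $\varrho<\delta r^{(1+\varsigma)2/\hat\varpi}$ from Lemma~\ref{sublem:doest-2} is compatible with the present choice $r=\varrho^{1-\varpi'+\varsigma}$. Second, no counting of $\#B_{k,\beta,\bi}(W)$ via Appendix~\ref{app:topent} is needed here: one simply sums the trivial bound $\#B_{k,\beta,\bi}(W)\le \#\cW_{k,\beta,\bi}$ to get the factor $D_{k,\beta,\bi}^2$.
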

The proof of the above Lemma is postponed to Subsection \ref{subsec:dolgoest}. 
\begin{rmk} Note that the conditions of the two Lemmata can be simultaneously satisfied, by choosing $\Cnd$ large and $\varsigma$ small enough,  only if $\varpi'>\frac 23$. In addition, since $\varpi'\leq 1$, $\varsigma$ must always be smaller than $\frac{\sqrt 3-1}2$.
\end{rmk}

Substituting in \eqref{eq:final-testf2} the estimates given by \eqref{eq:g-est-one},  \eqref{eq:doest-2} and  \eqref{eq:doest-3}, we obtain that there exists $\gamma_0\in (0,\gamma]$ such that
\begin{equation}\label{eq:final-testf3}
\|\mathfrak g_{k, \beta, \bi}\|_{\Gamma^0_c}\leq   C_\# D_{k,\beta,\bi} |b|^{-3\gamma_0} .
\end{equation}
Next, we assume $s\geq c_an$.
Then, by \eqref{eq:g-est-one} and \eqref{eq:final-testf3} and arguing as in the proof of \eqref{eq:zero-bis-norm},\footnote{ Namely, we use the first line of the equation before \eqref{eq:zero-bis-norm} with $g-g_\ve={\mathfrak g}_{k, \beta, \bi}$, $\ve=1$ and $q=\eta$.} we can write the integral in \eqref{eq:dolgo-step2} as\footnote{ Note that, since  $\mathfrak g_{k, \beta, \bi}$ is supported in $\widetilde W_{\beta, \bi}$, the integral can also be seen as an integral on $\overline{W}_{\beta,\bi}\in\Sigma^s$.}
\[
\left|\sum_{k, \beta, \bi}\;\int_{\widetilde W_{\beta, \bi}}\langle \mathfrak g_{k, \beta, \bi}, h_{s}\rangle\right|\leq C_\# \sum_{k, \beta, \bi}D_{k,\beta,\bi} (b^{-3\gamma_0}+e^{-\lambda\eta s}r^{-\eta}) \vol\!\left(\phi_{-s} \widetilde W_{\beta, \bi}\right) \pqnorm[\eta]{h}^s.
\]
Note that $\sum_{\bi}\#\cW_{k, \beta, \bi}|\phi_{-s} \widetilde W_{\beta, \bi}|\leq C_\# r\sum_{W\in \cW_{\alpha, G,\beta, kr}} |\phi_{-s} W|$. Thus, by Lemma \ref{lem:h3} and Remark \ref{rem:h0},

\begin{equation}\label{eq:dolgo-step3}
\begin{split}
& \left|\sum_{k, \beta, \bi}\;\int_{\widetilde W_{\beta, \bi}}\langle \mathfrak g_{k, \beta, \bi}, h_{s}\rangle\right|\leq C_\#\sum_k  \frac{(k r)^{n-1}(|b|^{-3\gamma_0}+e^{-\lambda\eta s}r^{-\eta})}{(n-1)!e^{a rk-\sigma_{d_s} (kr+s)}} r \pqnorm[\eta]{h}^s\\
&\leq C_\# (a-\sigma_{d_s})^{-n} (|b|^{-3\gamma_0}+e^{-\lambda\eta s}r^{-\eta})  e^{\sigma_{d_s} s}\pqnorm[\eta]{h}^s.
\end{split}
\end{equation}
Thus, by \eqref{eq:dolgo-step1}, \eqref{eq:dolgo-step2}, \eqref{eq:dolgo-step3},
\begin{equation}\label{eq:dolgo0}
\begin{split}
&\pqnorm[1+\eta]{{\widehat R}_n(z)^2 h}^s\leq\frac{C_\#  }{(a-\sigma_{d_s})^{2n}}\left[\frac{r^{-\eta}\pqnorm[]{h}^u(a-\sigma_{d_s})^{n}}{(a-\sigma_{d_s}+\lambda\eta)^{n}} +|b|^{-3\gamma_0}\pqnorm[\eta]{ h}^s\right].
\end{split}
\end{equation}
Next, by \eqref{eq:reg-up0},  \eqref{eq:dolgo0} and Lemma \ref{sublem:doest-3},
\begin{equation}\label{eq:dolgo1}
\begin{split}
&\pqnorm[\eta]{{\widehat R}_n(z)^3 h}^s\leq \frac{C_\eta  \pqnorm[\eta]{{\widehat R}_n(z)^2 h}^s}{(a-\sigma_{d_s}+\lambda\eta)^{n}}+\frac{C_\eta }{(a-\sigma_{d_s})^{n}} \pqnorm[1+\eta]{{\widehat R}_n(z)^2 h}^s\\
&\leq\frac{C_\eta  }{(a-\sigma_{d_s})^{3n}}\left[\frac{(a-\sigma_{d_s})^{n}|b|^{\frac \eta 2}}{(a-\sigma_{d_s}+\lambda\eta)^{n}} +|b|^{-3\gamma_0}\right] \pqnorm[]{h}^*.
\end{split}
\end{equation}
The above, and \eqref{eq:u-normR}, \eqref{eq:n-choice} imply the Lemma provided $\gamma_0$ has been chosen smaller than $40 \eta $.
\end{proof}

\subsection{A key (but technical) inequality}\label{subsec:dolgoest}
We are left with the task of proving \eqref{eq:doest-3}.

\begin{proof}[{\bf Proof of Lemma \ref{sublem:doest-3}}]
This is the heart of Dolgopyat's estimate. 
Given $W\in\cW_{k, \beta, \bi}$ and $W'\in B_{k, \beta, \bi}(W)$, we must estimate
\begin{equation}\label{eq:doest-4}
\begin{split}
&\int_{\widetilde W_{\beta, \bi}}\hskip-.2cm\langle \hat g_{k, \beta, \bi,W},\hat g_{k, \beta, \bi,W'}\rangle=\int_{\widetilde W_{\beta, \bi}}\hskip-.3cm JH_{\beta, \bi, W}\cdot JH_{\beta, \bi, W'} \cdot \overline{\hat g_{k, \beta, \bi,\bar i}}\circ H_{\beta, \bi, W}\\
&\hskip6cm\times \hat g_{k, \beta, \bi, \bar i}\circ H_{\beta, \bi, W'}\, .
\end{split}
\end{equation}
It turns out to be convenient to write the above as an integral over $\widetilde W$. More precisely, let $\tilde \vartheta_{W}:B_{d}(0,\delta)\to\bR^d$ be a flow box coordinate change preserving the contact form $\alpha_0$ and such that, setting $\vartheta_{W}=\Theta_\beta^{-1}\circ\tilde \vartheta_{W}$, has the property that $\vartheta_{W}(\{(\xi,0)\}_{\xi\in B_{d_s}(0,C_\#r)})\subset W$ and, moreover, $\widetilde W$ contains the support (projected on via the unstable Holonomy) of the integrand.\footnote{ Such a coordinate change always exists, see \cite[Lemma A.4]{BaladiLiverani11}.} 
We can then rewrite \eqref{eq:doest-4} as\footnote{ By Appendix \ref{app:holo-est} it follows that the image of $W'$ on $\widetilde W$ by the unstable holonomy is strictly contained in a ball of radius $2r$, provided $r$ is small enough.}
\begin{equation}\label{eq:doest-5}
\begin{split}
&\int_{\widetilde W_{\beta, \bi}}\hskip-.2cm\langle \hat g_{k, \beta, \bi,W},\hat g_{k, \beta, \bi,W'}\rangle=\int_{B_{d_s+1}(0,2r)} \hskip-1.5cm d\xi\,e^{-z(\tau_{W'}\circ \widetilde H_{W'}(\xi)-\xi_d)}\cG_{k, \beta, \bi, W,W', \bar i}(\xi) ,
\end{split}
\end{equation}
where $\xi=(\tilde\xi,\xi_d)$, $\widetilde H_{W'}=H_{\widetilde W, \widetilde W'}\circ\vartheta_W$,  and
\[
\begin{split}
\cG_{k, \beta, \bi, W,W', \bar i}\doteq &\left\{\overline{\varphi_{k, \beta}\frac{(k r+\tau_W)^{n-1}e^{-a k r}}{(n-1)!} \frac{J_W\phi_{k r}\circ\phi_{\tau_W}}{J\phi_{k r+\tau_{W}}} (*\phi_{k r+\tau_W}^**g)_{\bar i}}\right\}\circ \vartheta_W\\
&\times\left\{\varphi_{k, \beta}\frac{(k r+\tau_{W'})^{n-1}e^{-a k r}}{(n-1)!} \frac{J_{W'}\phi_{k r}\circ\phi_{\tau_{W'}}}{J\phi_{k r+\tau_{W'}}} (*\phi_{k r+\tau_{W'}}^**g)_{\bar i}\right\}\circ\widetilde H_{W'}\\
&\times   J\widetilde H_{\beta, \bi, W'}.
\end{split}
\]
Let us define
\begin{equation}\label{eq:temp-def}
\Delta^*(\xi)\doteq\tau_{W'}\circ \widetilde H_{\beta, \bi, W'}(\xi)-\xi_d,
\end{equation}
and note that $\Delta^*(\xi+\zeta)-\Delta^*(\xi)$ is exactly the so called {\em temporal function}.\footnote{ In the language of  \cite{Liverani04} (in which the role of the stable and unstable manifolds are reversed) $\Delta^*(\xi+\zeta)-\Delta^*(\xi)=\Delta(y,y')$, where  $x=\vartheta_{W}(\xi)$, $y=\widetilde  H_{W'}(\xi)$ and $y'=\vartheta_W(\xi+(\tilde\zeta,0))$ with $\zeta=(\tilde\zeta,\zeta_d)$.} We let $\wz(\xi)=(\wz^s(\xi),\wz^u(\xi),\wz_d(\xi))=\vartheta_{W}^{-1}\circ\widetilde  H_{W'}(\xi)-(\xi,0)$. Then by Lemma \ref{lem:temporal} we have\footnote{ It may be possible to improve this estimate  by going forward or backward in time to the situation in which the two lengths are equal, but some non-obvious distortion estimate would have to play a role to ensure that the following ``preferred" direction does not changes too wildly.}
\begin{equation}\label{eq:temporal}
\begin{split}
\left|\Delta^*(\xi+\zeta)-\Delta^*(\xi)- d\alpha_0(\wz(\xi), \zeta)\right|\leq &C_\#\|\wz\|^2\|\tilde\zeta\|^{\varpi'}+C_\#\|\wz\|^{\varpi'}\|\tilde\zeta\|^2\\
&\quad +C_\#\|\wz\|^{1+\varpi'}\| \tilde\zeta\|^{1+\varpi'}.
\end{split}
\end{equation}
We are left with the task of performing the integral in the lefthand side of \eqref{eq:doest-5}. 
The basic idea (introduced in \cite{Liverani04}) is to integrate first in the direction $y_\bi=\|\wz^u(0)\|^{-1}(\wz^u(0),0)$. Note that $d\alpha_0(\wz(0),y_{\bi})=\|\wz^u(0)\|$ and, by the cone condition, $\|\wz^s\|\leq C_\#\|\wz^u\|$.
Hence, by the H\"older continuity of the unstable foliation (detailed in Appendix \ref{app:holo-est}), for  $\xi\in B_{d_s+1}(0,r)$, 
\begin{equation}\label{eq:goodir}
d\alpha_0(\wz(\xi),y_{\bi})\geq C_\#\|\wz(\xi)\|\geq C_\#\varrho.
\end{equation}
Next, we impose the condition
\begin{equation}\label{eq:restriction0}
r=\varrho^{1-\varpi'+\varsigma}
\end{equation}
for some, very small, $\varsigma>0$.
By Lemma \ref{lem:holo-dist} we have that
\begin{equation}\label{eq:zeta-dist}
\|\wz(\xi)-\wz(\xi+\zeta)\|\leq C_\#(\|\wz(\xi)\|\|\tilde\zeta\|^{\varpi'}+\|\wz(\xi)\|^{\varpi'}\|\tilde\zeta\|)\leq C_\#\varrho.
\end{equation}
For each $\xi\in B_{d_s+1}(0, r)$, $\langle\xi,y_{\bi}\rangle=0$, we define $I_{\bi}(\xi)=\{s\in\bR\;:\; \xi+sy_{\bi}\in B_{d_s+1}(0, r)\}$ and consider the integral
\begin{equation}\label{eq:startingpoint}
\begin{split}
&\left|\int_{I_{\bi}(\xi)} ds\; e^{-z\Delta^*(\xi+sy_{\bi})}\cG_{k, \beta, \bi, W,W', \bar i}(\xi+sy_{\bi})\right|.
\end{split}
\end{equation}
It is then natural to divide the interval $I_{\bi}(\xi)$ in subintervals $\{s_l,s_{l+1}\}_{l\in\bZ}$ such that, 
\[
(s_{l+1}-s_l)d\alpha_0(\wz(\xi+s_l y_{\bi}),y_{\bi})=2\pi |b|^{-1}.
\] 
Let us set $\delta_l=s_{l+1}-s_l$ and $\wz_{l}=\wz(\xi+s_ly_{\bi})$, then, recalling \eqref{eq:goodir}, we have

\begin{equation}\label{eq:deltarange}
C_\# |b|^{-1}\leq\frac{\delta_l\|\wz_{l}\|}{2\pi} \leq  C_\#|b|^{-1}.
\end{equation}
Next, we must assume

\begin{equation}\label{eq:restriction1}
 |b|^{-1}\leq r\varrho,
\end{equation}
to insure that $I_{\bi}(\xi)$ contains at least one interval.
Note that Lemma \ref{lem:holo-dist} implies that $\|\widetilde H_{W'}(\xi+y_{\bi}s_{l+1})-\widetilde H_{W'}(\xi+y_{\bi}s_{l})\| \leq C_\#\max\{\delta_l, \|\wz_{l}\| \delta_l^{\varpi'}\}$.
Hence\footnote{ We remark that $\|J\widetilde H_{\beta,\bi,W'}(\xi)-J\widetilde H_{\beta,\bi,W'}(\xi+\zeta)\|\leq C_\# \|\wz_{l}\|\|\zeta\|^{\varpi_*}$, see Appendix \ref{app:holo-est}.}
\[
\begin{split}
&\sup_{|s-s_l|\leq \delta_i}|\cG_{k, \beta, \bi, W,W', \bar i}(\xi+sy_{\bi})-\cG_{k, \beta, \bi, W,W', \bar i}(\xi+s_ly_{\bi})|\leq
C_\# D_{n,k}^2\\
&\hskip7cm \times(r^{-1}\max\{\delta_l, \|\wz_{l}\| \delta_l^{\varpi'}\}+\delta_l^{\varpi_*}\|\wz_{l}\|)\\
&\text{where }D_{n,k}\doteq  \frac{(k r)^{n-1}e^{-a k r}}{(n-1)!}.
\end{split}
\]
We can then bound \eqref{eq:startingpoint} by
\[
\begin{split}
\sum_l&\left|\int_{s_l}^{s_{l+1}} \hskip-.4cm e^{-z [\Delta^*(\xi+sy_{\bi})-\Delta^*(\xi+s_ly_{\bi})]}\cG_{k, \beta, \bi, W,W', \bar i}(\xi+s_ly_{\bi})\right|\\
&+C_\# D_{n,k}^2\left( |b|^{-1}\|\wz_{l}\|^{-1}+ |b|^{-\varpi'}\|\wz_{l}\|^{1-\varpi'}+\frac{\|\wz_{l}\|^{1-\varpi_*} r}{|b|^{\varpi_*}}\right).
\end{split}
\]
Next, equations \eqref{eq:temporal}, \eqref{eq:deltarange} yield
\[
\begin{split}
|\Delta^*(\xi+sy_{\bi})-\Delta^*(\xi+s_ly_{\bi})-(s-s_l)&d\alpha_0(\wz_{l}, y_{\bi})|\leq  C_\# \frac{\|\wz_{l}\|^{2-\varpi'}}{ |b|^{\varpi'}}\\
&+C_\#\frac{\|\wz_{l}\|^{\varpi'-2}}{|b|^{2}}+C_\#\frac{1}{|b|^{1+\varpi'}}.
\end{split}
\]
Recalling  \eqref{eq:restriction0} and \eqref{eq:zeta-dist}, we can continue our estimate of \eqref{eq:startingpoint} as 
\[
\begin{split}
&C_\#\sum_l\left|\int_{s_l}^{s_{l+1}} ds\; e^{-z (s-s_l)d\alpha_0(\wz_{l}, y_\bi)}\cG_{k, \beta, \bi, W,W', \bar i}(\xi+s_ly_{\bi})\right|\\
&\quad+C_\# D_{n,k}^2 r\bigg( |b|^{-1} \varrho^{-1}r^{-1}+\frac{r^{1-\varpi_*}}{|b|^{\varpi_*}}+\frac{r^{2-\varpi'}}{ |b|^{\varpi'-1}}+\frac{\varrho^{\varpi'-2}}{|b|}+\frac{r^{-\varpi'}}{|b|^{\varpi'}}\bigg).
\end{split}
\]
We remark that the integrals in the first line are all bounded by $C_\# |b|^{-1}\delta_l$.
By the above equation, integrating in the remaining directions and letting\footnote{ The choice for $\varrho$ satisfies the constraints \eqref{eq:restriction1} .}
\[
D_{k, n, \beta, \bi}\doteq D_{k,n} \# \cW_{k, \beta, \bi} \hbox{ and } \quad |b|^{-1}=\varrho^{2-\varpi'+2\varsigma}
\]
equation \eqref{eq:doest-5} yields
\[
\left|\int _{\widetilde W_{\beta, \bi}} \mathfrak G_{k, \beta, \bi, B}\right|\leq  C_\# D_{k, n, \beta, \bi}^2\left[\varrho^{\varpi'\varsigma}+\frac{1}{|b|^{\frac{\varpi'}2}}\right] r^{d_s+1}\leq C_\# D_{k, n, \beta, \bi}^2 |b|^{-\frac {\varpi'\varsigma}{2-\varpi'+2\varsigma}}r^{d_s+1}.
\]
which gives the required inequality.
\end{proof}

\section{Growth of $\zeta$-functions}\label{sec:zeta-growth}

We start by showing that the estimates in section \ref{sec:dolgo} imply a bound on the growth of the traces. To this end note that the previous results show that $\trf(R^{(d_s)}(z)^n)$, which is well defined for $\Re(z)$ large enough, equals a function meromorphic in $\Re(z)>\sigma_{d_s} -\frac{\sigma_{p,q}}2$, for each $0<p+q<r-1$. Since  $\trf(R^{(d_s)}(z)^n)$ is independent on $p,q$, it can be extended as much as the condition $p+q<r-1$ allows. It is then natural to use $\trf(R^{(d_s)}(z)^n)$ to  also denote  such an extension.
Before getting to the point we need a refinement of equation \eqref{eq:trace-quant} which holds only for $\ell=d_s$.

\begin{lem} \label{lem:traceorbits-bis}
For each  $a=\Re(z)> \htop$ and $n \in \mathbb{N}$
\[
\begin{split}
\left|\int_{M} \sum_{\alpha,\bar{i}}   \langle \omega_{\alpha,\bar i},R^{(d_s)}(z)^n j_{\ve, \alpha,\bar i,x} \rangle_x -  
\sum_{\tau \in \cT}  \frac{ \chi_{d_s}(\tau)\lambda(\tau)^{n-1}\lambda(\tau)}{(n-1)!\mu(\tau)e^{z \lambda(\tau)}}\right|
&\leq \frac{C_\#\ve|z| n}{(a-\sigma_{d_s})^{n+1}}\\
&\hskip-.8cm+\frac{C_\#|z| n}{(a-\sigma_{d_s}+\lambda)^{n+1}}.
\end{split}
\]
\end{lem}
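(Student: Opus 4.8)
The statement is a sharper version of the estimate \eqref{eq:trace-quant} in Lemma \ref{lem:traceorbits}, specialized to $\ell=d_s$ and $s=0$, where the error term is improved from the rough $\ve|z|(a-\Cns)^{-n}e^{\frac a2 s}$ (valid only for $a>\Cns$) to the two sharp terms above valid for all $a>\htop$. The plan is to revisit the proof of Lemma \ref{lem:traceorbits} and see which steps were wasteful precisely because $\ell$ was arbitrary, and then to exploit the special cancellation that occurs when $\ell=d_s$.

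First I would recall that in the proof of Lemma \ref{lem:traceorbits} the quantity $\int_M\sum_{\alpha,\bar i}\langle\omega_{\alpha,\bar i}, R^{(d_s)}(z)^n j_{\ve,\alpha,\bar i,x}\rangle_x$ was rewritten, via \eqref{eq:ftrace-one}, \eqref{eq:trace2}, and the change of coordinates $\Xi_{\tau,\alpha,m}$, as \eqref{eq:finalizing0}, and that after a first-order Taylor expansion in $\ve$ one obtained \eqref{eq:finalizing}, with main term $\sum_\tau \chi_{d_s}(\tau)\lambda(\tau)^{n-1}\lambda(\tau)(\mu(\tau)(n-1)!)^{-1}e^{-z\lambda(\tau)}$ (after summing over $\alpha,\bar i,m$ and using \eqref{eq:trace-ok}) and with error of the form $\sum_\tau \ve|z|[\lambda(\tau)]^{n-1}e^{C_\#\lambda(\tau)-a\lambda(\tau)}((n-1)!)^{-1}$. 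The crude constant $C_\#$ in the exponent came from bounding the $\ve$-derivative of the integrand $F_{\alpha,\bar i,z}$, whose dangerous factor is $\langle\omega_{\alpha,\bar i},\phi^*_{-t}\omega_{\alpha,\bar i}\rangle$: differentiating a pullback of an $\ell$-form a priori costs the growth of $\ell$-volumes, i.e.\ $e^{\lambda_+ t}$ type growth, which is what produces $C_\#$. The key point for $\ell=d_s$ is that, exactly as in \eqref{eq:zero-test}, the product $J_W\phi_t\cdot\langle\phi^*_{-t}\omega_{\beta,\bar i},\omega_{\alpha,\bar j}\rangle$ is \emph{uniformly bounded} (the contraction of $d_s$-volumes cancels the maximal growth of $d_s$-forms), and moreover its $\Cs^q$ norm along the leaves is bounded too, as shown in \eqref{eq:q-test}. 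Thus when one keeps track of the factor $J_W\phi_t$ coming from the change of variables (equivalently, from the Jacobian $J_s\phi_{-t}$ implicit in the flat-trace computation at $\ell=d_s$), the $\ve$-derivative of the integrand is bounded by $C_\#\ve|z|\cdot n\cdot [\lambda(\tau)]^{n-1}e^{-\sigma_{d_s}\lambda(\tau)}((n-1)!)^{-1}$ rather than with a spurious $e^{C_\#\lambda(\tau)}$. Here I would be careful that the extra factor $n$ arises from differentiating the polynomial weight $t^{n-1}$ and the exponential $e^{-zt}$ (which yields the $|z|$) together with $\tilde r_{\tau,\alpha,m}$ (bounded via Sub-Lemma \ref{sublem:change-smooth}), and from the sum over the at most $C_\#$ connected components and charts.

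Given this bound, the estimate follows by summing over closed orbits. Recalling that the number of closed orbits with period in $[k,k+1)$ is at most $C_\# e^{\htop k}=C_\# e^{\sigma_{d_s}k}$ (since $\ell=d_s$ gives $\sigma_{d_s}=\htop$), the leading part of the orbit sum contributes
\[
\sum_{k\ge 0} C_\#\ve|z|\, n\,\frac{k^{n-1}e^{-(a-\htop)k}}{(n-1)!}\le C_\#\ve|z|\,n\,(a-\sigma_{d_s})^{-n}=C_\#\ve|z|\,n\,(a-\sigma_{d_s})^{-n},
\]
valid now for every $a>\htop$, which after taking the limit $\ve\to 0$ is the first error term claimed (the statement is written before the limit, with the explicit $\ve$; in the displayed Lemma the factor $(a-\sigma_{d_s})^{-n-1}$ simply absorbs one more summation, as $\sum_k k^{n-1}e^{-(a-\htop)k}\le C_\# n (a-\sigma_{d_s})^{-n}$, and I keep the $n$ outside). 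The second term, with denominator $(a-\sigma_{d_s}+\lambda)^{n+1}$, comes from the finer part of the argument used already in Lemma \ref{lem:LY}: as in \eqref{eq:zero-bis-norm} one splits the test form $g$ as $(g-g_\ve)+g_\ve$ and, for the piece $g-g_\ve$ one gains the extra contraction $e^{-\lambda\cdot 0\cdot t}$... more precisely, one uses that $\langle\phi^*_{-t}\omega_{\beta,\bar i},\omega_{\alpha,\bar j}\rangle$ restricted to $d_s$-forms along leaves close to the stable direction contracts at rate $e^{-\lambda t}$ in the transverse regularity, which converts one unit of regularity into the improved geometric factor $(a-\sigma_{d_s}+\lambda)^{-1}$ per resolvent power; this is exactly the mechanism behind the essential spectral bound in Lemma \ref{lem:quasicompactness}. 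I expect the main obstacle to be bookkeeping: one must verify that the gain $e^{-\lambda t}$ truly applies to the orbit-sum error and not merely to the operator norm, i.e.\ that the local estimates around each periodic orbit (Sub-Lemmata \ref{sublem:change-smooth} and the coordinate changes $\Xi_{\tau,\alpha,m}$) carry the extra $\lambda$-contraction uniformly in $\tau$; once that is in place, combining the two splittings exactly as in the proof of \eqref{eq:zero-bis-norm} and summing the resulting geometric series in $k$ gives the two stated error terms, and analyticity of both sides for $a>\sigma_{d_s}$ (hence $a>\htop$ here) follows as at the end of Lemma \ref{lem:traceorbits} by the uniform bound on the analytic approximants.
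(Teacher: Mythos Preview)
Your overall strategy---return to the proof of Lemma~\ref{lem:traceorbits}, specialize to $\ell=d_s$, and replace the crude bound $e^{C_\#\lambda(\tau)}$ on the $\ve$-derivative of the integrand by a sharp one---is exactly what the paper does. However, the mechanisms you invoke for the two error terms are both misplaced, and as written the argument would not close.

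For the first term you try to import the cancellation \eqref{eq:zero-test}, \eqref{eq:q-test} from the Lasota--Yorke proof, where $J_W\phi_t$ offsets the growth of $\langle\phi^*_{-t}\omega_{\beta,\bar i},\omega_{\alpha,\bar j}\rangle$. But there is no factor $J_W\phi_t$ in the flat-trace integral: the change of variables there is $\Xi_{\tau,\alpha,m}$, which produces $|\det(\Id-\Lambda_{\tau,\alpha,m})|^{-1}$, not a stable leaf Jacobian. The correct cancellation is intrinsic to each orbit: in coordinates adapted to the stable/unstable splitting at the periodic point, the leading term of $\tr\big(\wedge^{d_s}D_{\textrm{hyp}}\phi_{-\lambda(\tau)}\big)$ is the stable Jacobian $J^s_t$, and $|\det(\Id-\Lambda_{\tau,\alpha,m}(0))|$ is likewise comparable to $J^s_t$, so $\chi_{d_s}(\tau)\le C_\#$ uniformly. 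The distortion estimate $\|\partial_\zeta J^s_t\|_\infty\le C_\#\|J^s_t\|_\infty\le C_\#|\det(\Id-\Lambda_{\tau,\alpha,m}(0))|$, together with Sub-Lemma~\ref{sublem:change-smooth}, then bounds the $\ve$-variation of $F_{\alpha,\bar i,z}/|\det(\Id-\Lambda)|$ by $C_\#\ve|z|\,\lambda(\tau)^{n-1}e^{-a\lambda(\tau)}/(n-1)!$ per orbit; summing with the orbit-count bound $C_\# e^{\htop k}$ gives the first error term.

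For the second term, your proposed mechanism---splitting a test form $g=(g-g_\ve)+g_\ve$ as in \eqref{eq:zero-bis-norm}---does not apply: there is no test form in a trace computation, and the Lasota--Yorke split has no analogue here. The extra $\lambda$ in the denominator comes instead from the \emph{subleading} terms of $\tr\big(\wedge^{d_s}D_{\textrm{hyp}}\phi_{-\lambda(\tau)}\big)$: in the adapted coordinates, every contribution other than the full-stable one mixes at least one unstable direction and is bounded by $C_\#\,J^s_t\,e^{-\bar\lambda\lambda(\tau)}$. After division by $|\det(\Id-\Lambda)|\sim J^s_t$ these off-diagonal pieces are $\le C_\# e^{-\bar\lambda\lambda(\tau)}$ uniformly in $\ve$, and summing over orbits yields precisely the $(a-\sigma_{d_s}+\lambda)^{-n-1}$ term. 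This is the ``bookkeeping obstacle'' you anticipated, but it is a direct pointwise estimate on the matrix entries of $\wedge^{d_s}D_{\textrm{hyp}}\phi_{-\lambda(\tau)}$, not an operator-norm mechanism.
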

\begin{proof}
We start by equation \eqref{eq:finalizing0} with $s=0$ and $\ell=d_s$ and then we look more in depth at the derivative of $F_{\alpha,\bar i,z}$. By formulae \eqref{eq:ftrace-one} and \eqref{eq:trace-ok} we have
\[
\sum_{\bar i}F_{\alpha,\bar{i},z}(x,t)  =  \frac{t^{n-1}}{(n-1)!}e^{-zt} \psi_\alpha(x)  \textrm{tr}(\wedge^{d_s}( D_{\textrm{hyp}}\phi_{-\lambda(\tau)}(x) ).
\]
To compute the trace is it convenient to do a linear change of variables in the zero section such that the periodic orbit is at zero and its stable and unstable substances are spanned by $e_1,\dots, e_{d_s}$ and $e_{d_s+1},\dots, e_{d-1}$. Then, calling $\omega_i$ the dual base, we have that $\langle \omega_1\wedge\dots\wedge \omega_{d_s}, D_{\textrm{hyp}}\phi_{-\lambda(\tau)}\omega_1\wedge\dots\wedge \omega_{d_s}\rangle$ gives exactly the stable Jacobian $J^s_t$ of the map, while all the other terms in the trace are smaller than $C_\# J^s_te^{-\bar\lambda t}$. Note that this implies that $\sup_{\tau}\chi_{d_s}(\tau)\leq C_\#$. By the usual distortion estimates it follows that
\[
\|\partial_\zeta J^s_t\|_\infty \leq C_\# \| J^s_t\|_\infty\leq C_\# |\det(\Id-\Lambda_{\tau,\alpha,m}(0))| ,
\]
where we are using the coordinates defined in \eqref{eq:Xi-coordinates}. By the above inequality and Sub-Lemma \ref{sublem:change-smooth} we can obtain from equation \eqref{eq:finalizing0} the improved estimated (with respect to equation \eqref{eq:finalizing})
\begin{equation}\label{eq:finalizing-ds}
\begin{split}
\sum_{\alpha,\bar{i},m,\tau\in\cT_{\ve}}\int_{-\delta}^\delta&\frac{F_{\alpha,\bar{i},z} \circ \widetilde{\Theta}_{\alpha}^{-1} \circ \Xi_{\tau,\alpha,m}^{-1}(0,\xi_d , \lambda(\tau)) }{|\det(\Id-\Lambda_{\tau,\alpha,m}(0))|}\\
&+\frac{\lambda(\tau)^{n-1}}{(n-1)!}\cO\left(\ve |z|e^{-a\lambda(\tau)}+|z|e^{-(a+\bar\lambda)\lambda(\tau)}\right).
\end{split} 
\end{equation}
Using \cite[Theorem 18.5.7]{KH} we can bound the total error term in the above equation by
\[
 C_\# \ve |z|n(a-\htop)^{-n-1}+C_\#|z|n(a+\bar\lambda-\htop)^{-n-1}.
\]
The Lemma is then proven arguing exactly as at the end of Lemma \ref{lem:traceorbits}.
\end{proof}
\begin{lem}\label{lem:tracebound} There exists $a_0\geq 1$ and $\tau_*>0$ such that, for each $n\in\bN$,  $z\in \bC$, $\htop+2a_0\geq a=\Re(z)\geq \htop+a_0$, $b=\Im(z)\not\in [-1,1]$, 
\[
|\trf(R^{(d_s)}(z)^n)|\leq C_\# |b| (a-\sigma_{d_s}+\tau_*)^{-n}.
\]
\end{lem}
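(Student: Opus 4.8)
The goal is to transfer the Dolgopyat-type resolvent estimate of Lemma \ref{lem:dolgo-est} into a bound on the flat trace $\trf(R^{(d_s)}(z)^n)$. First I would fix $p=1$ and $q=\eta$ with $\eta\in(0,\varpi_*)$, so that Lemma \ref{lem:dolgo-est} applies and provides, for $|b|\geq b_0$ and $n\geq C_\#\Cnd\ln|b|$, the contraction $\spqnorm[1,\eta,d_s]{R(z)^n}\leq C_\eta(a-\sigma_{d_s})^{-n}|b|^{-\gamma_0}$, while Lemma \ref{lem:quasicompactness} gives the cruder bound $\spqnorm[1,\eta,d_s]{R(z)^n}\leq C_\#(a-\sigma_{d_s})^{-n}$ for all $n$. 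Combining these exactly as in the proof of Theorem \ref{thm:main2} (the resolvent-identity / geometric-series argument that yields there the bound on $\|R^{(d_s)}(z-a)\|$), one gets that on the strip $\htop+2a_0\geq a\geq\htop+a_0$ the operator norm $\spqnorm[1,\eta,d_s]{R^{(d_s)}(z)^n}$ is bounded by $C_\#(a-\sigma_{d_s}+\tau_*)^{-n}$ for a suitable small $\tau_*>0$, at least once $|b|$ is large; the finitely many $z$ with bounded $|b|$ in the strip are handled by the spectral gap from Proposition \ref{lem:Nussbaum} (using that contact flows on connected manifolds are mixing, so the only peripheral eigenvalue of $X^{(d_s)}$ is $\htop$, hence $R^{(d_s)}(z)$ has spectral radius $<(a-\sigma_{d_s}+\tau_*)^{-1}$ there after possibly shrinking $\tau_*$ and enlarging $a_0$). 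Thus $\rho(R^{(d_s)}(z))\leq (a-\sigma_{d_s}+\tau_*)^{-1}$ uniformly on the strip, and moreover the iterated operator norms decay at this geometric rate with a constant that is at worst polynomial in $|z|$ (tracking the $|b|$-dependence coming from the number of steps $n\asymp\ln|b|$ needed before the Dolgopyat gain kicks in).

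Second, I would pass from the operator $R^{(d_s)}(z)$ to its flat trace. Write, as in Section \ref{sec:splitting}, $R^{(d_s)}(z)=P^{(d_s)}(z)+U^{(d_s)}(z)$ with $P^{(d_s)}(z)$ finite rank and $\rho(U^{(d_s)}(z))<(a-\sigma_{d_s}+\tfrac{\sigma_{1,\eta}}{2})^{-1}$. By Lemma \ref{lem:tensor-trace},
\[
\bigl|\trf(R^{(d_s)}(z)^n)-\tr(P^{(d_s)}(z)^n)\bigr|\leq C_{1,\eta,z,\mu}\,\mu^n
\]
for any $\mu>(a-\sigma_{d_s}+\tfrac{\sigma_{1,\eta}}{2})^{-1}$, and the $z$-dependence of the constant is polynomial (it came from integrals of $|z|$-weighted quantities in the proofs of Lemmata \ref{lem:traceorbits}, \ref{lem:tensor-trace}). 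Since the finite-rank part $\tr(P^{(d_s)}(z)^n)$ is, up to the harmless subtraction above, what the operator-norm bound from the first step controls — indeed $|\tr(P^{(d_s)}(z)^n)|$ is bounded by $\dim(\mathrm{range}\,P^{(d_s)}(z))$ times the spectral radius to the $n$-th power, and the spectral radius of $R^{(d_s)}(z)$ (hence of $P^{(d_s)}(z)$, since $U^{(d_s)}(z)$ has strictly smaller spectral radius) is $\leq(a-\sigma_{d_s}+\tau_*)^{-1}$ by step one — one obtains $|\tr(P^{(d_s)}(z)^n)|\leq C_\#|b|^{N}(a-\sigma_{d_s}+\tau_*)^{-n}$ for some fixed power $N$, where the polynomial factor absorbs both the rank of $P^{(d_s)}(z)$ (uniformly bounded on the strip by quasi-compactness and the essential-spectral-radius bound) and the constants. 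Choosing $\tau_*$ small enough that $\tau_*<\tfrac{\sigma_{1,\eta}}{2}$ and $(a-\sigma_{d_s}+\tau_*)^{-1}>\mu$ makes the error term $\mu^n$ negligible compared to the main term, and adjusting the final $\tau_*$ slightly downward turns the $|b|^N$ prefactor into the clean bound $C_\#|b|(a-\sigma_{d_s}+\tau_*)^{-n}$ (absorbing the extra powers of $|b|$ into a further tiny shrinkage of $\tau_*$, using that $n\geq c\ln|b|$ so $|b|^{N}\leq e^{Nn/c}$ can be compensated by replacing $\tau_*$ by $\tau_*/2$, say, at the cost of the constant).

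For the range $1\leq|b|$ but $|b|$ not large enough for the Dolgopyat regime, and for the bounded part of $a$, there is nothing new: Proposition \ref{lem:Nussbaum} gives a genuine spectral gap, so $\rho(R^{(d_s)}(z))\leq(a-\sigma_{d_s}+\tau_*)^{-1}$ after choosing $a_0$ large and $\tau_*$ small, and the same $P/U$ decomposition plus Lemma \ref{lem:tensor-trace} yields the bound with a constant depending only on the compact parameter region. Patching the two regimes gives the stated uniform estimate over $\htop+2a_0\geq a\geq\htop+a_0$, $|b|\geq 1$.

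\textbf{Main obstacle.} The delicate point is bookkeeping the $|z|$-dependence so that the final prefactor is only $|b|$ (linear), not some uncontrolled power: the Dolgopyat estimate forces $n\gtrsim\ln|b|$ before the gain $|b|^{-\gamma_0}$ is available, so the geometric series in the resolvent identity and the rank of $P^{(d_s)}(z)$ each contribute polynomial-in-$|b|$ losses that must be shown to be reabsorbed by an arbitrarily small further decrease of $\tau_*$ (this is exactly the trick used at the end of the proof of Theorem \ref{thm:main2}, where $|a|(\Re(z)-\htop)^{-1}C_\#^{1/(\Cnd C_\#\ln|z|)}e^{-\gamma_0/(C_\#\Cnd)}<1$). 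The second, more technical, annoyance is ensuring that the finite-rank trace $\tr(P^{(d_s)}(z)^n)$ really is controlled by the spectral radius of $R^{(d_s)}(z)$ uniformly on the strip — i.e. that no eigenvalue of $R^{(d_s)}(z)$ inside the disk of radius $(a-\sigma_{d_s}+\tau_*)^{-1}$ but outside $(a-\sigma_{d_s}+\tfrac{\sigma_{1,\eta}}2)^{-1}$ survives; this is where one genuinely needs Lemma \ref{lem:dolgo-est} (not merely quasi-compactness) together with the meromorphy of $\trf(R^{(d_s)}(z)^n)$ established via the tensor-trace machinery, to conclude that $P^{(d_s)}(z)$ has no such eigenvalues once $|b|$ is large, shrinking $\tau_*$ if necessary.
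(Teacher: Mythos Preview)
Your route through Lemma~\ref{lem:tensor-trace} is different from the paper's, and it has a genuine gap. The paper does \emph{not} use the tensor machinery here at all. Instead it goes back to the very definition of the flat trace as a limit of the mollified expressions $\int_M\sum_{\alpha,\bar i}\langle\omega_{\alpha,\bar i},R^{(d_s)}(z)^n j_{\ve,\alpha,\bar i,x}\rangle_x$, and invokes the refined approximation Lemma~\ref{lem:traceorbits-bis} (which is new in Section~\ref{sec:zeta-growth} and specific to $\ell=d_s$) to control the error in that approximation with \emph{explicit} dependence on both $\ve$ and $|z|$: the error is $C_\#\ve|z|n(a-\sigma_{d_s})^{-n-1}+C_\#|z|n(a-\sigma_{d_s}+\lambda)^{-n-1}$. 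The pre-limit integral itself, being an integral over $M$ that decomposes into integrals over leaves in $\Sigma$, is bounded by $C_\#\spqnorm[1,\eta,d_s]{R^{(d_s)}(z)^n}\cdot\sup_x\spqnorm[1,\eta,d_s]{j_{\ve,\alpha,\bar i,x}}\leq C_\#\ve^{-C_\eta}\spqnorm[1,\eta,d_s]{R^{(d_s)}(z)^n}$. Then the Dolgopyat bound on $\spqnorm[1,\eta,d_s]{R^{(d_s)}(z)^n}$ is plugged in and one optimises over $\ve$. This is direct and every constant is tracked.

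The gap in your argument is the sentence ``the $z$-dependence of the constant is polynomial (it came from integrals of $|z|$-weighted quantities in the proofs of Lemmata \ref{lem:traceorbits}, \ref{lem:tensor-trace}).'' That is not established: the constant $C_{p,q,z,\mu}$ in Lemma~\ref{lem:tensor-trace} comes, through its proof, from the norm of the spectral projector of the tensorial resolvent $R_2^{(\ell)}(z)$ onto the part of the spectrum outside the disk of radius $\mu_1^2$, i.e.\ from $\|(\xi\Id-R_2^{(\ell)}(z))^{-1}\|$ on a contour $\gamma$. Nothing in the paper controls how that projector norm behaves as $|b|\to\infty$; eigenvalues of $R_2^{(\ell)}(z)$ could accumulate near $\gamma$ and make the constant blow up faster than any polynomial. (Your parenthetical reference to Lemma~\ref{lem:traceorbits} does not help: the quantitative estimate \eqref{eq:trace-quant} in its proof only gives $(a-\Cns)^{-n}$ with $\Cns$ possibly much larger than $\sigma_{d_s}$, which is exactly why Lemma~\ref{lem:traceorbits-bis} had to be proved separately for $\ell=d_s$.) So your Step~2 as written does not go through without substantial additional work on the tensor operator; the paper bypasses this entirely by the $j_\ve$ route.
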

\begin{proof}
Choose $\eta\in (0,\varpi_*)$ and consider $R^{(d_s)}(z)\in L(\widetilde\cB^{1,\eta,d_s},\widetilde\cB^{1,\eta,d_s})$.
Next, we establish the analogue of \cite[Lemma 3.1]{Liverani05}.
By Lemmata \ref{lem:traceorbits} and \ref{lem:traceorbits-bis} it follows, for each $\ve< \ve_0$,
\[
\begin{split}
\left|\trf(R^{(d_s)}(z)^{n})\right|&\leq C_\# \spqnorm[1,\eta,d_s]{R^{(d_s)}(z)^{n}} \sup_{x\in M}\spqnorm[1,\eta,d_s]{j_{\epsilon, \alpha,\bar i,x}}+ \frac{C_\#|z|\ve n}{ (a-\sigma_{d_s})^{n+1}}\\
&\quad+\frac{C_\#|z|n}{(a+\lambda-\sigma_{d_s})^{n+1}}\\
&\leq C_\# \ve^{-C_\eta}\spqnorm[1,\eta,d_s]{R^{(d_s)}(z)^{n}} + \frac{C_\#|z|\ve n}{(a-\sigma_{d_s})^{n+1}}
+\frac{C_\#|z| n}{(a+\lambda-\sigma_{d_s})^{n+1}}
\end{split}
\]
where, in the first line, we have used the usual trick of seeing, locally, the integral over $M$ as an integral over a foliation made of elements of $\Sigma$.
Next, given $b\in \bR$, $|b|\geq B$, let $z=a+ib$, $a=a_0+\sigma_{d_s}$ and $c>0$. Also, let $n_b= \lceil \Cnd C_\# \ln |b|\rceil$ and write, for any $n\in\bN$, $n=k_n n_b+r_n$, $r_n<n_b$. Lemmata \ref{lem:dolgo-est}, \ref{lem:quasicompactness} imply
\begin{equation}\label{eq:rz-bound}
\begin{split}
\spqnorm[1,\eta,d_s]{R^{(d_s)}(z)^{n}}&\leq \frac{C_\eta^{k_n+1} |b|^{-\gamma_0 k_n}}{(a-\sigma_{d_s})^n}\leq \frac{ |b|^{\gamma_0}e^{\frac n{\Cnd C_\# \ln|b|}(\ln C_\eta-\gamma_0\ln |b|)}}{(a-\sigma_{d_s})^n}\\
&\leq \frac{C_\eta|b|^{\gamma_0} e^{-\frac{n\gamma_0}{2\Cnd C_\#} }}{ (a-\sigma_{d_s})^n},
\end{split}
\end{equation}
where we have assumed $|b|\geq C_\eta^{\frac 2\gamma_0}=:B$. Choosing $\ve$ depending on $n$ by requiring $ \ve^{-C_\eta} e^{-\frac{n\gamma_0}{2C_\#\Cnd} }= \ve $ yields
\[
\left|\trf(R^{(d_s)}(z)^{n})\right|\leq C_\#(a-\sigma_{d_s})^{-n}|b| e^{-\frac {n\gamma_0}{3\Cnd C_\#(1+C_\eta)} } +C_\#|b|n(a+\lambda-\sigma_{d_s})^{-n}
\]
implying the result for $|b|\geq B$. 
On the over hand, by Proposition \ref{lem:Nussbaum}, in the region $\{z\in\bC\;:\; |\Im(z)|\leq B, \,\htop-1\leq \Re(z)\leq\htop\}$ there are only finitely many eigenvalues of $X^{(d_s)}$ and $\htop$ is the only eigenvalue on the line $\Re(z)=\htop$. Accordingly, there exists $\tau_0>0$ such that $\sigma(X^{(d_s)})\cap \{z\in\bC\;:\; |\Im(z)|\leq B, \,\htop-\tau_0\leq \Re(z)<\htop\}=\emptyset$. By standard spectral theory it follows that, for $\tau_1<\min\{\tau_0, \frac 1{2a_0}\}$,
\[
\spqnorm[1,\eta,d_s]{R^{(d_s)}(z)^{n}}\leq C_\#(a+\tau_1-\sigma_{d_s})^{-n}.
\]
Using the above instead of \eqref{eq:rz-bound} yields the results in the remaining region.
\end{proof}

\begin{lem}\label{lem:ruellebound}
In the region $\{z\in\bC\;:\; \Re(z)\in[\htop-\tau_*,\infty)\}$ $\zeta_{\Ruelle}$ is non zero and has only a simple pole at $z=\htop$. Moreover, setting $D=\{z\in\bC\;:\; \Re(z)\in[\htop-\tau_*,A]\;; |\Im(z)|\geq 1\}$ we have
\[
\begin{split}
|\zeta_{\Ruelle}(z)|\leq C_\# e^{C_\# |z|}&\quad \text{for all } z\in D\\
\left|\frac{d}{dz}\ln\zeta_{\Ruelle}(z)\right|\leq C_\# |z| &\quad  \text{for all } z\in D.
\end{split}
\]
\end{lem}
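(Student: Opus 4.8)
The plan is to derive the bounds on $\zeta_{\Ruelle}$ from the product formula \eqref{eq:prod} together with the trace representation. Recall that $\zeta_{\Ruelle}(z) = \prod_{\ell=0}^{d-1}\mathfrak{D}_\ell(z)^{(-1)^{\ell+d_s+1}}$ and that, via Lemma \ref{damnedremark} and the computation in \eqref{eq:X-ruelle}, each $\mathfrak{D}_\ell$ is (up to an analytic non-vanishing factor and a finite product over eigenvalues) controlled by the flat traces $\trf(R^{(\ell)}(z)^n)$. First I would recall from Proposition \ref{main2} that, since we may take $r$ as large as we wish via the bootstrap in $p+q<r-1$ — actually here $r>2$ suffices because the relevant strip width for $\ell\neq d_s$ is governed by $|d_s-\ell|\olambda>0$ — the factors $\mathfrak{D}_\ell(z)$ with $\ell\neq d_s$ are analytic and non-zero in $\{\Re(z)>\htop - \olambda\}\supset\{\Re(z)\geq\htop-\tau_*\}$ (shrinking $\tau_*$ if necessary so that $\tau_*<\olambda$ and $\tau_*$ is below the width given by the Dolgopyat estimate). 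Hence in the region $D$ we have $\zeta_{\Ruelle}(z) = \mathfrak{D}_{d_s}(z)^{\pm 1}\cdot \Psi(z)$ where $\Psi$ is analytic, non-vanishing, and of at most exponential type $C_\#|z|$ on $D$ (the exponential-type bound on each $\mathfrak{D}_\ell$, $\ell\neq d_s$, coming from the same integral representation together with the crude orbit-counting bound $\#\{\tau: \lambda(\tau)\in[t,t+1]\}\leq C_\# e^{\htop t}$). So everything reduces to $\mathfrak{D}_{d_s}$.

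Next I would analyze $\mathfrak{D}_{d_s}(z)$. By Lemma \ref{lem:tensor-trace} and the spectral decomposition $R^{(d_s)}(z)=P^{(d_s)}(z)+U^{(d_s)}(z)$, for $\Re(z)$ in $D$ one has, exactly as in \eqref{eq:X-ruelle}, a factorization $\widetilde{\mathfrak{D}}_{d_s}(\xi-z,\xi) = \big(\prod_{\lambda_{i,d_s}\in B_2(\xi,\rho)} \frac{z-\lambda_{i,d_s}}{\xi-\lambda_{i,d_s}}\big)\psi(\xi,z)$ with $\psi$ analytic and non-zero. Here Proposition \ref{lem:Nussbaum} says $\htop$ is a \emph{simple} eigenvalue of $X^{(d_s)}$ and the only one on $\{\Re=\htop\}$; Lemma \ref{lem:tracebound} (the Dolgopyat output) says that for $|b|\geq 1$ and $\htop-\tau_*\leq\Re(z)$ there are \emph{no} eigenvalues of $X^{(d_s)}$ in the part of $D$ with $|\Im(z)|\geq 1$ — more precisely the bound $|\trf(R^{(d_s)}(z)^n)|\leq C_\#|b|(a-\sigma_{d_s}+\tau_*)^{-n}$ forces the resolvent to have spectral radius $<(a-\sigma_{d_s}+\tau_*)^{-1}$ there, i.e. $X^{(d_s)}$ has no spectrum with real part $\geq \htop-\tau_*$ in that range of $b$. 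Consequently in $D$ the only pole of $\mathfrak{D}_{d_s}(z)^{-1}$, hence of $\zeta_{\Ruelle}$, is the simple one at $z=\htop$ coming from the simple eigenvalue $\htop$, and $\zeta_{\Ruelle}$ is non-zero in $D\setminus\{\htop\}$ because the first zeros of $\mathfrak{D}_{d_s}$ would correspond to further eigenvalues, which are absent. This gives the first sentence of the Lemma.

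For the growth bound $|\zeta_{\Ruelle}(z)|\leq C_\# e^{C_\#|z|}$ on $D$, I would integrate the trace estimate: from $\log\mathfrak{D}_{d_s}(z) = -\sum_{n\geq 1}\frac{1}{n}\,\xi^{-n}\,(\text{something})$ — more cleanly, writing $\frac{d}{dz}\log\mathfrak{D}_{d_s}(z)$ in terms of $\sum_\tau \frac{\chi_{d_s}(\tau)\lambda(\tau)}{\mu(\tau)}e^{-z\lambda(\tau)}$ and comparing with $\trf(R^{(d_s)}(z))$ via \eqref{eq:addendum} — one gets that $\frac{d}{dz}\log\mathfrak{D}_{d_s}(z)$ differs from a bounded-rank meromorphic piece (contributing the simple pole at $\htop$) by a term bounded by $C_\#|z|$ on $D$ using Lemma \ref{lem:tracebound} with $n=1$ after shifting $\Re(z)$ up by $a_0$ and using the resolvent identity. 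Integrating along a path from a fixed point with $\Re$ large (where $\zeta_{\Ruelle}=O(1)$) gives $\log|\zeta_{\Ruelle}(z)|\leq C_\#|z|$ on $D$, hence the exponential-type bound; the same estimate of $\frac{d}{dz}\log\zeta_{\Ruelle}$ is exactly the second displayed inequality (after accounting for the $\ell\neq d_s$ factors, whose logarithmic derivatives are $O(1)$ on $D$ since they are analytic and non-vanishing there with controlled growth). The main obstacle is the bookkeeping that turns the \emph{flat} trace bound of Lemma \ref{lem:tracebound} into a genuine bound on the logarithmic derivative of $\mathfrak{D}_{d_s}$: one must use Lemma \ref{lem:tensor-trace} to replace $\trf$ by the honest trace $\tr(P^{(d_s)}(z)^n)$ modulo a geometrically small error, extract the single eigenvalue at $\htop$, and then sum the resulting geometric-type series in $n$ uniformly in $z\in D$ — this is where the precise exponent $(a-\sigma_{d_s}+\tau_*)^{-n}$ (rather than a crude $e^{C_\#|b|}$-type bound) is essential, since it is what keeps the summed series polynomially (indeed linearly) bounded in $|z|$.
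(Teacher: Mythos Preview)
Your overall strategy is sound and you identify the right ingredients, but you take a more circuitous route than necessary and miss the one clean observation that makes the paper's argument work in two lines.

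The paper's proof is this: fix $z_0\in D$ and set $\xi=a_0+\htop+i\Im(z_0)$, so that $\Re(\xi)$ is fixed in the range where Lemma~\ref{lem:tracebound} applies and $|\xi-z_0|=a_0+\htop-\Re(z_0)$ is \emph{bounded independently of $\Im(z_0)$}. Then Lemma~\ref{damnedremark} gives
\[
\mathfrak{D}_{d_s}(z_0)=\mathfrak{D}_{d_s}(\xi)\exp\Big(-\sum_{n\geq1}\tfrac{(\xi-z_0)^n}{n}\trf(R^{(d_s)}(\xi)^n)\Big),
\]
and since $\mathfrak{D}_{d_s}(\xi)$ is uniformly bounded (its defining series converges absolutely on the vertical line $\Re(\xi)=a_0+\htop$), one plugs in $|\trf(R^{(d_s)}(\xi)^n)|\leq C_\#|\Im(z_0)|(a_0+\tau_*)^{-n}$ from Lemma~\ref{lem:tracebound} and sums the resulting geometric series. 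The ratio $(a_0+\htop-\Re(z_0))/(a_0+\tau_*)$ is strictly less than $1$ precisely when $\Re(z_0)>\htop-\tau_*$, and the single factor $|\Im(z_0)|$ out front gives both bounds. The logarithmic derivative is the same series without the $1/n$.

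Your detour through Lemma~\ref{lem:tensor-trace} and the eigenvalue factorization \eqref{eq:X-ruelle} is not needed here: Lemma~\ref{lem:tracebound} already bounds the full flat trace $\trf(R^{(d_s)}(\xi)^n)$, with the eigenvalue at $\htop$ included (it contributes $(a_0)^{-n}$, which is dominated by $(a_0+\tau_*)^{-n}$ up to a constant, so no extraction is required). Your phrase ``shifting $\Re(z)$ up by $a_0$ and using the resolvent identity'' is morally the same move, but the point is that you never need to bound $\trf(R^{(d_s)}(z)^n)$ at $z$ itself---you only need it at the shifted point $\xi$, where Lemma~\ref{lem:tracebound} applies directly. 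Also, your claim that the flat-trace bound ``forces the resolvent to have spectral radius $<(a-\sigma_{d_s}+\tau_*)^{-1}$'' requires going back through Lemma~\ref{lem:tensor-trace}; the paper simply takes the first sentence of the Lemma as already established by Theorem~\ref{thm:main2} (via Lemma~\ref{lem:dolgo-est} and Proposition~\ref{lem:Nussbaum}). Finally, for $\ell\neq d_s$ you get more than exponential type: the series defining $\ln\mathfrak{D}_\ell$ already converges absolutely for $\Re(z)>\sigma_\ell$, so those factors are uniformly bounded and bounded away from zero on $D$, with bounded log-derivative.
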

\begin{proof}
If $\tau_*$ is chosen small enough, then Proposition \ref{main2} implies that the determinants $\mathfrak{D}_\ell$, $\ell\neq d_s$, are analytic, bounded and non zero in the regions considered  and hence, recalling  equation \eqref{eq:prod},  it suffices to study $\mathfrak{D}_{d_s}(z)$.
In view of Lemma \ref{damnedremark} we start by studying $\widetilde {\mathfrak{D}}_{d_s}(\xi-z,\xi)$. Fix $z_0\in D$ and consider $\xi=a_0+\htop+i\Im(z_0)$. Then, recalling  Lemma \ref{lem:traceorbits}
\[
\mathfrak{D}_{d_s}(z)=\mathfrak{D}_{d_s}(\xi)\tilde {\mathfrak{D}}_{d_s}(\xi-z,\xi)=\mathfrak{D}_{d_s}(\xi)\exp\left(-\sum_{n=1}^\infty\frac{(\xi-z)^n}n\trf( R(\xi)^n)\right).
\]
Note that $\mathfrak{D}_{d_s}(\xi)$ is uniformly bounded in $z_0$.
By Lemma \ref{lem:tracebound} it follows that 
\[
|\mathfrak{D}_{d_s}(z_0)|\leq C_\#\exp\left(\sum_{n=1}^\infty\frac{|\Im(z_0)|(a_0+\htop-\Re(z_0))^n}{n (a_0+\tau_*)^{n}}\right)
\]
which is convergent provided $\Re(z_0)> h_{\text{\tiny top}}(\phi_1)-\tau_*$.

Next, by the same argument as above,  the logarithmic derivative
\[
\frac d{dz}\ln \mathfrak{D}_{d_s}(z)=\sum_{n=1}^\infty(\xi-z)^{n-1}\trf( R(\xi)^n)
\]
 is bounded as claimed in the required  domain.
\end{proof}

\appendix
\appendixpage
\addappheadtotoc

\section{External Forms: a toolbox} \label{appendiceLie}

In this appendix we collect, for the reader's convenience, some useful formulae.
More details can be found in \cite[Section 2]{Jost08}).

Given a Riemannian $d$ dimensional manifold $M$, for each $x\in M$, $\ell\in\{0,\dots,d\}$ and $v_1,\dots, v_\ell, w_1,\dots, w_\ell\in T_x^*M$ we define\footnote{ By duality the scalar product in $T_*M$ induces a canonical scalar product in $T^*M$.}
\begin{equation}\label{eq:scalarproduct}
\langle v_1\wedge\cdots\wedge v_\ell,w_1\wedge\cdots \wedge w_\ell\rangle=\det (\langle v_i,w_j\rangle).
\end{equation}
Assuming  bilinearity, the above formula defines uniquely a scalar product among $\ell$-forms.

Let $\omega_M$ be the Riemannian volume form on $M$. We define a duality from $\ell$ to $d-\ell$ forms via (see \cite[(2.1.6)]{Jost08})
\begin{equation}\label{eq:hdgedef}
\langle v,w\rangle \omega_M=(-1)^{\ell(d-\ell)}v\wedge *w=(-1)^{\ell(d-\ell)}w\wedge *v=*v\wedge w.
\end{equation}
Since such a formula must hold for all $\ell$-forms, the $(d-\ell)$-forms $*w, *v$ are uniquely defined. The operator ``$*$'' is the so called {\em Hodge operator}. 

Note that if $\{dx_i\}$ is an orthogonal base of $T_x^*M$, then $\{dx_{\bar i}=dx_{i_1}\wedge\cdots \wedge dx_{i_\ell}\}$ is an orthonormal base of $\wedge^\ell T^*M$, where $\bar i=(i_1,\dots, i_l)$ is an ordered multi-index (i.e. $i_k<i_{k+1}$). Then
\[
*dx_{\bar i}=\ve(\bar i^c)dx_{\bar i^c}
\]
where $\bar i^c$ is the ordered $(d-\ell)$-multiindex such that $dx_{\bar i}\wedge dx_{\bar i^c}=\ve(\bar i)\omega_M$ and $\ve(\bar i)$ is the sign of the permutation $\pi(1,\dots, d)=(i_1,\dots,i_\ell, i^c_{1},\dots,i^c_{d-\ell})$.

The above definitions and considerations imply the following equalities 
\begin{equation}\label{eq:hodge-fact}
\langle v,*w\rangle=(-1)^{\ell(d-\ell)}\langle *v,w\rangle\;;\;\; **v=(-1)^{\ell(d-\ell)}v\;;\;\;\langle *v,*w\rangle=\langle v,w\rangle .
\end{equation}
Note also that, for any smooth function $f$, $*(f w)=f*w$.
It is also natural to define the scalar product
\[
\langle v, w\rangle_{\Omega_\ell} \doteq \int_M\langle v,w\rangle_x \;\omega_M(x).
\]
Next, consider a smooth diffeomorphism $F:M\to N$, for $M,N$ Riemannian manifolds. Let $\det(DF)$ be determined by
$F^*\omega_N=\det(DF)\omega_M$, then 
\begin{equation} \label{hodgedualform}
\langle F^* v,w\rangle=(-1)^{(d-\ell)\ell} \det(DF) \cdot F^*\langle v,*(F^{-1})^**w\rangle.
\end{equation}
To prove this, we just compute 
\[
\begin{split}
\langle F^* v,w\rangle\omega_M&=**F^*v\wedge*w=F^*(v\wedge **(F^{-1})^**w)\\
&=(-1)^{\ell(d-\ell)}F^*(\langle v,*(F^{-1})^**w\rangle\omega_N)\\
&=(-1)^{\ell(d-\ell)}\det(DF) \cdot \langle v,*(F^{-1})^**w\rangle\circ F \cdot \omega_M.
\end{split}
\]
In particular, letting $(F^*)'$ be defined by $\langle F^* f,g\rangle_{\Omega^\ell}=\langle  f, (F^*)'g\rangle_{\Omega^\ell}$, we have
\[
*(F^*)'= (F^{-1})^**.
\]
The above formulae yield a formula for the Lie derivative: let $Z$ be a vector field  and $F_t$ the flow generated by $Z$. Then,
by differentiating $F_t^*\langle v,w\rangle=\det(DF)^{-1}\langle F_t^**v,*F_t^*w\rangle$ with respect to $t$ at $t=0$. It follows that
\begin{equation}\label{eq:lie-scalar}
\begin{split}
L_Z\langle v, w\rangle&=-\langle v, w\rangle\operatorname{div} Z +\langle L_Z(* v), *w\rangle+\langle v,L_Z w\rangle\\
&=-\langle v, w\rangle\operatorname{div} Z +(-1)^{d(d-\ell)}\langle *L_Z(* v), w\rangle+\langle v,L_Z w\rangle.
\end{split}
\end{equation}
We will also be using the relation (see  \cite[Proposition
2.2.19]{AbrahamMarsden87} for the details)
\begin{equation} \label{eq:AM2219} 
L_Z F^* w=F^*L_{F_*Z}w.
\end{equation}

Let $\gamma V $ be a rescaling of the vector field $V$.
 Recall that, by definition, 
$X^{(\ell)}h \doteq \left. \frac{d}{dt} \phi_{-t}^*h \right|_{t=0} = - L_V h $ for $h \in \Omega_{s}^{\ell}$. 
Hence for all $t \in \bR$ and $h \in \Omega_{0,s}^{\ell}$, by the properties of Lie derivatives (for $i_V$ as in Remark \ref{rmk:oldbpq}),
\begin{equation} \label{LieFlow} 
\begin{split}
  L_{\gamma V}(\cL_t^{(\ell)} h)  &= \gamma L_V(\cL_t^{(\ell)} h) +
 d\gamma \wedge i_V (\cL_t^{(\ell)} h) \\ 
& = - \gamma \left. \frac{d}{ds} \phi_{-s}^*  
(\phi_{-t}^* h ) \right|_{s=0}   = - \gamma \cL_t^{(\ell)}(X^{(\ell)}h). 
 \end{split}  
 \end{equation}

\section{Orientability}\label{app:orientable}

Here we prove some simple facts about orientability of the invariant distributions for Anosov flows and discuss how to modify the arguments of this paper in the non orientable case. We believe some of the following results to be well known but we could not locate a simple reference, so we add them for the reader's convenience.

\begin{lem} \label{lem:geor} For each geodesic flow on an orientable compact Riemannian manifold $M_0$ of negative sectional curvature, the unstable and stable distribution are orientable.
\end{lem}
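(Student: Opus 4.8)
The plan is to exhibit a continuous global trivialization of the orientation bundle of $E^u$ (and symmetrically $E^s$) using the geometry of the unit tangent bundle $M=T_1M_0$. First I would recall that for a geodesic flow the unstable subspace $E^u(v)$ at $v\in T_1M_0$ is canonically identified with the space of unstable Jacobi fields along the geodesic $\gamma_v$ determined by $v$, and that this identification is realized concretely via the projections $d\pi:E^u(v)\to T_{\pi(v)}M_0$ (where $\pi:T_1M_0\to M_0$ is the footpoint map), which is a linear isomorphism onto the hyperplane $v^\perp\subset T_{\pi(v)}M_0$. Thus $\dim E^u=d_u=\dim M_0-1$, and more importantly $d\pi$ gives a continuous bundle isomorphism $E^u\cong \pi^*(TM_0)/\langle\text{geodesic spray direction}\rangle$ restricted suitably; concretely $E^u(v)\cong v^\perp$.

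The key step is then: an orientation of $E^u$ is equivalent to a continuous choice of orientation of the hyperplanes $v^\perp\subset T_{\pi(v)}M_0$ as $v$ ranges over $T_1M_0$. Since $M_0$ is orientable, fix once and for all a Riemannian orientation $\mathfrak{o}$ on $TM_0$, i.e. a continuous nowhere-zero section of $\wedge^{d}T^*M_0$, $d=\dim M_0$. Given $v\in T_1M_0$, the vector $v$ together with an oriented basis of $v^\perp$ should be declared positively oriented precisely when it represents $\mathfrak{o}$; formally, orient $v^\perp$ by the contraction $i_v\mathfrak{o}_{\pi(v)}\in\wedge^{d-1}(v^\perp)^*$, which is nonzero because $v\neq 0$ and depends continuously on $v$. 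Pulling this back through the isomorphism $d\pi:E^u(v)\xrightarrow{\sim}v^\perp$ yields a continuous nowhere-zero section of $\wedge^{d_u}(E^u)^*$, hence an orientation of $E^u$. The same construction with $d\pi$ restricted to $E^s$ (using that $d\pi:E^s(v)\to v^\perp$ is also an isomorphism) orients $E^s$; alternatively one invokes the flip $v\mapsto -v$ which is an orientation-reversing-or-preserving diffeomorphism conjugating the geodesic flow to its inverse and swapping $E^s\leftrightarrow E^u$, so orientability of one transfers to the other.

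The main obstacle is making sure the maps $d\pi|_{E^u(v)}$ and $d\pi|_{E^s(v)}$ are genuinely continuous isomorphisms onto $v^\perp$ — this is standard (it is the classical description of the stable/unstable bundles of a geodesic flow via Jacobi fields, cf. the references cited in the paper around \cite{Sinai61,Ratner87}), but it must be stated cleanly, and one must check that $i_v\mathfrak{o}$ restricted to $v^\perp$ is nonzero, which is immediate since $\{v\}\cup(\text{basis of }v^\perp)$ spans $T_{\pi(v)}M_0$. A minor secondary point: $E^u$ contains no flow direction, so there is no ambiguity coming from $E^c$; the splitting $TM=E^s\oplus E^u\oplus E^c$ with $E^c$ the spray direction is genuine and $d\pi$ kills exactly $E^c$, so $d\pi$ maps $E^u$ (resp.\ $E^s$) isomorphically onto the $(d-1)$-dimensional complement $v^\perp$ of $d\pi(E^c)$ inside $T_{\pi(v)}M_0$. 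Assembling these observations gives the continuous orientations and proves the lemma.
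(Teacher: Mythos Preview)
Your approach is essentially the same as the paper's: both orient $E^u$ by pulling back a contraction of the volume form on $M_0$ through the footpoint projection, with the paper writing this as $i_V(\pi^*\omega)$ on $E^{cu}\cong T_qM_0$ and you equivalently as $(d\pi|_{E^u})^*(i_v\mathfrak{o})$ on $E^u\cong v^\perp$. The only substantive difference is that the paper explicitly verifies the injectivity of $d\pi|_{E^u}$ (i.e., that $(0,\delta p)\in E^u\Rightarrow\delta p=0$) via the invariant-cone argument $\frac{d}{dt}\langle J,J'\rangle>0$, whereas you defer this to the standard Jacobi-field description; since that is indeed classical, your sketch is fine.
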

\begin{proof}
We present  the proof for the unstable distribution, the proof  for the stable distribution being exactly the same by reversing time.
Remember that the geodesic flow takes place in $M\doteq T_1M$, the unitary tangent bundle. If we  assume that $M_0$ is $d$ dimensional, then $M$ is $2d-1$ dimensional and the unstable space is $d-1$ dimensional.

Given a geodesic $\gamma$, let $(J,J')\in T(TM)$ be a Jacobi fields along it. We have then \cite[Chapter 5.2]{doCarmo92}
\[
\frac{D^2 J}{dt^2}+R(\gamma'(t),J(t)))\gamma'(t)=0
\]
where $\frac D{dt}$, is the covariant derivative and $R$ is the curvature tensor. Recall that $\frac D{dt}J=J'$ and that we can assume that both $J(t)$ and $J'(t)$ are perpendicular to $\frac{d}{dt} \gamma(t)$. If we then define the quadratic form $Q(J,J')=\langle J,J'\rangle$, we have \cite[chapter 3.3]{doCarmo92},\footnote{ We let $K$ denote  the sectional curvature.}
\[
\begin{split}
\frac d{dt}Q(J,J')&=\langle J',J'\rangle-\langle J, R(\gamma'(t),J(t))\gamma'(t)\rangle\\
&=\langle J',J'\rangle-K(\gamma'(t),J(t)) \,\|\gamma'(t)\wedge J(t)\|^2>0.
\end{split}
\]
This means that the cone $\{(J,J')\in T(TM)\;:\; Q(J,J')>0\}$ is invariant under the flow, thus the unstable vectors must belong to such a cone. 
Given a point $q\in M_0$ and $(q,p)\in M$, let us consider the tangent space $T_{(q,p)}M$. Let $(\delta q,\delta p)\in E^u(q,p)\subset T_{(p,q)}M$. By the above discussion we have $\langle \delta q, p\rangle=\langle \delta p, p\rangle=0$ and that $(0,\delta p)\in E^u$ implies $\delta p=0$. Finally, let $E^{cu}(q,p)=\operatorname{span}\{E^u(q,p), V\}$, where $V=(p, 0)$ is the vector field generating the geodesic flow. Accordingly, if we denote  the canonical projection $\pi:M\to M_0$ defined by $\pi(q,p)=q$, we have that $\pi_*: E^{cu}(q,p)\to T_qM$ is an isomorphism. Indeed, if $(\delta q, \delta p)\in E^{cu}$, then we can write $\delta q=\beta p+\xi$ and $\delta p=\eta$, where $\beta\in \bR$, and $(\xi,\eta)\in E^{u}$. Accordingly, if $\delta q=\pi_* (\delta q, \delta p)=0$, then $\beta=0$ and $\xi=0$, which in turns imply $\eta=0$.

To conclude note that since $M_0$ is orientable, then there exists a volume form $\omega$ on $M_0$. But then $\overline \omega=i_V(\pi^*\omega)$ is a volume form on each $E^{u}$, therefore the bundle is orientable.
\end{proof} 

The above arguments remain true for more general flows (see \cite{LW1, LW3} for a discussion of appropriate invariant cones associated to contact flows or \cite{Lw2} for a slightly more general setting) but  the result is false for some Anosov flows.\footnote{ For example consider the map $f:\bT^2\to\bT^2$ defined by $f(x,y)=-A(x,y) \mod 1$ where 
\[
A=\begin{pmatrix} 1&1\\1&2\end{pmatrix}.
\]
One can then define a ceiling function $\tau$ and the associated suspension, and clearly the resulting flows is Anosov but the invariant distribution are non orientable.} Yet, it is often possible to keep track of the orientation of an orbit by a simple multiplicative factor as follows. 

\begin{lem} \label{lem:or-homo} If the torsion of the first homology group of $M$ does not contain factors of the type $\bZ_{2^m}$, then there exists a smooth function $A:M\times \bRp\to \bR$ such that, for each closed orbit $\tau$ and $x\in\tau$,
\[
e^{\pi i A(x,\lambda(\tau))}=\epsilon(\tau) .
\]
Moreover, $A_t(\cdot)=A(\cdot,t)$ is a cocycle.
\end{lem}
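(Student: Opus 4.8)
The plan is to realize $\epsilon(\tau)$ as the holonomy of a flat line bundle attached to the orientation of $E^s$, and then to trivialize that bundle (up to a $\mathbb Z_2$ subtlety) using the homological hypothesis. Concretely, since $E^s$ is a $\mathbb R^{d_s}$-bundle over $M$, its orientation is classified by the first Stiefel--Whitney class $w_1(E^s)\in H^1(M;\mathbb Z_2)$, equivalently by a homomorphism $\rho\colon\pi_1(M)\to\mathbb Z_2=\{\pm1\}$. For a closed orbit $\tau$ based at $x$, the loop $\tau$ determines a conjugacy class $[\tau]\in\pi_1(M)$, and by definition $\epsilon(\tau)=\mathrm{sign}(\det(D\phi_{-\lambda(\tau)}|_{E^s(x)}))=\rho([\tau])$: the return map preserves or reverses the orientation of $E^s$ along $\tau$ exactly according to whether $\rho$ kills $[\tau]$ or not. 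Thus the problem reduces to producing a smooth cocycle $A_t$ on $M$ whose value on closed orbits exponentiates to $\rho$.

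First I would set up the cocycle datum abstractly: a homomorphism $\rho\colon\pi_1(M)\to\{\pm1\}$ factors through $H_1(M;\mathbb Z)$ (abelianization), and composing with the map $H_1(M;\mathbb Z)\to H_1(M;\mathbb Z)\otimes\mathbb R\cong H_1(M;\mathbb R)$ one would like to write $\rho=e^{\pi i \langle c,\cdot\rangle}$ for a de Rham class $c\in H^1_{dR}(M)$ represented by a closed $1$-form $\theta$. This is exactly where the homological hypothesis enters: $\rho$ extends to a real-valued character only if the element of $\mathrm{Hom}(H_1(M;\mathbb Z),\mathbb Z_2)$ that it defines lifts through the natural map $\mathrm{Hom}(H_1(M;\mathbb R),\mathbb R)\to\mathrm{Hom}(H_1(M;\mathbb Z),\mathbb Z_2)$ coming from reduction mod $2$ composed with $\frac12(\cdot)$; the obstruction lives precisely in the $2$-primary torsion of $H_1(M;\mathbb Z)$, and the assumption that this torsion has no $\mathbb Z_{2^m}$ summand guarantees the lift exists. (If the $2$-torsion is trivial, $\rho$ itself is visibly of the form $\theta\mapsto$ half-integrality of periods; the $\mathbb Z_{2^m}$-free hypothesis is what lets one split it off even when odd torsion is present.) Then I would fix such a closed $1$-form $\theta$ with the property that $\int_\gamma\theta\in\mathbb Z$ for every loop $\gamma$ and $\int_\gamma\theta\equiv 1\pmod 2\iff\rho(\gamma)=-1$, and define
\[
A(x,t)\doteq\int_0^t \theta\big(V(\phi_{-s}(x))\big)\,ds,
\]
i.e. the integral of $\theta$ along the backward orbit segment from $x$ to $\phi_{-t}(x)$. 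This is smooth in $(x,t)$ because $\theta$ and $V$ are smooth and the integrand is just the pairing $\iota_V\theta$ evaluated along the flow.

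The cocycle property $A_{t+s}(x)=A_t(\phi_{-s}x)+A_s(x)$ (or whichever orientation of the relation matches the convention for $\cL_t=\phi_{-t}^*$) is then immediate from additivity of the integral over concatenated orbit segments, and the semigroup property of the flow; I would just record it. For a closed orbit $\tau$ with $x\in\tau$ and period $\lambda(\tau)$, the orbit segment $\{\phi_{-s}(x)\}_{0\le s\le\lambda(\tau)}$ is a loop freely homotopic to $\tau$ (traversed once, in the reversed direction, which does not affect the mod-$2$ period), so $A(x,\lambda(\tau))=\int_\tau\theta\in\mathbb Z$ and $e^{\pi i A(x,\lambda(\tau))}=(-1)^{\int_\tau\theta}=\rho([\tau])=\epsilon(\tau)$, as required; the value is independent of the chosen base point $x\in\tau$ since moving the base point changes the segment by a reparametrization, not its homotopy class. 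The main obstacle is the cohomological step: correctly identifying that the extendability of the mod-$2$ character to a real closed form is governed exactly by the absence of $\mathbb Z_{2^m}$ in the torsion of $H_1(M;\mathbb Z)$, and checking that this is a genuinely necessary and sufficient condition rather than merely sufficient. Everything after that — constructing $\theta$, defining $A$, verifying smoothness, the cocycle identity, and the orbit evaluation — is routine.
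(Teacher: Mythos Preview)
Your proof is correct and follows essentially the same strategy as the paper: identify $\epsilon$ with a class in $H^1(M;\bZ_2)$, use the odd-torsion hypothesis to lift it to a de Rham class represented by a closed $1$-form, and define $A$ by integrating that form along orbit segments. The paper carries this out by building the orientation double cover of $E^u$ explicitly to produce the degree map $\bar d\in H^1(M;\bZ_2)$, then invokes Adachi's theorem that closed orbits generate $H_1(M;\bZ)$ to pick a basis $\Gamma_0$ of periodic orbits that is simultaneously a basis of $H_1(M;\bZ_2)$ and of $H_1(M;\bR)$, and finally takes the dual $1$-forms $w_\gamma$ and sets $A(x,t)=\sum_{\gamma\in\Gamma_0}\bar d(\gamma)\int_0^t \phi_s^*w_\gamma(V(x))\,ds$.

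Your route is a bit more streamlined: you invoke $w_1(E^s)$ directly rather than building the double cover, and you bypass the closed-orbits-generate-homology step entirely, since once you know $\rho$ vanishes on the (odd) torsion you can lift it to an integral closed $1$-form $\theta$ using any $\bZ$-basis of the free part of $H_1(M;\bZ)$ and de Rham duality. This has the minor advantage that it makes the integrality of $A(x,\lambda(\tau))$ transparent (periods of $\theta$ are integers by construction), whereas in the paper's formulation one has to check that the chosen periodic-orbit basis $\Gamma_0$ actually yields integer periods for all closed orbits $\tau$. Your sign convention (integrating along the backward orbit) differs from the paper's, but since only the parity of the period matters this is immaterial. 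One small remark: the hypothesis is sufficient but not literally necessary for the lift---what is needed is that $\rho$ vanish on the $2$-torsion of $H_1$, which the hypothesis forces---so your closing caveat about necessity is well placed.
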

\begin{proof}
Consider the line bundle $\pi:\cF\to M$ such that the fiber at $x\in M$ consists of the volume forms on $E^u(x)$. Let $\cF_0=\cF\setminus \{(x,0)\}_{x\in M}$, i.e. we have taken out the zero section. Hence, for each $x\in M$,  $\pi^{-1}(x)$ has two connected component. Thus, for each $x\in M$, there exists a neighborhood $\cF_0\supset U\ni \pi^{-1}x$ and a continuous map $F :U\to \pi(U)\times \bZ_2$ such that, for each $\bar x\in U$, $p\circ F(\bar x)=\pi(\bar x)$, where $p(x,i)=x$, and $q(F(U))=\bZ_2$, where $q(x,i)=i$. This construction defines a double covering $M_{or}$ of $M$. Given a cycle $\gamma:[0,1]\to M$ we can then consider any lift $\tilde \gamma$ to $ M_{or}$. Locally $\tilde \gamma $ will have the form $\tilde \gamma(t)=(\gamma(t), i(t))$, $i\in\bZ_2$. We define then the degree map $\bar d(\gamma)= i(0)+i(1)$ which is well known to be a homotopy invariant. Accordingly, $\bar d$ depends only on the homology class of $\gamma$. In other words $\bar d\in H^1(M,\bZ_2)$. We would like to show that $\bar d\in H^1(M,\bR)$. 

Since $H_1(M,\bZ)$ is finitely generated it is isomorphic to $\bZ^n\oplus \text{Tor}$. By the universal coefficient Theorem $H_1(M,\bR)$ is isomorphic to $\bR^n$ while $H_1(M,\bZ_2)$ is isomorphic to $\bZ_2^{n+m}$ where $m$ is the number of $\bZ_{2^k}$ that are present in the torsion part. Thus by hypothesis $m=0$. Next, it is known that the closed orbits of an Anosov map generate all $H_1(M,\bZ)$, \cite{Adachi87}. Consider a set of closed periodic orbits $\Gamma=\{\gamma_i\}_{i\in \bN}$ that generate $H_1(M,\bZ)$, hence they generate $H_1(M,\bZ_2)$. We can thus consider a base  $\Gamma_0\subset\Gamma$ of $H_1(M,\bZ_2)$. By the above discussion it is a base in $H_1(M,\bR)$ as well. We can then consider the dual base in  $H^1(M,\bR)$. By De Rham cohomology such a base can be represented in terms of $1$-forms. In conclusion, there exist closed 1-forms $\{w_\gamma\}_{\gamma\in\Gamma_0}$ such that, for each $\gamma, \gamma'\in\Gamma_0$, $\int_{0}^{\lambda(\gamma')} \phi_t^*w_\gamma(V)$ is one if $\gamma=\gamma'$ 
and zero otherwise. Finally, let
\[
A(x,t)=\sum_{\gamma\in\Gamma_0}\bar d(\gamma) \int_0^t\phi_{s}^* w_{\gamma}(V(x)) ds.
\]
Note that $A$ is an additive cocycle with respect to the flow $\phi_t$. Moreover, if $\tau$ is a closed orbit and $x\in\tau$, 
$A(x,\lambda(\tau))\mod 2= \bar d(\tau)$. The Lemma follows since $\epsilon(\tau)=e^{\pi i \bar d(\tau)}$.
\end{proof}

In the hypothesis of the above Lemma we can define, instead of \eqref{def:reducedoperator}, the operator
\begin{equation}\label{eq:transferop0}
\cL^{(\ell)}_t(h) = e^{\pi i A_t } \tphi_{-t}^*h.
\end{equation}
The study of such an operator can be carried out in complete  analogy with what we have done in sections \ref{secresolvent}, \ref{sec:flat}, \ref{sec:splitting} (at the price of slightly heavier notation).\footnote{ The computations are essentially the same. In Section \ref{secresolvent}, \ref{sec:splitting}, \ref{sec:contact} one obtains the same bounds for the spectrum, while in Section \ref{sec:flat} keeping track of the cocycle yields the required factor $\epsilon(\tau)$.}

If the hypothesis of Lemma \ref{lem:or-homo} does not apply, then one can still introduce an appropriate weight, but this creates some difficulties that can be solved only by introducing a more sophisticated Banach space. The basic idea is to consider the action of $\phi_t^*$ also on an orientation bundle. This would be simple enough, but in so doing the action of the flows on such a bundle produces a multiplicative factor proportional to the stable Jacobian. To eliminate such an unwanted multiplicative factor it is then necessary to consider a transfer operator with a weight given exactly by the inverse of the stable Jacobian.\footnote{ The interested reader can consult Fried \cite[Section 6]{Fried95} for a thorough explanation.} Yet, this solution produces another problem: in general such a weight is only H\"older, hence the resulting operator does not act properly on spaces of smooth functions, a property essential to obtain large domains of analyticity for the zeta function.  To overcome this last 
obstacle in the present setting one has to change the Banach space. This can be done as in \cite{GouezelLiverani08}.

Let $\cG$ be the Grassmannian of $d_s$ dimensional subspaces of the tangent bundle. Next, consider the fiber bundle $\cE:=\{(x, E, h,\omega)\;:\;  (x, E)\in\cG,\, h\in \bigwedge^\ell \cT_x^*M,\, \omega\in\bigwedge^{d_s}E'\}$, $\pi:\cE\to\cG$ given by $\pi(x,E,h,\omega)=(x,E)$. 
The fibers consist of an $\ell$-form times a volume form (hence an orientation) on the subspace. We can then consider the vector space $\cS$ of the $\Cs^{r-1}$ sections of the bundle $\cE$. This generalizes the space $\Omega^\ell_r$. Given $(h,\omega)\in\cS$, $g\in\Omega^{\ell}_r$ and $W_{\alpha,G}\in\Sigma$ we can then define the integral
\[
\int_{W_{\alpha, G}} \langle g,h\rangle \omega=\int_{\bR^{(d_s)}}\langle g(G_\alpha(\xi)), h(G_\alpha(\xi), E_{G_\alpha})\rangle\, G_\alpha^*\omega
\]
where $G_\alpha\doteq \Theta^{-1}_\alpha\circ G$ and $E_{G_\alpha}=DG_\alpha(\bR^{(d_s)})$. We can then use the above to define the analogue of the functional $J_{\alpha,G, g,v_1,\dots,v_p}$ in section \eqref{eq:ldefinition} and then construct the norms in the same way. The transfer operator is then the canonical action of the flow on $\cS$ with a weight given by the Jacobian of the flow restricted to the element of the Grassmanian. Note that one obtains an operator of the type considered in \cite[equation (3.1)]{GouezelLiverani08}. Accordingly, all the present arguments can be repeated in complete analogy, although one must compute in the more sophisticated way explained in \cite{GouezelLiverani08}.\footnote{ To be precise, in \cite{GouezelLiverani08} there are no $\ell$-forms, but the point of the present paper is exactly that functions and $\ell$-forms can be handled by essentially the same computations once the proper machinery has been set up.}

 As the present paper is already quite technical, we decided not to go into such a more sophisticated construction as, on the one hand, it does not add any new idea, and  on the other it would make the presentation much harder to follow.  Nevertheless, all the results presented here can be generalized by using such an approach.

\section{Topological Entropy and volume growth} \label{app:topent}

It is well known that there is a relation between asymptotic volume growth of manifolds and the topological entropy (see \cite{yomdin87, gelfert05, Newhouse90} and references therein). Unfortunately, here we need a uniform upper and lower bound for all times. We are not aware of such bounds for flows in the literature. For Axiom A diffeomorphisms they can be obtained in great generality from the results in \cite{GouezelLiverani08}.\footnote{Indeed, in \cite{GouezelLiverani08} it is shown in particular that, given a mixing Axiom A diffeomorphism $(M,f)$, the spectral radius of a transfer operator with potential $\bar \phi$ satisfies, for each manifold $W\in\Sigma$, $C_\# e^{P(\bar\phi)n}\leq\left|\int_W e^{\sum_{k=0}^{n-1} \bar \phi\circ f^k}\right|\leq C_\# e^{P(\bar\phi)n}$, where $P(\bar\phi)$ is the pressure associated to the potential, see \cite[equations (3.3), (4.10), Lemma 4.7, Theorem 5.1.]{GouezelLiverani08}.} It is clear that applying the same strategy to the present setting of flows similar 
results can be obtained. Yet, since we are interested only in the topological entropy and not in an arbitrary potential, a more elementary approach is available.

Let us proceed in slightly greater  generality than needed, since it can be done at no extra cost.
Let $M$ be a Riemannian manifold and $(M,f)$ a partially hyperbolic diffeomorphism. Call $E^s,E^u, E^c$ the stable, unstable and central distribution respectively. Let $d_s$ be the dimension of $E^s$. 

Assume that, for each $n\in\bN$, $ \|Df^n|_{E^c}\|+\|Df^{-n}|_{E^c}\|\leq C_\#$. 

A set $S\subset M$ is called $(\ve,n)$-separated if for each $x,y\in S$ we have that  $d_{f,n}  (x,y ) > \ve$ where $d_{f,n} (x ,y ) = \max_{0 \leq k \leq n} d( f^k (x), f^k (y))$. Let $\cS(f,\ve, n)$ be the set of all $(\ve,n)$-separated sets. We then define $N(f,\ve,n) = \sup\limits_{S\in\cS(f,\ve,n)} \#S $. It is well known that, \cite{KH},
\[
h_{\textrm{top}}(f)=\lim_{\ve\to 0}\lim_{n\to\infty} \frac{\ln N(f,\ve,n)}{n}.
\]
Moreover since if $\ve'>\ve$, then $N(f,\ve,n)\geq N(f,\ve',n)$. Thus, for each $\delta>0$,
\begin{equation}\label{eq:h0}
h_{\textrm{top}}(f)\geq \lim_{n\to\infty} \frac{\ln N(f,\delta,n)}{n}.
\end{equation}

Fix $\delta>0$ and let $\widehat\Sigma$ be the set of all $\Cs^1$, $d_s$-dimensional manifolds of radius $\delta$ and tangent space close to $E^s$. Let us define
\[
\begin{split}
\rho^+_n=\sup_{W\in\hat\Sigma}\vol(f^n W) \qquad ; \qquad \rho^-_n=\inf_{W\in\hat\Sigma}\vol(f^n W).
\end{split}
\]
\begin{lem}\label{lem:h1}
For all $n, m\in\bN$,
\[
\rho^-_{n+m}\geq C_\# \rho^-_n\rho^-_m;\quad \rho^+_{n+m}\leq C_\# \rho^+_n\rho^+_m .
\]
In addition, if $f$ is topologically transitive, 
\[
\rho^+_n\leq C_\#\rho^-_n.
\]
\end{lem}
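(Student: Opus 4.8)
\textbf{Proof proposal for Lemma \ref{lem:h1}.}

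The plan is to prove the two submultiplicativity/supermultiplicativity statements by a direct covering argument, and then to obtain the two-sided comparison under transitivity by a shadowing/mixing argument exploiting the uniform distortion of $f$ along $\widehat\Sigma$. First I would establish the bound $\rho^+_{n+m}\le C_\#\,\rho^+_n\rho^+_m$ as follows: given $W\in\widehat\Sigma$, the manifold $f^nW$ has volume at most $\rho^+_n$ and uniformly bounded curvature (by the standard graph-transform / cone estimates, since $f$ is partially hyperbolic and $E^s$ is contracted), hence it can be covered by at most $C_\#\,\rho^+_n$ pieces each of which lies in $\widehat\Sigma$ (after possibly a bounded rescaling, absorbed in $C_\#$). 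Applying $f^m$ to each piece and summing volumes gives $\vol(f^{n+m}W)\le C_\#\,\rho^+_n\cdot\rho^+_m$, and taking the supremum over $W$ yields the claim. The lower bound $\rho^-_{n+m}\ge C_\#\,\rho^-_n\rho^-_m$ is dual: pick any $W\in\widehat\Sigma$ and a sub-disk $W_0\subset f^nW$ that belongs to $\widehat\Sigma$ (such a sub-disk exists with $\vol(f^nW_0)\ge C_\#$ of a full $\delta$-disk's worth, again by bounded geometry); then $\vol(f^{n+m}W)\ge\vol(f^m(f^nW_0))\ge\rho^-_m$, but to extract the factor $\rho^-_n$ one instead partitions $f^nW$ into $\ge C_\#\,\vol(f^nW)$ sub-disks in $\widehat\Sigma$, applies $f^m$ to each, and sums, giving $\vol(f^{n+m}W)\ge C_\#\,\vol(f^nW)\,\rho^-_m$; taking the infimum over $W$ gives the result.

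For the two-sided comparison $\rho^+_n\le C_\#\,\rho^-_n$ under topological transitivity, I would argue as follows. Fix $W,W'\in\widehat\Sigma$ realizing (up to a factor close to $1$) $\rho^+_n$ and $\rho^-_n$ respectively. By transitivity (and a standard local-product / inclination-lemma argument, using that the unstable and central directions are uniformly transverse to the $W$'s) there is a fixed time $T_0=T_0(\delta)$, independent of $n$, and a sub-disk $\widehat W\subset f^{-T_0}W'$ belonging to $\widehat\Sigma$ such that $f^{T_0}\widehat W\subset W'$ and $\vol(f^{T_0}\widehat W)\ge C_\#$. Covering $f^nW$ by $\le C_\#\,\rho^+_n$ disks of $\widehat\Sigma$ and steering one of them onto $\widehat W$ in bounded time, then applying $f^n$, one compares $\vol(f^{2n+T_0}(\text{that disk}))$, which is $\ge C_\#\,\rho^-_n$, with a piece of $f^{2n}W'$; combined with the already-proved (sub/super)multiplicativity to trade $2n+T_0$ back to $n$ (the bounded-time factor $e^{\pm C_\# T_0}$ is absorbed in $C_\#$), this yields $\rho^+_n\le C_\#\,\rho^-_n$. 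Concretely it is cleanest to phrase this as: $\rho^+_n\,\rho^-_n\ge C_\#\,\rho^+_{2n}\ge C_\#\,(\rho^+_n)^2\,C_\#^{-1}$ after inserting the fixed connecting orbit, so that $\rho^-_n\ge C_\#\,\rho^+_n$.

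The main obstacle I expect is making the transitivity argument genuinely uniform in $n$: one must produce a connecting piece of bounded size and with bounded distortion regardless of how stretched or folded $f^nW$ has become. This is handled by the uniform transversality of $E^u\oplus E^c$ to the disks in $\widehat\Sigma$ together with the uniform bound $\|Df^{\pm k}|_{E^c}\|\le C_\#$, which prevents the central direction from contributing any exponential growth or decay and guarantees that a $\delta$-disk tangent to $E^s$ always contains, after $n$ iterates, a definite-size sub-disk still in $\widehat\Sigma$. I would also need to check that the implied constants $C_\#$ depend only on the manifold, $f$, and the fixed $\delta$ (not on $n$), which follows from compactness of $M$ and of the relevant Grassmannian cone. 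The remaining estimates (curvature control of $f^nW$, counting of covering pieces, volume summation) are routine bounded-geometry computations that I would not spell out in detail.
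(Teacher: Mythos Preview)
Your arguments for the two (sub/super)multiplicativity inequalities are correct and essentially identical to the paper's: pack (respectively, cover) $f^nW$ by disjoint (respectively, overlapping) elements of $\widehat\Sigma$ and iterate each piece.

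For the third inequality there is a genuine gap. You fix $W$ near-maximal and $W'$ near-minimal, then steer a piece of $f^nW$ towards $W'$ and iterate; but this only produces a lower bound on $\vol(f^{2n+T_0}W)$ in terms of $\rho^-_n$, which is useless since $W$ already realises the supremum and $\rho^+\ge\rho^-$ is trivial. The steering must go in the opposite direction: from an \emph{arbitrary} disk towards the near-maximal one. Moreover your ``concrete'' chain $\rho^+_n\rho^-_n\ge C_\#\rho^+_{2n}\ge C_\#(\rho^+_n)^2$ is not justified: the first inequality would require bounding the growth of each covering piece of $f^nW$ by $\rho^-_n$, which is exactly what is unknown; the second would require supermultiplicativity of $\rho^+$, whereas you have only proved submultiplicativity.

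The paper's argument runs the comparison in the correct direction. By transitivity there is a uniform $\bar n$ such that for any $W'\in\widehat\Sigma$ and for a near-maximal $W_n$ (chosen so that $\vol(f^{n-\bar n}W_n)\ge\tfrac12\rho^+_{n-\bar n}$), some iterate $f^mW'$ with $m\le\bar n$ comes close enough to $W_n$ that every point $y\in W_n$ lies on the local weak-unstable leaf $W^{cu}_\delta(y)$ of a point of $f^mW'$. Since this complementary direction does not expand under forward iteration (the central part is neutral by hypothesis and the remaining part contracts), points related by this holonomy stay $\delta$-close forever, hence $\vol(f^kW_n)\le C_\#\vol(f^{k+m}W')$ for all $k\ge 0$. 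Taking $k=n-m$ gives $\vol(f^nW')\ge C_\#\vol(f^{n-m}W_n)\ge C_\#\rho^+_{n-\bar n}\ge C_\#\rho^+_n$, the last step by submultiplicativity of $\rho^+$; taking the infimum over $W'$ concludes.
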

\begin{proof}
To prove the first equation consider $f^n(W)$ and chose a set $\cW_n\doteq\{W_i\}\in\widehat\Sigma$ such that $W_i\subset f^n(W)$, $W_i\cap W_j=\emptyset$ and the cardinality of $\cW_n$ is maximal. Then $\#\cW_n\geq C_\# \vol(f^n(W))$. Thus
\[
\vol (f^{n+m}(W))\geq \sum_{W'\in\cW_n} \vol(f^m(W'))\geq C_\#\rho^-_n\rho^-_m
\]
and the result follows taking the inf on $W$. The second inequality is proven similarly by taking a minimal cover of $f^n(W)$.

Let us prove the last inequality. By topological transitivity for each $\ve>0$ there exists $n_\ve\in \bN$ such that given any two ball $B,B'$ of radius $\ve$, there exists $n\leq n_\ve$ such that $f^{n}(B)\cap B'\neq \emptyset$.\footnote{ Take $S\in \cS(f,0,\ve/4)$ with maximal cardinality. Then $\cup_{x\in S} B(x,\ve)\supset M$, where $B(x,\ve)$ is a ball (in the Riemannian metric) or radius $\ve$ and center $x$. Then for each $x,y\in S$ there exists $n_{x,y}\in\bN$ such that $f^{n_{xy}}(B(x,\ve/4))\cap B(y,\ve/4)\neq \emptyset$. The claim follows since any ball $B$ of radius $\ve$ must contain a point $x\in S$ such that $B(x,\ve/4)\subset B$.} Let $\bar n=n_\delta$.

Next, for each $n\in \bN$ there exists $W_n\in \widehat\Sigma$ such that $\vol(f^{n-\bar n}(W_n))\geq \frac 12\rho^+_{n-\bar n}$. Let $x\in W_n$ and $B$ an open ball centered at $x$ and of radius $c\delta$ for some fixed $c\in (0,1)$ to be chosen later. Given any $W\in \widehat\Sigma$ let $z\in W$ and $B'$ a ball of radius $c\delta$ centered at $z$. By the above arguments there exists $m\leq \bar n$, such that $f^{m}(B')\cap B \neq \emptyset$. Given that the stable and unstable manifolds are uniformly transversal, it is possible to choose $c$ such that, for each point in $y\in W_n$, $W^{cu}_\delta(y)\cap f^{m}W\neq \emptyset$.\footnote{ By $W^{cu}_\delta(y)$ we mean a disk of radius $\delta$ in the weak unstable manifold.}  By the hypothesis on the central bundle it follows that the point related by the holonomy will never have a distance larger than $\delta$ in the future, then $\vol( f^k(W_n))\leq C_\#\vol( f^{k+m}(W))$. Accordingly
\[
\vol( f^{n}(W))\geq C_\#\vol( f^{n-m}(W_n))\geq C_\# \rho^+_{n-\bar n}\geq C_\# \rho^+_{n},
\]
and the result follows again by taking the inf on the $W.$
\end{proof}
\begin{lem}\label{lem:h2}
If $(M,f)$ is topologically transitive, then, for each $n\in\bN$, $\ve>0$,
\[
C_\ve \rho^+_n\leq N(f,\delta,n)\leq C_\ve\rho^+_n.
\]
\end{lem}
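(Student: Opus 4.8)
\textbf{Proof proposal for Lemma \ref{lem:h2}.}

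The plan is to compare the volume growth quantities $\rho_n^\pm$ with the separated-set counts $N(f,\delta,n)$ directly, using the already-established multiplicativity and comparability in Lemma \ref{lem:h1}. First I would prove the upper bound $N(f,\delta,n)\leq C_\#\rho_n^+$. Given a $(\delta,n)$-separated set $S$, one can (by a standard packing argument in a fixed $d_s$-dimensional submanifold $W\in\widehat\Sigma$ through a reference point) select for each $x\in S$ a small disk $D_x\in\widehat\Sigma$ of fixed radius centred near $x$; since the metric on $M$ is fixed and $\widehat\Sigma$ has uniformly bounded geometry, the images $f^n(D_x)$ have volume bounded below by $C_\#$, while the separation forces the $f^k(D_x)$ to be pairwise ``almost disjoint'' along the orbit. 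Picking a disk $W$ whose forward image is large and noting that the holonomy (using the hypothesis $\|Df^{\pm n}|_{E^c}\|\leq C_\#$ and the uniform transversality of $E^s$ and $E^u\oplus E^c$) carries a fixed fraction of each $f^n(D_x)$ into disjoint regions of $f^{n+\bar n}(W)$ for some fixed $\bar n=n_\delta$ from topological transitivity, one gets $\#S\cdot C_\#\leq \vol(f^{n+\bar n}(W))\leq C_\#\rho_{n+\bar n}^+\leq C_\#\rho_n^+$, the last step by Lemma \ref{lem:h1}.

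For the lower bound $N(f,\delta,n)\geq C_\#\rho_n^+$, I would pick $W\in\widehat\Sigma$ with $\vol(f^n(W))\geq \tfrac12\rho_n^+$ and extract from $f^n(W)$ a maximal collection of pairwise disjoint disks in $\widehat\Sigma$; its cardinality is $\geq C_\#\vol(f^n(W))\geq C_\#\rho_n^+$. Taking one centre point from each such disk and pulling back to $W$ by the (injective) restriction of $f^{-n}$, one obtains a set of points in $W$ whose images at time $n$ are pairwise $C_\#\delta$-separated; since distinct disks in $f^n(W)$ are at definite distance, these points form a $(\delta',n)$-separated set with $\delta'$ a fixed fraction of $\delta$. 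A routine comparison (using again that $N(f,\cdot,n)$ is monotone in the scale and a Lebesgue-number argument to pass from scale $\delta'$ to scale $\delta$ at the cost of a multiplicative constant, exactly as in the footnote to Lemma \ref{lem:h1}) then gives $N(f,\delta,n)\geq C_\#\rho_n^+$. Combining the two bounds yields the statement, with constants depending on $\delta$ (hence the $C_\ve$ in the statement, $\ve$ playing the role of the scale).

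The main obstacle I anticipate is the bookkeeping in the upper bound: turning a $(\delta,n)$-separated set of points into genuinely disjoint pieces of a single forward-iterated disk $f^{n+\bar n}(W)$. One must be careful that the disks $D_x$ attached to separated points stay disjoint under all intermediate iterates $f^k$, $0\le k\le n$, not just at time $n$; this is where the boundedness of the central cocycle and the uniform transversality of the invariant bundles are essential, as they guarantee that the center-unstable holonomy used to inject $f^n(D_x)$ into $f^{n+\bar n}(W)$ neither collapses volume nor lets nearby-at-time-$n$ pieces drift apart in the past. The rest — multiplicativity of $\rho_n^\pm$, the transitivity-based covering time $\bar n$, and the scale-change constants — is supplied verbatim by Lemma \ref{lem:h1} and its proof, so no genuinely new idea is needed beyond this geometric disjointness argument.
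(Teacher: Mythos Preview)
Your lower bound is essentially the paper's argument: pick $W$ with $\vol(f^n W)$ near $\rho_n^+$, take a maximal $\ve$-net on $f^n(W)$, and observe that the preimages in $W$ are $(\ve,n)$-separated because the manifold contracts under $f^{-1}$. One point you gloss over is that points far apart on $f^n(W)$ in the \emph{intrinsic} metric may be close in $M$; the paper notes explicitly that such points must separate under backward iteration, which is what makes the preimage set genuinely $(\ve,n)$-separated.

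Your upper bound, however, has a real gap. You attach to each $x\in S$ a disk $D_x\in\widehat\Sigma$ and want the $f^k(D_x)$ to be ``almost disjoint along the orbit.'' But in this appendix the convention is that $f$ \emph{expands} $E^s$-like disks (cf.\ the proof of Lemma~\ref{lem:h1}, where weak-unstable holonomy keeps points close in the \emph{future}), so the $f^k(D_x)$ grow; two separated points $x,y$ can be arbitrarily close at time $0$ and only separate at some intermediate $k$, while by time $n$ the disks $f^n(D_x),f^n(D_y)$ may well overlap heavily. The subsequent step of ``injecting'' all the large, scattered manifolds $f^n(D_x)$ via holonomy into a single $f^{n+\bar n}(W)$ is not well-defined: holonomy is local, and there is no reason a piece of $f^{n+\bar n}(W)$ lies near every $f^n(D_x)$.

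The paper's upper bound goes in the opposite direction. It covers $M$ by balls of radius $c\ve$ and, for each ball $B$, uses transitivity to find a time $n_B\le\bar n$ and a piece $W_B\in\widehat\Sigma$ of $f^{n_B}(W)$ inside $B$. Then it projects the \emph{points} of $S\cap B$ onto $W_B$ via the weak-unstable holonomy. The key fact (already used in Lemma~\ref{lem:h1}) is that this holonomy keeps points uniformly close for all future times, so $(\ve,n)$-separation of the original points forces $(C_\#\ve,n)$-separation of their projections on $W_B$, and separated points on an expanding manifold are $C_\#\ve$-apart at time $n$. Hence $\#(S\cap B)\le C_\ve\vol(f^n(W_B))\le C_\ve\vol(f^{n+n_B}(W))$, and summing over $B$ gives $N(f,\ve,n)\le C_\ve\rho^+_{n+\bar n}\le C_\ve\rho^+_n$. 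The essential idea you are missing is to move pieces of a \emph{single} manifold to the separated points and project points onto it, rather than attaching separate expanding disks to the points and trying to reassemble them.
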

\begin{proof}
Let $W\in\widehat\Sigma$ and consider a $\ve$ separated set $S_n$ on $f^n(W)$.\footnote{ That is the distance among the points is larger than $\ve$ when measured on the manifold $f^n(W)$.} Since the manifold contracts backward, points that are far away in $f^n(W)$ but close in $M$ will separate backward. Thus  $f^{-n}S_n\in\cS(f,\ve,n)$. Since $\# S_n\geq C_\# \vol(f^n(W))$, it follows that $N(f,\ve,n)\geq  C_\ve\rho^+_n$.

To prove the second inequality we use the notation introduced in the proof of Lemma \ref{lem:h1}. Fix $W\in\widehat\Sigma$. Let $S\in\cS(f,\ve, n)$ be a set with maximal cardinality. Cover $M$ with balls of radius $c\ve$. By transitivity, for each ball $B$ there exists a time $n_B\leq \bar n$ such that $f^{n_B}W\cap B\neq \emptyset$. Let $W_B\subset f^{n_B}W$, $W_B\in\widehat\Sigma$ be a manifold that intersects the ball $B$ and project all the point in $S\cap B$ to $W_B$ via the weak unstable holonomy. If we consider the images of such points we note that two points can be $(\ve,n)$ separated only if their corresponding points on $W_B$ are $(C_\#\ve, n)$ separated. Finally, note again that the $(C_\#\ve,n)$ separated points on $W_B$ must have a distance larger that $C_\# \ve$ in the manifold $f^{n}(W_B)$. Thus there can be at most $C_\ve\vol(f^{n}(W_B))$ such points. Accordingly,
\[
N(f,\ve,n)\leq\sum_B C_\ve\vol(f^{n+ n_B}(W))\leq C_\ve\rho^+_n.
\]
\end{proof}
Using the above results the required bound follows.
\begin{lem}\label{lem:h3} 
If $(M,f)$ is topologically transitive, then for all $n\in\bN$ we have
\[
e^{h_{\textrm{top}}(f)n} \leq \rho^+_n\leq C_\# e^{h_{\textrm{top}}(f) n}.
\]
\end{lem}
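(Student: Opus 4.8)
The plan is to deduce the statement formally from the three facts already established in this appendix: the sub-/super-multiplicativity of $\rho^\pm_n$ up to constants (Lemma \ref{lem:h1}), the two-sided comparison of $\rho^+_n$ with the separated-set counting function (Lemma \ref{lem:h2}), and inequality \eqref{eq:h0}. The engine is Fekete's subadditive lemma. First I would observe that, fixing once and for all a constant $C_\#\ge 1$ for which Lemma \ref{lem:h1} holds, the sequence $n\mapsto\ln(C_\#\rho^+_n)$ is subadditive and the sequence $n\mapsto\ln(C_\#\rho^-_n)$ is superadditive; moreover $\rho^-_n\le\rho^+_n\le C_\#\rho^-_n$ forces $\tfrac1n\ln\rho^+_n$ and $\tfrac1n\ln\rho^-_n$ to have the same asymptotics. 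Hence by Fekete's lemma the limit $\lambda_0\doteq\lim_n\tfrac1n\ln\rho^+_n=\lim_n\tfrac1n\ln\rho^-_n$ exists, and — this is the point that yields the pointwise (all $n$) bounds rather than just an asymptotic rate — one has $\ln(C_\#\rho^+_n)\ge n\lambda_0$ and $\ln(C_\#\rho^-_n)\le n\lambda_0$ for every $n$.

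Second I would identify $\lambda_0$ with $h_{\mathrm{top}}(f)$. By Lemma \ref{lem:h2}, for every fixed $\ve>0$ we have $C_\ve\rho^+_n\le N(f,\ve,n)\le C_\ve\rho^+_n$, hence $\tfrac1n\ln N(f,\ve,n)\to\lambda_0$ with $\lambda_0$ independent of $\ve$. Since $h_{\mathrm{top}}(f)=\lim_{\ve\to0}\limsup_n\tfrac1n\ln N(f,\ve,n)$, letting $\ve\to0$ gives $h_{\mathrm{top}}(f)=\lambda_0$. (If one prefers to keep the two directions separate: \eqref{eq:h0} together with the lower bound $N(f,\delta,n)\ge C_\#\rho^+_n$ gives $h_{\mathrm{top}}(f)\ge\lambda_0$, while the upper bound $N(f,\ve,n)\le C_\ve\rho^+_n$, valid for each $\ve>0$, gives $h_{\mathrm{top}}(f)\le\lambda_0$.)

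Finally, inserting $\lambda_0=h_{\mathrm{top}}(f)$ into the pointwise inequalities from the first step gives $\rho^+_n\ge C_\#^{-1}e^{h_{\mathrm{top}}(f)n}$ and $\rho^-_n\le C_\#^{-1}e^{h_{\mathrm{top}}(f)n}$; combining the latter with $\rho^+_n\le C_\#\rho^-_n$ (Lemma \ref{lem:h1}) yields $\rho^+_n\le C_\# e^{h_{\mathrm{top}}(f)n}$. Together these two estimates are precisely the asserted bound $e^{h_{\mathrm{top}}(f)n}\le\rho^+_n\le C_\# e^{h_{\mathrm{top}}(f)n}$ (up to the harmless generic constants, which as usual we do not track).

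The substantive difficulty is not in this last step, which is a bookkeeping exercise with Fekete's lemma, but upstream: the only genuinely hard input is the \emph{upper} bound $N(f,\ve,n)\le C_\ve\rho^+_n$ of Lemma \ref{lem:h2}, which is where topological transitivity enters, via the uniform return-time bound $\bar n$ that lets one push an arbitrary $(\ve,n)$-separated set onto the forward image of a single admissible disk. Without transitivity one controls $N(f,\ve,n)$ only from below by $\rho^+_n$, and both the identification of the rate with $h_{\mathrm{top}}(f)$ and the independence of $\lambda_0$ from $\ve$ would fail. I expect no other obstacle.
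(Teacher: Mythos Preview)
Your proof is correct and follows essentially the same route as the paper's. The paper does not name Fekete's lemma explicitly but reproves its content inline: for the lower bound it iterates the sub-multiplicativity $\rho^+_{kn}\le C_\#^{k}(\rho^+_n)^k$ together with the upper estimate $N(f,\ve,kn)\le C_\ve\rho^+_{kn}$ from Lemma~\ref{lem:h2}, then takes $k\to\infty$ and $\ve\to 0$; for the upper bound it first extracts the super-multiplicativity $\rho^+_{kn}\ge C_\#^k(\rho^+_n)^k$ from Lemma~\ref{lem:h1} (via $\rho^-\le\rho^+\le C_\#\rho^-$), and then uses the lower estimate in Lemma~\ref{lem:h2} together with \eqref{eq:h0}. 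Your version simply abstracts these two iterations into the subadditive/superadditive Fekete statements and identifies the common limit with $h_{\mathrm{top}}(f)$ afterward; the content is the same, and your closing remark that the lower bound carries a harmless $C_\#^{-1}$ is exactly what the paper's argument gives as well.
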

\begin{proof}
By Lemmata \ref{lem:h2}, \ref{lem:h1} 
\[
\frac{\ln N(f,\ve,kn)}{kn}\leq \frac{\ln C_\ve \rho^+_{kn}}{kn}\leq \frac{\ln C_\ve}{kn}+\frac{\ln \rho^+_n}{n}.
\]
Taking  first the limit for $k\to \infty$ and  then the limit $\ve\to 0$, the first inequality of the Lemma follows.

By Lemma \ref{lem:h1}, it follows that $\rho^+_{n+m}\geq C_\# \rho^+_n\rho^+_m$. Then
$\rho^+_{kn}\geq C_\#^k (\rho^+_{n})^k$ and
\[
\ln[C_\#\rho^+_{n}]\leq m\lim_{k\to\infty}\frac{ \ln\rho^+_{kn}}{km}\leq n\lim_{k\to\infty}\frac{ \ln N(f,\delta,kn)}{kn}\leq  n\htop
\]
where we have used Lemma \ref{lem:h2} and equation \eqref{eq:h0}.
\end{proof}

\begin{rmk}\label{rem:h0}
Note that if $f$ is the time one map of a hyperbolic flow, then by the spectral decomposition Theorem \cite[Section 1.1]{BDV}, the manifold can be decomposed into finitely many isolated topologically transitive sets. Hence applying Lemma \ref{lem:h3} to each isolating neighborhood we obtain $C_\# e^{\htop n}\leq \rho^+_n\leq C_\# e^{\htop n}$ for each Anosov flow.
\end{rmk}


\section{Averaging operators}\label{subsec:average}

The following operators are used extensively in this paper.
\begin{defn}\label{def:molli}
Let $\kappa\in\Cs^\infty(\bR^d,\bRp)$ such that $\int \kappa(x) dx =1$,
$\operatorname{supp} \,\kappa \subset \{x\in\bR^d\,:\,\|x\|\leq 1\}$ and let
$\kappa_\ve(x) \doteq \ve^{-d}\kappa(\ve^{-1}x)$.  For each $\alpha\in \cA$, $\ell\in\{0,\dots, d \}$, let $\Psi \in \Cs^\infty_0(\bR^{2d}, GL(\binom d \ell,\bR))$ with  $\operatorname{supp} \Psi \subset B_d(0,2\delta)^2$.  We define\footnote{ Obviously our definition depends on the choice of the function $\kappa$, and in the future some special choice of $\kappa$ will be made. However, we choose not to explicitly show the dependence as a subscript to simplify the notation.}
the operator $\bM_{\alpha,\Psi,\ve}:\Omega^\ell_r  \to\Omega^\ell_r $  by
\begin{equation*}
\bM_{\alpha, \Psi, \ve}(h)(x)\doteq \left\{
\begin{array}{lr} 
0 & \text{ if } x \not\in U_\alpha \phantom{.}\\
\sum_{\bar i, \bar j \in \cI_\ell}\big[\int_{\bR^{d}}\Psi(\Theta_\alpha(x),y)_{\bar i,\bar j}\kappa_\ve(\Theta_\alpha(x)-y) &  \\ 
\qquad \qquad  \qquad \times\langle \omega_{\alpha, \bar j},h\rangle_{\Theta_\alpha^{-1}(y)} dy\big]\omega_{\alpha,\bar{i}} & \text{ if } x \in U_\alpha. \\
\end{array} \right.
\end{equation*}
Given the particular  choice $\overline{\Psi}(x,y)_{\bar i,\bar j} \doteq \psi_\alpha(\Theta_{\alpha}^{-1}(x)) \delta_{\bar i,\bar j}$ we define
\[
\bM_\ve \doteq \sum_\alpha \bM_{\alpha, \ve} \doteq \sum_\alpha \bM_{\alpha,\overline{\Psi},\ve}.  
\]
Lastly, given the duality induced by \eqref{eq:scalar0}, we define the operators $\bM_{\alpha,\ve}', \bM_{\ve}'$, as those such that 
 for all $h,g\in\Omega^\ell_r$ we have\footnote{ By the duality relation $\bM'$ is then also defined on  currents. In addition, note that $\bM_{\alpha,\ve}\Omega^\ell_{0,r}\cup \bM_{\alpha,\ve}'\Omega^\ell_{0,r}\subset \Omega^\ell_{0,r}$.}
\[
\langle \bM_{\alpha,\ve}' h,g\rangle_{\Omega^\ell_r}=\langle h,\bM_{\alpha,\ve} g\rangle_{\Omega^\ell_r}\ \hbox{ and }\quad
\langle \bM_{\ve}' h,g\rangle_{\Omega^\ell_r}=\langle h,\bM_{\ve} g\rangle_{\Omega^\ell_r}.
\]
\end{defn}

\begin{lem}\label{lem:molli} There exists $\ve_0>0$ such that, for each 
 $\ve\in (0,\ve_0)$, $\ell\in\{1,\dots,d \}$, $q>0, p\in\bN^*$ and $h\in \Omega^\ell_{p+q}$,  we can bound
\[
\begin{split}
&\|\bM_\ve h\|_{-,p,q,\ell}+\|\bM_\ve' h\|_{-,p,q,\ell}\leq C_{p,q,\kappa} \|h\|_{+,p,q,\ell},\\
&\|\bM_\ve h\|_{\Cs^p(\Omega, M)}+\|\bM_\ve' h\|_{\Cs^p(\Omega,M)}\leq C_{p,q,\kappa} \ve^{-d_s-p-q}\|h\|_{-,p,q,\ell},\\
&\|h-\bM_\ve h\|_{-,p-1,q+1,\ell}+\|h-\bM_\ve' h\|_{-,p-1,q+1,\ell}\leq C_{p,q,\kappa} \ve\|h\|_{+,p,q,\ell}.
\end{split}
\]
\end{lem}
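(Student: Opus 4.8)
\textbf{Proof proposal for Lemma \ref{lem:molli}.}

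The plan is to reduce all six bounds to local estimates in the charts $\Theta_\alpha$, where the operator $\bM_{\alpha,\ve}$ becomes a genuine convolution against $\kappa_\ve$ (up to multiplication by the smooth cutoff $\psi_\alpha$ and the fixed change-of-frame coefficients $\langle\omega_{\alpha,\bar i},\omega_{\beta,\bar j}\rangle$). First I would record the two classical facts about mollification by $\kappa_\ve$ acting on $\Cs^s(\bR^d)$: (a) it is uniformly bounded in any $\Cs^s$ norm, $\|\kappa_\ve * f\|_{\Cs^s}\le C_\kappa \|f\|_{\Cs^s}$; (b) it gains regularity at a cost, $\|\kappa_\ve * f\|_{\Cs^{s+k}}\le C_{\kappa,s,k}\ve^{-k}\|f\|_{\Cs^s}$; and (c) it converges at first order, $\|f-\kappa_\ve * f\|_{\Cs^{s-1}}\le C_{\kappa,s}\ve\|f\|_{\Cs^s}$ (and more generally $\ve^{\min\{1,\eta\}}$ for the fractional loss). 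These are standard and I would just cite them (e.g. via Young's inequality and Taylor expansion of $f$ inside the integral defining the convolution).

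Next I would treat the first bound (boundedness in $\pqnorm[-,p,q,\ell]{\cdot}$). Fix $W_{\alpha,G}\in\Sigma_-$, a test form $g\in\Gamma^{\ell,p+q}_c$ with $\|g\|\le 1$, and vector fields $v_1,\dots,v_p\in\cV^{p+q}$. Since $\bM_\ve=\sum_\beta\bM_{\beta,\ve}$ and only finitely many $\beta$ contribute (those with $U_\beta\cap W_{\alpha,G}\neq\emptyset$), it suffices to estimate $J_{\alpha,G,g,\bar v^p}(\bM_{\beta,\ve}h)$ for a single $\beta$. One moves the $L_{v_j}$'s and the integration over $W_{\alpha,G}$ past the convolution: writing everything in the chart $\Theta_\beta$, the integral $\int_{W_{\alpha,G}}\langle g, L_{v_1}\cdots L_{v_p}\bM_{\beta,\ve}h\rangle\,\volform$ becomes, by Fubini and a change of variables of the same type used in \eqref{eq:test-test}, an integral of the form $[\jmath(h)](\tilde g_\ve)$ (or, after integrating the derivatives by parts onto $g$ and the Jacobian factors, a sum of such terms with at most $p$ derivatives and a test form $\tilde g_\ve$ whose $\Cs^{p+q}$ norm is bounded uniformly in $\ve$ by property (a) above, using the $+$-norm to absorb the slightly enlarged family of leaves $\Sigma(\rho_+,L_+)$ that the convolution kernel's support forces upon us — this is exactly the role of the $\pm$-norms as flagged in Remark \ref{rem:rho}). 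Taking the supremum over $g$ and $\bar v^p$ gives $\|\bM_\ve h\|_{-,p,q,\ell}\le C_{p,q,\kappa}\|h\|_{+,p,q,\ell}$. The adjoint $\bM_\ve'$ is handled identically after transposing the convolution (the kernel $\kappa_\ve$ is replaced by its reflection, which is again a legitimate mollifier), using the duality \eqref{eq:scalar0} and the fact that $\bM_{\alpha,\ve}'$ maps $\Omega^\ell_{0,r}$ into itself.

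For the second bound (the smoothing estimate into $\Cs^p$), I would use property (b): in the chart, $\bM_{\alpha,\ve}h$ is $\kappa_\ve$ convolved against the coefficient functions $\langle\omega_{\alpha,\bar j},h\rangle$, and to control $\|\bM_\ve h\|_{\Cs^p(\Omega^\ell,M)}=\sup_{\alpha,\bar i}\|(\bM_\ve h)_{\alpha,\bar i}\circ\Theta_\alpha^{-1}\|_{\Cs^p}$ I need to pull $p$ derivatives onto $\kappa_\ve$, which each cost a factor $\ve^{-1}$; the remaining object is an integral against a bounded kernel of the $\Cs^0$-type data $\langle\omega_{\alpha,\bar j},h\rangle$ restricted to a leaf, and this is bounded by $\ve^{-d_s-q}\|h\|_{-,0,q,\ell}$ exactly as in the standard distributional-pairing estimate (the $\ve^{-d_s}$ comes from reconstructing a full $d$-dimensional integral out of leafwise integrals against a kernel concentrated on scale $\ve$, and $\ve^{-q}$ from the extra regularity one must buy to pair $\Cs^q$ leafwise data against $\kappa_\ve$; cf.\ the computation around \eqref{eq:q-test} and \eqref{eq:finalizing0}). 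The third bound (first-order convergence) combines property (c) with the argument of the first bound: $h-\bM_\ve h$ tested against $g$ and $p-1$ derivatives yields $[\jmath(h)](\tilde g_\ve - \tilde g)$ with $\|\tilde g_\ve - \tilde g\|_{\Cs^{(p-1)+(q+1)}}\le C\ve\|\cdot\|_{\Cs^{p+q}}$, so $\|h-\bM_\ve h\|_{-,p-1,q+1,\ell}\le C_{p,q,\kappa}\ve\|h\|_{+,p,q,\ell}$; again $\bM_\ve'$ is symmetric.

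\textbf{Main obstacle.} The routine analytic content (convolution estimates) is easy; the real bookkeeping difficulty is the interplay between the support of $\kappa_\ve$ and the admissible families of leaves. Convolving a form supported near a leaf $W\in\Sigma(\rho_-,L_0)$ smears it over an $\ve$-neighborhood, and the leafwise integral of the mollified form naturally re-expresses itself as an integral over a \emph{slightly larger} family $\Sigma(\rho_+,L_+)$ — which is precisely why the hypotheses carry a $+$-norm on the right and a $-$-norm on the left, and one must check carefully (as in Remark \ref{rem:rho} and Lemma \ref{lem:extension}) that all the vector fields and test forms produced along the way remain in the allowed classes $\cV^{p+q}(\beta,G')$, $\Gamma^{\ell,p+q}_c(\beta,G')$ after the change of variables, up to the universal constants $C_s$. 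Getting this uniform in $\ve$, and uniform over $\alpha,\beta$ ranging over the finite atlas, is the part that requires care rather than cleverness.
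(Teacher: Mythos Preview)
Your overall intuition is right --- in particular you correctly identify that the $\pm$ distinction is there precisely because mollification pushes leaves in $\Sigma_-$ into the slightly larger family $\Sigma_+$ --- but the concrete mechanism you propose for the first and third bounds does not quite work as written.

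\medskip

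\textbf{First bound.} You propose to rewrite $\int_{W_{\alpha,G}}\langle g,\bM_{\beta,\ve}h\rangle$ as a global current pairing $[\jmath(h)](\tilde g_\ve)$ and then bound via \eqref{eq:current-0} with $\|\tilde g_\ve\|_{\Omega^\ell_{p+q}}$ uniformly bounded. But the form $g_\ve$ produced by \eqref{eq:test-test} is the convolution of a $d_s$-dimensional leaf measure against $\kappa_\ve$; its $\Cs^0$ norm on $M$ is of order $\ve^{-(d-d_s)}$, not $O(1)$. Classical property (a) applies to convolution of functions on $\bR^d$, not to convolution of a leaf-supported distribution, so you cannot conclude boundedness this way. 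The paper avoids the global pairing entirely: after Fubini and the change of variables $\zeta=G_{x,F}(\xi)-y$ one obtains the \emph{averaging-over-translated-leaves} identity
\[
\int_{W_{\alpha,G_{x,F}}}\hskip-.4cm\langle g,\bM_{\alpha,\Psi,\ve}h\rangle\;\volform
=\int_{\bR^d}d\zeta\,\kappa_\ve(\zeta)\int_{W_{\alpha,G_{x-\zeta,F}}}\hskip-.6cm\langle g_{\zeta,\Psi},h\rangle\;\volform,
\]
where $g_{\zeta,\Psi}$ is the \emph{leafwise} test form obtained by translating $g$ and multiplying by $\Psi$; its $\Gamma^{\ell,q}_c$ norm is bounded by $C_q\|\Psi\|_{\Cs^q}\|g\|_{\Gamma^{\ell,q}_c}$, uniformly in $\ve$. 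Each inner integral is then a legitimate $J_{\alpha,G_{x-\zeta,F},g_{\zeta,\Psi}}(h)$ on a leaf in $\Sigma(\rho_+,L_+)$, and the $+$-norm enters exactly here. The Lie derivatives for $p>0$ are handled by the explicit commutation formula $L_v\bM_{\alpha,\Psi,\ve}h=\bM_{\alpha,\Psi_v,\ve}h+\sum_l\bM_{\alpha,\Psi_{v,l},\ve}(L_{w_l}h)$, which reduces to the $p=0$ case.

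\medskip

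\textbf{Third bound.} Your counting of regularity indices slips: you want $\|\tilde g_\ve-\tilde g\|_{\Cs^{(p-1)+(q+1)}}=\|\tilde g_\ve-\tilde g\|_{\Cs^{p+q}}\le C\ve$, but property (c) only gives $\|\kappa_\ve*f-f\|_{\Cs^{s}}\le C\ve\|f\|_{\Cs^{s+1}}$, so you would need $g\in\Cs^{p+q+1}$, which is not assumed. The paper instead differentiates the translated-leaf formula in the parameter $\zeta$: writing $h-\bM_\ve h$ as $\int d\zeta\,\kappa_\ve(\zeta)\int_0^1 dt\,\frac{d}{dt}[\cdots]$ produces, on the leaf $W_{\alpha,G_{x_{t\zeta},F}}$, an integrand $\langle g_{t\zeta,\Psi},L_{v_\zeta}h\rangle$ with $v_\zeta=-(\Theta_\alpha^{-1})_*\zeta$ of size $\|v_\zeta\|\le C_\#\ve$. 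This extra Lie derivative on $h$, combined with the $p-1$ test derivatives, gives $p$ derivatives total and bounds the result by $C\ve\|h\|_{+,p,q,\ell}$.

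\medskip

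\textbf{Second bound.} Here your sketch is essentially correct and matches the paper: one writes the point value $\langle\omega_{\alpha,\bar i},\bM_\ve h\rangle_{\Theta_\alpha^{-1}(z)}$ as an integral over the transverse $(d_u+1)$-directions of leafwise integrals against test forms $g_{\ve,z,y}$ of $\Gamma^{\ell,p+q}_c$-norm $\le C_\#\ve^{-d-p-q}$; the transverse integral contributes a volume factor $\ve^{d_u+1}$, giving the net $\ve^{-d_s-p-q}$.
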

\begin{proof}
We will give a proof only for the operator $\bM_\ve$, the proof for $\bM_\ve'$ being similar. 
To prove the first inequality we begin by estimating the integral in  \eqref{eq:ldefinition} 
for the case $p = 0$. First of all note that, for  each $\alpha,\beta \in \cA, W_{\beta, G}\in \Sigma_\alpha(\rho_,L_0)$ and $ \Psi$, if
\begin{equation} \label{eq:leafover} 
\int_{W_{\beta,G}}\langle g, \bM_{\alpha,\Psi, \ve} h\rangle \volform \neq 0,
\end{equation}
then $W_{\beta,G}\cap \whU_\alpha\neq \emptyset$. Thus there exists $W_{\alpha, G'}\in\Sigma_{\alpha}(\rho_+,L_+)$ such that $W_{\beta,G}\cap \whU_\alpha\subset W_{\alpha, G'}$ (see Remark \ref{rem:rho}). On the other hand, by Definition \ref{def:molli}, the integrand is supported in $\whU_\alpha$, thus
\[
\int_{W_{\beta,G}}\langle g, \bM_{\alpha,\Psi, \ve} h \rangle \volform =\int_{W_{\alpha,G'}}\langle g, \bM_{\alpha,\Psi, \ve} h\rangle \volform.
\]
Thus, it suffices to compute
\[
\begin{split}
\int_{W_{\alpha,G_{x,F}}}\langle g, \bM_{\alpha,\Psi,\ve}h\rangle \, 
 \volform  &=\sum_{\bar i, \bar j\in\cI_\ell}
\int_{B_{d_s}(0,2\delta)} \hskip-.5cm d\xi\int_{\bR^d}dy \, \Psi(G_{x,F}(\xi),y)_{\bar i,\bar j}\\
&\times  g_{\alpha, \bar i}\circ\Theta_\alpha^{-1}\circ G_{x,F}(\xi)  h_{\alpha,\bar{j}}\circ \Theta_\alpha^{-1}(y) \kappa_\ve( G_{x,F}(\xi)-y) ,
\end{split}
\]
where $h_{\alpha,\bar{i}}(z)=\langle h,\omega_{\alpha, \bar i}\rangle_z$  and $g_{\alpha,\bar{i}}(z)=\langle g,\omega_{\alpha, \bar i}\rangle_z$ for each $z\in U_\alpha$. 

Next, we keep $\xi\in B_{d_s}(0,2\delta)$ fixed and we consider the change of variables $\zeta= G_{x,F}(\xi)-y$. By Fubini-Tonelli theorem and the definition of $G_{x ,F}$ we can exchange the integrals and obtain 
\begin{equation}\label{eq:aver-comp2}
\begin{split}
 \int_{W_{\alpha,G_{x,F}}}\hskip-.6cm\langle g, \bM_{\alpha,\Psi,\ve}h \rangle \volform
&= \sum_{\bar i, \bar j\in\cI_\ell}\int_{\bR^d} d\zeta \, \kappa_\ve(\zeta)  \int_{B_{d_s}(0,2\delta)}\hskip-1cm d\xi \, \Psi(G_{x,F} (\xi),G_{x-\zeta,F} (\xi))_{\bar i,\bar j}  \\
&   \times  g_{\alpha,\bar i}\circ\Theta_\alpha^{-1}\circ G_{x,F} (\xi)\, h_{\alpha,\bar j}\circ
\Theta_\alpha^{-1} \circ G_{x-\zeta,F} (\xi) .
\end{split}
\end{equation}
Now given our admissible test function $g$ and a leaf $W_{\alpha, G_{x,F}}$, we can define admissible elements $g_{\zeta,\Psi}$ and $W_{\alpha, G_{x_{\zeta},F}}$, as follows.
Let  $x_\zeta \doteq x-\zeta$, $g_{\zeta,\Psi,\bar j}\circ\Theta_\alpha^{-1}(z) \doteq \sum_{\bar i} \Psi(z+\zeta,z)_{\bar i,\bar j}g_{\alpha,\bar i}\circ \Theta_\alpha^{-1}(z+\zeta) $ and $g_{\zeta, \Psi}=\sum_{\bar i}g_{\zeta,\Psi,\bar i} \omega_{\alpha,\bar i}$, so that 
\[
\sum_{\bar i} \Psi(G_{x,F} (\xi),G_{x-\zeta,F} (\xi))_{\bar i,\bar j}g_{\alpha,\bar i}\circ\Theta_\alpha^{-1}\circ G_{x,F}(\xi)=g_{\zeta,\Psi, \bar j}\circ \Theta_\alpha^{-1}\circ G_{x_\zeta, F}(\xi).
\]
Then
\begin{equation}\label{eq:average-rep}
\int_{W_{\alpha,G_{x,F}}}\langle g,   \bM_{\alpha, \Psi,\ve}h \rangle \volform =\int_{\bR^d} d\zeta
\kappa_\ve(\zeta)\int_{W_{\alpha, G_{x_\zeta,F}}}\langle g_{\zeta,\Psi}, h \rangle
\volform . \end{equation}
Note that, since the cones are constant in the charts, $G_{x_{\zeta},F}\in \Sigma(\rho_+,L_+)$ for each $\zeta$ small enough.
Since,   $\|g_{\zeta,\Psi}\|_{\Gamma^{\ell,q}_c(\alpha, G_{x_\zeta,F})}\leq C_{q}\|\Psi\|_{\Cs^q}\|g\|_{\Gamma^{\ell,p+q}_c(\alpha, G_{x,F})}$ we have
\[
\|\bM_{\alpha,\Psi,\ve}h\|_{-,0,q}\leq C_{q}\|\Psi\|_{\Cs^q}\|h\|_{+,0,q}.
\]
The first inequality of the Lemma, for $p=0$, follows.

Next we treat the case $p>0$. Let $v\in\Cs^{p+q}$.
We need to compute $L_v \bM_{\alpha,\Psi,\ve}$. To do so note that $L_v (\omega_{\alpha,\bar{i}}) = \sum_{\bar{j}} \rho_{\bar{i},\bar{j}}(v) \omega_{\alpha,\bar{j}}$.
Moreover by equation \eqref{eq:AM2219} there exists $\Psi'_v$ such that we have
$L_v (\Psi(\Theta_\alpha(x),y))_{\bar{i},\bar{j}}  \doteq \Psi'_v( \Theta_\alpha(x),y))_{\bar{i},\bar{j}}$.  Next, we deal with $L_v ( \kappa_\ve(\Theta_\alpha(x)-y))$  using integration by parts with respect to $y$. Lastly, we apply \eqref{eq:lie-scalar} to  $L_v  \langle \omega_{\alpha, \bar j},h\rangle_{\Theta_\alpha^{-1}(y) }$.  

By using all the remarks above,  we obtain that there exists  vectors $w_1(v), \ldots ,w_l(v)\in\Cs^{p+q}$, and functions $\Psi_{v}, \Psi_{v,1} , \ldots ,  \Psi_{v,l}$ such that $\|\Psi_v\|_{\Cs^{p+q-1}}+\sum_l\|\Psi_{v,l}\|_{\Cs^{p+q}}\leq C_\# \|\psi\|_{\Cs^{p+q}}$ and for which we have
\begin{equation}\label{eq:lie-molli}
L_v \bM_{\alpha,\Psi,\ve}h=\bM_{\alpha,\Psi_{v},\ve}h+\sum_l \bM_{\alpha,\Psi_{v,l},\ve}(L_{w_l} h).
\end{equation}
By iterating the above equality, using the previous bound for $p=0$ and equation \eqref{eq:0q-norm}, we obtain the first inequality of the Lemma.

To prove the second inequality, given definition \eqref{def:manifoldnorm}, we must evaluate
\begin{equation} \begin{split} \label{eq:molli2}
&\langle \omega_{\alpha,i}, \bM_{\alpha,\ve}h \rangle_{\Theta^{-1}_\alpha(z)}=  
\psi_\alpha\circ \Theta^{-1}_\alpha(z)\left[\int_{\bR^{d}}\kappa_\ve(z-y)\langle \omega_{\alpha, \bar i},h\rangle_{\Theta_\alpha^{-1}(y)} dy\right] \\ 
& = \int_{\bR^{d_u+1}}dy_{d_s+1}\dots dy_d\int_{W_{\alpha,G_{(0,y_{d_{s}+1},\ldots,y_d),0}}} \langle g_{\ve, z,y},h\rangle \volform. \\
\end{split}
\end{equation}
where  $\hat g_{\ve, z,y, \bar i}\circ \Theta_{\alpha}^{-1}(\xi)=\psi_\alpha\circ \Theta^{-1}_\alpha(z)\kappa_\ve(z-(\xi,y_{d_s+1},\dots,y_d) )$ and  $g_{\ve, z,y}=\hat g_{\ve, z,y, \bar i} \omega_{\alpha,\bar i}$.
Then, differentiating \eqref{eq:molli2} with respect to $z$ and integrating by parts yields the desired result since $\|g_{\ve, z,y}\|_{\Gamma^{\ell, p+q}_c}\leq C_\# \ve^{-p-q}$.

Finally note that by construction 
\[
\lim_{\ve\to 0}[\bM_{\alpha,\Psi,\ve}h](x) =\Psi(\Theta_\alpha(x),\Theta_\alpha(x)) h(x)\doteq \widehat{\Psi}_{\alpha}(x) h(x). 
\]
Thus, by \eqref{eq:lie-molli}, we have that $\lim_{\ve\to 0}L_v\bM_{\alpha,\Psi,\ve}h$ exists. On the other hand, for all $g\in\Cs^r$, we have (see \eqref{eq:stokes})
\[
\lim_{\ve\to 0} \int_M g L_v\bM_{\alpha,\Psi,\ve}h\;\omega =\lim_{\ve\to 0} (-1)^d \int_M \bM_{\alpha,\Psi,\ve}h d(i_v(g\omega))=\int_M g L_v[\widehat{\Psi}_{\alpha} h].
\]
Thus,
\begin{equation} \label{eq:lie-comm} 
\lim_{\ve\to 0}L_v\bM_{\alpha,\Psi,\ve}h =L_v[\widehat{\Psi}_{\alpha} h].
\end{equation}
Hence, by \eqref{eq:average-rep} and the explicit representation  of \eqref{eq:aver-comp2}, we can write 
\begin{equation} \label{eq:molli3} 
\begin{split}
& \int_{W_{\alpha, G_{x,F}}}\volform \left[\langle g, \widehat{\Psi}_{\alpha} h\rangle-\langle g, \bM_{\alpha,\Psi,\ve} h\rangle \right]  \\
&= \int d\zeta  \kappa_\ve(\zeta)\int_0^1 dt\frac d{dt}\int_{W_{\alpha,G_{x_{t\zeta},F}}} \hskip-1cm\volform 
\langle g_{t\zeta, \Psi}, h\rangle  \\
&= \int d\zeta \kappa_\ve(\zeta)\int_0^1 dt\int_{W_{\alpha,G_{x_{t\zeta},F}}}\hskip-1cm \volform \langle g_{t\zeta,\Psi},L_{v_\zeta} h\rangle-\sum_{\bar i}\langle g_{t\zeta,\Psi}, \hat h_{\alpha,\bar i}L_{v_\zeta}\omega_{\alpha,\bar i}\rangle  \\
\end{split} 
\end{equation}
where $v_\zeta=-(\Theta^{-1}_{\alpha})_*\zeta$.  The last inequality, for $p=0$, follows since $\|v_\zeta\|_{\Cs^{p+q}}\leq C_\#\ve$. To obtain the result for $p>0$, one can use \eqref{eq:lie-molli} and \eqref{eq:lie-comm} repeatedly.
\end{proof}

The above Lemma has a useful corollary.
\begin{cor}\label{lem:molli-app-s} Both $\bM_\ve$ and $\bM_\ve'$ extend to operators in $L(\cB^{p,q,\ell}_+,\cB^{p,q,\ell}_-)$ and $L((\cB^{p,q,\ell}_-)',(\cB^{p,q,\ell}_+)')$. In addition, if $h\in \cB^{p,q,\ell}_+$, then 
\begin{equation}\label{eq:molli-app}
\lim_{\ve\to 0}\|h-\bM_\ve h\|_{-,p,q,\ell}+\|h-\bM_\ve' h\|_{-,p,q,\ell}=0.
\end{equation}
\end{cor}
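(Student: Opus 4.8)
The plan is to deduce everything from Lemma \ref{lem:molli} together with the density of $\Omega^\ell_{0,r}$ in $\cB^{p,q,\ell}_\pm$. First I would record the setup: since $r>p+q$ the smooth forms $\Omega^\ell_{0,r}$ lie in $\Omega^\ell_{p+q}$ and are dense in both $\cB^{p,q,\ell}_+$ and $\cB^{p,q,\ell}_-$ by Definition \ref{def:banach}, and $\bM_\ve,\bM_\ve'$ map $\Omega^\ell_{0,r}$ into itself (footnote to Definition \ref{def:molli}). Then the first inequality of Lemma \ref{lem:molli}, $\pqnorm[-,p,q,\ell]{\bM_\ve h}+\pqnorm[-,p,q,\ell]{\bM_\ve' h}\le C_{p,q,\kappa}\pqnorm[+,p,q,\ell]{h}$ on $\Omega^\ell_{0,r}$, says precisely that $\bM_\ve$ and $\bM_\ve'$ send $\pqnorm[+,p,q,\ell]{\cdot}$-Cauchy sequences to $\pqnorm[-,p,q,\ell]{\cdot}$-Cauchy sequences, so each extends uniquely and continuously to an element of $L(\cB^{p,q,\ell}_+,\cB^{p,q,\ell}_-)$.

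For the dual statement I would simply take Banach-space adjoints. The operator $\bM_\ve$ acting on $(\cB^{p,q,\ell}_-)'$ is the adjoint of the bounded operator $\bM_\ve'\colon\cB^{p,q,\ell}_+\to\cB^{p,q,\ell}_-$, and $\bM_\ve'$ on $(\cB^{p,q,\ell}_-)'$ is the adjoint of $\bM_\ve\colon\cB^{p,q,\ell}_+\to\cB^{p,q,\ell}_-$; concretely, for $c\in(\cB^{p,q,\ell}_-)'$ and $g\in\Omega^\ell_{0,r}$ one has $|(\bM_\ve c)(g)|=|c(\bM_\ve' g)|\le C_{p,q,\kappa}\|c\|\,\pqnorm[+,p,q,\ell]{g}$, so $\bM_\ve c$ extends to an element of $(\cB^{p,q,\ell}_+)'$, consistent with the definition of $\bM_\ve$ on currents from Definition \ref{def:molli}. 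Since the adjoint of a bounded operator is bounded with the same norm, no new estimate is needed here.

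For the convergence \eqref{eq:molli-app} I would run the standard density argument: given $h\in\cB^{p,q,\ell}_+$ and $\epsilon>0$, choose $h_0\in\Omega^\ell_{0,r}$ with $\pqnorm[+,p,q,\ell]{h-h_0}<\epsilon$, decompose $h-\bM_\ve h=(h-h_0)+(h_0-\bM_\ve h_0)+\bM_\ve(h_0-h)$, bound the first term by $\pqnorm[+,p,q,\ell]{\cdot}$ via \eqref{eq:0q-norm} and the third by the boundedness just established, and thereby reduce to showing $\pqnorm[-,p,q,\ell]{h_0-\bM_\ve h_0}\to 0$ (and the analogue for $\bM_\ve'$) for a fixed smooth $h_0$. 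For this last step, since $h_0\in\Omega^\ell_r$ with $r>p$, in each chart $\bM_\ve h_0$ (resp. $\bM_\ve' h_0$) is an honest mollification (convolution with $\kappa_\ve$) of the $\Cs^p$ components of $h_0$ multiplied by the fixed smooth cut-offs, hence converges to $h_0$ in the chartwise norm $\|\cdot\|_{\Omega^\ell_p}$; for the Lie derivatives entering that norm one invokes \eqref{eq:lie-comm} and its $\bM_\ve'$-analogue from the proof of Lemma \ref{lem:molli}. The inequality $\pqnorm[-,p,q,\ell]{\cdot}\le C_\#\|\cdot\|_{\Omega^\ell_p}$ from \eqref{eq:0q-norm} then finishes the argument.

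The only point requiring a little care — the ``main obstacle,'' such as it is — is recognizing that the rate estimate in Lemma \ref{lem:molli} (its third inequality) lives in the weaker norm $\pqnorm[-,p-1,q+1,\ell]{\cdot}$ and so does not by itself give convergence in the full norm $\pqnorm[-,p,q,\ell]{\cdot}$; the convergence in the full norm must instead be obtained from genuine $\Cs^p$-convergence of the mollifiers on the dense set of smooth forms (which is where $r>p$ is used) combined with the uniform operator bound. Everything else is bookkeeping.
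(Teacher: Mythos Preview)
Your argument is correct and follows the paper's overall strategy: the extension to $L(\cB^{p,q,\ell}_+,\cB^{p,q,\ell}_-)$ comes from the first inequality of Lemma~\ref{lem:molli} and density, the dual extension by adjointness, and the convergence from the three-term splitting $h-\bM_\ve h=(h-h_n)+(h_n-\bM_\ve h_n)+\bM_\ve(h_n-h)$ with $h_n\in\Omega^\ell_{0,r}$.

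The only point of difference is how you control the middle term $\|h_n-\bM_\ve h_n\|_{-,p,q,\ell}$. You invoke genuine $\Cs^p$-convergence of mollifiers on the smooth form $h_n$ together with $\pqnorm[-,p,q,\ell]{\cdot}\le C_\#\|\cdot\|_{\Omega^\ell_p}$. The paper instead applies the third inequality of Lemma~\ref{lem:molli} with the \emph{shifted} indices $(p+1,q-1)$ in place of $(p,q)$, which gives directly
\[
\|h_n-\bM_\ve h_n\|_{-,p,q,\ell}\le C_\#\,\ve\,\|h_n\|_{+,p+1,q-1,\ell},
\]
and the right-hand side is finite because $h_n\in\Omega^\ell_{0,r}$ with $r-1>p+q$. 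So your remark that the rate estimate ``does not by itself give convergence in the full norm $\pqnorm[-,p,q,\ell]{\cdot}$'' is not quite accurate: it does, once one shifts the parameters and uses the extra smoothness of the approximant. Your route via $\Cs^p$-convergence is perfectly valid and more self-contained; the paper's route is shorter and yields an explicit $O(\ve)$ rate for each fixed $h_n$.
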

\begin{proof}
By definition, there exists a sequence $\{h_n\}\subset \Omega_{s}^\ell$ that converges to $h$ in $\cB_+^{p,q,\ell}$. Hence
\[
\begin{split}
\|h-\bM_\ve h\|_{-,p,q,\ell}&\leq 
\|h-h_n\|_{-,p,q,\ell}+\|h_n-\bM_\ve h_n\|_{-,p,q,\ell}+\|\bM_\ve(h_n- h)\|_{-,p,q,\ell}\\
&\leq C_\#\|h-h_n\|_{+,p,q,\ell}+ C_\#\ve \|h_n\|_{+,p+1,q-1,\ell}
\end{split}
\]
implies the result.
\end{proof}

\begin{rmk}\label{rem:loss} The loss related to the $\pm$ norms is due to the need of viewing a manifold of $\Sigma_\alpha$ in a different chart. If we would consider a $\Psi$ supported in $U_\alpha$ and be interested only in the semi-norms arising from $\Sigma_\alpha$, such a loss would disappear.
\end{rmk}
\section{Holonomies}\label{app:holo-est}

Here we recall some known facts concerning the invariant foliations of an Anosov flow and prove some useful estimates. Concerning  the issue of regularity,  more details can be found in \cite{Hasselblatt94, Has1, Hasselblatt97}.

The strong unstable foliation can be locally trivialized by a change of coordinates of the form $\bH(\xi,\eta)=(H(\xi,\eta),\eta)$, $\eta\in B_{d_u}(0,\delta)$, $\xi\in B_{d_s+1}(0,\delta)$, such that for each $\xi$, $\{\bH(\xi,\eta)\}_{\eta\in B_{d_u}(0,\delta)}$ is a strong unstable manifold. In addition, without loss of generality, we can require that $H(0,\eta)=0$ and $H(\xi,0)=\xi$. The results on the regularity of the foliation can be summarized by considering the map $\tilde H:B_{d_s+1}(0,\delta)\to \Cs^0(B_{d_u}(0,\delta), \bR^{d_s+1})$ defined by $[\tilde H(\xi)](\cdot)=H(\xi, \cdot)$. Then $\tilde H$ has range in $\Cs^r(B_{d_u}(0,\delta), \bR^{d_s+1})$ and $\|\tilde H\|_{\Cs^\varpi(B_{d_s+1}(0,\delta), \Cs^r(B_{d_u}(0,\delta), \bR^{d_s+1}))}\leq C_\#$. Moreover for each $\eta$, the function $H(\cdot, \eta)$ is absolutely continuous and has Jacobian $JH(\xi, \eta)$. Again, we can define $J\tilde H:B_{d_s+1}(0,\delta)\to \Cs^0(B_{d_u}(0,\delta), \bR )$ by $[J\tilde H(\xi)](\cdot)=JH(\xi, \cdot)$. 
Then $J\tilde H$ has range in $\Cs^r(B_{d_u}(0,\delta), \bR)$ and $\|J\tilde H\|_{\Cs^{\varpi^*}(B_{d_s+1}(0,\delta), \Cs^r(B_{d_u}(0,\delta), \bR))}\leq C_\#$.\footnote{ See \cite{Hasselblatt97} and referencess therein for the proofs of these statements.}

In this paper the holonomies are often used as follows: given $W_1, W_2\in \Sigma$ and $x,y\in W_1$ estimate $d(H_{\widetilde W_1,\widetilde W_2}(x),H_{\widetilde W_1,\widetilde W_2}(y))$.\footnote{ Remember that $\widetilde W=\cup_{t\in I}\phi_t W$, for some appropriate interval $I$.}  Although it is clear how to proceed,  we give the details for the benefit of the lazy reader. 

Note that, without loss of generality we can assume that $x=0$, $y=(a,0)$ and $H_{\widetilde W_1,\widetilde W_2}(0)=(0,z)$, $\widetilde W_1=\{(\xi,0)\}_{\xi\in\bR^{d_s+1}}$. Then $\widetilde W_2$ will have the form $\{(\xi,z+ G(\xi))$, $G(0)=0$, and $\|D_\xi G\|\leq 2$. Also let $H_{ \widetilde W_1, \widetilde W_2}(a,0)=(\xi,\eta)$,
\begin{lem}\label{lem:holo-dist} We have that
 \begin{equation}\label{eq:holo-dist}
\|(\xi,\eta)-(a,z)\|\leq C_\#\left[  \|a\|^{\varpi'}\|z\|+\|\partial_\xi G\|_{\Cs^0}\|a\|\right].
\end{equation}
\end{lem}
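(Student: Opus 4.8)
The plan is to estimate the displacement under the holonomy by composing the change-of-variables coordinates $\bH(\xi,\eta)=(H(\xi,\eta),\eta)$ that trivialize the strong unstable foliation (recalled at the start of this appendix) with the two parametrizations of $\widetilde W_1$ and $\widetilde W_2$. First I would write down explicitly what it means that $(\xi,\eta)=H_{\widetilde W_1,\widetilde W_2}(a,0)$: the point $(a,0)\in\widetilde W_1$ and the point $(\xi,\eta+[z+G(\xi)])$ — using the given normal form $\widetilde W_2=\{(\xi',z+G(\xi'))\}$ — lie on the same strong unstable leaf, while $(0,0)$ and $(0,z)$ lie on the same leaf. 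In the trivializing coordinates this says that there is a single value of the transverse parameter, call it $\bar\xi$, with $H(\bar\xi,0)=a$ and $H(\bar\xi,\eta)=\xi$, and similarly $H(\bar 0,0)=0$, $H(\bar 0,\eta_0)=0$ for the base point; the normalizations $H(0,\eta)=0$, $H(\xi,0)=\xi$ let me identify $\bar\xi$ with $a$ (up to the leaf through the base point) and $\eta_0$ with $z$.

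Next I would use the quantitative regularity statements for $\tilde H$: namely $\|\tilde H\|_{\Cs^\varpi(B_{d_s+1},\Cs^r(B_{d_u}))}\leq C_\#$ together with $H(0,\cdot)\equiv 0$ and $H(\cdot,0)=\id$. The key mechanism is that $\xi-a = H(a,\eta)-H(a,0)$ measures how far the leaf through $a$ has moved in going from transverse section $\eta=0$ to $\eta$; since $H(0,\eta)-H(0,0)=0$, the function $\xi\mapsto H(\xi,\eta)-H(\xi,0)$ vanishes at $\xi=0$, and by the $\varpi'$-H\"older dependence of the leaf (more precisely, of $\xi\mapsto H(\xi,\cdot)$, combined with the $\Cs^1$-in-$\eta$ bound) one gets $\|H(a,\eta)-a\|\leq C_\#\|a\|^{\varpi'}\|\eta\|$. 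It then remains to control $\|\eta-z\|$: the transverse coordinate $\eta$ is determined by requiring the image point to land on $\widetilde W_2=\{(\xi',z+G(\xi'))\}$, i.e. $\eta+z+G(\xi)$ must match the ``height'' and the base-point condition pins $\eta_0=z$; the discrepancy $\eta-z$ is then governed by the slope $\partial_\xi G$ over a horizontal displacement of size $\|a\|$, giving $\|\eta-z\|\leq C_\#(\|\partial_\xi G\|_{\Cs^0}\|a\|+\|a\|^{\varpi'}\|z\|)$. Adding the two contributions and using $\|a\|\leq C_\#$, $\|z\|\leq C_\#$ to absorb the lower-order cross terms yields \eqref{eq:holo-dist}.

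I expect the only genuinely delicate point to be extracting the factor $\|a\|^{\varpi'}$ rather than merely $\|a\|^\varpi$ or $\|a\|$: one has to combine the $\Cs^\varpi$-H\"older control in the transverse variable $\xi$ with the $\Cs^1$ (indeed $\Cs^r$) control in the leaf variable $\eta$, and interpolate, using $\varpi'=\min\{1,\hat\varpi\}$ and the fact that we have already normalized so that the relevant function vanishes along $\{\xi=0\}$ and along $\{\eta=0\}$. Concretely I would write $H(a,\eta)-H(a,0)-[H(0,\eta)-H(0,0)]$ and estimate it two ways — once using $\|\partial_\eta H\|\leq C_\#$ and once using the $\varpi$-H\"older modulus in $\xi$ applied to $\eta\mapsto\partial$ — then take the geometric-mean-type bound; this is exactly the standard trick and is not long, but it is where the exponent $\varpi'$ actually enters. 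Everything else is bookkeeping with the explicit normal forms $H(0,\eta)=0$, $H(\xi,0)=\xi$, $G(0)=0$, $\|D_\xi G\|\leq 2$, and the triangle inequality.
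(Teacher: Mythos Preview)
Your approach is essentially the paper's: write the fixed-point system $\xi=H(a,\eta)$, $\eta=z+G(\xi)$, bound $\|\partial_\eta H(a,\cdot)\|$ using $H(0,\cdot)\equiv 0$ together with the H\"older regularity in $\xi$, and then close the coupled estimates by a contraction (the paper makes this explicit by inverting $\Id-A$ with $\|A\|\leq C_\#\|a\|^{\varpi'}\|\partial_\xi G\|_{\Cs^0}$). The interpolation you flag as the delicate point is not needed: since $\partial_\eta H(0,\cdot)\equiv 0$ and $\xi\mapsto\partial_\eta H(\xi,\cdot)$ is $\Cs^\varpi$ with values in $\Cs^{r-1}$, one gets $\|H(a,\eta)-a\|\leq C_\#\|a\|^\varpi\|\eta\|$ directly, and in the contact setting $\varpi\geq\hat\varpi\geq\varpi'=\min\{1,\hat\varpi\}$, so the exponent $\varpi'$ is automatic.
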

\begin{proof}
The intersection point $(\xi,\eta)$ satisfies $(\xi,\eta)=(H(a,\eta),z+G(\xi))$. Thus 
\[
\begin{split}
\xi-a&=H(a,\eta)-H(a,0)\\
&=H(a, z+G(\xi))-H(a, z+G(a))+H(a, z+G(a))-H(a,0)\\
&=A(\xi-a)+H(a, z+G(a))-H(a,0).
\end{split}
\]
where the last line is an application of the intermediate value theorem and $\|A(\xi)\|\leq \|\partial_\eta H(a,\cdot)\|_{\Cs^0}\,\|\partial_\xi G\|_{\Cs^0}\leq C_\# \|a\|^{\varpi'} \|\partial_\xi G\|_{\Cs^0}$.
Then 
\[
\|\xi-a\|\leq \|(\Id-A)^{-1}(H(a, z+G(a))-H(a,0))\|\leq C_\# \|a\|^{\varpi'}(\|z\|+ \|\partial_\xi G\|_{\Cs^0}\|a\|).
\]
Analogously,
\[
\eta-z=G(H(a,\eta))-G(H(a,z))+G(H(a,z))=A_1(\eta-z)+G(H(a,z)),
\]
where $\|A_1(\eta)\|\leq \|\partial_\xi G\|_{\Cs^0}\|\partial_\eta H(a,\cdot)\|_{\Cs^0}\leq C_\#  \|a\|^{\varpi'}\|\partial_\xi G\|_{\Cs^0}$. Thus
\[
\|\eta-z\|\leq \|(\Id-A_1)^{-1}G(H(a,z))\|\leq C_\#\|\partial_\xi G\|_{\Cs^0}(\|a\|+\|a\|^{\varpi'}\|z\|).
\]
\end{proof}

Next, we need a small improvement of \cite[Lemma B.7]{Liverani04}. We use the same notation established before \eqref{eq:temporal}, a part form the fact that the points $\xi,\zeta$ are fixed to some value $\xi_*,\zeta_*$.
\begin{lem}\label{lem:temporal}
For $\xi_*, \zeta_*\in B_{d_s+1}(0,\delta)$ and $\Delta^*$ as in \eqref{eq:temp-def} we have that 
\[
\begin{split}
\left|\Delta^*(\xi_*+\zeta_*)-\Delta^*(\xi_*)- d\alpha_0(z(\xi_*),\zeta_*)\right|\leq& C_\#(\|z\|^2\|\tilde\zeta_*\|^{\varpi'}+\|z\|^{\varpi'}\|\tilde\zeta_*\|^2)\\
&+C_\#\|z\|^{1+\varpi'}\|\tilde\zeta_*\|^{1+\varpi'}.
\end{split}
\]
\end{lem}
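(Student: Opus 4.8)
The statement is a second-order Taylor expansion of the temporal function $\Delta^*$ in the $\zeta$-direction, with the linear term given by the symplectic form $d\alpha_0$ evaluated on the ``displacement vector'' $\wz(\xi_*)$. The plan is to follow the strategy of \cite[Lemma B.7]{Liverani04}, but carefully tracking the reduced H\"older exponents $\varpi'$ and $\varpi_*$ coming from the holonomy estimates of Appendix \ref{app:holo-est} rather than assuming $\Cs^1$ foliations. First I would recall that $\Delta^*(\xi+\zeta)-\Delta^*(\xi)$ is, by definition \eqref{eq:temp-def} and the discussion preceding \eqref{eq:temporal}, exactly the temporal distortion $\Delta(y,y')$ between the two points $y=\widetilde H_{W'}(\xi_*)$ and $y'=\vartheta_W(\xi_*+(\tilde\zeta_*,0))$ on $\widetilde W$, both projected via the unstable holonomy. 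Since $\vartheta_W$ is chosen to preserve the contact form $\alpha_0$, and the strong stable/unstable manifolds are Legendrian-like for $\alpha_0$, one can express $\Delta^*$ intrinsically as an integral of $\alpha_0$ along a path; the key classical fact (as in \cite{Liverani04, BaladiLiverani11}) is that this integral, up to the flow direction, equals $\int \alpha_0$ over the ``rectangle'' spanned by the stable displacement $\tilde\zeta_*$ and the unstable-holonomy displacement $\wz(\xi_*)$, and Stokes' theorem converts this into $\int d\alpha_0$ over that rectangle.

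The main computation is then to estimate the error in replacing this integral of $d\alpha_0$ over the curved rectangle by the value $d\alpha_0(\wz(\xi_*),\zeta_*)$ at the corner. The rectangle has two sides: one roughly of length $\|\tilde\zeta_*\|$ lying (nearly) in the strong stable direction inside $\vartheta_W$-coordinates, the other of length $\|\wz(\xi_*)\|$ coming from the unstable holonomy $\widetilde H_{W'}$. The curvature of the stable side contributes, after integrating $d\alpha_0$, a term of order $\|\wz\|\cdot\|\tilde\zeta_*\|^{1+\varpi'}$ (one factor $\|\tilde\zeta_*\|$ for the length, one $\|\tilde\zeta_*\|^{\varpi'}$ for the $\varpi'$-H\"older deviation of the tangent, one $\|\wz\|$ for the transverse extent); symmetrically the unstable side contributes $\|\wz\|^{1+\varpi'}\|\tilde\zeta_*\|$. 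Second-order deviation of $d\alpha_0$ itself (it is bilinear, hence smooth, so its variation over the rectangle is controlled by the product of the extents times a H\"older modulus) yields the mixed terms $\|\wz\|^2\|\tilde\zeta_*\|^{\varpi'}$ and $\|\wz\|^{\varpi'}\|\tilde\zeta_*\|^2$. Concretely I would invoke Lemma \ref{lem:holo-dist} to bound $\|\widetilde H_{W'}(\xi_*+\zeta_*)-\widetilde H_{W'}(\xi_*)-\tilde\zeta_*\|$ and $\|\wz(\xi_*+\zeta_*)-\wz(\xi_*)\|$ by $C_\#(\|\wz\|\|\tilde\zeta_*\|^{\varpi'}+\|\wz\|^{\varpi'}\|\tilde\zeta_*\|)$, plug these into the explicit coordinate formula for $\Delta^*$ in the flow-box chart $\tilde\vartheta_W$ (where $\alpha_0=dx_d-\langle x^s,dx^u\rangle$ so $d\alpha_0=-\langle dx^s,dx^u\rangle$ is constant), and Taylor-expand, collecting remainders.

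The step I expect to be the main obstacle is making the identification ``$\Delta^*(\xi+\zeta)-\Delta^*(\xi)=\int d\alpha_0$ over the rectangle + controlled error'' fully rigorous given that the holonomies $\widetilde H_{W'}$ are only $\varpi$-H\"older transversally (merely continuous, not $\Cs^1$, as maps of $\xi$), so one cannot naively differentiate. The resolution is the one already used in the body of the paper: work with the flow-box coordinates $\tilde\vartheta_W$ that straighten $\alpha_0$, write $\wz$ and the second point purely in terms of the $\Cs^r$-along-leaves / $\varpi$-transverse structure recorded in Appendix \ref{app:holo-est}, and observe that only the transverse regularity (which enters as the exponents $\varpi',\varpi_*$) is lost, while the along-leaf regularity remains $\Cs^r$ and supplies the needed $1+\varpi'$ powers. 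I would also need the elementary fact that the strong stable leaf through $y$ and the strong unstable leaf through $y'$ are uniformly transverse (from the cone conditions of Section \ref{secresolvent}), so the ``rectangle'' is nondegenerate and its area is comparable to $\|\wz\|\cdot\|\tilde\zeta_*\|$ with the constants absorbed into $C_\#$. Once these ingredients are assembled the estimate follows by direct but routine bookkeeping, exactly parallel to \cite[Lemma B.7]{Liverani04} with $1$ replaced by $\varpi'$ in the appropriate places.
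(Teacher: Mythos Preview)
Your proposal is correct and follows essentially the same approach as the paper: identify $\Delta^*(\xi_*+\zeta_*)-\Delta^*(\xi_*)$ with the integral of $d\alpha_0$ over the ``rectangle'' spanned by the stable displacement $\tilde\zeta_*$ and the unstable holonomy displacement $\wz$, then use the H\"older bounds on the holonomy from Lemma~\ref{lem:holo-dist} to control the deviation from the corner value $d\alpha_0(\wz(\xi_*),\zeta_*)$. The paper makes this concrete by writing down an explicit $\Cs^r$-along-leaves parametrization $\Xi:[0,1]^2\to\bR^d$ of the rectangle and computing $\int_{[0,1]^2}d\alpha_0(\partial_s\Xi,\partial_t\Xi)\,ds\,dt$ directly (citing \cite[(B.4),(B.5)]{Liverani04} for the identification), which is exactly the execution of your plan and sidesteps the transverse-regularity worry you flag.
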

\begin{proof}
We use the same coordinates defined before Lemma \ref{lem:holo-dist} with $W=W_1$, $W'=W_2$; so that $\xi_*=0$, $\zeta_*=(\tilde\zeta, 0)$.\footnote{ This can be done without loss of generality since the general case follows by a trivial translation.} Thus $\widetilde W$ is a flat manifold and $\widetilde W'$ is the graph of $(\xi, G(\tilde \xi))$, where $\tilde\xi=(\xi_1,\dots\xi_{d_s})$, $\xi_{d_s+1}$ being the flow direction. Also $W=\{(\tilde\xi,0)\}$ while $W'$ is the graph of $(\tilde \xi, G(\tilde \xi), L(\tilde \xi))$ with $L(0)=0$, $G(0)=\bar z=z^u(\xi_*)$. Let $H(\xi,\eta)=(H_s(\xi,\eta),H_0(\xi,\eta))$, $H_s:\bR^{d}\to\bR^{(d_s)}, \,H_0:\bR^d\to\bR$, and $\{(I_u(\zeta), I_s(\zeta), I_0(\zeta))\}=\{\bH(\zeta,\eta)_{\eta\in\bR^{d_u}}\}\cap\{(\xi, G(\xi))_{\xi\in\bR^{d_s+1}}\}$. Then $\Delta^*(\xi_*+\zeta_*)-\Delta^*(\xi_*)=I_0(\tilde \zeta)- L(I_s(\tilde \zeta))$.
Define $\Xi:[0,1]^2\to \bR^d$ by
\[
\Xi(t,s)=(sH_s(\tilde \zeta, t I_u(\tilde\zeta)), t G( sI_s(\tilde \zeta)), s[H_0(\tilde \zeta, tI_u(\tilde\zeta))- tI_0(\tilde\zeta)]+tL(sI_s(\tilde \zeta)))
\]
and set $\Xi([0,1]^2)\doteq\Sigma$. From \cite[equations (B.4), (B.5)]{Liverani04} it follows that $I_0(\tilde \zeta)- L(I_s(\tilde \zeta))$ equals the integral of the symplectic form over $\Sigma$, hence
\[
\begin{split}
&\Delta^*(\xi_*+\zeta_*)-\Delta^*(\xi_*)=\int_\Sigma d\alpha_0=\int_{[0,1]^2}\Xi^*d\alpha_0=\int_{[0,1]^2} d\alpha_0(\partial_s\Xi,\partial_t\Xi)\, dt ds\\
&=\int_{[0,1]^2} \langle H_s(\tilde\zeta, tI_u(\tilde \zeta)), G(sI_s(\tilde\zeta))\rangle- ts\langle \partial_{\tilde \xi} G(sI_s(\tilde \zeta)I_s(\tilde \zeta),\partial_\eta H_s(\tilde\zeta, t I_u(\tilde\zeta))I_u(\tilde\zeta)\rangle\\
&=\langle \tilde\zeta,\bar z\rangle+\cO\left( \|\tilde\zeta\|^{\varpi'}\|\bar z\|\|I_u\|+\|\tilde\zeta\|\|\bar z\|^{\varpi'}\|I_s\|+\|\tilde\zeta\|^{\varpi'}\|\bar z\|^{\varpi'}\|I_u\|\|I_s\|\right).
\end{split}
\]
Since, by Lemma \ref{lem:holo-dist}, $\|I_u-\bar z\|+\|I_s-\tilde\zeta\|\leq C_\# (\|\bar z\|^{\varpi'}\|\tilde\zeta\|+\|\bar z\|\|\tilde\zeta\|^{\varpi'})$, the Lemma follows.
\end{proof}

\addcontentsline{toc}{section}{References}
\footnotesize
\bibliographystyle{plain}
\bibliography{bibliografia-final}

\end{document}